\def\nfrac#1#2{{\textstyle\frac{#1}{#2}}}
\def\dfrac#1#2{\lower0.15ex\hbox{\large$\frac{#1}{#2}$}}
\def\non{\nonumber}
\def\nonum{\nonumber}
\def\cF{\mathcal{F}}
\def\cG{\mathcal{G}}
\def\lm#1{#1_\times}
\def\E{\mathbb{E}}
\def\disp{\displaystyle}
\newtheorem{firstthm}{Proposition}[section]
\newtheorem{thm}[firstthm]{Theorem}
\newtheorem{lemma}[firstthm]{Lemma}
\newtheorem{cor}[firstthm]{Corollary}
\theoremstyle{definition}
\newtheorem{remark}[firstthm]{Remark}
\numberwithin{equation}{section}
\title{A threshold result for loose Hamiltonicity\\ in random regular uniform hypergraphs}
\author{
Daniel Altman\\
\small Mathematical Institute\\[-0.8ex]
\small University of Oxford\\[-0.8ex]
\small Oxford, OX2 6GG, U.K.\\
\small \texttt{daniel.h.altman@gmail.com}
\and
Catherine Greenhill\thanks{Supported by the Australian Research Council grant DP140101519.}\\
\small School of Mathematics and Statistics\\[-0.8ex]
\small UNSW Sydney\\[-0.8ex]
\small NSW 2052, Australia\\
\small \texttt{c.greenhill@unsw.edu.au} 
\and
Mikhail Isaev${}^*$\\
\small School of Mathematical Sciences\\[-0.8ex]
\small Monash University\\[-0.8ex]
\small VIC 3800, Australia\\
\small Moscow Institute of Physics and Technology\\[-0.8ex]
\small Dolgoprudny, 141700, Russia\\
\small \texttt{isaev.m.i@gmail.com}
\and
Reshma Ramadurai\\
{\small School of Mathematics and Statistics}\\[-0.8ex]
{\small Victoria University}\\[-0.8ex]
{\small Wellington, New Zealand }\\
{\small \texttt{reshma.ramadurai@vuw.ac.nz}}
}
\date{1 November 2019}
\begin{document}

\maketitle

\begin{abstract}
Let $\mathcal{G}(n,r,s)$ denote a uniformly random 
$r$-regular $s$-uniform hypergraph on $n$ vertices,
where $s$ is a fixed constant and $r=r(n)$ may grow with $n$.
An $\ell$-\emph{overlapping} Hamilton cycle 
is a Hamilton cycle in which successive edges
overlap in precisely $\ell$ vertices, and 1-overlapping Hamilton
cycles are called \emph{loose} Hamilton cycles.  

When $r,s\geq 3$ are fixed integers, we establish
a threshold result for the property of containing a loose Hamilton cycle.
This partially verifies a conjecture of
Dudek, Frieze, Ruci{\' n}ski and {\v S}ileikis (2015).
In this setting, we also find the asymptotic distribution of the number of loose Hamilton
cycles in $\mathcal{G}(n,r,s)$. 

Finally we 
prove that for $\ell = 2,\ldots, s-1$ and for $r$ growing moderately as
$n\to\infty$,
the probability that 
$\mathcal{G}(n,r,s)$ has a $\ell$-overlapping Hamilton cycle tends
to zero. 
\end{abstract}

\section{Introduction}\label{s:intro}

A \emph{hypergraph} $G=(V,E)$ consists of a finite set $V$ of vertices and a multiset
$E$ of multisubsets of $V$, which we call edges.
We say that $H$ is \emph{simple} if $E$ is a set of sets: that is, there are
no repeated edges and no edge contains a repeated vertex.
Given a fixed integer $s \geq 2$, the hypergraph $G$ is said to be $s$-\emph{uniform} 
if every edge contains precisely $s$ vertices, counting multiplicities. Uniform hypergraphs have been well-studied, as they generalise graphs (which are 2-uniform hypergraphs). 
Let $r\geq 1$ be an integer. 
A hypergraph is said to be $r$-\emph{regular} if every vertex has degree $r$, counting multiplicities.  
(For more background on hypergraphs, see~\cite{duchet})

For integers $r,s\geq 2$, let
$\mathcal{S}(n,r,s)$ be the set of all simple, $r$-regular, $s$-uniform hypergraphs on the vertex set 
$\{1,2,\ldots, n\}$.  To avoid trivialities, assume that $s$ divides $rn$
(as any hypergraph in $\mathcal{S}(n,r,s)$ has $rn/s$ edges).
We write $\mathcal{G}(n,r,s)$ to denote a random
hypergraph chosen uniformly from $\mathcal{S}(n,r,s)$.
Here $s$ is fixed, though we sometimes allow $r=r(n)$ to grow with $n$.

A 1-\emph{cycle} (or \emph{loop}) in a hypergraph is an edge which contains a 
repeated vertex, and a 2-\emph{cycle}
is an unordered pair of edges which intersect in at least 2 vertices.  
Hence a hypergraph is simple if and only if it contains no 1-cycle and 
no 2-cycle consisting of two identical edges.
For $k\geq 3$,
a set of $k$ edges forms a $k$-\emph{cycle} if for some ordering 
$e_0,e_1,\ldots, e_{k-1}$ of the edges, there exist \emph{distinct} vertices
$v_0,\ldots, v_{k-1}$ such that
$v_j\in e_j\cap e_{j+1}$ for $j=0,\ldots, k-1$ (identifying $e_k$ with $e_0$).

We particularly focus on loose cycles.
A 1-cycle $e$ is \emph{loose} if $e$ contains $s-1$ distinct vertices, and 
a 2-cycle $e_1, e_2$ is  \emph{loose} if $|e_1\cap e_2| = 2$ and $e_1$, $e_2$ are not loops.  For
$k\geq 3$, a $k$-cycle is \emph{loose} if it contains no loops and, for some ordering
$e_0,\ldots, e_{k-1}$ of its edges,
\[ |e_i\cap e_j| = \begin{cases} 1 & \text{ if $i-j\equiv \pm 1 \, (\operatorname{mod}\,  k)$,}\\ 0 
& \text{ otherwise}\end{cases}
\]
for all distinct $i,j\in \{ 0,\ldots, k-1\}$.
A loose $k$-cycle $C$ contains precisely $k(s-1)$ distinct vertices,
for any positive integer $k$.
Figure~\ref{hamilton}
shows a loose 6-cycle in a 3-uniform hypergraph. 

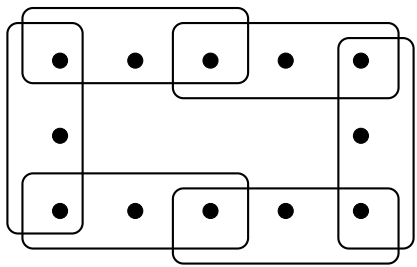
\begin{figure}[ht!]
\begin{center}
\begin{tikzpicture}[scale=1.0]
\draw [fill] (0.0,0.0) circle (0.1);
\draw [fill] (1.0,0.0) circle (0.1);
\draw [fill] (2,0.0) circle (0.1);
\draw [fill] (3,0) circle (0.1);
\draw [fill] (4.0,0.0) circle (0.1);
\draw [fill] (4.0,1) circle (0.1);
\draw [fill] (4,2) circle (0.1);
\draw [fill] (3,2) circle (0.1);
\draw [fill] (2,2) circle (0.1);
\draw [fill] (1,2) circle (0.1);
\draw [fill] (0,2) circle (0.1);
\draw [fill] (0,1) circle (0.1);
\draw [thick,rounded corners,-] (-0.5,2.7) rectangle (2.5,1.7);
\draw [thick,rounded corners,-] (1.5,2.5) rectangle (4.5,1.5);
\draw [thick,rounded corners,-] (-0.5,0.5) rectangle (2.5,-0.5);
\draw [thick,rounded corners,-] (1.5,0.3) rectangle (4.5,-0.7);
\draw [thick,rounded corners,-] (3.7,2.3) rectangle (4.7,-0.5);
\draw [thick,rounded corners,-] (-0.7,2.5) rectangle (0.3,-0.3);
\end{tikzpicture}
\label{hamilton}
\caption{A loose 6-cycle in a 3-uniform hypergraph.}
\end{center}
\end{figure}

Define $t = n/(s-1)$ and let $G$ be a hypergraph on $n$ vertices.
Observe that a loose $t$-cycle of $G$ covers all $(s-1)t = n$ vertices of $G$.
From now on, we refer to a loose $t$-cycle of $G$ as a 
\emph{loose Hamilton cycle}. 
A necessary condition for the existence of a loose Hamilton cycle in a
hypergraph $G\in\mathcal{S}(n,r,s)$ is that $s-1$ divides $n$. 

More generally, an $\ell$-\emph{overlapping} Hamilton cycle is
a set of $t_\ell = n/(s-\ell)$ edges which can be labelled $e_0, e_1,\ldots, e_{t_\ell-1}$
such that for some ordering $v_0,\ldots, v_{n-1}$ of the vertices we have
\begin{equation}
\label{edge-def}
 e_i = \{ v_{i(s-\ell)}, \, v_{i(s-\ell) + 1},\, \cdots, v_{i(s-\ell) + s-1}\} \quad
               \text{ for \,\, $i=0,\ldots, t_\ell-1$}.
\end{equation}
Here the vertex labels are also interpreted cyclically, so that for example
\[ e_{t_\ell-1} = \{ v_{n-s+\ell}, v_{n-s+\ell+1},\ldots, v_{n-1},\, v_0,\, v_1,\ldots, v_{\ell-1}\}.\]
A necessary condition for an $\ell$-overlapping Hamilton cycle to exist in an $s$-uniform hypergraph
on $n$ vertices is that $s-\ell$ divides $n$.
An $(s-1)$-overlapping Hamilton cycle is also called a \emph{tight} Hamilton cycle, and 
a 1-overlapping Hamilton cycle is just a loose Hamilton cycle.

In this paper, asymptotic results for $\ell$-overlapping Hamilton cycles hold as $n\to\infty$ 
restricted to the set
\[  \mathcal{I}^{(\ell)}_{(r,s)} = \{ n\in\mathbb{Z}^+ \, : \, s\mid rn \,\, \text{ and }
            \,\, (s-\ell) \mid n\}. \]
If the probability of an event tends to 1 as $n\to\infty$ along this set, then we say that
the event holds \emph{asymptotically almost surely} (a.a.s.).
Since our main focus is on loose
Hamilton cycles, we write $\mathcal{I}_{(r,s)}$ instead of $\mathcal{I}^{(1)}_{(r,s)}$
when $\ell=1$.

The case $s=2$ (graphs) has been extensively studied.
In order to prove that random $r$-regular graphs are a.a.s.\ Hamiltonian, for fixed $r\geq 3$,
Robinson and Wormald~\cite{RW92,RW94} 
used an analysis of variance technique now known as the
\emph{small subgraph conditioning method}. 
In~\cite{RW92}, Robinson and Wormald proved that random cubic graphs
are a.a.s.\ Hamiltonian, 
but their generalisation~\cite{RW94} to higher degrees used an inductive argument based
on the a.a.s.\ presence of perfect matchings in random regular graphs of
degree $r\geq 4$.  The ideas of~\cite{RW92,RW94} were further developed 
by Frieze et al.~\cite{fjmrw} and by Janson~\cite{janson}.
Frieze et al.~\cite{fjmrw} provided algorithmic results for the construction,
generation and counting of Hamilton cycles in random regular graphs,
while Janson~\cite{janson} applied small subgraph conditioning to
give the asymptotic distribution
of the number of Hamilton cycles in random $r$-regular graphs. 
Janson stated this distribution in~\cite[Theorem 2]{janson}, and noted
that in particular, the expected number of Hamilton cycles in random
$r$-regular graphs is asymptotically equal to
\begin{equation}
\label{exps2}
 e\, \sqrt{\frac{\pi}{2n}}\, \left( \frac{(r-2)^{(r-2)/2}\, (r-1)}{r^{(r-2)/2}}\right)^n.\end{equation}
Janson also observed that~\cite[Theorem 2]{janson} directly implies that
random $r$-regular graphs are a.a.s.\ Hamiltonian, when $r\geq 3$.

Our aim in this paper is to extend the results of Frieze et al.~\cite{fjmrw}
and Janson~\cite{janson} by using small subgraph conditioning to study loose 
Hamilton cycles in random $s$-uniform $r$-regular hypergraphs, for any $r,s\geq 2$.
Our work is also motivated by
two conjectures stated by Dudek et al.~\cite{cfmr}, as discussed
in Section~\ref{s:related}.
Where possible, we state our results so that they also cover the 
known results for graphs ($s=2$), though we stress that our proofs do not cover
this case.

\begin{thm}\label{main2}
Let $s\geq 2$ be a fixed integer. 
There exists a positive constant $\rho(s)$ such that for any fixed integer $r\geq 2$, 
as $n\to\infty$ along $\mathcal{I}_{(r,s)}$,
\[ 
    \Pr(\, \mathcal{G}(n,r,s) \, \text{ contains a loose Hamilton cycle})\,
  \longrightarrow \begin{cases} 1 & \text{ if $r >  \rho(s)$,}\\
  0 & \text{ if $r \leq  \rho(s)$.} \end{cases} 
\]
Specifically, $\rho = \rho(s)$ is the unique real number in $(2,\infty)$ such that
\[ (\rho-1)(s-1)\left(\frac{\rho s-\rho-s}{\rho s-\rho}\right)^{(s-1)(\rho s-\rho-s)/s}=1.\]
We note that $\rho(2) \in (2,3)$ and $\rho(3)=3$, while if $s\geq 4$ then
\begin{equation}
\rho^{-}(s) < \rho(s) < \rho^+(s)
\label{trap}
\end{equation}
where
\begin{align*}
 \rho^{-}(s) &= \frac{e^{s-1}}{s-1} -\frac{s-2}{2} - \frac{(s^2-s+1)^2}{(s-1)e^{s-1}},\\ 
  \rho^+(s) &= \frac{e^{s-1}}{s-1} - \frac{s-2}{2}.
\end{align*}
\end{thm}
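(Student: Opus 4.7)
The plan is to apply the small subgraph conditioning method of Robinson--Wormald~\cite{RW92,RW94}, in the form of Janson~\cite{janson}, to the random variable $X = X_n$ counting loose Hamilton cycles in $\mathcal{G}(n,r,s)$. I would first transfer the problem to the pairing (configuration) model for $r$-regular $s$-uniform multihypergraphs: $n$ vertices each carry $r$ points, and the $rn$ points are partitioned uniformly at random into $rn/s$ unordered groups of size $s$, each interpreted as a hyperedge. Conditioning on simplicity (no repeated vertex within a group, no repeated group) yields a uniform element of $\mathcal{S}(n,r,s)$, and since the simplicity probability is bounded away from $0$ for fixed $r,s$, all a.a.s.\ statements transfer.

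\textbf{First moment.} A loose Hamilton cycle is specified by a cyclic arrangement of the $n$ vertices together with the choice of which $t = n/(s-1)$ vertices serve as link vertices between consecutive hyperedges. Counting the pairings which realise a prescribed loose Hamilton cycle and applying Stirling gives
\[
    \mathbb{E}[X] = \Theta(n^{-1/2})\, C(r,s)^n, \qquad
    C(r,s) = (r-1)(s-1)\left(\frac{rs-r-s}{rs-r}\right)^{(s-1)(rs-r-s)/s},
\]
where $C(r,s)$ is precisely the function whose unique root on $(2,\infty)$ defines $\rho(s)$. Since $C(r,s) \leq 1$ for $r \leq \rho(s)$, and a refined analysis of the polynomial prefactor at the boundary $C = 1$ shows that $\mathbb{E}[X] \to 0$ there as well, Markov's inequality yields $\Pr(X \geq 1) \to 0$ on that side of the threshold.

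\textbf{Second moment and small subgraph conditioning.} For $r > \rho(s)$ the expectation grows exponentially but the ratio $\mathbb{E}[X^2]/(\mathbb{E}[X])^2$ tends to a finite limit $\Lambda > 1$, so the plain second moment method fails. I would compute $\mathbb{E}[X^2]$ by stratifying ordered pairs $(C_1, C_2)$ of loose Hamilton cycles according to the combinatorial type of their intersection -- an auxiliary structure recording shared hyperedges and shared vertices (of link or interior type) together with their cyclic positions along the two cycles -- and then extract the dominant contribution from pairs with vanishing overlap via a Laplace/saddle-point estimate. In parallel I would show that the numbers $Z_k$ of $k$-cycles in the pairing model are asymptotically independent Poissons with explicit means $\lambda_k = \lambda_k(r,s)$, and compute constants $\delta_k$ satisfying $\mathbb{E}[X Z_k] / \mathbb{E}[X] \to \lambda_k(1 + \delta_k)$ by counting configurations in which a $k$-cycle is attached to a reference loose Hamilton cycle. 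Janson's theorem then reduces the conclusion $\Pr(X > 0) \to 1$ to the identity
\[
    \Lambda = \prod_{k \geq 2} \exp(\lambda_k \delta_k^2).
\]
Verifying this identity termwise -- essentially matching the ``small overlap'' types in the second moment expansion with insertions of short cycles into a reference loose Hamilton cycle -- is, I expect, the main technical obstacle of the proof: in a hypergraph each pair of loose Hamilton cycles can share a hyperedge along either cyclic orientation, and can share a link vertex independently of its incident hyperedges, making the combinatorial bookkeeping substantially more involved than in the graph case treated in~\cite{janson}.

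\textbf{The threshold function.} The properties of $\rho(s)$ itself follow from elementary calculus applied to $C(\rho,s) = 1$: the function $C(\cdot, s)$ is continuous on $(2,\infty)$, tends to a limit at most $1$ as $r \to 2^+$ and to $+\infty$ as $r \to \infty$, and a direct monotonicity analysis shows that it crosses the level $1$ exactly once on $(2, \infty)$, giving uniqueness of $\rho(s)$. The special values $\rho(2) \in (2,3)$ and $\rho(3) = 3$ are obtained by direct substitution, while the bounds $\rho^-(s) < \rho(s) < \rho^+(s)$ for $s \geq 4$ follow by expanding the logarithm of the defining equation in powers of $1/s$ to two orders and converting two-sided inequalities on $\log C(r,s)$ into two-sided bounds on $\rho$ via monotonicity on the relevant range.
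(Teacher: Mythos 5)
Your proposal is correct and follows precisely the argument the paper carries out: transfer to the configuration model, locate the threshold via the first-moment base $C(r,s)$, verify Janson's small subgraph conditioning hypotheses (A1)--(A4) by stratifying pairs of loose Hamilton cycles according to overlap type and applying a Laplace-type estimate, and establish the stated properties of $\rho(s)$ by elementary analysis of $C(\rho,s)=1$. Two small points for when you flesh this out: the first-moment estimate should read $C(r,s)^{n/(s-1)}$ rather than $C(r,s)^{n}$, since a loose Hamilton cycle has $t=n/(s-1)$ hyperedges (this does not change the location of the threshold); and in the configuration model the variance identity (A4) runs over all $k\ge 1$ including loops -- its verification proceeds not by a termwise bijection between overlap types and short-cycle insertions but by computing $\E(Y^2)/(\E Y)^2$ through a Laplace summation over a four-parameter overlap structure and showing it equals $\exp\bigl(\sum_{k\ge 1}\lambda_k\delta_k^2\bigr)$, the dominant technical burden being the proof that the associated four-variable exponent has a unique interior maximum (the paper's Appendix~\ref{s:maximum}).
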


In Table~\ref{rhobounds} we give the values of $\rho(s)$  for $s=2,\ldots, 10$,
and for $s = 4,\ldots, 10$ we compare $\rho(s)$ with the
lower and upper bounds $\rho^-(s)$, $\rho^+(s)$ given in (\ref{trap}).
\begin{table}[ht!]
\renewcommand{\arraystretch}{1.4}
\begin{center}
\begin{tabular}{|c|c|c|c|c|c|c|c|c|c|c|}
\hline
$s$ &  2 & 3 & 4 & 5 & 6 & 7 & 8 & 9 & 10 \\
\hline
$\rho^{-}(s)$  
         & --  & -- & 2.891 & 10.130 &  26.388 & 63.974 & 153.239 & 368.896 & 896.229  \\
\hline
$\rho(s)$ & 2.488 & 3 & 5.501 & 11.998 & 27.580 & 64.675 & 153.625 & 369.100 & 896.332  \\
\hline
$\rho^+(s)$ & -- & -- & 5.695 & 12.150 & 27.683 & 64.738 & 153.662 & 369.120 & 896.342 \\
\hline
\end{tabular}
\label{rhobounds}
\end{center}
\caption{Values of $\rho(s)$ for small $s$, together with our bounds (for $s\geq 4$).}
\end{table}
All values are rounded to three decimal places, except $\rho(3)=3$,
since it is an integer.  (Three decimal places are required to see
that $\rho(5) < 12$.)
We see that
$\rho(s)$ is closely approximated by the upper bound $\rho^+(s)$,
except at very small values of $s$. 

%%%%%%
Furthermore, we establish the asymptotic distribution of the number of
loose Hamilton cycles in $\mathcal{G}(n,r,s)$.   We present this result
in Theorem~\ref{distribution} below, as it requires the definition of
several crucial parameters which will be specified later.

The following result concerns $\ell$-overlapping Hamilton cycles
and allows the degree $r$ to grow moderately with $n$.  

\begin{thm}
Let $s\geq 3$ be a fixed integer and let
\[ \kappa = \kappa(s) = \begin{cases} 1 & \text{ if $s\geq 4$,}\\       
                            \nfrac{1}{2} & \text{ if $s=3$}.
           \end{cases}
\]
Suppose that $r=r(n)$ with $2\leq r = o(n^\kappa)$.
Then as $n\to\infty$ along $\mathcal{I}^{(\ell)}_{(r,s)}$, a.a.s.\ 
$\cG(n,r,s)$ has no $\ell$-overlapping Hamilton cycle for $\ell=2,\ldots, s-1$.
\label{new-theorem}
\end{thm}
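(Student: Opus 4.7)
The plan is to apply the first moment method to $X_\ell$, the number of $\ell$-overlapping Hamilton cycles in $\mathcal{G}(n,r,s)$. Since $X_\ell$ is a nonnegative integer-valued random variable, Markov's inequality gives $\Pr(X_\ell \geq 1) \leq \mathbb{E}[X_\ell]$, so it is enough to show $\mathbb{E}[X_\ell] = o(1)$ in the stated range. I would work in the standard configuration (pairing) model for $r$-regular $s$-uniform hypergraphs, in which $n$ groups of $r$ points each are partitioned into $rn/s$ unordered blocks of size $s$, with groups collapsed to vertices. Conditioning on simplicity recovers the uniform measure on $\mathcal{S}(n,r,s)$, so
\[ \mathbb{E}_{\mathcal{G}(n,r,s)}[X_\ell] \;\leq\; \mathbb{E}_{\mathrm{conf}}[X_\ell] \,/\, \Pr(\text{configuration is simple}). \]
For $r = o(n^{\kappa})$ in the stated range, known asymptotic enumeration results for $|\mathcal{S}(n,r,s)|$ give $\Pr(\mathrm{simple}) \geq \exp(-R(r,s,n))$, with $R(r,s,n)$ controlled by the expected counts of the relevant bad substructures (hyperloops and 2-cycles).

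The first substantive step is to enumerate $\ell$-overlapping Hamilton cycles as subgraphs on $[n]$. Each such cycle is an orbit of a cyclic vertex sequence $(v_0,\ldots,v_{n-1})$ under cyclic shifts by $s-\ell$, reflection, and the within-edge reorderings that preserve the intersection pattern, yielding
\[ \Phi_\ell(n,s) \;=\; \frac{n!}{2\,t_\ell\,\gamma_\ell^{\,t_\ell}} \]
for a bounded combinatorial constant $\gamma_\ell=\gamma_\ell(s)$. The second step is a direct configuration-model calculation: by choosing, for each vertex $v$, which $m_v$ of its $r$ points are assigned to the cycle edges and then matching the remaining points freely, the probability that a fixed $\ell$-OHC is present is
\[ p_\ell \;=\; \Bigl(\prod_{v} r^{(m_v)}\Bigr)(s!)^{t_\ell}\,\frac{(rn/s)^{(t_\ell)}}{(rn)^{(s t_\ell)}}, \]
where $m_v$ counts the cycle-edges containing $v$ (so $\sum_v m_v = s t_\ell$) and $x^{(k)}$ denotes the falling factorial. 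Combining $\mathbb{E}_{\mathrm{conf}}[X_\ell] = \Phi_\ell(n,s)\,p_\ell$ and applying Stirling yields, after simplification, a bound of the form
\[ \mathbb{E}_{\mathrm{conf}}[X_\ell] \;\leq\; C_1(s,\ell)\left(\frac{C_2(s)\,r}{n^{\ell-1}}\right)^{n/(s-\ell)}. \]
Since $\ell\geq 2$ forces $\ell-1\geq 1$, this is super-polynomially small whenever $r=o(n^{\ell-1})$, and dominates any $\Pr(\mathrm{simple})^{-1}$ whose logarithm is $o(n\log n)$.

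The main technical obstacle is the borderline case $s=3$, $\ell=2$ (tight cycles), in which every vertex appears in $m_v=3$ edges of the cycle. Here the cycle essentially saturates each vertex's degree, the Stirling-based estimates must be carried out very carefully, and $R(r,3,n)$ is largest (dominated by contributions from pairs of edges sharing multiple vertices). The threshold $\kappa=\tfrac12$ for $s=3$ is exactly what is needed to keep $R(r,3,n) = o(n\log n)$ while permitting $r$ to grow; for $s\geq 4$ the $\ell$-OHCs are strictly less dense (one checks $\max_v m_v \leq \lceil s/(s-\ell)\rceil$), the simplicity correction $R$ is milder, and $\kappa=1$ suffices. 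The heart of the argument is therefore to track the interplay between $\Phi_\ell\, p_\ell$ and $\exp R$ uniformly across the range $r=o(n^{\kappa})$.
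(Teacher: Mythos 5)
Your approach is the same as the paper's: bound $\E Y^{(\ell)}$ in the configuration model (Lemma~\ref{nr-l-overlap}), divide by $\Pr(\cF(n,r,s)\text{ is simple})$ via the enumeration theorem of Dudek, Frieze, Ruci\'nski and {\v S}ileikis~\cite{dfrs-enum}, and invoke~(\ref{conditioning}) with Markov's inequality; your $\Phi_\ell\, p_\ell$ expression and the Stirling reduction to $(O(r)/n^{\ell-1})^{t_\ell}$ are exactly the paper's.

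Two of your explanatory remarks are off and would break a literal reading of the argument. The assertion that the first-moment bound ``dominates any $\Pr(\text{simple})^{-1}$ whose logarithm is $o(n\log n)$'' fails when $\ell=2$ and $s\geq4$: taking $r=n/g(n)$ with $g$ growing slowly gives only $\E Y^{(\ell)}\leq e^{-\Theta(n\log g(n))}$, which does not dominate $e^{o(n\log n)}$. What actually closes the argument is that the enumeration theorem gives $\log \Pr(\text{simple})^{-1}=O(r)=o(n^{\kappa})=o(n)$, a much stronger bound that you state earlier and then needlessly relax. Your explanation of $\kappa(s)$ is also a misattribution: $\kappa=\tfrac{1}{2}$ for $s=3$ versus $\kappa=1$ for $s\geq4$ has nothing to do with the density $\max_v m_v$ of the $\ell$-overlapping cycle; it is simply the range of validity of the asymptotic formula for $|\mathcal{S}(n,r,s)|$ (driven by the error term $O((r/n)^{1/2}+r^2/n)$ in~\cite{dfrs-enum}), and the same $\kappa$ applies for every $\ell$. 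Finally, you should note the degenerate case $rs-r\ell-s=0$ (equivalently $r=s/(s-\ell)$, which forces $r=O(1)$): then $rn-st_\ell=0$ so the Stirling step as written collapses, and the paper handles it separately using $\Pr(\text{simple})=\Theta(1)$ together with $\E Y^{(\ell)}=o(1)$.
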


To prove these results, as is usual in this area,
 we will work in a related probability model known as the
\emph{configuration model}. 
After discussing some related results and extensions in Section~\ref{s:related},
we review the configuration model for hypergraphs in Section~\ref{s:mainideas}
and prove Theorem~\ref{new-theorem} in Section~\ref{ss:expectation}.
To prove Theorem~\ref{main2} we will apply
the small subgraph conditioning method, which is discussed in Section~\ref{ss:partitions}. 
The structure of the rest of the paper will be described in
Section~\ref{s:structure}.

\subsection{Extensions and related results}\label{s:related}

The small subgraph conditioning method has been applied to prove
many a.a.s.\ structural theorems (contiguity results)
for regular graphs. See Wormald~\cite{wormald}
or Janson~\cite{janson} for more detail.  For uniform regular hypergraphs,
we only know of one application of the method:
Cooper et al.~\cite{cfmr} used small subgraph conditioning to investigate 
perfect matchings in random regular uniform hypergraphs.
They proved a threshold result for 
existence of a perfect matching in a random $r$-regular $s$-uniform hypergraph,
where $r, s\geq 2$ are fixed integers. Specifically, in~\cite[Theorem 1]{cfmr}
they proved that as $n\to\infty$, this probability tends to 0 
if $s > \sigma_r$ and tends to 1 if $s < \sigma_r$, where
\[ \sigma_r = \frac{\ln r}{(r-1)\ln\left(\frac{r}{r-1}\right)} + 1.\]
Defining
$r_0(s) = \min\{ r : s < \sigma_r\}$, Cooper et al.~\cite{cfmr} remark that
$r_0(s)$ is approximately $e^{s-1}$.
It is interesting to observe that, in contrast to graphs ($s=2$),
the threshold for the
existence of perfect matchings in $\cG(n,r,s)$ is higher than the threshold for the existence of
loose Hamilton cycles when $s\geq 3$;  that is, $r_0(s) > \rho(s)$.

Recently, Dudek et al.~\cite{dfrs-sandwich} established a relation between $\cG(n,r,s)$ and 
the uniform probability model
on the set of $s$-uniform hypergraphs on $n$ vertices with $m$ edges (when $s\geq 3$). 
Using known results about 
the existence of loose Hamilton cycles in the latter model, they showed that a.a.s.\ $\cG(n,r,s)$ 
contains a loose Hamilton cycle when $r \gg \ln n $ 
(or $r = \Omega(\ln n)$, if $s=3$) and $r = O(n^{s-1})$.  

Dudek et al.\ made the following conjecture~\cite[Conjecture 1]{dfrs},
rewritten here in our notation:
\begin{quote} 
For every $s \geq 3$ there exists a constant $\rho = \rho(s)$ such that for any $r \geq \rho$, $\cG(n,r,s)$ contains a loose Hamilton cycle a.a.s.
\end{quote}
We have partially verified this conjecture with Theorem~\ref{main2}, which gives a threshold 
result for constant values of $r$.  This leaves a gap for degrees $r=r(n) = O(\ln n)$.
Intuitively, it seems that increasing the degree should make the
existence of a loose Hamilton cycle more likely, but it appears that other ideas are required.  

Aldosari and Greenhill~\cite{AG} 
applied a switching argument to provide an asymptotic formula for the expected number
of loose Hamilton cycles in $\cG(n,r,s)$ when $r,s$ are slowly-growing.    We will use this
formula in the proof of Corollary~\ref{lem:looseHam}.

There has been much work on $\ell$-overlapping Hamilton cycles in the binomial model 
$\cG_{n,p}^{(s)}$ of
$s$-uniform hypergraphs, where each $s$-set is an edge with probability $p$,
independently.  
In particular, loose ($\ell=1$) and tight ($\ell=s-1$) Hamilton cycles
are well studied, see for example~\cite{ABKP,df,ferber} and references therein.
Dudek and Frieze~\cite[Theorem 3(i)]{df} proved that for all fixed integers $s > \ell\geq 2$ and 
fixed $\epsilon > 0$, if $p \leq (1-\varepsilon) e^{s-\ell}/n^{s-\ell}$ then a.a.s.\
$\cG_{n,p}^{(s)}$ has no $\ell$-overlapping Hamilton cycle.
This motivated the second conjecture of Dudek et al.~\cite[Conjecture 2]{dfrs},
written here in our notation:
\begin{quote}
For every $s > \ell \geq 2$, if $r \gg n^{\ell-1}$ then a.a.s.\ $\cG(n,r,s)$ contains an
$\ell$-overlapping Hamilton cycle. 
\end{quote}
Dudek et al.~\cite[Theorem 5]{dfrs-sandwich} 
proved most of this conjecture, showing that if
$s> \ell\geq 2$ then there exists a constant $C>0$ such that if
\[ \ell = 2 \,\,\, \text{ and } \,\,\, n \ll r \leq \frac{n^{s-1}}{C},\]
or if
\[ \ell \geq 3 \,\,\, \text{ and }\,\,\,  C n^{\ell-1} \ll r \leq \frac{n^{s-1}}{C},\]
then a.a.s.\ $\cG(n,r,s)$ contains an $\ell$-overlapping Hamilton cycle.
Dudek et al.~\cite{dfrs-sandwich} conjectured that in each of these
situations, the lower bounds for $r$ is
a threshold for $\ell$-overlapping Hamiltonicity.
Recently, Espuny D{\' i}az et al.~\cite[Corollary~3.13]{EJKO} proved that 
when $\ell\geq 2$,
the probability that $\cG(n,r,s)$ contains an $\ell$-overlapping Hamilton cycle tends to~0
if $r=o(n^{\ell-1})$ and tends to~1 if $r=\omega(n^{\ell-1})$ and $r=o(n^{s-1})$. 
Their arguments require $r$ to tend to infinity, and they rely on our Theorem~\ref{new-theorem}
to handle the case of constant~$r$.
This solves the conjecture of Dudek et al.~\cite{dfrs-sandwich} for all $r \leq n^{s-1}/C$.
For larger values of $r$,
we believe that complex-analytic methods such as those presented in~\cite{mother}
may allow further progress.

\section{Main ideas}\label{s:mainideas}

To study properties of  random regular uniform hypergraphs it is convenient to  
work in the configuration model, which we briefly review here. This is the same model used by Cooper et al.~\cite{cfmr}.
We use the notation $[n] = \{ 1,2,\ldots, n\}$.
Throughout the paper, we use the convention that $0^0=1$.

Let $B_1, B_2,\ldots, B_n$ be disjoint sets of size $r$, which we call \emph{cells}, and
define $\mathcal{B} = \cup_{i=1}^n B_i$.  Elements of $\mathcal{B}$ are called
\emph{points}.   We assume that there is a fixed ordering on the $rn$ points of $\mathcal{B}$
(so that different points in the same cell are distinguishable). 

Assume that $s$ divides $rn$. 
Let $\Omega(n,r,s)$ be the set of all unordered partitions $F = \{ U_1,\ldots, U_{rn/s}\}$
of $\mathcal{B}$ into $rn/s$ parts, where each part has exactly $s$ points. 
Each partition $F\in \Omega(n,r,s)$ defines a hypergraph $G(F)$ on the vertex set $[n]$
in a natural way: vertex $i$ corresponds to the cell $B_i$, and each part $U\in F$ gives
rise to an edge $e_U$ such that the multiplicity of vertex $i$ in $e_U$ equals $|U\cap B_i|$,
for $i=1,\ldots, n$. Then $G(F)$ is an $s$-uniform $r$-regular hypergraph.

The partition
$F\in\Omega(n,r,s)$ is called \emph{simple} if $G(F)$ is simple. More generally, we will often describe $F \in \Omega(n,r,s)$ as having a particular hypergraph property  if $G(F)$ has that property.
A subpartition $X$ of $F$ is called a
\emph{cycle} if the hypergraph corresponding to $X$, denoted $G(X)$, is a cycle. 
Hence we may speak of a 1-cycle, loose $k$-cycle or $\ell$-overlapping Hamilton cycle in a
partition $F$.

% Comment #1
For any nonnegative integer $\ell$ which is divisible by $s$, define
\begin{equation}
\label{p-def}
p(\ell) = \frac{\ell!}{(\ell/s)!\, (s!)^{\ell/s}}.
\end{equation}
Then 
\begin{equation} 
|\Omega(n,r,s)| = p(rn) = \frac{(rn)!}{(rn/s)!\, (s!)^{rn/s}}. \label{sizeomega}
\end{equation}
More generally, if $k$ parts in a partition have already been specified then
there are $p(rn-sk)$ elements of $\Omega(n,r,s)$ which contain the specified $k$ parts.

Every hypergraph in $\mathcal{S}(n,r,s)$
corresponds to precisely $(r!)^n$ partitions $F\in\Omega(n,r,s)$. (This is only true for simple hypergraphs.) 
Therefore
\begin{equation}
 |\mathcal{S}(n,r,s)| = \frac{|\Omega(n,r,s)|\, \Pr(\cF(n,r,s) \text{ is simple})}{(r!)^n},
\label{simpleratio}
\end{equation}
where $\mathcal{F}(n,r,s)$ denotes a random partition chosen uniformly from $\Omega(n,r,s)$. 
Observe also that
$\cG(n,r,s)$ has the same distribution as $G(\cF(n,r,s))$, conditioned on the event that
$\cF(n,r,s)$ is simple.

Now we explain how to translate asymptotic properties  for the random regular uniform hypergraphs from  the configuration model.
First, for any event $\mathcal{A}\subseteq \Omega(n,r,s)$, we can bound 
\begin{align}
  \Pr(\cF(n,r,s)\in \mathcal{A}\mid &\cF(n,r,s)  \text{ is simple}) 
  \leq \frac{\Pr(\cF(n,r,s)\in \mathcal{A})}{\Pr(\cF(n,r,s) \text{ is simple})}.
\label{conditioning}
\end{align}
Cooper et al. showed in \cite{cfmr} that when $r, s\geq 3$ are fixed,
\begin{equation*}
 \lim_{n \to \infty} \Pr(\cF(n,r,s) \text{ is simple}) = e^{-(r-1)(s-1)/2}.
\end{equation*} 
They also~remark in~\cite[Section 2]{cfmr}, the probability
that two parts in $\cF(n,r,s)$ give rise to a repeated edge is $o(1)$.
 The next lemma follows from these  bounds.

\begin{lemma}\label{translate}
Fix integers $r\geq 2$ and $s\geq 3$.  For any positive integer $n$ such that $s$ divides $rn$,
let
$\widehat{\cF}$ be a uniformly random partition in $\Omega(n,r,s)$ with no 1-cycles (loops) and let
$\cF_{{\mathcal{S}}}$ be a uniformly random simple partition in $\Omega(n,r,s)$.
Write $\cF=\mathcal{F}(n,r,s)$, for ease of notation.
Finally, let $h: \Omega(n,r,s) \rightarrow \mathbb{Z}$. 
Then as $n \rightarrow \infty$ along integers such that $s$ divides $rn$, the following
two properties hold.
\begin{itemize}	
\item[\emph{(a)}] If  \, $\Pr(h(\cF) \in A) = o(1)$ \, then  \,
  $\Pr(h(\cF_{\mathcal{S}}) \in A) =o(1)$  \, for any \, $A \subset \mathbb{Z}$.
\item[\emph{(b)}] $\Pr(h(\widehat{\cF}) \in A) - 	\Pr(h(\cF_{\mathcal{S}}) \in A)  = o(1)$  
for any  $A \subset \mathbb{Z}$. 
\end{itemize}
\end{lemma}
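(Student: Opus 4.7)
The plan is to derive both parts from the two facts just quoted from Cooper et al.~\cite{cfmr}: that $\Pr(\cF \text{ is simple}) \to e^{-(r-1)(s-1)/2}$, which is a positive constant, and that the probability that $\cF$ contains two parts giving rise to a repeated edge is $o(1)$. Throughout, I abbreviate $\cF = \cF(n,r,s)$ and let $L$ denote the event that $\cF$ contains a $1$-cycle (a loop) and $R$ the event that $\cF$ contains two parts which form the same edge; so $\{\cF\text{ simple}\} = \overline{L}\cap\overline{R}$, $\widehat{\cF}$ is $\cF$ conditioned on $\overline{L}$, and $\cF_{\mathcal{S}}$ is $\cF$ conditioned on $\overline{L}\cap\overline{R}$.

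For part (a), I would invoke (\ref{conditioning}) with $\mathcal{A} = h^{-1}(A)$ to obtain
\[
   \Pr(h(\cF_{\mathcal{S}}) \in A) \leq \frac{\Pr(h(\cF)\in A)}{\Pr(\cF\text{ is simple})} = \frac{o(1)}{e^{-(r-1)(s-1)/2} + o(1)} = o(1),
\]
using that the denominator is bounded below by a positive constant depending only on $r$ and $s$.

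For part (b), the idea is to compare the two conditionings directly. Since $\overline{L}\supseteq\overline{L}\cap\overline{R}$ and $\Pr(\overline{L})\geq \Pr(\cF\text{ simple}) = \Theta(1)$, I would write, for $\mathcal{A} = h^{-1}(A)$,
\[
   \Pr(\widehat{\cF}\in\mathcal{A}) = \Pr(\cF\in\mathcal{A}\mid\overline{L}) = \Pr(\cF\in\mathcal{A}\cap\overline{R}\mid\overline{L}) + \Pr(\cF\in\mathcal{A}\cap R\mid\overline{L}).
\]
The second term is at most $\Pr(R\mid\overline{L})\leq \Pr(R)/\Pr(\overline{L}) = o(1)/\Theta(1) = o(1)$ by the second fact from \cite{cfmr}. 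For the first term, conditioning further on $\overline{R}$ gives
\[
   \Pr(\cF\in\mathcal{A}\cap\overline{R}\mid\overline{L}) = \Pr(\cF\in\mathcal{A}\mid\overline{L}\cap\overline{R})\,\Pr(\overline{R}\mid\overline{L}) = \Pr(\cF_{\mathcal{S}}\in\mathcal{A})\,(1 - o(1)),
\]
where I again used $\Pr(R\mid\overline{L}) = o(1)$. Combining yields $\Pr(\widehat{\cF}\in\mathcal{A}) = \Pr(\cF_{\mathcal{S}}\in\mathcal{A}) + o(1)$, as required.

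Neither step involves real obstacles: everything is routine manipulation of conditional probabilities once one has (i) a positive constant lower bound on $\Pr(\cF\text{ simple})$ (and hence on $\Pr(\overline{L})$), and (ii) the $o(1)$ bound on the probability of a repeated edge. The only point worth being careful about is checking that these ingredients are truly what the excerpt attributes to \cite{cfmr}; both are stated explicitly just before the lemma.
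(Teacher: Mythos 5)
Your proposal is correct and follows essentially the same route as the paper: part (a) is the direct application of (\ref{conditioning}) together with the positive limit of $\Pr(\cF \text{ is simple})$, and part (b) is the same total-probability decomposition over the repeated-edge event, with the difference bounded by $\Pr(\mathcal{R}\mid \overline{L}) \leq \Pr(\cF\in\mathcal{R})/\Pr(\cF\text{ is simple}) = o(1)$.
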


\begin{proof}
Property (a) follows immediately from (\ref{conditioning}).
For (b), observe that $h(F)\in A$ if and only if $F\in h^{-1}(A)$, for any set
$A\subseteq \mathbb{Z}$ and any $F\in\Omega(n,r,s)$.  
Let $\mathcal{R}\subseteq \Omega(n,r,s)$ be the set of partitions which give
rise to hypergraphs with repeated edges. 
Now for any $B\subseteq \Omega(n,r,s)$,
\begin{align*}
 \big| \Pr(\widehat{\cF}\in B) - \Pr(\cF_{\mathcal{S}}\in B) \big|
 &= \big| \Pr(\widehat{\cF}\in B) - \Pr(\widehat{\cF}\in B\mid \widehat{\cF}\in \overline{\mathcal{R}}) \big| \\
 &= \Pr(\widehat{\cF}\in \mathcal{R})\,
      \big| \Pr(\widehat{\cF}\in B\mid \widehat{\cF}\in \mathcal{R}) - 
   \Pr(\widehat{\cF}\in B\mid \widehat{\cF}\in \overline{\mathcal{R}})\big|\\
 &\leq \Pr(\widehat{\cF}\in\mathcal{R})\\
 &\leq \frac{\Pr(\cF \in\mathcal{R})}{\Pr(\cF \text{ is simple})}\\
 &= o(1),
\end{align*}
completing the proof.
\end{proof}

\noindent Property (a) can be described as asymptotic absolute continuity of 
$h(\cF_{\mathcal{S}})$ with respect to
$h(\cF)$, and property (b) can be described as asymptotic equivalence in distribution of
$h(\widehat{\cF})$ and $h(\cF_{\mathcal{S}})$.

\subsection{Expected value in configuration model}\label{ss:expectation}

For $\ell=1,\ldots, s-1$, let $Y^{(\ell)}$ be the number of $\ell$-overlapping Hamilton cycles
in $\cF(n,r,s)$.   We now find an expression and an upper bound for the
expected value of $Y^{(\ell)}$.
% Comment #2
Since $r=r(n)$ may depend on $n$ in this section, interpret $\mathcal{I}_{(r,s)}^{(\ell)}$
to be the set of all positive integers $n$ such that $s\mid r(n)\, n$ and $(s-\ell)\mid n$.
When $r, s$ are both constant, the set $\mathcal{I}_{(r,s)}^{(\ell)}$ is guaranteed to be infinite.  
To cover the case of non-constant $r=r(n)$, we include this condition in the lemma statement.

\begin{lemma}
Let $\ell\in \{ 1,\ldots, s-1\}$, where $s\geq 3$ is fixed,
and let $r=r(n)$ be a function of $n$ which satisfies $r\geq 2$ and $r(s-\ell)\geq s$.
% Comment #2
Suppose that $\mathcal{I}^{(\ell)}_{(r,s)}$ contains infinitely many positive integers $n$.
Write $\ell = q(s-\ell) + c$
where $q,c$ are nonnegative integers and $c\in \{ 0,1,\ldots, s-\ell-1\}$.
Then as $n\to\infty$ along $\mathcal{I}^{(\ell)}_{(r,s)}$,
\begin{align*}
\E Y^{(\ell)}
&= \frac{n!}{2t_\ell}\, 
    \left(\frac{\left((r)_{q+1}\right)^{s-\ell}\, (r-q-1)^c \, (s)_{\ell+c} }
               {c!}\, \right)^{t_\ell} \,
           \frac{(rn/s)_{t_\ell} }{(rn)_{st_\ell}}. 
\end{align*}
When $\ell\geq 2$ and $r= o(n^{\ell-1})$ we have $\E Y^{(\ell)} \leq e^{-n}$.
\label{nr-l-overlap}
\end{lemma}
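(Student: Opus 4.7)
The plan is to compute $\E Y^{(\ell)}$ by linearity of expectation and then extract the asymptotic bounds from Stirling's approximation. Write $\E Y^{(\ell)} = N_\ell\cdot\pi$, where $N_\ell$ is the number of sub-partitions $S$ consisting of $t_\ell$ pairwise disjoint $s$-subsets of $\mathcal{B}$ such that $G(S)$ is an $\ell$-overlapping Hamilton cycle, and $\pi$ is the probability that a fixed such $S$ is a sub-partition of $\cF(n,r,s)$. The probability is immediate from~(\ref{sizeomega}): any partition containing $S$ is obtained by partitioning the remaining $rn-st_\ell$ points into $s$-subsets, so $\pi = p(rn-st_\ell)/p(rn) = (s!)^{t_\ell}(rn/s)_{t_\ell}/(rn)_{st_\ell}$.

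To count $N_\ell$ I use a double-count via \emph{ordered realisations}: pairs consisting of a bijection $(v_0,\ldots,v_{n-1})$ of $[n]$ together with, for each $(i,k)\in\{0,\ldots,t_\ell-1\}\times\{0,\ldots,s-1\}$, a point $p_{i,k}\in B_{v_{i(s-\ell)+k}}$, with the constraint that the points assigned to any common cell are pairwise distinct. A short calculation using $\ell=q(s-\ell)+c$ shows that a position $j$ lies in exactly $q+2$ of the cyclic edges defined in (\ref{edge-def}) when $j\bmod(s-\ell)\in\{0,\ldots,c-1\}$, and in exactly $q+1$ edges otherwise; hence for a fixed ordering the number of valid point-assignments equals $((r)_{q+1})^{(s-\ell)t_\ell}(r-q-1)^{ct_\ell}$, giving $n!$ times this as the number of ordered realisations. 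Each unordered $S$ arises from exactly $2t_\ell\cdot(c!(s-\ell-c)!)^{t_\ell}$ ordered realisations: the factor $2t_\ell$ accounts for cyclic rotation and reflection of the Hamilton cycle, while $(c!(s-\ell-c)!)^{t_\ell}$ arises because within each of the $t_\ell$ blocks of $s-\ell$ consecutive positions, the $c$ positions of one residue class (respectively the $s-\ell-c$ positions of the other class) lie in identical sets of cyclic edges, so any permutation within a class preserves $S$. Substituting and using $(s)_{\ell+c}/s!=1/(s-\ell-c)!$ yields the claimed closed form.

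For the asymptotic bounds I apply Stirling's formula to $n!$, $(rn/s)_{t_\ell}$ and $(rn)_{st_\ell}$. A direct simplification, using $\beta=r(s-\ell)-s$ for the sub-factorial arguments, shows that the coefficient of $n\log n$ in $\log\E Y^{(\ell)}$ equals $-(\ell-1)/(s-\ell)$. When $\ell\geq 2$ this is strictly negative, so $\E Y^{(\ell)}$ decays super-exponentially, yielding both $\E Y^{(\ell)}=o(1)$ and $\E Y^{(\ell)}\leq e^{-n}$ for $n$ large, irrespective of the sign of $\beta$. In the boundary case $\ell=1$ the $n\log n$ contributions cancel, and one tracks the linear-in-$n$ coefficient explicitly; the hypothesis $\beta>0$ is then invoked to conclude that this coefficient is sufficiently negative. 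The most delicate point is the identification of the symmetry group, and in particular the within-block factor $(c!(s-\ell-c)!)^{t_\ell}$, which requires a careful description of which sets of positions lie in identical collections of cyclic edges.
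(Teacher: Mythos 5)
Your proof of the exact formula is essentially the paper's argument in a slightly different guise. The paper chooses an ordering of the vertices ($n!$ ways), observes that within each difference set $S_i=e_i\setminus e_{i-1}$ there are $c$ vertices of degree $q+2$ and $s-\ell-c$ of degree $q+1$, counts the point-embeddings, multiplies by the probability $p(rn-st_\ell)/p(rn)$, and divides by the overcount $2t_\ell\,(c!(s-\ell-c)!)^{t_\ell}$; you do the same computation, phrased as a double count over ``ordered realisations.'' Your justification for the within-block factor (positions of the same residue class in a block lie in identical sets of cyclic edges, so permuting within a class fixes $S$) is actually more explicit than the paper, which simply asserts the factor. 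The algebra $(s)_{\ell+c}/s!=1/(s-\ell-c)!$ and the residue-class criterion for degree $q+2$ are both correct.

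There is, however, a genuine error in your handling of $\ell=1$: you claim that once the $n\log n$ contributions cancel, the hypothesis $\beta=r(s-\ell)-s>0$ forces the linear-in-$n$ coefficient of $\log\E Y^{(\ell)}$ to be negative. For $\ell=1$ one has $\beta=rs-r-s>0$ for \emph{every} $r\geq 2$, $s\geq 3$, yet the linear coefficient equals $L(r,s)/(s-1)$ (in the paper's notation for Lemma~\ref{rho-val}), which is \emph{positive} precisely when $r>\rho(s)$; indeed $\E Y^{(1)}\to\infty$ in that regime (Corollary~\ref{lem:looseHam}), and this is exactly what drives the paper's positive threshold result. Your claim as written is therefore false. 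To be fair, the lemma statement itself is loosely worded here: its ``$\E Y^{(\ell)}=o(1)$'' and ``$\E Y^{(\ell)}\le e^{-n}$ if $rs-r\ell-s>0$'' conclusions are only meant (and only used, in the proof of Theorem~\ref{new-theorem}) for $\ell\geq 2$, where the $n\log n$-coefficient $-(\ell-1)/(s-\ell)$ is strictly negative; and the paper's own displayed chain $=(O(r)/n^{\ell-1})^{t_\ell}\le e^{-n}$ likewise breaks for $\ell=1$. You would have done better to note explicitly that for $\ell=1$ only the exact and asymptotic formula is being asserted, not a bound. For $\ell\geq 2$ your $n\log n$-coefficient argument handles the degenerate case $\beta=0$ uniformly, which is a minor tidiness gain over the paper's separate treatment of $rs-r\ell-s=0$, though (like the paper) you are somewhat informal about how large $r=r(n)$ may grow before the super-exponential decay is overwhelmed.
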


\begin{proof}
Recall the notation $t_\ell = n/(s-\ell)$.
There are $n!$ ways to fix an ordering $v_0,\ldots, v_{n-1}$ of the vertices.  
This gives rise to an $\ell$-overlapping
Hamilton cycle $H$ with edges $e_0,\ldots, e_{t_\ell-1}$ defined by (\ref{edge-def}).    
For $j=0,\ldots, t_\ell-1$ define the set $S_j = e_j\setminus e_{j-1}$,
with index arithmetic performed cyclically (so $S_0 = e_0 \setminus e_{t_\ell-1}$).
The sets $S_0,\ldots, S_{t_{\ell}-1}$ partition $[n]$.

First, observe that
$s-\ell-c$ vertices in $S_i$ have degree $q+1$ in the Hamilton cycle $H$,
and the remaining $c$ vertices of $S_i$ have degree $q+2$ in $H$, 
for all $i\in \{ 0,1,\ldots, t_\ell-1\}$.
It follows that the chosen Hamilton cycle $H$ (as a hypergraph) corresponds to precisely
\[ 2t_\ell\, \left( c!\, (s-\ell-c)!\right)^{t_\ell}\]
orderings of the vertices.  

The number of ways to embed the edges of $H$ as parts in a partition $F$ is
\[ \left(\left((r)_{q+1}\right)^{s-\ell-c} \, \left((r)_{q+2}\right)^{c}\right)^{t_\ell}
   = \left( \left( (r)_{q+1}\right)^{s-\ell} (r-q-1)^c\right)^{t_\ell},\]
This completely specifies the $t_\ell$ parts of the partition which correspond to $H$.
Finally, we must multiply by
\[ \frac{p(rn-st_\ell)}{p(rn)}\]
for the probability that a randomly chosen partition contains these $t_\ell$ specified parts.
We have shown that
\begin{align}
\E Y^{(\ell)} &= \frac{n!}{2t_\ell\, \left(c!\, (s-\ell-c)!\right)^{t_\ell}}\,
    \left(\left((r)_{q+1}\right)^{s-\ell}\, (r-q-1)^c \right)^{t_\ell} \,
           \frac{p(rn-st_\ell)}{p(rn)} \nonumber \\
&= \frac{n!}{2t_\ell}\,
    \left(\frac{\left((r)_{q+1}\right)^{s-\ell}\, (r-q-1)^c \, s!}
 {c!\, (s-\ell-c)!}\, \right)^{t_\ell} \,
           \frac{(rn-s t_{\ell})!\, (rn/s)!\,  }{(rn/s - t_\ell)!\,  \,
                 (rn)!}, \label{pre-Stirling}
\end{align}
which proves the first statement of the lemma, after some cancellation. 
(Observe that $rn\geq st_{\ell}$, by assumption.)

For the remainder of the proof, suppose that $\ell \geq 2$ and $r=o(n^{\ell-1})$.
% Comment #3
Recall Stirling's inequalities 
\[ \sqrt{2\pi a} \, (a/e)^a \leq a!\leq e^{1/(12 a)}\, \sqrt{2\pi a} \, (a/e)^a\]
which hold for all positive integers $a$.  If $rs-r\ell-s > 0$ then applying Stirling's inequalities to
(\ref{pre-Stirling}) gives
\begin{align}
 &  \E Y^{(\ell)} \nonumber \\
 & \leq  e^{o(n)}\, \left(\frac{e^{\ell-1}\, ((r)_{q+1})^{s-\ell}\, (r-q-1)^c\, (s-1)_{c+\ell-1}\,
   (rs-r\ell-s)^{(s-1)(rs-r\ell-s)/s}}{n^{\ell-1}\, c!\, r^{r(s-1)(s-\ell)/s}\, (s-\ell)^{(s-1)(rs-r\ell-s)/s}}\right)^{t_\ell}.
\label{firstcase}
%\\ &= \left(\frac{O(r)}{n^{\ell-1}}\right)^{t_\ell}  
 %\leq e^{-n}, 
\end{align}
%proving the second statement of the lemma.  
When $rs-r\ell-s=0$ the factors $(rn-st_\ell)!$ and $(rn/s-t_\ell)!$ both equal~1,
and applying Stirling's inequalities to $n!$, $(rn)!$ and $(rn/s)!$  in (\ref{pre-Stirling}) leads to
\begin{align*}
 \E Y^{(\ell)} 
 &\leq  e^{o(n)}\, \left(\frac{e^{(rs-r-s)(s-\ell)/s}\, ((r)_{q+1})^{s-\ell}\, (r-q-1)^c\, s! }
    {n^{(rs-r-s)(s-\ell)/s}\, c!\, (s-\ell-c)!\, r^{r(s-1)(s-\ell)/s}\, 
      s^{r(s-\ell)/s}}\right)^{t_\ell}.
\end{align*}
But in this case $\ell-1=(rs-r-s)/r$ and $s-\ell=s/r$.  Substituting these identities into the above
expression shows that (\ref{firstcase}) holds in all cases, since $0^0=1$, and hence
\[ \E Y^{(\ell)} = \left(\frac{O(r)}{n^{\ell-1}}\right)^{t_\ell}. \]
This proves the second statement of the lemma.
%Note that this bound holds even when
%$rn-st_\ell = 0$, by our assumption that $0^0=1$.
\end{proof}

We are now in a position to prove Theorem~\ref{new-theorem}.

\begin{proof}[Proof of Theorem~\ref{new-theorem}]\
Recall from the statement of Theorem~\ref{new-theorem} 
that $\kappa=\kappa(s)$ equals 1 if $s\geq 4$, and equals $\nfrac{1}{2}$ when $s=3$.
Fix $\ell=2,\ldots, s-1$, where $s\geq 3$ is constant and
$r=r(n)$ may grow with $n$, such that $2\leq r = o(n^\kappa)$.
Dudek et al.~\cite[Theorem 1]{dfrs-enum} proved that when $r=o(n^{\kappa})$,
\[
  |\mathcal{S}(n,r,s)| 
          = \frac{(rn)!}{(rn/s)!\, (s!)^{rn/s}\, (r!)^n}\,
         \exp\Big( - \nfrac{1}{2}(r-1)(s-1) + O\big((r/n)^{1/2} + r^2/n\big)\, \Big).\]
Combining this with (\ref{sizeomega}) and (\ref{simpleratio}), 
we conclude that when $r=o(n^{\kappa})$,
\begin{align*}
 \Pr( \cF(n,r,s) \text{ is simple}) 
        &= \exp\big( - \nfrac{1}{2} (r-1)(s-1)(1 + o(1))\, \big) = \Omega(\exp( -\hat{c}_s\, r ))
\end{align*}
for some positive constant $\hat{c}_s$ (independent of $r$).
By (\ref{conditioning}), Lemma~\ref{nr-l-overlap} and Markov's Inequality, 
it follows that the probability
that $G\in\cG(n,r,s)$ contains an $\ell$-overlapping Hamilton cycle is bounded
above by 
\begin{align*} \frac{\E Y^{(\ell)}}{\Pr(\cF(n,r,s) \text{ is simple})} 
                  &=  O\left(\exp(\hat{c}_s\, r -n)\right) 
\end{align*}
which is $o(1)$ for any $r=o(n)$.  
This completes the proof, by (\ref{conditioning}). 
\end{proof}

We are particularly interested in loose Hamilton cycles ($\ell=1$) for fixed $r$.
Let $Y=Y^{(1)}$ denote the number of loose Hamilton cycles in $\cF(n,r,s)$.  

\begin{cor}
Let $r\geq 2$ and $s\geq 3$ be fixed integers and let $t=n/(s-1)$.
The expected value of $Y$ satisfies
\begin{align}
\E Y &=  \frac{n! }{2 t\, {((s-2)!)^t}}\,\cdot\, r^n (r-1)^t\,
\cdot \, \frac{p(rn-st)}{p(rn)} \label{EY-raw} \\
 &\sim \sqrt{\frac{\pi}{2n}}\, (s-1)\, \left(
    (r-1)(s-1)\, \left(\frac{rs-r-s}{rs-r}\right)^{(s-1)(rs-r-s)/s} \right)^{n/(s-1)}\nonum
\end{align}
as $n\to\infty$ along $\mathcal{I}_{(r,s)}$.
Furthermore, if $Y_{\cG}$ denotes the number of loose Hamilton cycles in $\cG(n,r,s)$ then
\[ \E Y_{\cG} \sim \exp\left( \frac{(s-1)(rs-s-2)}{2(rs-r-s)}\right)\, \E Y.\]
\label{lem:looseHam}
\end{cor}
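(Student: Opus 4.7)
The plan is to invoke Lemma \ref{nr-l-overlap} with $\ell=1$ and specialise the various Pochhammer symbols. Since $s\geq 3$, the unique representation $1 = q(s-1) + c$ with $0 \leq c \leq s-2$ forces $q=0$ and $c=1$. I would substitute these values throughout the closed-form expression for $\E Y^{(\ell)}$ given in the lemma.

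For the exact formula~(\ref{EY-raw}), the substitution gives $((r)_{q+1})^{s-\ell} = r^{s-1}$, $(r-q-1)^c = r-1$ and $(s)_{\ell+c}/c! = s(s-1)$. Using $n = (s-1)t$, the factor $(r^{s-1})^t$ collapses to $r^n$, and $(s(s-1))^t = (s!/(s-2)!)^t$ contributes the $1/((s-2)!)^t$ factor together with a $(s!)^t$ that I want to reabsorb. To convert the ratio $(rn/s)_t/(rn)_{st}$ appearing in the lemma into the form $p(rn-st)/p(rn)$ required by the corollary, I would expand the definition~(\ref{p-def}) and verify the identity
\[ \frac{p(rn-st)}{p(rn)} = \frac{(rn/s)_t\,(s!)^t}{(rn)_{st}}, \]
after which the surplus $(s!)^t$ factor cancels exactly against the one produced above. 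This yields (\ref{EY-raw}).

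For the asymptotic estimate, I would plug $\ell=1,\,q=0,\,c=1$ into formula~(\ref{nr-l-asymptotic}). The term $(e/n)^{(\ell-1)t_\ell}$ vanishes because its exponent is $0$, and $(s-\ell) = s-1$ contributes both the prefactor and part of the $t_\ell$-th power. The quantity inside the $(n/(s-1))$-th power becomes
\[ \frac{r^{s-1}(r-1)(s-1)(rs-r-s)^{(s-1)(rs-r-s)/s}}{r^{r(s-1)^2/s}\,(s-1)^{(s-1)(rs-r-s)/s}}. \]
The main algebraic step is to recognise $rs-r = r(s-1)$ and to verify the exponent identity
\[ \frac{r(s-1)^2}{s} - \frac{(s-1)(rs-r-s)}{s} = s-1, \]
which rewrites the $r$-powers as $r^{-(s-1)(rs-r-s)/s}$ after cancelling the $r^{s-1}$ in the numerator. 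Combined with the $(s-1)^{-(s-1)(rs-r-s)/s}$ in the denominator, this repackages the algebraic mess into the clean factor $\bigl((rs-r-s)/(rs-r)\bigr)^{(s-1)(rs-r-s)/s}$ stated in the corollary.

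Since the corollary is a direct specialisation of results already proved, there is no genuine obstacle: the entire proof is bookkeeping. The only step that requires a small amount of care is the Pochhammer-to-$p(\cdot)$ conversion in the exact expression, since it is easy to misplace an $(s!)^t$ factor, and the exponent identity above, which is what makes the clean quotient $(rs-r-s)/(rs-r)$ appear in the final asymptotic.
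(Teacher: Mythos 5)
Your proposal is correct and is exactly the route the paper takes: the authors derive Corollary~\ref{lem:looseHam} by setting $\ell=1$ (hence $q=0$, $c=1$) in Lemma~\ref{nr-l-overlap} and in (\ref{nr-l-asymptotic}). Your Pochhammer-to-$p(\cdot)$ conversion and the exponent identity $\frac{r(s-1)^2}{s} - \frac{(s-1)(rs-r-s)}{s} = s-1$ are both verified, so the bookkeeping goes through as you describe.
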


\begin{proof}
The first statement follows from substituting $\ell=1$ into 
Lemma~\ref{nr-l-overlap} and using Stirling's approximation.
The proof is completed by comparing this asymptotic expression with the asymptotic
expression for $\E Y_{\cG}$ given in~\cite[Corollary~3.2]{AG}.
\end{proof}

\medskip

In Lemma~\ref{rho-val} we characterise pairs $(r,s)$ for which
$\E Y$ tends to infinity, leading to the definition of the threshold
function $\rho(s)$.
Combining this result with (\ref{conditioning}), we obtain the negative part of
the threshold result Theorem~\ref{main2}, as explained in Section~\ref{s:proof-main2}.
In order to complete the proof of Theorem~\ref{main2}, 
we require more information about the asymptotic distribution of the number of
loose Hamilton cycles in $\cF(n,r,s)$.  This information is obtained using the 
small subgraph conditioning method.

\subsection{Small subgraph conditioning for hypergraphs}\label{ss:partitions}

The following statement of the small subgraph conditioning method is adapted 
from \cite[Theorem~1]{janson}. 
A similar theorem is given in \cite[Theorem 4.1]{wormald}.

\begin{thm}[\cite{janson}]\label{thm:janson}
Let $\lambda_k > 0$ and $\delta_k \ge -1$, $k = 1,2, \dots,$ be constants and 
suppose that for each $n$ there are random variables $X_{k,n}$, $k = 1,2, \dots$, and 
$Y_n$ (defined on the same probability space) such that $X_{k,n}$ is nonnegative integer 
valued and $\E Y_n \ne 0$ and furthermore the following conditions are satisfied:
\begin{enumerate}
\item[\emph{(A1)}] $X_{k,n} \overset{d}{\rightarrow}  Z_k$ as $n \to \infty$, 
jointly for all k, where $Z_k \sim \text{Po}(\lambda_k)$ are independent Poisson random 
variables;
\item[\emph{(A2)}] For any finite sequence $x_1, x_2, \dots, x_m$ of nonnegative integers,
\[ \disp \frac{\E(Y_n | X_{1,n} = x_1, X_{2,n} = x_2, \dots, X_{m,n} = x_m)}{\E Y_n} 
   \to \prod_{k=1}^m \left( 1 + \delta_k \right)^{x_k} e^{-\lambda_k \delta_k} 
  \,\, \text{ as } \,\,n \to \infty;\]
\item[\emph{(A3)}] $\disp \sum_{k\geq 1} \lambda_k \delta_k^2 < \infty$;
\item[\emph{(A4)}] $\disp \frac{\E( Y_n^2)}{(\E Y_n)^2} \to \exp\left(\sum_{k\geq 1} \lambda_k \delta_k^2 \right)$ as $n \to \infty$.
\end{enumerate}
Then
\begin{equation}
\label{Wdef}
 \disp \frac{Y_n}{\E Y_n}  \stackrel{d}{\longrightarrow} W = \prod_{k=1}^\infty \left(1 + \delta_k\right)^{Z_k} e^{-\lambda_k \delta_k}\,\, \text{ as } \,\,n \to \infty;
\end{equation}
moreover, this and the convergence (A1) hold jointly. 
The infinite product defining $W$ converges a.s. and in $L^2$, with 
\[ \E W = 1 \text{ and } 
  \E W^2 = \exp\left(\sum_{k\geq 1} \lambda_k \delta_k^2 \right) = 
    \lim_{n \to \infty} \frac{\E( Y_n^2)}{(\E Y_n)^2}.\]
Furthermore, if $\delta_k > -1$ for all $k$ then a.a.s.\ $Y_n > 0$.
\end{thm}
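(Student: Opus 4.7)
The plan is to follow the standard route for small subgraph conditioning, decomposing $W_n := Y_n/\E Y_n$ by projection onto the $\sigma$-algebra generated by the small-subgraph counts $X_{k,n}$, and then using hypothesis (A4) to show that the projection captures essentially all the second moment. I would first use (A1) and (A2) to pin down the joint limit of $(W_n, X_{1,n}, \ldots, X_{m,n})$ for each fixed $m$, and then use (A3) and (A4) to pass $m \to \infty$.

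More concretely, for fixed $m$ I would define the truncated conditional variable
\[
   W_{m,n} = \E\!\left( W_n \,\middle|\, X_{1,n}, \ldots, X_{m,n}\right),
\]
and an analogous limit variable $W_m = \prod_{k=1}^m (1+\delta_k)^{Z_k} e^{-\lambda_k\delta_k}$. Hypothesis (A2) applied to indicator events $\{X_{k,n} = x_k\}$ together with (A1) gives the joint convergence $(W_{m,n}, X_{1,n},\ldots, X_{m,n}) \stackrel{d}{\to} (W_m, Z_1, \ldots, Z_m)$. A direct calculation using independence of the $Z_k$ shows
\[
  \E W_m = 1, \qquad \E W_m^2 = \exp\!\left(\sum_{k=1}^{m} \lambda_k \delta_k^2\right),
\]
and one verifies convergence of the corresponding moments of $W_{m,n}$ (for fixed $m$ this is essentially a bounded-polynomial computation in $X_{1,n},\ldots, X_{m,n}$, justified using (A1), (A2) and uniform integrability).

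The core step, and the main obstacle, is controlling $W_n - W_{m,n}$ uniformly in $n$ as $m\to\infty$. By the $L^2$-projection identity,
\[
   \E\!\left[ (W_n - W_{m,n})^2 \right] = \E W_n^2 - \E W_{m,n}^2.
\]
By (A4) the left-hand factor $\E W_n^2$ converges to $\exp(\sum_{k\ge 1}\lambda_k\delta_k^2)$, which by (A3) is finite; and the second term tends to $\exp(\sum_{k\le m}\lambda_k\delta_k^2)$ first as $n\to\infty$ then as $m\to\infty$, giving the same limit. Hence $\limsup_{n\to\infty} \E(W_n - W_{m,n})^2 \to 0$ as $m\to\infty$. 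This uniform $L^2$ control lets me combine the joint convergence of $(W_{m,n}, X_{1,n},\ldots)$ with a standard approximation argument (Slutsky plus Cauchy sequence in $L^2$) to conclude that $W_n \stackrel{d}{\to} W$ jointly with the $X_{k,n}$, that the infinite product defining $W$ converges a.s.\ and in $L^2$, and that $\E W = 1$, $\E W^2 = \exp(\sum_k \lambda_k\delta_k^2)$.

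Finally, if $\delta_k > -1$ for all $k$ then every factor in the product for $W$ is strictly positive, so $W>0$ almost surely; combined with $W_n \stackrel{d}{\to} W$ and the Portmanteau theorem applied to the open set $(0,\infty)$, this gives $\Pr(Y_n > 0) = \Pr(W_n > 0) \to 1$, i.e.\ a.a.s.\ $Y_n > 0$. The essentially non-routine ingredient is the tail $L^2$ estimate above, which is why (A3) and (A4) are precisely the quantitative content that makes the method work beyond the second-moment method.
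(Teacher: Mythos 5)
Note first that the paper does not prove this result: Theorem~\ref{thm:janson} is quoted (with minor adaptation) from Janson~\cite{janson}, where it is Theorem~1, so there is no in-paper proof to compare against. Your sketch is a reasonable summary of Janson's original argument --- projecting $W_n=Y_n/\E Y_n$ onto the $\sigma$-algebra generated by $X_{1,n},\ldots,X_{m,n}$, using (A1) and (A2) to handle each fixed $m$, and using (A3), (A4) to control the tail via the Pythagorean identity $\E[(W_n-W_{m,n})^2]=\E W_n^2-\E W_{m,n}^2$ --- and the architecture is the same.

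Two of your assertions, however, are stronger than what you actually justify. First, you claim $\E W_{m,n}^2\to\E W_m^2$ as $n\to\infty$ for each fixed $m$, attributing it to a ``bounded-polynomial computation'' and ``uniform integrability''; but $W_{m,n}=\E(W_n\mid X_{1,n},\ldots,X_{m,n})$ is an unbounded function of the $X$'s, and the distributional convergence $W_{m,n}\stackrel{d}{\longrightarrow}W_m$ does not give second-moment convergence without a uniform-integrability argument that you have not supplied. Fortunately you never need equality: Fatou's lemma (after a Skorokhod coupling) gives $\liminf_n\E W_{m,n}^2\ge\E W_m^2=\exp\!\big(\sum_{k\le m}\lambda_k\delta_k^2\big)$, which together with the Pythagorean bound $\E W_{m,n}^2\le\E W_n^2$ and (A4) already yields $\limsup_n\E[(W_n-W_{m,n})^2]\le\exp\!\big(\sum_k\lambda_k\delta_k^2\big)-\exp\!\big(\sum_{k\le m}\lambda_k\delta_k^2\big)$, and this tends to $0$ as $m\to\infty$ by (A3). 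Second, ``every factor in the product for $W$ is strictly positive, so $W>0$ almost surely'' is not an argument: an infinite product of strictly positive factors can perfectly well converge to $0$. The correct route is to note that $(W_m)_{m\ge1}$ is a non-negative $L^2$-bounded martingale (whence the asserted a.s.\ and $L^2$ convergence with $\E W=1$), that when $1+\delta_k>0$ for all $k$ the event $\{W=0\}$ is a tail event of the independent sequence $(Z_k)$, and hence by Kolmogorov's zero--one law together with $\E W=1>0$ that $\Pr(W>0)=1$; only then does Portmanteau applied to $(0,\infty)$ give $\Pr(Y_n>0)\to1$.
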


Janson remarks in~\cite{janson} that in the asymptotics, the index set
$\mathbb{Z}^+$ may be replaced by any other countably-infinite set. The same is
true for the other results stated in this section.

We emphasise that for the remainder of the paper, $r\geq 3$ is a fixed integer.
Recall that $t = n/(s-1)$ is the number of edges in a loose Hamilton cycle.
We will apply Theorem~\ref{thm:janson} to the random variables defined as follows.
In order to distinguish our specific random variables from the general random variables
used in Theorem~\ref{thm:janson}, we do not include the subscript $n$ in our notation.
\begin{itemize}
\item  Let $Y$ be the number of subsets $F_H$ of $\cF(n,r,s)$ consisting
of $t$ parts such that $G(F_H)$ is a loose Hamilton cycle. 
\item For $k\geq 2$ let $X_k$ be the number of subsets $F_C$ of $\cF(n,r,s)$
consisting of $k$ parts such that $G(F_C)$ is a loose $k$-cycle. 
\item Let $X_1$ be the number of parts $U$ in $\cF(n,r,s)$ such that $U$ gives rise
to an edge which contains a repeated vertex. 
That is, $X_1$ is the number of parts $U$ in $\cF(n,r,s)$ such that
$|U\cap B_j| > 1$ for some $j\in [n]$.  
\end{itemize}
Note that $X_1$ counts parts which correspond to 1-cycles, not just
loose 1-cycles.
We define $X_1$ in this way so that $X_1=0$ if and only if 
no edge of $G(F)$ contains a repeated vertex.
(Our definition of $X_1$ agrees with that used in~\cite{cfmr}.)

Cooper et al. proved in~\cite[Section 5]{cfmr} that $X_k\rightarrow Z_k$ 
as $n\to\infty$, jointly
for $k\geq 1$, where $Z_k\sim \operatorname{Po}(\lambda_k)$ are asymptotically 
independent Poisson random variables with mean 
\begin{equation}
\label{lambdak}
\lambda_k = \frac{\left((r-1)\,(s-1)\right)^k}{2k}.
\end{equation}
This verifies that (A1) of Theorem~\ref{thm:janson} holds.
In fact, Cooper et al.~\cite{cfmr} worked with the random variable $X_k'$ for $k\geq 1$
which counts the number of $k$-cycles (not necessarily loose). 
Note that $X_1'=X_1$.
Calculations from~\cite[Section 5]{cfmr} show that $X_k'\stackrel{d}{\sim} X_k$
jointly for $k\geq 1$, since a.a.s.\ the contribution to $X_k$ from non-loose 
$k$-cycles forms only a negligible fraction of $X_k$.
Here we write $A_n \stackrel{d}{\sim} B_n$ to mean that two sequences
of random variables $(A_n)$ and $(B_n)$ have the same asymptotic distribution,
recalling that both $X_k$ and $X_k'$ depend on $n$.
Hence Theorem~\ref{thm:janson} (A1) holds with $\lambda_k$ as in (\ref{lambdak}).

\bigskip

In order to establish (A2) of Theorem~\ref{thm:janson},
the following result (for general random variables) is convenient.

\begin{lemma}[{\cite[Lemma 1]{janson}}]
\label{lemma:janson}
Let $\lambda_k' \ge 0$, $k=1,2,\dots$, be constants. Suppose that \emph{(A1)} holds, that $Y_n \ge 0$ and that
\begin{enumerate}
	\item[\emph{(A2${}^\prime$)}] for every finite sequence $x_1, x_2, \dots, x_m$ of nonnegative integers
	$$\disp \frac{\E\left(Y_n (X_{1,n})_{x_1}( X_{2,n})_{x_2} \dots (X_{m,n})_{x_m}\right))}{\E Y_n} \to 
  \prod_{k=1}^m \left( \lambda_k'\right)^{x_k} \text{ as } n \to \infty.$$
\end{enumerate}
Then \emph{(A2)} holds with $\lambda_k(1 + \delta_k) = \lambda_k'$ for all $k\geq 1$.
\end{lemma}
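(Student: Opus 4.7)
The plan is to invert the factorial-moment hypothesis (A2') into a statement about conditional expectations by means of an inclusion--exclusion identity. The starting point is the elementary identity
\[
\mathbf{1}[X = x] \;=\; \sum_{j \geq x} \frac{(-1)^{j-x}}{(j-x)!\, x!}\, (X)_j,
\]
valid for any nonnegative integer valued random variable $X$; this follows from $\binom{X}{x}\mathbf{1}[X=x] = \binom{X}{x}\sum_{\ell\geq 0}(-1)^\ell\binom{X-x}{\ell}$ after reindexing $j=\ell+x$. Multiplying this identity over $k=1,\ldots,m$ expresses $\mathbf{1}[X_{1,n}=x_1,\ldots,X_{m,n}=x_m]$ as a formal $m$-fold alternating series in the joint falling-factorial products $\prod_{k=1}^m (X_{k,n})_{j_k}$.

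Next I would multiply by $Y_n$, take expectation, and divide by $\E Y_n$. Each term in the resulting series is exactly one of the quantities appearing in (A2'), so it individually converges to $\prod_{k=1}^m(\lambda_k')^{j_k}$ weighted by the combinatorial coefficient. Granting for the moment that the limit can be exchanged with the infinite summation, the inner sums collapse via $\sum_{j\geq x}(-1)^{j-x}\mu^{j-x}/(j-x)! = e^{-\mu}$, giving
\[
\frac{\E\bigl(Y_n\,\mathbf{1}[X_{k,n}=x_k \text{ for } k=1,\ldots,m]\bigr)}{\E Y_n} \;\longrightarrow\; \prod_{k=1}^m \frac{(\lambda_k')^{x_k}}{x_k!}\,e^{-\lambda_k'}.
\]
By (A1), the denominator satisfies $\Pr(X_{1,n}=x_1,\ldots,X_{m,n}=x_m)\to \prod_k \lambda_k^{x_k} e^{-\lambda_k}/x_k!$, so dividing yields
\[
\frac{\E(Y_n\mid X_{1,n}=x_1,\ldots,X_{m,n}=x_m)}{\E Y_n} \;\longrightarrow\; \prod_{k=1}^m \left(\frac{\lambda_k'}{\lambda_k}\right)^{x_k} e^{\lambda_k - \lambda_k'},
\]
which under $\lambda_k(1+\delta_k)=\lambda_k'$ is exactly the right-hand side of (A2).

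The main obstacle is justifying the interchange of limit and infinite summation, because the series is alternating and not term-wise dominated by anything absolutely summable independently of $n$. I would handle this by truncating each $j_k$-sum at a common large index $N$: on the truncation, (A2') applies to a finite linear combination and delivers convergence, while the tail needs to be controlled uniformly in $n$. Here the hypothesis $Y_n \geq 0$ is essential, since it lets one bound the absolute value of each tail term by $\prod_k \frac{1}{(j_k-x_k)!\,x_k!}\cdot \E\bigl(Y_n \prod_k (X_{k,n})_{j_k}\bigr)/\E Y_n$; an appropriate uniform-in-$n$ estimate, derived from (A2') applied to the single factor $\prod_k(X_{k,n})_{j_k}$ together with the factorial decay from the denominators, forces the tail to vanish as $N\to\infty$. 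This truncation and uniform tail control is the technical heart of the argument.
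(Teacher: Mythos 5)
The paper does not actually prove this lemma: it is quoted from Janson \cite[Lemma 1]{janson}, where it is obtained by the method of moments — (A2$'$) says that under the size-biased measure $\mathrm{d}\tilde P_n=(Y_n/\E Y_n)\,\mathrm{d}P_n$ the joint factorial moments of $(X_{1,n},\dots,X_{m,n})$ converge to those of independent Poisson variables with means $\lambda_k'$, hence $\tilde P_n(X_{1,n}=x_1,\dots,X_{m,n}=x_m)\to\prod_k e^{-\lambda_k'}(\lambda_k')^{x_k}/x_k!$, and dividing by the probabilities supplied by (A1) gives (A2). Your plan is a direct proof of exactly this moment inversion, and the algebra at the end (including the identification $\lambda_k(1+\delta_k)=\lambda_k'$, so that $(\lambda_k'/\lambda_k)^{x_k}e^{\lambda_k-\lambda_k'}=(1+\delta_k)^{x_k}e^{-\lambda_k\delta_k}$) is correct.

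There is, however, a genuine gap at precisely the step you call the technical heart. From (A2$'$) you get, for each \emph{fixed} multi-index $(j_1,\dots,j_m)$, convergence and hence boundedness in $n$ of $\E\bigl(Y_n\prod_k(X_{k,n})_{j_k}\bigr)/\E Y_n$; but the $n$ beyond which this ratio is close to $\prod_k(\lambda_k')^{j_k}$ depends on the multi-index, and no bound that is simultaneously valid for all $n$ and summable over the infinitely many multi-indices in the tail follows from (A2$'$). For fixed $n$ the tilted factorial moments can be enormous in $j$ (of order $n^{j}$ in the applications at hand), so the factorial decay $1/(j_k-x_k)!$ does not provide a dominating summable bound uniform in $n$; a dominated-convergence style interchange therefore cannot be justified from the stated hypotheses. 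The standard repair avoids domination entirely: since $Y_n\ge 0$ and $(X)_j\ge 0$ for nonnegative integer $X$, the truncated alternating sums in your inclusion--exclusion identity sandwich the indicator pointwise (Bonferroni), so multiplying by $Y_n$ and taking expectations yields two-sided bounds involving only finitely many factorial moments; one lets $n\to\infty$ at fixed truncation depth using (A2$'$), then lets the depth tend to infinity using convergence of $\sum_{\ell\ge 0}(-\lambda_k')^{\ell}/\ell!$, treating the coordinates one at a time in the multivariate case. Equivalently, one can invoke the standard fact that convergence of joint factorial moments to those of independent Poissons implies convergence in distribution (the Poisson law being determined by its moments). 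As written, your tail-control step would fail, so the proof is incomplete until it is replaced by such an argument.
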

The arguments of
Section~\ref{s:framework} and Section~\ref{s:shortcycles} can be extended
to show that, for fixed integers $r,s\geq 2$,
\begin{equation}
\label{omit}
  \frac{\E\left(Y (X_{1})_{x_1}( X_{2})_{x_2} \dots (X_{m})_{x_m}\right))}{\E Y} = (1+o(1))
   \prod_{k=1}^m \left(\frac{\E( Y X_{k})}{\E Y}\right)^{x_k},
\end{equation} 
similarly to the case of Hamilton cycles in cubic graphs~\cite[equation (2.8)]{RW92}.  
Roughly, (\ref{omit}) holds because fixing a constant-length cycle has asymptotically negligible
effect on the number of ways to choose subsequent cycles, and noting that overlapping
cycles also give negligible relative contribution.
For completeness, 
the proof of (\ref{omit}) is given in Appendix~\ref{SomeRoutineUsuallyIgnored};
see Lemma~\ref{more-than-one-cycle}.

The distribution of the number of loose Hamilton cycles in
$\mathcal{G}(n,r,s)$  is asymptotically equivalent to the conditional distribution of the random variable $Y$  on the event that $\cF(n,r,s)$ has no 1-cycles (loops), as described  in Lemma \ref{translate}(b). Therefore, the following 
(general) corollary of Theorem~\ref{thm:janson} will be very useful for us (namely, in the proof of Theorem~\ref{distribution}).

\begin{cor}
Suppose that $Y_n$ and $X_{k, n}$ satisfy conditions \emph{(A1)--(A4)} of Theorem~\emph{\ref{thm:janson}}.
Let $\widehat{Y}_n$ be the random variable obtained from $Y_n$ by conditioning
on the event $X_{1,n}=0$.  
Then
\[ \frac{\widehat{Y}_n}{\E Y_n} \stackrel{d}{\longrightarrow } 
e^{-\lambda_1\delta_1}\,
  \prod_{k=2}^\infty \left(1 + \delta_k\right)^{Z_k} e^{-\lambda_k \delta_k}\,\, 
                     \text{ as } \,\,n \to \infty.
\]
Moreover, if $\delta_k > -1$ for all $k\geq 1$ then a.a.s.\ $\widehat{Y}_n > 0$.
\label{W-conditional}
\end{cor}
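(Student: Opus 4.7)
The plan is to derive the corollary directly from Theorem~\ref{thm:janson} by exploiting the fact that the convergence in (\ref{Wdef}) is joint with condition (A1), and then restricting to the event $\{X_{1,n}=0\}$. Specifically, Theorem~\ref{thm:janson} delivers
\[
\bigl(Y_n/\E Y_n,\, X_{1,n},\, X_{2,n},\, \ldots\bigr) \stackrel{d}{\longrightarrow} \bigl(W,\, Z_1,\, Z_2,\, \ldots\bigr),
\]
where $W$ is defined by (\ref{Wdef}) and the $Z_k$ are independent with $Z_k\sim\mathrm{Po}(\lambda_k)$. I would first record that $X_{1,n}\stackrel{d}{\to} Z_1$ implies $\Pr(X_{1,n}=0)\to e^{-\lambda_1}>0$, so the conditioning event has nonvanishing probability in the limit.

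The first substantive step is to upgrade the joint weak convergence to
\[
\E\bigl[f(Y_n/\E Y_n)\,\mathbf{1}_{X_{1,n}=0}\bigr] \longrightarrow \E\bigl[f(W)\,\mathbf{1}_{Z_1=0}\bigr]
\]
for every bounded continuous $f:\mathbb{R}\to\mathbb{R}$. Since $X_{1,n}$ and $Z_1$ are both integer valued, the indicator $\mathbf{1}_{\cdot=0}$ can be sandwiched between two bounded continuous bump functions on $\mathbb{R}$ which agree on the integers, so this reduces to the portmanteau theorem applied to the joint distribution. Dividing through by $\Pr(X_{1,n}=0)$ and using the positive limit yields
\[
\widehat{Y}_n/\E Y_n \stackrel{d}{\longrightarrow} \bigl[\, W \,\big|\, Z_1 = 0\bigr].
\]

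To identify this conditional law, I would invoke the mutual independence of the $Z_k$: conditioning on $Z_1=0$ leaves the joint distribution of $(Z_2, Z_3, \ldots)$ unchanged, while substituting $Z_1=0$ into (\ref{Wdef}) replaces the first factor by $(1+\delta_1)^0 e^{-\lambda_1\delta_1}=e^{-\lambda_1\delta_1}$ (using $0^0=1$). This gives precisely the claimed limit. For the positivity conclusion, observe that if $\delta_k>-1$ for every $k\geq 1$ then every factor of the limit product is strictly positive, so the limit random variable is a.s.\ positive. A standard portmanteau argument then forces $\Pr(\widehat{Y}_n=0)\to 0$: indeed $\{\widehat{Y}_n=0\}\subseteq\{\widehat{Y}_n/\E Y_n\leq \epsilon\}$ for any $\epsilon>0$, and weak convergence gives $\limsup_n\Pr(\widehat{Y}_n/\E Y_n\leq\epsilon)\leq \Pr([W\mid Z_1=0]\leq\epsilon)$, which tends to $0$ as $\epsilon\to 0$.

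The only real technical hurdle is the interchange of conditioning with the limit in the first displayed convergence, but this is routine precisely because $\{Z_1=0\}$ carries positive mass in the limit; once that hurdle is cleared, the rest is bookkeeping with the explicit product formula (\ref{Wdef}).
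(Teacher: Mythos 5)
Your proof of the distributional convergence is correct and follows essentially the same route as the paper: both arguments rest on the joint convergence of $(Y_n/\E Y_n, X_{1,n})$ to $(W,Z_1)$ guaranteed by Theorem~\ref{thm:janson}, on $\Pr(X_{1,n}=0)\to e^{-\lambda_1}>0$, and on identifying the law of $W$ conditioned on $Z_1=0$ by independence of the $Z_k$ together with the convention $0^0=1$. Your implementation via bounded continuous test functions, using that $\mathbf{1}_{X_{1,n}=0}=g(X_{1,n})$ for a continuous bump $g$ supported in $(-\tfrac12,\tfrac12)$, is just a repackaging of the paper's continuity-set argument with $\mathcal{E}\times(-\tfrac12,\tfrac12)$; nothing is gained or lost there.

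The second assertion, however, contains a genuine gap. You claim that since $\delta_k>-1$ makes every factor of $\widehat{W}=e^{-\lambda_1\delta_1}\prod_{k\geq 2}(1+\delta_k)^{Z_k}e^{-\lambda_k\delta_k}$ strictly positive, the limit random variable is a.s.\ positive. That inference is invalid for infinite products: the partial products are positive but may converge to $0$, and the statement of Theorem~\ref{thm:janson} used in this paper only asserts that the product in (\ref{Wdef}) converges a.s.\ and in $L^2$ with $\E W=1$ and $\E W^2<\infty$, together with the separate conclusion that a.a.s.\ $Y_n>0$; it does not assert $W>0$ a.s. Proving $\Pr(\widehat{W}=0)=0$ would require an additional argument (for instance, that under (A3) and $\delta_k>-1$ the series $\sum_k\bigl(Z_k\ln(1+\delta_k)-\lambda_k\delta_k\bigr)$ converges a.s., so $W$ is the exponential of an a.s.\ finite quantity, as in Janson's original paper), which you neither prove nor cite. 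Your final portmanteau step is fine once $\Pr(\widehat{W}=0)=0$ is known, but as written the positivity claim is unproven. The paper avoids the issue entirely: from the last conclusion of Theorem~\ref{thm:janson}, a.a.s.\ $Y_n>0$ when $\delta_k>-1$ for all $k$, and then $\Pr(\widehat{Y}_n=0)\leq \Pr(Y_n=0)/\Pr(X_{1,n}=0)=o(1)$ since $\Pr(X_{1,n}=0)$ is bounded away from $0$. Replacing your last step by this ratio bound closes the gap without appealing to any unproved property of $W$.
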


\begin{proof}
Let $\widehat{W}$ be the random variable obtained from $W$ by conditioning on the 
event that $Z_1 = 0$.
Observe that
\[ \widehat{W} = e^{-\lambda_1\delta_1}\,\prod_{k=2}^\infty \left(1 + \delta_k\right)^{Z_k} e^{-\lambda_k \delta_k}.
\]
For any continuity set   $\mathcal{E}\subset \mathbb{R}$ of $\widehat{W}$,  
we have
 \begin{equation}
\label{no-Ham}
 \Pr\left(\frac{\widehat{Y}_n}{\E Y_n}\in \mathcal{E}\right) = \frac{\Pr\big((\frac{Y_n}{\E Y_n}\in \mathcal{E})\wedge (X_{1,n}=0)\big)}
             {\Pr(X_{1,n}=0)}
              = 
             \frac{\Pr\big( (\frac{Y_n}{\E Y_n}, X_{1,n}) \in \mathcal{E}'\big)}
             {\Pr(X_{1,n}=0)},
\end{equation}
where $\mathcal{E}' = \mathcal{E}\times (-\nfrac{1}{2},\nfrac{1}{2})$. 
Note that  $\Pr(X_{1,n}=0)$ tends to $e^{-\lambda_1}>0$ and that
$\mathcal{E}'$  is a continuity set for the random vector $(W, Z_1)$.
By Theorem~\ref{thm:janson}, the convergence of (\ref{Wdef}) 
and the convergence of (A1) holds jointly. Therefore, as $n\rightarrow \infty$
\[
             \frac{\Pr\big( (\frac{Y_n}{\E Y_n}, X_{1,n}) \in \mathcal{E}'\big)}
             {\Pr(X_{1,n}=0)} \longrightarrow  
             \frac{ \Pr\big((W, Z_1) \in \mathcal{E}'\big)}{\Pr(Z_{1}=0)} 
       =  \Pr(\widehat{W}\in \mathcal{E}),
\]	
proving the first statement.

Next, suppose that $\delta_k > -1$ for all $k\geq 1$.
Then a.a.s.\ $Y_n > 0$, by the final statement of Theorem~\ref{thm:janson}.
Applying (\ref{no-Ham}) to  $\mathcal{E} = \{ 0\}$, we find that  
\[ 
\Pr(\widehat{Y}_n=0) \leq \frac{\Pr(Y_n=0)} {\Pr(X_{1,n}=0)} = o(1),
\]
 completing the proof.
\end{proof}

\subsection{Structure of the rest of the paper}\label{s:structure}

We assume that $r,s\geq 3$, since the results for $s=2$ were proved by
Frieze et al.~\cite{fjmrw} and Janson~\cite{janson}.
It remains to investigate the second moment of $Y$ and the interaction of
$Y$ with short cycles.

Section~\ref{s:preliminary} contains some terminology and preliminary results,
and describes a common framework which we will use for the calculations in
the following two sections.  

In Section~\ref{s:shortcycles} we calculate $\E(Y X_k)/\E Y$, 
using a generating function to assist in our calculations.
For each $k\geq 1$ this determines the value of $\delta_k$ such that $\E(Y X_k)/\E Y$ tends to
$\lambda_k(1+\delta_k)$, where $\lambda_k$ is defined in (\ref{lambdak}).
Standard arguments imply that condition~(A2) of Theorem~\ref{thm:janson} also holds. 

The remainder of the paper is devoted to
completing the small subgraph conditioning argument to prove Theorem~\ref{main2}.
In Section~\ref{s:consequences} we calculate $\sum_{k=1}^\infty \lambda_k \delta_k^2$,
proving that assumption (A3) of Theorem~\ref{thm:janson} holds.
Section~\ref{s:variance} contains the analysis of the second moment $\E(Y^2)$.
Here we use Laplace summation to find an asymptotic expression for the second
moment, proving that (A4) of Theorem~\ref{thm:janson} holds. This involves proving 
that a certain 4-variable real function has a unique
maximum in a certain bounded convex domain.  
The proof of Theorem~\ref{main2} is completed in Section~\ref{s:proof-main2}.

There are two appendices: the optimisation argument required for Section~\ref{s:variance}
is performed in Appendix~\ref{s:maximum}, and the deferred proof of (\ref{omit})
is presented in Appendix~\ref{SomeRoutineUsuallyIgnored}.

\section{Terminology and common framework}\label{s:preliminary} 

For the small subgraph conditioning method, we need to calculate
the second moment of $Y$ and establish condition (A2) of Theorem~\ref{thm:janson}.
We now describe a common framework which we will use for these
calculations, which will be completed in Sections~\ref{s:shortcycles} 
and~\ref{s:variance}.

Suppose that $F_C$, $F_H$ are
both subpartitions of some partition in $\Omega(n,r,s)$,
such that $G(F_H)$ is a loose Hamilton cycle and
$G(F_C)$ is a loose $k$-cycle. 
In particular, $|F_H|=t = n/(s-1)$ and $|F_C|=k$.  Write $H$ for $G(F_H)$ and
write $C$ for $G(F_C)$.  
We will be particularly interested in two extreme cases, namely,
when $k$ is constant or when $k=t$. (In the latter case, $C$ is also a
loose Hamilton cycle.)
In order to describe the common framework we will use for our calculations
in these cases, we need some terminology.  We will use
Figure~\ref{f:Cexternal-cases}
as a running example: it shows a 12-cycle $C$ in a 5-uniform hypergraph,
and some edges of a Hamilton cycle $H$.
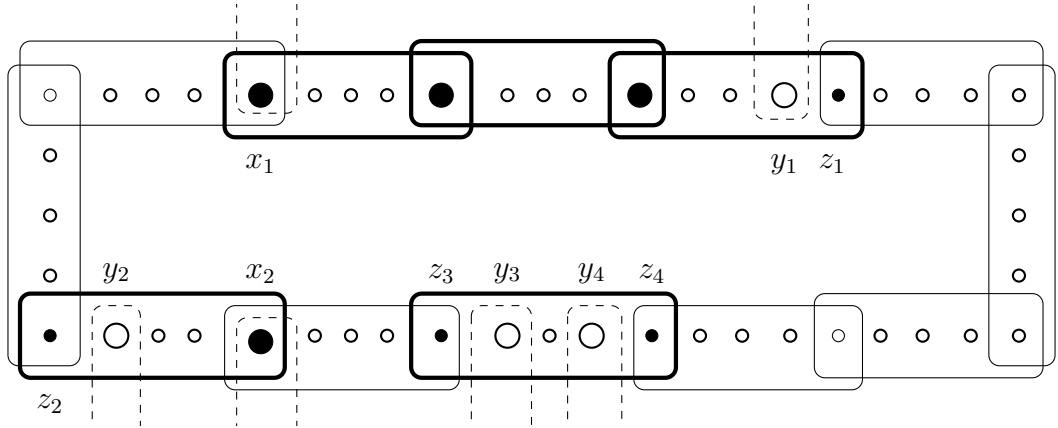
\begin{figure}[ht!]
\begin{center}
\begin{tikzpicture}[scale=0.8]
\begin{scope}
\clip (-1.8, 1.5) rectangle (17.0, 8.5);
%%%%%%%%%%%%%%%%%  Edges in the intersection  %%%%%%%%%%%%%%%%%%
\draw [ultra thick, rounded corners] (2.4,6.3) rectangle (6.5,7.7);
\draw [ultra thick, rounded corners] (5.5,6.5) rectangle (9.7,7.9);
\draw [ultra thick, rounded corners] (8.8, 6.3) rectangle (13.0,7.7);
\draw [ultra thick, rounded corners] (5.5,2.3) rectangle (9.9,3.7);
\draw [ultra thick, rounded corners] (-1.0,2.3) rectangle (3.4,3.7);
%%%%%%%%%%%%%%%%%  Edges in E(H) but not E(C) %%%%%%%%%%%%%%%
\draw [dashed, rounded corners] (2.6,6.7) rectangle (3.6,9);
\draw [dashed, rounded corners] (11.2,6.6) rectangle (12.1,9);
\draw [dashed, rounded corners] (0.2,3.5) rectangle (1.0,1.0);
\draw [dashed, rounded corners] (2.6,3.3) rectangle (3.6,1.0);
\draw [dashed, rounded corners] (6.5,3.5) rectangle (7.5,1.0);
\draw [dashed, rounded corners] (8.1,3.5) rectangle (9.0,1.0);
%%%%%%%%%%%%%%%%%  Edges in E(C) but not E(H) %%%%%%%%%%%%%%%
\draw [rounded corners] (-1.0, 6.5) rectangle (3.4,7.9);
\draw [rounded corners] (12.3, 6.5) rectangle (16.0,7.9);
\draw [rounded corners] (2.4, 2.1) rectangle (6.3,3.5);
\draw [rounded corners] (9.2, 2.1) rectangle (13.0,3.5);
\draw [rounded corners] (15.1, 2.5) rectangle (16.2,7.5);
\draw [rounded corners] (-1.2, 2.5) rectangle (0.0,7.5);
\draw [rounded corners] (12.2, 2.3) rectangle (16.0,3.7);
%%%%%%%%%%%%%%%%  Interesting vertices and their names %%%%%%%%%%%%%%%%%
\draw [fill] (3.0,2.9) circle (0.2);  % H-ext and C-ext, x2
\node [above] at (3.0,3.7) {$x_2$};
\draw [fill] (3.0,7.0) circle (0.2);  % H-ext and C-ext, x1
\node [below] at (3.0,6.2) {$x_1$};
\draw [fill] (6.0,7.0) circle (0.2);  % H-ext and C-ext, no name
\draw [fill] (9.3,7.0) circle (0.2);  % H-int and C-ext, no name
\draw [thick] (11.7,7.0) circle (0.2);  % H-ext and C-int, y1
\node [below] at (11.7,6.2) {$y_1$};
\draw [thick] (0.6,3.0) circle (0.2);  % H-ext and C-int, y2
\node [above] at (0.6,3.7) {$y_2$};
\draw [thick] (7.1,3.0) circle (0.2);  % H-ext and C-int, y3
\node [above] at (7.1,3.7) {$y_3$};
\draw [thick] (8.5,3.0) circle (0.2);  % H-ext and C-int, y4
\node [above] at (8.5,3.7) {$y_4$};
\draw [fill] (12.6,7.0) circle (0.1);  % H-int and C-ext, z1
\node [below] at (12.5,6.2) {$z_1$};
\draw [] (-0.5,7.0) circle (0.1);  % C-ext, no name
\draw [fill] (-0.5,3.0) circle (0.1);  % H-int and C-ext, z2
\node [below] at (-0.5,2.2) {$z_2$};
\draw [fill] (6.0,3.0) circle (0.1);  % H-int and C-ext, z3
\node [above] at (6.0,3.7) {$z_3$};
\draw [fill] (9.5,3.0) circle (0.1);  % H-int and C-ext, z4
\node [above] at (9.5,3.7) {$z_4$};
\draw [] (12.6,3.0) circle (0.1);  % C-ext, no name
%%%%%%%%%%%%%%  Small vertices, top row  %%%%%%%%%%%%%%%%%%
\draw [thick] (0.5,7.0) circle (0.1);  % H-int and C-int, no name
\draw [thick] (1.2,7.0) circle (0.1);  % H-int and C-int, no name
\draw [thick] (1.9,7.0) circle (0.1);  % H-int and C-int, no name
\draw [thick] (3.9,7.0) circle (0.1);  % H-int and C-int, no name
\draw [thick] (4.5,7.0) circle (0.1);  % H-int and C-int, no name
\draw [thick] (5.1,7.0) circle (0.1);  % H-int and C-int, no name
\draw [thick] (7.1,7.0) circle (0.1);  % H-int and C-int, no name
\draw [thick] (7.7,7.0) circle (0.1);  % H-int and C-int, no name
\draw [thick] (8.3,7.0) circle (0.1);  % H-int and C-int, no name
\draw [thick] (10.1,7.0) circle (0.1);  % H-int and C-int, no name
\draw [thick] (10.8,7.0) circle (0.1);  % H-int and C-int, no name
\draw [thick] (13.3,7.0) circle (0.1);  % H-int and C-int, no name
\draw [thick] (14.0,7.0) circle (0.1);  % H-int and C-int, no name
\draw [thick] (14.8,7.0) circle (0.1);  % H-int and C-int, no name
\draw [thick] (15.6,7) circle (0.1);  % H-int and C-int, no name
%%%%%%%%%%%%%%  Small vertices, bottom row  %%%%%%%%%%%%%%%%%%
\draw [thick] (1.3,3.0) circle (0.1);  % H-int and C-int, no name
\draw [thick] (1.9,3.0) circle (0.1);  % H-int and C-int, no name
\draw [thick] (3.9,3.0) circle (0.1);  % H-int and C-int, no name
\draw [thick] (4.5,3.0) circle (0.1);  % H-int and C-int, no name
\draw [thick] (5.1,3.0) circle (0.1);  % H-int and C-int, no name
\draw [thick] (7.8,3.0) circle (0.1);  % H-int and C-int, no name
\draw [thick] (10.3,3.0) circle (0.1);  % H-int and C-int, no name
\draw [thick] (11.0,3.0) circle (0.1);  % H-int and C-int, no name
\draw [thick] (11.8,3.0) circle (0.1);  % H-int and C-int, no name
\draw [thick] (13.3,3.0) circle (0.1);  % H-int and C-int, no name
\draw [thick] (14.0,3.0) circle (0.1);  % H-int and C-int, no name
\draw [thick] (14.8,3.0) circle (0.1);  % H-int and C-int, no name
\draw [thick] (15.6,3) circle (0.1);  % H-int and C-int, no name
%%%%%%%%%%%%%%% Small vertices, left side
\draw [thick] (-0.5,4.0) circle (0.1);  % H-int and C-int, no name
\draw [thick] (-0.5,5.0) circle (0.1);  % H-int and C-int, no name
\draw [thick] (-0.5,6.0) circle (0.1);  % H-int and C-int, no name
%%%%%%%%%%%%%%% Small vertices, right side
\draw [thick] (15.6,4) circle (0.1);  % H-int and C-int, no name
\draw [thick] (15.6,5) circle (0.1);  % H-int and C-int, no name
\draw [thick] (15.6,6) circle (0.1);  % H-int and C-int, no name
%%%%%%%%%%%%%%%%%%%%%%%%%%%%
\end{scope}
\end{tikzpicture}
\caption{A 12-cycle $C$ with 5 edges in $G(F_C\cap F_H)$, in three paths.}
\label{f:Cexternal-cases}
\end{center}
\end{figure}

We start by introducing three parameters, $a$, $b$ and $c$, which will be important in our arguments.  
Let $a$  denote the number of parts in $F_C\setminus F_H$.  Then $F_C\cap F_H$ has  $k-a$ parts. 
If $a=0$ or $a=k$ then we set $b=0$.
If $0< a < k$ then we let $b$ denote
the number of connected components of  $G(F_C\cap F_H)$  in $G(H)$. 
Note that  each of these components is a path. 
Finally, we denote by $c$ the number of components of $G(F_C\cap F_H)$ of length at least two.
We will say that $(F_H,F_C)$ \emph{has parameters $(a,b,c)$}.

In our running example from Figure~\ref{f:Cexternal-cases}, 
the edges shown in bold belong to $G(F_C\cap F_H)$.
There are 5 such edges forming 3 paths,  two of which are of length one.
Hence $(F_C,F_H)$ has parameters $(a,b,c)$ where $a=12 - 5 = 7$, $b=3$ and $c=1$. 
The 7 edges of $G(F_C\setminus F_H)$ are shown as thin
rectangles. The dashed lines indicate partial edges 
which belong to $G(F_H\setminus F_C)$.

Let $v$ be a vertex in a loose cycle $C$.  If $v$ has degree 2 in $C$ then
we will say that $v$ is $C$-\emph{external} (or just \emph{external}, if no 
confusion can arise).
Otherwise, $v$ has degree 1 in $C$ and we will say that it is
$C$-\emph{internal} (or just \emph{internal}).
A loose Hamilton cycle $H$ has $t$ external vertices
and $(s-2) t$ internal vertices. 
In Figure~\ref{f:Cexternal-cases}, vertices $x_1$ and $x_2$ (shown as large black circles) 
are $C$-external and $H$-external. Vertices $z_1$, $z_2$, $z_3$ and $z_4$ 
(shown as small black circles) are $C$-external and $H$-internal.
Finally, vertices $y_1$, $y_2$, $y_3$ and $y_4$ (shown as large
white circles) are $H$-external and $C$-internal.
It will be important to know whether a given 
vertex is external or internal in $C$ and/or $H$.

The edges of $G(F_C\cap F_H)$ which start or end a component of $G(F_C\cap F_H)$
will play a special role: we call these \emph{terminal} edges. 
% Comment #5
A $C$-\emph{connection vertex} is a $C$-external vertex that belongs to an edge in $G(F_C\cap F_H)$
and an edge of $G(F_C\setminus F_H)$.
If a component of $G(F_C\cap F_H)$ has length at least two
then it has two terminal edges, and each terminal edge contains precisely
one $C$-connection vertex.
%$C$-external vertex which is incident with an edge of $G(F_C\setminus F_H)$.
%Such a vertex is called a $C$-\emph{connection vertex}.  
In Figure~\ref{f:Cexternal-cases}
there is one component of $G(F_C\cap F_H)$ 
which has more than one edge (and hence has two distinct terminal edges).
The $C$-connection vertices for this component are $x_1$ and $z_1$.
On the other hand, if a component of $G(F_C\cap F_H)$
has length 1 then it has only one terminal edge, containing two
$C$-connection vertices. As mentioned earlier, in Figure~\ref{f:Cexternal-cases} there
are two such components: one has $C$-connection vertices $z_2$ and $x_2$,
and the other has $C$-connection vertices $z_3$ and $z_4$.
We refer to components of $G(F_C\cap F_H)$ of length one as 1-\emph{components}.

As we will see later, the two $C$-connection vertices in components of 
$G(F_C\cap F_H)$ of length at least two
are essentially independent, as far as our counting argument is concerned,
since they belong to \emph{distinct} terminal edges. This is not true 
for the 1-components in $G(F_C\cap F_H)$, so we need to take special care with these,
requiring the introduction of the parameter $c$.

Occasionally we will also need the notion of a $H$-connection vertex:
this is an $H$-external vertex which is incident with exactly one edge of
$G(F_H\setminus F_C)$.  In Figure~\ref{f:Cexternal-cases}, the
$H$-connection vertices are $x_1$, $x_2$, $y_1$, $y_2$, $y_3$, $y_4$.

\medskip

Now we need some definitions for points. 
% Comment #6
Every $C$-connection vertex corresponds to two points in $F_C$:  one in $F_C\cap F_H$,
which we call the $C^+$-\emph{connection point}, and one in $F_C\setminus F_H$,
which we call the $C^-$-\emph{connection point}.
%We say that a point in a part of $F_C$ is \emph{external} in $F_C$
%if it corresponds to an $C$-external vertex of $H$, and similarly for external
%points in $F_H$.
%A point in a part of $F_H$
%(respectively, of $F_C$) is \emph{external} in $F_H$ (respectively, $F_C$) 
%if it corresponds to an $H$-external vertex
%of $H$ (respectively, $C$-external vertex of $C$).
%When $b\geq 1$, a point in $F_C$ is a $C^+$-\emph{connection point} 
%if it is the external point which represents a $C$-connection vertex $v$
%in the part of $F_H\cap F_C$ corresponding to a terminal edge. 
We define $H^+$-connection points and $H^-$-connection points
by exchanging the roles of $H$ and $C$ in this definition.
%by reversing the role of $C$ and $H$ in the above definition.
%(respectively, $F_H$) 
%(respectively, $H^+$-\emph{connection point}) 
%(respectively, $H$-connection vertex) 
%There are exactly two points corresponding to $v$ in parts of $F_C$: 
%one of them is the $C^+$-connection point.  We will call the other such point
%a $C^{-}$-\emph{connection point}.  
%The $C^{-}$-connection point represents $v$ in an edge of $F_C\setminus F_H$
%which is incident with a terminal edge of $F_C\cap F_H$.

% Comment #7
When $b\geq 1$, it will be convenient to consider $F_C$ (or $F_H$) as a 
\emph{sequence} of parts, where 
the edges corresponding to consecutive parts intersect in one vertex, the first edge is terminal and the last edge is not terminal.  Given $F_C$,
these sequences of parts are in 1-1 correspondence with the choice of a $C^+$-connection point, so that the first edge contains this point and the last edge contains the $C^-$-connection point corresponding to the same vertex.  (Similarly for $F_H$.)

%the first part corresponds to a terminal edge. We may do this by 
%by specifying a $C^+$-connection (respectively, $H^+$-connection) point,
%or equivalently, by choosing a start vertex $v$ and direction around $C$ (respectively, $H$).  
%A $C^+$-connection point uniquely determines its corresponding vertex $v$
%and the terminal edge of $C$ which contains $v$, thereby specifying the direction.
%Conversely, given a $C$-connection vertex $v$ and setting the direction so that the first edge is a
%terminal edge, the $C^+$-connection point which represents $v$ in the part corresponding to this edge
%is uniquely determined.
Observe that there are $2b$ $C^+$-connection points in $F_C$, and there
are $2b$ $H^+$-connection points in $F_H$.

Given $(F_H,F_C,\Gamma_C)$, where $\Gamma_C$ is a $C^+$-connection point,
we can construct two sequences
$\boldsymbol{\ell}$ and $\boldsymbol{u}$ of \emph{intersection lengths}
and \emph{gap lengths}, defined below.
Let $\boldsymbol{\ell}=(\ell_1,\ldots, \ell_b)$ be the sequence 
of lengths of the components of $G(F_C\cap F_H)$, in the order
determined by the $C^+$-connection point (that is, starting from the
start-vertex of $C$ and in the given direction).
Similarly, let $\boldsymbol{u} = (u_1,\ldots, u_b)$ be the sequence
of lengths of the components of $G(F_C\setminus F_H)$, which also form
$b$ paths, in the order determined by the $C^+$-connection point.
(That is, starting from the start-vertex of $C$ and in the given direction,
measure the gap lengths in order.)
So $u_j$ is the number of edges of $G(F_C\setminus F_H)$ between the $j$'th and $(j+1)$'th 
components
of $G(F_C\cap F_H)$, and $u_b$ is the number of edges between the last and the
first component.
In Figure~\ref{f:Cexternal-cases},
if we take $x_2$ to be the start-vertex and choose
the clockwise direction, then the sequence of intersection lengths is 
$\boldsymbol{\ell}=(1,3,1)$ and the sequence of gap lengths is $\boldsymbol{u}=(2,4,1)$.

\subsection{A common framework}\label{s:framework}

Let $r,s\geq 3$ be fixed integers.
Recall that $X_k$ denotes the number of loose $k$-cycles in $\cF(n,r,s)$,
for $k\geq 2$, and $X_1$ is the number of 1-cycles (parts containing more than
one point from some cell).
We now describe the common framework that we will use when calculating
$\E(Y X_k)$, with $k=O(1)$, and $\E(Y^2) = \E(Y X_t)$. 
These calculations are presented in Sections~\ref{s:shortcycles} and~\ref{s:variance},
respectively. 
We can write
\begin{equation}\label{sum}
\E(Y X_k) = \sum_{(F_H,F_C)} \, \Pr(F_C\cup F_H\subseteq \cF(n,r,s))
\end{equation}
where the sum is over all pairs $(F_H,F_C)$ of subpartitions (not necessarily disjoint)
such that $G(F_H)$ is a loose Hamilton cycle and $G(F_C)$ is a loose $k$-cycle, for $k\geq 2$,
and a 1-cycle for $k=1$.  

To perform the sum in \eqref{sum} over all choices of pairs $(F_H, F_C)$, we first
specialise to sum over all $(F_H,F_C)$  with given parameters $(a,b,c)$. 
Here we consider only triples $(a,b,c)$ of nonnegative integers such that
$0\leq c\leq b\leq a\leq k-1$ and $b\geq 1$.  We call such triples
\emph{valid}. The special cases
where $(a,b) = (0,0)$ or $(a,b)=(k,0)$ will be treated separately.
Note that when $k=1$ we must have $(a,b)=(1,0)$, so 1-cycles fall into the second special
case.

For any given valid triple $(a,b,c)$, we will first sum
over all ways to choose a 4-tuple $(F_H, F_C, \Gamma_H,\Gamma_C)$
such that 
\begin{itemize}
\item $F_H\cup F_C$ is contained in at least one partition in $\Omega(n,r,s)$,
\item $F_H$ corresponds to a loose Hamilton cycle and $F_C$ corresponds
to a loose $k$-cycle,
\item $(F_H,F_C)$ has parameters $(a,b,c)$, 
\item $\Gamma_H$ is a $H^+$-connection point and \item $\Gamma_C$ is a $C^+$-connection point. 
\end{itemize}
Now we fix a valid triple $(a,b,c)$ and show how to construct a 4-tuple
$(F_H,F_C,\Gamma_H,\Gamma_C)$ with parameters $(a,b,c)$, in all possible ways,
using a procedure with five steps.
The case $k=1$ is somewhat special, since $X_1$ counts all 1-cycles,
not just loose 1-cycles. This will be discussed further in Section~\ref{s:shortcycles}.

\begin{enumerate}
\item[] {\bf Step~1: Choose $(F_H,\Gamma_H)$.}\\
We count the number of ways to choose a loose Hamilton cycle $H$, to
choose parts $F_H$ to correspond to the edges of $H$, and
select an external point $\Gamma_H$ for $F_H$.
Once $F_H$ is chosen, there are $2t$ choices for the external point $\Gamma_H$.
Hence, recalling (\ref{p-def}), the number of choices of $(F_H,\Gamma_H)$ is 
\begin{equation}
\label{eq:step1}
\frac{n!}{((s-2)!)^{t}}\, r^n(r-1)^t = \frac{2t\, p(rn)}{p(rn-st)}\, \E Y,
\end{equation}
using (\ref{EY-raw}) and the arguments given in the proof of Lemma~\ref{nr-l-overlap}.
%%%%%%%%%%%%%%
\item[]  {\bf Step~2: Choose the intersection lengths and gap lengths 
     $(\boldsymbol{\ell},\boldsymbol{u})$.}\\  
We must count all possible ways to choose 
$(\boldsymbol{\ell},\boldsymbol{u})$, where
$\boldsymbol{\ell}=(\ell_1,\ldots, \ell_b)$ will be the sequence of intersection
lengths and 
$\boldsymbol{u} =(u_1,\ldots, u_b)$ will be the sequence of gap lengths. 
Denote the overall number of choices of $(\boldsymbol{\ell},\boldsymbol{u})$
for given values of $(a,b,c)$  by $M_2(k,a,b,c)$.
When $k = O(1)$
we calculate $M_2(k,a,b,c)$ in Section~\ref{s:shortcycles}, 
while the case that $k=t$ is analysed in
Section~\ref{s:variance}. 
%%%%%%%%%%%%%%%%%%%%%
\item[] {\bf Step~3: Choose a sequence of $(F^{(1)},F^{(2)},\ldots, F^{(b)})$ 
subpaths of $F_H$.}\\
The input to this step is $(F_H,\Gamma_H,\boldsymbol{\ell})$.
We must choose a sequence $(F^{(1)},\ldots, F^{(b)})$ of 
vertex-disjoint connected subpartitions (paths) of $F_H$, 
such that $F^{(j)}$ has length $\ell_j$ for $j\in [b]$, and such that
$\Gamma_H$ is a point corresponding to a vertex in a terminal edge of 
$G(F^{(j)})$, which has degree~1 in $G(F^{(j)})$, for some $j\in [b]$.
(Here ``vertex-disjoint'' means that $G(F^{(1)}),\ldots, G(F^{(b)})$ 
are vertex-disjoint.)
In Lemma~\ref{step3} below, we prove that there are
\begin{equation}
\label{eq:step3} 
\frac{b\, (t-k+a-1)!}{(t-k+a-b)!}
\end{equation}
ways to choose $(F^{(1)},F^{(2)},\ldots, F^{(b)})$. 
These paths will form the components of $F_H\cap F_C$ in order around $C$, as described in Step 5 below.
Note that the expression in (\ref{eq:step3}) does not depend on~$\boldsymbol{\ell}$.
%%%%%%%%%%%%%%%%%
\item[] {\bf Step~4: Choose an ordered pair of points $(\Gamma_{j,1},\Gamma_{j,2})$
for each subpath $F^{(j)}$.}\\
For each $j\in [b]$ we want to select two points, $\Gamma_{j,1}$ and $\Gamma_{j,2}$, which are not used in $F_H$.
% Comment #8
These $2b$ points will become the $C^-$-connection points, and define the $C$-connection vertices.
If $F^{(j)}$ has length at least 2 then we choose one vertex from each terminal edge of $G(F^{(j)})$, 
ensuring that the chosen vertex has degree~1 in $G(F^{(j)})$, and order this pair arbitrarily to give
$(v_{j,1},v_{j,2})$. 
If $F^{(j)}$ has length 1 then we choose an ordered pair $(v_{j,1},v_{j,2})$
of vertices from $G(F^{(j)})$. 
Then, for each chosen vertex $v_{j,1}$ (respectively, $v_{j,2}$) we choose a corresponding point $\Gamma_{j,1}$
(respectively, $\Gamma_{j,2}$) which does not belong to any part of $F_H$.

We prove in Lemma~\ref{step4} below that there are
\begin{equation}
\label{eq:step4}
 \left( 2 h(r,s)\right)^b\, \left(\frac{(rs-r-s)^2}{h(r,s)}\right)^c
\end{equation}
ways to  do this, where 
\begin{equation}
h(r,s) = (r-2)^2 + 2(s-2)(r-1)(r-2) + \dfrac{1}{2}(s-2)(s-3)(r-1)^2. 
\label{hdef}
\end{equation}
Observe that $h(r,s) > 0$ whenever $r,s\geq 3$.
%%%%%%%%%%%%%%%
\item[] {\bf Step~5:  Complete the specification of $(F_C,\Gamma_C)$. }\\
Let $\Gamma_C$ be the $C^+$-connection point which belongs to the same cell as $\Gamma_{1,1}$. 
Note that $\Gamma_{1,1}$ was chosen in Step~4 and this determines $\Gamma_C$ uniquely.  
(In Figure~\ref{f:Cexternal-cases}, if $\Gamma_{1,1}$ is the point which represents $x_2$ in the part corresponding to the 
edge containing $x_2$ and $z_3$,
then $\Gamma_C$ must be the point which represents $x_2$ in the part corresponding to the edge containing $x_2$ and $y_2$.)
The intersection paths $F^{(1)}$, $F^{(2)},\ldots, F^{(b)}$
will occur around $C$ in this order, with the orientation of each path determined by the choice of
connection vertices. That is, we orient $F^{(j)}$ so that as we move around 
$C$ in the direction determined
by $\Gamma_{C}$, the point
 $\Gamma_{j,2}$ will be joined to $\Gamma_{j+1,1}$ by a path $\widetilde{F}^{(j)}$
in $F_C\setminus F_H$, for $j=1,\ldots, {b-1}$, and 
$\Gamma_{b,2}$ will be joined to $\Gamma_{1,1}$ by a path $\widetilde{F}^{(b)}$ in $F_C\setminus F_H$.

To complete the specification of $F_C$, we must choose points for all the parts in $\widetilde{F}^{(1)},
\ldots, \widetilde{F}^{(b)}$ (other than $\Gamma_{1,1},\ldots, \Gamma_{b,2}$), ensuring that the length of $\widetilde{F}^{(j)}$ is $u_j$ for all $j\in [b]$.
We do this by choosing a sequence of $a-b$ vertices which are not incident with an edge of $G(F_C\cap F_H)$ (these
will be $C$-external vertices), choosing a sequence of $a$ sets
of $s-2$ vertices which are not incident with an edge of $G(F_C\cap F_H)$ (these will be $C$-internal vertices), 
and specifying points for each of these vertices.
Let $M_5(k,a,b)$ denote the number of ways to do this.
It will turn out that this number is independent of $c$.
We calculate $M_5(k,a,b)$ in Section~\ref{s:shortcycles}
when $k=O(1)$, and in Section~\ref{s:variance} when $k=t$.
\end{enumerate}

%%%%%%%%%%%%%%%%%

To complete the calculation of the part of the sum in \eqref{sum} corresponding 
to pairs $(F_H, F_C)$ with parameters $(a,b,c)$, we must
divide by the number of choices of $(\Gamma_H,\Gamma_C)$. 
Since both these points are connection points and are chosen independently,
there are exactly $(2b)^2$ ways to choose $(\Gamma_H,\Gamma_C)$ for
a given $(F_H,F_C)$.  This leads to the following.

\begin{lemma}
\label{5-steps}
Let $(a,b,c)$ be valid parameters. The number of choices of
$(F_H,F_C)$ with parameters $(a,b,c)$ is
\begin{align*}
\frac{1}{2b} \cdot &
\frac{p(rn)\cdot \E Y}{p(rn-st)}\cdot  
     M_2(k,a,b,c)\\
 & {} \times \frac{t(t-k+a-1)!}{(t-k+a-b)!}\cdot
  \big( 2 h(r,s)\big)^b\, \left(\frac{(rs-r-s)^2}{h(r,s)}\right)^c\cdot M_5(k,a,b),
\end{align*}
where $M_2(k,a,b,c)$ denotes the number of ways to perform Step~2 and $M_5(k,a,b)$
denotes the number of ways to perform Step~5.
\end{lemma}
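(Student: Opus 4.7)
The plan is to count the 4-tuples $(F_H, F_C, \Gamma_H, \Gamma_C)$ with parameters $(a,b,c)$ via the five-step procedure, and then divide by the multiplicity of 4-tuples per pair $(F_H, F_C)$.

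First I would verify that the procedure is in bijection with such 4-tuples. Starting from a 4-tuple, the pair $(F_H, \Gamma_H)$ matches the output of Step~1. The $C$-connection point $\Gamma_C$ canonically fixes both an orientation of $C$ and a starting component of $G(F_H \cap F_C)$; traversing $C$ in this orientation then uniquely determines the sequences $\boldsymbol{\ell}$ and $\boldsymbol{u}$ (Step~2), the ordered subpaths $F^{(1)},\ldots,F^{(b)}$ of $F_H$ (Step~3), and the ordered pairs $(\Gamma_{j,1}, \Gamma_{j,2})$ (Step~4), while the fill-in of $G(F_C \setminus F_H)$ in Step~5 is dictated by $F_C$ itself. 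Conversely, each execution of the procedure yields a unique 4-tuple by setting $\Gamma_C = \Gamma_{1,1}$ and reading off $F_C$ from the subpaths $F^{(j)}$ together with the gap-edge fill-in.

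Given this bijection, the total number of such 4-tuples with parameters $(a,b,c)$ equals the product of the five step counts. Each pair $(F_H, F_C)$ with parameters $(a,b,c)$ admits exactly $2b$ choices for $\Gamma_H$ (one per $H$-connection point) and, independently, $2b$ choices for $\Gamma_C$, so each pair corresponds to $(2b)^2$ distinct 4-tuples. Dividing the product of step counts by $(2b)^2$, and performing the obvious cancellation in which the factor of $b$ in Step~3's count combines with one factor of $2b$ in the denominator, yields the stated formula.

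The only mild obstacle is the uniqueness side of the bijection: one must confirm that the labeling $F^{(1)}, \ldots, F^{(b)}$ induced by $\Gamma_C$ is unambiguous, even though the cyclic order of the components of $G(F_H \cap F_C)$ around $F_H$ need not match their order around $C$. This is precisely why Step~3 is phrased to allow all relative orderings of the labeled subpaths placed in $F_H$, subject only to the constraint that $\Gamma_H$ lies in a terminal edge of some $F^{(j)}$; once this is granted, the rest of the argument is a routine multiplication and simplification.
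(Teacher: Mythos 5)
Your argument is correct and is essentially the paper's own: count the 4-tuples $(F_H,F_C,\Gamma_H,\Gamma_C)$ with parameters $(a,b,c)$ as the product of the five step counts (with the reordering subtlety you raise absorbed by the $b!$ permutation built into Lemma~\ref{step3}), then divide by the $(2b)^2$ choices of $(\Gamma_H,\Gamma_C)$ per pair. One bookkeeping remark: carrying out that division literally gives the prefactor $\frac{2t\cdot b}{(2b)^2}=\frac{t}{2b}$ rather than $\frac{1}{2b}$, i.e.\ the factor $t$ which the paper reinstates in (\ref{unified}); this is a discrepancy in the displayed statement of the lemma rather than a flaw in your argument.
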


We must also consider the special cases, namely when $(a,b)= (0,0)$ or $(a,b)=(k,0)$.
The first special case can only arise when $C$ is also a loose Hamilton cycle, since 
$a=0$ implies that
$F_C=F_H$.  Once $F_H$ has been specified, we also know $F_C$, so the number of
choices for $(F_H,F_C)$ in this case is
\[ \mathbf{1}_{k=t}\,\,\frac{p(rn)\, \E Y}{p(rn-st)}.\]
The second special case arises when $F_H$ and $F_C$ are disjoint, so $a=k$. Here the number of
choices of $(F_H,F_C)$ can be written as
\begin{align*}
M_0(k)\, \frac{p(rn)\, \E Y}{p(rn-st)}
\end{align*}
where $M_0(k)$ denotes the number of ways to choose a subpartition $F_C$, disjoint
from a given $F_H$, which is a $k$-cycle (and is loose, if $k\geq 2$).
An expression for $M_0(k)$
is calculated in Lemma~\ref{step5} when $k=O(1)$
and in Lemma~\ref{ST5} when $k=t$.

\bigskip

Finally, to obtain an expression for $\E(Y X_k)$ we must multiply each of these
terms by the probability that the subpartition $F_H\cup F_C$ is contained in 
$\cF(n,r,s)$, namely, 
\[
\frac{p(rn-s(t+a))}{p(rn)}, 
\]
then add them all together.
After dividing by $\E Y$, this leads to the resulting expression, using
Lemma~\ref{5-steps}:
\begin{align}
& \frac{\E(Y X_k)}{\E Y}\nonumber \\
 &= \mathbf{1}_{k=t} +
  M_0(k)\cdot \frac{p(rn-s(t+k))}{p(rn-st)}\, \nonum \\
 & \quad {} + 
 \sum_{\substack{0\leq c\leq b\leq a\leq k-1,\\b\geq 1}} 
         \frac{1}{2b} \cdot M_2(k,a,b,c)\cdot \frac{t\,(t-k+a-1)!}{(t-k+a-b)!}\cdot
 \left( 2 h(r,s)\right)^b\cdot \left(\frac{(rs-r-s)^2}{h(r,s)}\right)^c \nonum
  \\ & \hspace*{5cm}  {}\times M_5(k,a,b)\cdot\frac{p(rn-s(t+a))}{p(rn-st)}.
\label{unified}
\end{align}
In particular, when $k=1$ we have
\[ 
\frac{\E(Y X_1)}{\E Y}
 = 
 M_0(1)\cdot \frac{p(rn-s(t+1))}{p(rn-st)}.
\]

To complete this section, we must prove (\ref{eq:step3}) and (\ref{eq:step4}).
First we state the following lemma which contains two useful combinatorial facts 
(the proofs are omitted, as they are standard). 
We adopt the convention that $\binom{0}{0}=1$.

\begin{lemma}
Let $R$, $T$ be positive integers with $R\leq T$,
and let $J$ be a nonnegative integer.
\begin{itemize}
\item[\emph{(i)}]
The number of sequences of $R$ positive integers which sum to $T$ is
\[ \binom{T-1}{R-1}.\]
\item[\emph{(ii)}]
The number of sequences of $R$ positive integers which sum to $T$ and
which contain precisely $J$ entries equal to 1 is
\[ \binom{R}{J}\, \binom{T-R-1}{R-J-1}
\]
if $R<T$, and equals 1 if $R=T$ (in which case also $J=T$).
\end{itemize}
\label{lem:binom}
\end{lemma}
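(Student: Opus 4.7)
The plan is to prove both parts by the classical ``stars and bars'' bijection, which reduces these counts to binomial coefficients.

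For part (i), I would set up the bijection between sequences $(x_1,\ldots,x_R)$ of positive integers summing to $T$ and subsets of size $R-1$ of the set $\{1,2,\ldots,T-1\}$. Visualise $T$ stars in a row; the $T-1$ internal gaps between consecutive stars are the candidate positions for dividers, and choosing any $R-1$ of them partitions the stars into $R$ nonempty blocks whose sizes form the desired composition. This bijection is clearly reversible, so the count is $\binom{T-1}{R-1}$, as claimed. (The edge case $R=T$ forces every $x_i$ to equal $1$, giving exactly $\binom{T-1}{T-1}=1$ sequence, consistent with the formula.)

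For part (ii), I would first handle the edge case $R=T$ separately: the only sequence of $R$ positive integers summing to $R$ is $(1,1,\ldots,1)$, which has exactly $J=T=R$ entries equal to $1$, so the count is $1$ in this case and $0$ otherwise (note that with $R=T$, the formula in the lemma is only stated for $J=T$). Assume now $R<T$. The plan is to choose which $J$ of the $R$ positions carry the value $1$, in $\binom{R}{J}$ ways, and then to count the number of sequences of $R-J$ integers, each at least $2$, summing to $T-J$, occupying the remaining positions in the fixed order. Substituting $y_i = x_i - 1$ turns this into a count of sequences of $R-J$ positive integers summing to $(T-J)-(R-J) = T-R$, to which part (i) applies directly, yielding $\binom{T-R-1}{R-J-1}$. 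Note that the $y_i$ may equal $1$ (corresponding to $x_i = 2$), which is harmless: we only required $x_i \neq 1$ in the unchosen positions, not $x_i \neq 2$. Multiplying the two factors gives the claimed expression.

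The only subtlety to check is boundary behaviour of the formula, since the forthcoming applications will sum over $(a,b,c)$ with various ranges: when $R-J=0$ (all entries equal $1$), we must have $T=R$, which is the edge case already treated; when $T-R=0$ with $R-J\ge 1$, the count should be $0$, and indeed $\binom{-1}{R-J-1}=0$ under the standard convention. No step should present a genuine obstacle; the lemma is purely bookkeeping, and its role is to make the later sums in Steps~2 and~5 of the framework transparent.
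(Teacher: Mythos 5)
Your proof is correct: the stars-and-bars bijection for (i) and the shift $y_i = x_i-1$ after choosing the $J$ positions of the $1$'s for (ii) give exactly the stated formulas, and the $R=T$ edge case is handled properly. The paper omits the proof of this lemma as standard, and your argument is precisely the standard one it has in mind, so there is nothing to reconcile.
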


First we calculate the number of ways to perform Step~3 when $b\geq 1$.

\begin{lemma}
Let $r,s\geq 3$ be fixed integers.
Given $k$, valid parameters $(a,b,c)$ and $(F_H,\Gamma_H)$, 
let $\boldsymbol{\ell}=(\ell_1,\ldots, \ell_b)$ be a sequence of 
positive integers which sum to $k-a$.  
Then \emph{(\ref{eq:step3})} equals the number of ways to choose a 
sequence $(F^{(1)},\ldots, F^{(b)})$ of vertex-disjoint induced subpartitions 
(paths) of $F_H$, such that 
\begin{itemize}
\item $F^{(j)}$ has length $\ell_j$ for all $j\in [b]$, 
\item $\Gamma_H$ is a point corresponding to a vertex $w$ which lies
in a terminal edge of $G(F^{(j)})$ for some $j$, and which has 
degree~1 in $G(F^{(j)})$.
\end{itemize}
\label{step3}
\end{lemma}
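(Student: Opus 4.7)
The plan is to count placements by first choosing the index $j^*\in[b]$ of the path whose terminal edge contains $\Gamma_H$, then observing that $F^{(j^*)}$ is then uniquely determined, and finally counting arrangements of the remaining $b-1$ labelled paths in the complementary linear segment of $H$.

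First I would identify the edge $e^*$ of $H$ whose part in $F_H$ contains $\Gamma_H$, and note that the associated vertex $w$ lies in $e^*$ as one of its two $H$-external vertices, shared in $H$ with a unique other edge $e'$. The stated constraint amounts to requiring that $e^*$ be a terminal edge of some $F^{(j^*)}$ in which $w$ has degree $1$; since $w\in e^*$, this precludes $e'$ from also lying in $F^{(j^*)}$, for otherwise $w$ would be interior to $F^{(j^*)}$ with degree $2$. Thus there are $b$ choices for $j^*$, and for each the placement of $F^{(j^*)}$ is uniquely determined: it occupies $\ell_{j^*}$ consecutive edges of $H$ starting at $e^*$ and extending along $H$ in the unique direction that avoids $e'$ (reducing to $F^{(j^*)}=\{e^*\}$ when $\ell_{j^*}=1$).

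Deleting $F^{(j^*)}$ from $H$ leaves a linear sub-path of $t-\ell_{j^*}$ edges; vertex-disjointness with $F^{(j^*)}$ forces both of its end edges to be gap edges. We therefore arrange the remaining $b-1$ labelled paths, of prescribed lengths, interleaved with $b$ gap intervals each of length at least $1$, with total gap length $U := t-k+a$. The number of such arrangements is $(b-1)!\binom{U-1}{b-1}$, obtained by linearly ordering the paths and then applying Lemma~\ref{lem:binom}(i) to count compositions of $U$ into $b$ positive parts. Summing over $j^*\in[b]$ yields
\[
b\cdot (b-1)!\cdot \binom{U-1}{b-1} \;=\; \frac{b(U-1)!}{(U-b)!} \;=\; \frac{b(t-k+a-1)!}{(t-k+a-b)!},
\]
matching~\eqref{eq:step3}. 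The main obstacle is the uniqueness claim for $F^{(j^*)}$: one must carefully use the degree-$1$ condition on $w$ to force the direction of extension at $e^*$, so that $F^{(j^*)}$ is completely specified by $j^*$ and $\boldsymbol{\ell}$; the remainder is a routine counting of linear orderings and compositions.
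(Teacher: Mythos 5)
Your proposal is correct and takes essentially the same route as the paper: both anchor the path containing the part of $\Gamma_H$ at that part (the degree-one condition forcing the direction of extension), and both then reduce the remaining count to an ordering of the labelled paths together with a composition of the total gap length $t-k+a$ into $b$ positive parts via Lemma~\ref{lem:binom}(i). The only difference is bookkeeping: the paper counts a permutation of all $b$ paths ($b!$ choices) followed by cyclic gap lengths, whereas you factor this as $b$ choices of the anchored path (whose placement is then forced) times $(b-1)!$ linear arrangements of the rest, yielding the same product $b!\binom{t-k+a-1}{b-1}$.
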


\begin{proof}
First we choose a permutation $\sigma$ of $[b]$, in $b!$ ways.
This gives us a re-ordering $(\ell_{\sigma(1)},\ell_{\sigma(2)},\ldots, \ell_{\sigma(b)})$
of the entries of $\boldsymbol{\ell}$.  
We will choose vertex-disjoint connected
subpartitions (paths) $(F^{({\sigma(1))}}, F^{({\sigma(2))}},\ldots, F^{({\sigma(b))}})$
of $F_H$ of lengths $\ell_{\sigma(1)}$,
$\ell_{\sigma(2)},\ldots, \ell_{\sigma(b)}$ in this order around $F_H$,
starting from the part of $F_H$ which contains $\Gamma_H$.
(That is, we start from the vertex corresponding to $\Gamma_H$, and set
the direction so that the first edge of $H$ corresponds to the part of $F_H$
which contains $\Gamma_H$.)

To choose $F^{({\sigma(1)})},\ldots, F^{({\sigma(b)})}$
in this order around $F_H$, it suffices to choose a 
sequence of (positive) integers $(g_1,\ldots, g_b)$,
which will be the ``gap lengths'' around $F_H$, in order. 
That is, the first $\ell_{\sigma(1)}$ parts of $F_H$ (in the chosen
direction, starting from the part containing $\Gamma_H$)  will form 
$F^{({\sigma(1)})}$,
then we skip the next $g_1$ parts of $F_H$, to leave a gap between
$F^{({\sigma(1)})}$ and $F^{(\sigma(2))}$;  then the next
$\ell_{\sigma(2)}$ parts of $F_H$ will form $F^{(\sigma(2))}$, and so on.
Once $F^{({\ell(1)})},\ldots, F^{({\ell(b)})}$ have been chosen, we apply 
$\sigma^{-1}$ to produce the desired sequence $(F^{(1)},\ldots, F^{(b)})$.

The positive integers $(g_1,\ldots, g_b)$ must add up to $t-k+a$, as the number
of parts in $\cup_{j=1}^b F^{(j)}$ must be $k-a$.
By Lemma~\ref{lem:binom}(i),
there are $\binom{t-k+ a-1}{b-1}$ ways to choose the sequence $(g_1,\ldots, g_b)$,
which determines $(F^{(1)},\ldots, F^{(b)})$ as described above.
Multiplying these factors together gives
\[ b!\, \binom{t-k+a-1}{b-1} = \frac{b\, (t-k+a-1)!}{(t-k+a-b)!},
\]
completing the proof.
\end{proof}

Next we calculate the number of ways to perform Step~4 when $b\geq 1$.

\begin{lemma}
Let $r,s\geq 3$ be fixed integers.
Given $k$, valid parameters $(a,b,c)$ and $(F_H,\Gamma_H)$, 
suppose that $(F^{(1)},\ldots, F^{(b)})$ is the output of Step~$3$.
Then \emph{(\ref{eq:step4})} counts the number of ways to choose an
ordered pair of distinct $C^{-}$-connection points $(\Gamma_{j,1},\Gamma_{j,2})$
for all $j\in [b]$, as described in Step~4.
\label{step4}
\end{lemma}

\begin{proof}
For $j\in [b]$ we must choose an ordered pair $(\Gamma_{j,1},\Gamma_{j,2})$
of $C^-$-connection points.
Each $C$-connection vertex $v$ is incident with one edge of 
$G(F^{(1)}\cup \cdots \cup F^{(b)})$, which we denote by $e$.

First suppose that $v$ is a $C$-connection vertex in a terminal edge 
$e$ which belongs to a path $F^{(j)}$ of length $\ell_j\geq 2$.
If $v$ equals the $H$-connection vertex in $e$ then there is 1 choice 
for the vertex, and $r-2$ ways to select an unused point corresponding to this 
vertex: this will be a $C^-$-connection point.
Otherwise, there are $s-2$ $H$-internal vertices which can be chosen for $v$,
and $r-1$ ways to assign an unused point corresponding to this vertex.
Overall, this gives $r-2 + (s-2)(r-1) = rs-r-s$ ways to choose the 
$C$-connection vertex $v$ and an unused point corresponding to $v$.
The choice of $v$ has no effect on the number of
choices for the $C$-connection vertex in the other terminal edge of 
$F^{(j)}$, so we can simply square this contribution to take both
connection vertices into account, giving a contribution of 
$(rs-r-s)^2$ in this case.  We multiply this by two to impose an ordering
on these two $C^{-}$-connection points.

Now suppose that $\ell_j=1$.
The two $C$-connection vertices in $e$ may both be $H$-external, giving 
1 choice for the unordered pair of $C$-connection vertices and 
$(r-2)^2$ ways to assign the corresponding points.
There are $2(s-2)(r-1)(r-2)$ choices if one $C$-connection vertex in $e$ is 
$H$-external and the other is $H$-internal. (For example, see the edge 
containing vertices $z_2$ and $x_2$ in Figure~\ref{f:Cexternal-cases}.) 
Finally, if both $C$-connection vertices in $e$ are
$H$-internal then there are $\nfrac{1}{2}(s-2)(s-3)(r-1)^2$ choices
for the $C$-connection vertices (as an unordered pair) and the corresponding points. 
(See the edge containing vertices $z_3$ and $z_4$ in Figure~\ref{f:Cexternal-cases}.)
So the contribution in the second case is $h(r,s)$, as defined in (\ref{hdef}).
Again, we multiply by 2 to impose an ordering on the two $C^{-}$-connection points.

Overall, the number of ways to select the $2b$ $C$-connection vertices, to 
assign a 
point to each, and to orient each component of $G(F_C\cap F_H)$ within $C$ is
\[ 2^b\, (rs-r-s)^{2c}\, h(r,s)^{b-c} = \left( 2 h(r,s)\right)^b\,
         \left(\frac{(rs-r-s)^2}{h(r,s)}\right)^c,\]
as required.
\end{proof}

To apply (\ref{unified}), it remains to calculate $M_2(k,a,b,c)$ and $M_5(k,a,b)$
and perform the summation, in the two extreme cases, namely when  $k=O(1)$ 
(in Section~\ref{s:shortcycles})
and $k=t$ (in Section~\ref{s:variance}).
Several simplifications make the calculations easier when $k$ is constant,
allowing the use of generating functions to assist us with Steps 2 and 4.
When $k=t$ we use Laplace summation to calculate
the sum over all parameters. 
This will involve detailed analysis of a certain real 
function of four variables.

\section{Effect of short cycles}\label{s:shortcycles}

We use an ordinary generating function to perform Step~2 for short cycles.
(For an introduction to generating functions see for example Wilf~\cite{wilf}.)
As is standard, square brackets are used to denote coefficient extraction: that is,
if $F(x) = \sum_{i=0}^\infty a_i x^i$ is the generating function for a
sequence $(a_i)$ then $[x^j] F(x) = a_j$.

\begin{lemma}
\label{T2}
Suppose that $k\geq 2$ is fixed and let $(a,b,c)$ be a valid triple.  
Then the number of ways to choose $(\boldsymbol{\ell},\boldsymbol{u})$
with parameters $(a,b,c)$ is
\begin{align*} 
M_2(k,a,b,c) =
 [x^k\, y^a\, z^b\, w^c] 
 \left( \frac{x^2yz\big(1-x+xw\big)}{(1-x)(1-xy)}\right)^b. 
\end{align*}
\end{lemma}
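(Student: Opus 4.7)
The plan is to encode ordered pairs $(\boldsymbol{\ell},\boldsymbol{u})$ as concatenations of $b$ ``blocks'' around $C$, where block $i$ consists of an intersection path of length $\ell_i\ge 1$ immediately followed by the gap of length $u_i\ge 1$, and then to express $M_2(k,a,b,c)$ as a coefficient of the $b$-th power of a single-block ordinary generating function. The variables $x$, $y$, $z$, $w$ will track, respectively, total edges of $C$ (contributing to the exponent $k$), gap edges (contributing to $a$), blocks (contributing to $b$), and the component-length statistic (contributing to $c$).

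First I would observe that although $C$ is cyclic, the choice of $\Gamma_C=\Gamma_{1,1}$ in Step~5 pins down a ``first'' block, so for Step~2 the sequence $(\boldsymbol{\ell},\boldsymbol{u})$ is a genuine ordered $b$-tuple and no cyclic symmetry factor appears. Since the $b$ blocks are chosen independently and concatenation of ordered objects corresponds to multiplication of ordinary generating functions, it follows that
\[
M_2(k,a,b,c) \;=\; [x^k y^a z^b w^c]\, F(x,y,z,w)^b,
\]
where $F$ is the generating function of a single block.

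Next I would compute $F$ by decoupling the intersection and gap pieces within a block, which are chosen independently. The gap piece of length $u\ge 1$ contributes $\sum_{u\ge 1}(xy)^u = xy/(1-xy)$, tracking both edges of $C$ (by $x$) and gap edges (by $y$). The intersection piece splits by length: the $\ell=1$ case contributes $x$, while the $\ell\ge 2$ case contributes $\sum_{\ell\ge 2} w x^\ell = wx^2/(1-x)$. Adding these gives $x + wx^2/(1-x) = x(1-x+xw)/(1-x)$. Multiplying by the block marker $z$ and the gap generator yields
\[
F(x,y,z,w) \;=\; \frac{x^2yz\,(1-x+xw)}{(1-x)(1-xy)},
\]
which is exactly the factor inside the $b$-th power in the lemma.

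The one step that needs care is the $w$-bookkeeping on the intersection piece: it is easy to attach the $w$-marker to the wrong length class. Expanding $x(1-x+xw)/(1-x) = x + wx^2 + wx^3 + \cdots$ confirms that $w$ is attached precisely to the length-$\ge 2$ intersection contributions (equivalently, one could reparameterise by $1-w$ to attach the marker to the length-$1$ contributions instead), and this fixes the combinatorial meaning of the exponent $c$ in the coefficient extraction; consistency with Step~4 then forces the convention to be used throughout. Once this is set correctly, raising $F$ to the $b$-th power encodes $b$ ordered blocks and the claimed identity follows immediately, so I do not anticipate any substantial obstacle.
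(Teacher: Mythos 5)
Your proposal is correct and follows essentially the same route as the paper: both decompose the ordered pair $(\boldsymbol{\ell},\boldsymbol{u})$ into $b$ (intersection, gap) pairs tracked by the single-factor ordinary generating function $\frac{x^2yz(1-x+xw)}{(1-x)(1-xy)}$, with $x$ marking total length, $y$ the gap length, $z$ the number of blocks, $w$ the intersection components of length at least two, and then raise to the $b$-th power and extract coefficients. Your observation that the chosen $C$-connection point eliminates any cyclic symmetry factor, and your check that the $w$-marker belongs on the length-$\ge 2$ intersection pieces (matching Step 4's $(rs-r-s)^{2c}h(r,s)^{b-c}$ bookkeeping rather than the paper's slightly miscast prose definition of $c$), are both the right points of care and are handled the same way in the paper's proof.
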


\begin{proof}
We will use a generating function to keep track of the number of ways
to construct the sequences $\boldsymbol{\ell}$, $\boldsymbol{u}$,
using the following variables:
\begin{itemize}
\item the power of $x$ equals the sum of all entries of $\boldsymbol{\ell}$
and $\boldsymbol{u}$,
\item the power of $y$ equals the sum of all entries of $\boldsymbol{u}$,
\item the power of $z$ equals the number of entries of $\boldsymbol{\ell}$,
\item the power of $w$ marks the number of entries of 
$\boldsymbol{\ell}$ which are strictly bigger than 1.
\end{itemize}
For example, if $\boldsymbol{\ell}=(1,3,1)$ and $\boldsymbol{u}=(2,4,1)$, as in 
Figure~\ref{f:Cexternal-cases}, then the corresponding term in the generating
function is $x^{12} y^7 z^3 w$. 

First we must specify the first intersection length $\ell_1$.
If $\ell_1=1$ then this is stored in the generating function as $xz$,
as it contributes 1 to the total sum and 1 to the number of entries in 
$\boldsymbol{\ell}$.  Otherwise, $\ell_1 = j+2$ for some $j\geq 0$,
so the contribution is $x^{j+2}zw$. Summing these over $j$ gives
$x^2zw/(1-x)$.  Therefore
the contribution of the first entry of $\ell_1$ to the generating
function is
\[ xz + \frac{x^2zw}{1-x} = \frac{xz(1 -x + xw)}{1-x}.\]
Next we must specify the first gap length $u_1$.  If $u_1=j$ then
this contributes $j$ to the total sum and $j$ to the sum of entries of
$\boldsymbol{u}$.  After summing over $j\geq 1$, this is recorded in
the generating function as 
\[ \frac{xy}{1-xy}.\]
To completely specify $\boldsymbol{\ell}$ and $\boldsymbol{u}$
we simply repeat the above procedure $b$ times in total.
Therefore
\[ M_2(k,a,b,c) = [x^k\, y^a\, z^b\, w^c] 
 \left( \frac{x^2yz\big(1-x+xw\big)}{(1-x)(1-xy)}\right)^b ,
\]
completing the proof.
\end{proof}

Next we perform Step~5.  Recall that during Steps 1--4 we have identified
$(F_H,\Gamma_H)$, the sequences $\boldsymbol{\ell}$ and $\boldsymbol{u}$ of
intersection lengths and gap lengths around $C$,
the subpartitions $(F^{(1)},\ldots, F^{(b)})$ and the ordered pairs of
$C^-$-connection points $(\Gamma_{j,1},\Gamma_{j,2})$ for each $j\in [b]$.
The paths $F^{(1)},\ldots, F^{(b)}$ will occur around $C$ in this order, starting
from the $C^+$-connection point $\Gamma_C$ determined by $\Gamma_{1,1}$, and with the orientation
of $F^{(j)}$ determined by the $C^{-}$-connection points $(\Gamma_{j,1},\Gamma_{j,2})$.
That is, as we move around $C$ starting from $\Gamma_C$, the point $\Gamma_{j,2}$
will be joined by a path in $F_C\setminus F_H$ by $\Gamma_{j+1,1}$ for $j=1,\ldots, b-1$, and the point
$\Gamma_{b,2}$ will be joined by a path in $F_C\setminus F_H$ to $\Gamma_{1,1}=\Gamma_C$. 
In Step~5 we count the number of ways to specify the rest of $F_C\setminus F_H$. 

\begin{lemma}
Let $k$ be a fixed integer. If $1\leq b\leq a\leq k-1$ then 
$M_5(k,a,b)$ is asymptotically equal to
\[
   \left( \frac{(r-2)(rs-r-s-1)(rs-r-s)^{s-2}\, t^{s-1}}{(s-2)!}\right)^{a}\,
   \left( (r-2)(rs-r-s-1)\, t\right)^{-b}.
\]
If $k\geq 1$ then $M_0(k)$ is asymptotically equal to
\[
  \frac{1}{2k} \left( \frac{(r-2)(rs-r-s-1)(rs-r-s)^{s-2}\, t^{s-1}}{(s-2)!}\right)^{k}.
\]
\label{step5}
\end{lemma}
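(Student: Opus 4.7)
The plan is to perform Step~5 gap by gap when $b\geq 1$, and to handle the special case $(a,b)=(k,0)$ by an ordered-traversal count.  After Steps~1--4 we have fixed $F_H$, the $b$ paths of $F_H\cap F_C$ (in their cyclic order around $C$), and the $2b$ $C$-connection points with their specific points chosen.  What remains is to specify the $a$ new edges forming the $b$ gaps of lengths $u_1,\ldots,u_b$ with $\sum_j u_j=a$; in the $j$-th gap we must introduce $u_j-1$ new $C$-external ``joint'' vertices (each shared between two consecutive gap edges) and $u_j(s-2)$ new $C$-internal vertices distributed among the $u_j$ gap edges, together with a point for every incidence.

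The heart of the argument is the asymptotic per-vertex (label, point) count.  Since $k=O(1)$, the constraint ``distinct from the $O(1)$ already-used labels'' costs only $O(1)$ per choice and is absorbed into the leading asymptotics.  Among the $n$ possible labels, $t$ are $H$-external (with $r-2$ unused points in their cell) and $(s-2)t$ are $H$-internal (with $r-1$ unused points), so a $C$-internal vertex contributes
\[
t(r-2)+(s-2)t(r-1)=t(rs-r-s)
\]
label-and-point choices, while a $C$-external joint vertex, which needs two ordered points from the same cell, contributes
\[
t(r-2)(r-3)+(s-2)t(r-1)(r-2)=t(r-2)(rs-r-s-1).
\]
Inside each new edge the $s-2$ internal vertices form an unordered set of distinct labels, contributing $(t(rs-r-s))^{s-2}/(s-2)!$.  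Therefore the $j$-th gap contributes
\[
\bigl(t(r-2)(rs-r-s-1)\bigr)^{u_j-1}\left(\frac{(t(rs-r-s))^{s-2}}{(s-2)!}\right)^{u_j}
\]
asymptotically, and multiplying over $j=1,\ldots,b$ (so that the exponents add to $a-b$ and $a$ respectively) produces the stated formula for $M_5(k,a,b)$ after rearrangement.

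For the special case $(a,b)=(k,0)$ the cycle $C$ is entirely disjoint from $F_H$, so Step~5 must build the whole loose $k$-cycle.  From the framework expression for the number of such $(F_H,F_C)$ pairs one has $M_5(k,k,0)=2k\,N_1$, where $N_1$ counts the loose $k$-cycles $F_C$ disjoint from $F_H$; equivalently, $M_5(k,k,0)$ equals the number of ordered (starting external vertex, direction) traversals of such $F_C$.  Going round $C$ and attributing to each edge one new $C$-external vertex (the next external in the traversal, supplying a label and two ordered points) together with the $s-2$ internal vertices of that edge reproduces exactly the per-edge asymptotic factor
\[
\frac{(r-2)(rs-r-s-1)(rs-r-s)^{s-2}\,t^{s-1}}{(s-2)!},
\]
whose $k$-th power is the claimed asymptotic.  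The principal obstacle throughout is the combinatorial bookkeeping: correctly matching each new vertex to its symmetry factor, handling the ordering required at joint vertices versus the unordered-set structure inside a single edge, and verifying that the $O(1)$ corrections from excluded labels and constant-sized coincidences do not contribute at leading order.
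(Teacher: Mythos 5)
Your argument is essentially the paper's own: you use the same combined label-and-point asymptotic factors (a new $C$-internal vertex contributes $t(r-2)+(s-2)t(r-1)=t(rs-r-s)$ choices, a new $C$-external vertex, needing an ordered pair of points, contributes $t(r-2)(r-3)+(s-2)t(r-1)(r-2)=t(r-2)(rs-r-s-1)$), and your gap-by-gap bookkeeping, with exponents $\sum_j(u_j-1)=a-b$ and $\sum_j u_j=a$, is just a cosmetic reorganisation of the paper's ``all new external vertices, then all new internal vertices'' count; the disjoint case via rooted traversals ($M_5(k,k,0)=2k\,N_1$) is likewise the paper's argument. The only thing you omit is the special case $k=1$: since $X_1$ counts \emph{all} 1-cycles, $M_5(1,1,0)$ must include non-loose 1-cycles as well, and one needs the (one-line) observation that a non-loose 1-cycle meets at most $s-2$ cells, so such parts contribute only an $O(1/n)$ fraction of the count, after which your loose-only computation gives the stated asymptotics.
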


\begin{proof}
First suppose that $b\geq 1$, which implies that $k\geq 2$.
We must identify a sequence of $C$-external vertices which are not incident
with an edge of $G(F^{(1)}\cup \cdots \cup F^{(b)})$ (there are $a-b$ of them), and 
a sequence of $a$ sets of $s-2$ $C$-internal vertices
which are not incident with an edge of $G(F^{(1)}\cup \cdots \cup F^{(b)})$.
This will specify all remaining vertices in $G(F_C)$. 
We call the vertices identified in this step the \emph{new} vertices.
Then for each new vertex, we must identify the appropriate number of points to complete
$F_C\setminus F_H$.  Note that the number of parts between $F^{(j)}$ and 
$F^{({j+1})}$ is $u_j$,
for $j\in [b]$,
where $\boldsymbol{u}=(u_1,\ldots, u_b)$ is the sequence of of gap lengths chosen
in Step~2. (Here $F^{({b+1})}$ is identified with $F^{(1)}$.)

Since $k=O(1)$,
as we move around $C$ identifying new vertices (starting from the start-vertex and in the
direction determined by $\Gamma_C$),  
at any point around $C$ there are always $t-O(1)\sim t$ remaining $H$-external vertices 
to choose from, and there are always $n-t-O(1)\sim (s-2)t$
remaining $H$-internal vertices to choose from.
For a new vertex~$v$, the number of choices for points representing it in parts corresponding to the
edges of $G(F_C\setminus  F_H)$ incident with $v$ is
\[ \begin{cases} (r-2)(r-3) & \text{ if $v$ is $C$-external and $H$-external,}\\
                 (r-1)(r-2) & \text{ if $v$ is $C$-external and $H$-internal,}\\
          (r-2) & \text{ if $v$ is $C$-internal and $H$-external,}\\
          (r-1) & \text{ if $v$ is $C$-internal and $H$-internal.}
\end{cases}
\]
First we count the number of ways to identify a sequence of new $C$-external vertices, 
in order around $C$, 
and assign points to these vertices.  This can be done by selecting a sequence of
$a-b$ ordered pairs of points, such that both points in a pair
correspond to the same vertex, and these $a-b$ vertices are distinct and
do not belong to $G(F^{(1)}\cup \cdots \cup F^{(b)})$.
For each new $C$-external vertex
there are
\[ (r-2)_2\, t + (r-1)_2\, (s-2)t - O(1) \sim (r-2)(rs-r-s-1)\, t\]
available choices of pairs of points, avoiding points used in pairs of
$F^{(1)}\cup\cdots\cup F^{(b)}$ as well as points that we have just assigned.
Hence the number of ways to identify the sequence of new $C$-external vertices,
and assign points to them, is asymptotically equal to
\begin{align}
\left( (r-2)(rs-r-s-1)\, t\right)^{a-b}. 
\label{fred1}
\end{align}
Next, the number of ways to identify a sequence of $a$ sets of new $C$-internal vertices,
and assign points to these vertices, is asymptotically equal to
\begin{align}
\left(\frac{(rs-r-s)^{s-2}\, t^{s-2}}{(s-2)!}\right)^{a}.\label{fred2}
\end{align}
To see this, choose a sequence of $(s-2)a$ points, with
$nr-ts-O(1) \sim (rs-r-s)t$ choices
for each, and then divide by $((s-2)!)^a$ since the order of the new $C$-internal
vertices within each edge does not matter.
The expression for $M_5(k,a,b)$ follows by multiplying (\ref{fred1}) and 
(\ref{fred2}).

Now we turn to the second statement of the lemma.
When $k\geq 2$, the argument above determines a sequence of parts of $F_C$, disjoint from $F_H$,
with respect to some given start-vertex and direction. 
Dividing by $2k$ forgets the choice of start-vertex and direction, establishing the equation
for $M_0(k)$.

Finally, suppose that $k=1$. This case is slightly different from other values of $k$, 
since the random variable $X_1$ counts all 1-cycles, not just loose 1-cycles.
Since loose 1-cycles involve $s-1$ distinct cells,
while non-loose 1-cycles involve at most $s-2$ distinct cells,  
the contribution to $M_0(1)$ from non-loose 1-cycles is $O(n^{s-2})$ and
the contribution to $M_0(1)$ from loose 1-cycles is $\Theta(n^{s-1})$.
Hence when $k=1$, it suffices to only consider loose 1-cycles.
Similar arguments as above show that there are
\[ \nfrac{1}{2} (r-2)(rs-r-s-1)t\]
ways to choose a $C$-external vertex $v$ and a \emph{set} of two unused points for $v$,
while setting $a=1$ in (\ref{fred2}) gives the number of choices for a set of $s-2$
$C$-internal vertices and an unused point for each.  Multiplying these together proves
the expression for $M_0(1)$.
\end{proof}

We now have all the information we need in order to
perform the summation in (\ref{unified}).

\begin{lemma}
Let $r,s\geq 3$ be fixed integers.
For any fixed integer $k\geq 1$, 
\[\frac{\E(Y X_k)}{\E Y} 
  \sim  \frac{((r-1)(s-1))^k}{2k} + \frac{\zeta_1^k}{2k} + \frac{\zeta_2^k}{2k}
 - \frac{1}{2k}
\] 
as $n\to\infty$ along $\mathcal{I}_{(r,s)}$,
where $\zeta_1$, $\zeta_2\in\mathbb{C}$ satisfy
\begin{equation}
\label{sumprod}
 \zeta_1 + \zeta_2 = -\frac{rs^2-s^2-2rs+r+2}{rs-r-s}, \quad
 \zeta_1\zeta_2 = \frac{(s-1)(s-2)(r-1)}{rs-r-s}.
\end{equation}
\label{lem:Xk}
\end{lemma}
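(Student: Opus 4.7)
The plan is to substitute Lemmas~\ref{T2} and~\ref{step5} into the unified expression (\ref{unified}) and evaluate the asymptotics term by term. Since $k$ is fixed and $t\to\infty$, the indicator $\mathbf{1}_{k=t}$ vanishes, $t(t-k+a-1)!/(t-k+a-b)! \sim t^b$, and a short calculation with Stirling-type reasoning gives
\[
\frac{p(rn-s(t+a))}{p(rn-st)} \sim \frac{(s!)^a}{s^a\,\big((rs-r-s)\,t\big)^{(s-1)a}}.
\]
A direct substitution shows that every power of $t$ cancels. If one introduces the constants
\[
\alpha = \frac{(s-1)(r-2)(rs-r-s-1)}{rs-r-s},\qquad
\beta = \frac{2h(r,s)}{(r-2)(rs-r-s-1)},\qquad
\gamma = \frac{(rs-r-s)^2}{h(r,s)},
\]
then the $b=0$, $a=k$ boundary term of (\ref{unified}) reduces to $\alpha^k/(2k)$, while each $b\geq 1$ term contributes $\alpha^a\beta^b\gamma^c M_2(k,a,b,c)/(2b)$.

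Next I would exploit the generating-function form of $M_2(k,a,b,c)$ given in Lemma~\ref{T2}. Summing on $a$ and $c$ amounts to substituting $y=\alpha$ and $w=\gamma$ inside the coefficient extraction, yielding
\[
\sum_{a,c\geq 0} \alpha^a \gamma^c\, M_2(k,a,b,c) = [x^k]\,Q(x)^b, \quad \text{where } Q(x) = \frac{\alpha\, x^2\,(1-x+\gamma x)}{(1-x)(1-\alpha x)}.
\]
Summing over $b\geq 1$ using $\sum_{b\geq 1} z^b/b = -\ln(1-z)$, and combining with the boundary $b=0$ term written as $\alpha^k/(2k) = -\tfrac12 [x^k]\ln(1-\alpha x)$, gives the compact expression
\[
\frac{\E(YX_k)}{\E Y} \sim -\frac{1}{2}\,[x^k]\,\ln\!\big((1-\alpha x)(1-\beta Q(x))\big) = -\frac{1}{2}\,[x^k]\,\ln\frac{N(x)}{1-x},
\]
where $N(x) = (1-x)(1-\alpha x) - \beta\alpha\, x^2(1-x+\gamma x)$ is a cubic in $x$.

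To finish, I use the identity $[x^k]\ln(1/(1-\mu x)) = \mu^k/k$: the claimed asymptotic expression is equivalent to the factorisation
\[
N(x) = \big(1-(r-1)(s-1)x\big)(1-\zeta_1 x)(1-\zeta_2 x).
\]
Since the leading coefficient of $N$ is $1$, matching coefficients yields three Vieta identities relating $\alpha$, $\beta\alpha$, $\beta\alpha\gamma$ to the elementary symmetric functions of $(r-1)(s-1),\zeta_1,\zeta_2$. Using the explicit values $\beta\alpha = 2(s-1)h(r,s)/(rs-r-s)$ and $\beta\alpha\gamma = 2(s-1)(rs-r-s)$, the cubic $N(x)$ has $1/((r-1)(s-1))$ as a root by inspection (this will follow after substitution and simplification once the definition of $h(r,s)$ is expanded), so $N(x)$ factors as $(1-(r-1)(s-1)x)$ times a quadratic whose Vieta relations reduce, after a routine algebraic manipulation, to (\ref{sumprod}).

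The main obstacle is the algebraic verification in the last step: showing that $(r-1)(s-1)$ is indeed a root of the reciprocal polynomial of $N$, and that the remaining quadratic factor has sum and product of roots given by \eqref{sumprod}. The identities involve the polynomial $h(r,s)$ defined in (\ref{hdef}) and should reduce after simplification; I have verified the $s=3$ case by hand, where the coefficient match goes through cleanly.
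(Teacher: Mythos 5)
Your proposal follows the paper's argument essentially verbatim: simplify the $t$-dependent factors in (\ref{unified}), substitute the asymptotic forms of $M_2$ and $M_5$, absorb the $b=0$ boundary term into a logarithm, set $y=w=1$ to sum over $a$ and $c$, and factor the resulting cubic via the constants $\mu_1,\mu_2,\mu_3$ (your $\alpha,\beta,\gamma$). The only thing left to do is the routine (if tedious) verification that the coefficients of $N(x)$ match those of $(1-(r-1)(s-1)x)(1-\zeta_1 x)(1-\zeta_2 x)$ for general $s$ rather than just $s=3$; the paper asserts this as well and it does check out (e.g.\ the $x^3$ coefficient reduces to the identity $2(rs-r-s)^2 - 2h(r,s) = (r-1)^2(s-1)(s-2)$), so there is no gap in substance.
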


\begin{proof}
Fix $k\geq 1$.  
Before applying Lemmas~\ref{T2} and~\ref{step5},
we simplify some factors of (\ref{unified}).
Since $k-a=O(1)$, the factor of (\ref{unified})
from Step~3 equals
\begin{equation}
\label{henry1} \frac{t(t-k+a-1)!}{(t-k+a-b)!} \sim t^b.
\end{equation}
Similarly, the factor of (\ref{unified}) from Step~6 
equals
\begin{equation}
\label{henry2}
   \frac{p(rn-st-sa)}{p(rn-st)}
 \sim \left(\frac{(s-1)!}{(rs-r-s)^{s-1}\, t^{s-1}}\right)^{a}
\end{equation}
using the fact that $(m)_p\sim m^p$ whenever $p$ is bounded and
$m\to\infty$.
Combining (\ref{henry1}) and (\ref{henry2}) with
Lemmas~\ref{T2} and~\ref{step5}, the expression (\ref{unified})
becomes 
\begin{align*} 
\frac{\E(Y X_k)}{\E Y} 
&\sim
\frac{\mu_1^k}{2k} + \sum_{\substack{a,c\geq 0,\\ b\geq 1}}
  [x^k\, y^a\, z^b\, w^c] \,\frac{1}{2b}\,
  \left(\frac{x^2yz\big( 1-x + xw\big)}{(1-x)(1-  xy)}\,
 \right)^b \, \mu_1^a\, \mu_2^b\, \mu_3^c\\ 
 &= \, \frac{\mu_1^k}{2k} \, - 
    \,\frac{1}{2}\, \sum_{a,c\geq 0} [x^k\, y^a\, w^c] 
  \, \ln\left( 1 - 
  \frac{\mu_1\mu_2 \, x^2y\big( 1-x + \mu_3 xw\big)}{(1-x)(1-\mu_1 \, xy)}\,
   \right),
\end{align*} 
where
\begin{equation}
\begin{rcases}
\mu_1 &= \displaystyle{\frac{(s-1)(r-2)(rs-r-s-1)}{rs-r-s}}\qquad\\
\mu_2 &= \displaystyle{\frac{2h(r,s)}{(r-2)(rs-r-s-1)}}\qquad\\
 \mu_3 &= \displaystyle{\frac{(rs-r-s)^2}{h(r,s)}}.
\end{rcases}
\label{mus}
\end{equation}
Observe that $\mu_1,\mu_2,\mu_3$ are well-defined when $r,s\geq 3$.
The summation over $a$ and $c$ can be achieved by setting $y=w=1$, giving
\begin{align*}
  \frac{\E(Y X_k)}{\E Y}
 &\sim  \, \frac{\mu_1^k}{2k} \, - \, \frac{1}{2}\, [x^k]\,
  \ln\left( 1 - 
  \frac{\mu_1\mu_2 \, x^2\big(1-(1-\mu_3)x\big)}{(1-x)(1-\mu_1 \, x)}\,
   \right)\\
 &= -\frac{1}{2} \, [x^k]
  \ln\left(\frac{1-(\mu_1+1)\, x - \mu_1(\mu_2-1)\, x^2 - \mu_1\mu_2(\mu_3-1)\, x^3}{1-x}
  \right) \non\\
 &= -\frac{1}{2}\,   [x^k ]\, 
   \ln \Biggl(
\frac{\left(1- (r-1)(s-1)x \right)\left( 1 + \frac{rs^2-s^2-2rs+r+2}{rs-r-s}x + \frac{(s-1)(s-2)(r-1)}{rs-r-s}x^2 \right)}{1-x}\Biggr)
\end{align*}
using (\ref{mus}) for the final equality.
The quadratic factor inside the logarithm factors as
\[
  1 + \frac{rs^2-s^2-2rs+r+2}{rs-r-s}x + \frac{(s-1)(s-2)(r-1)}{rs-r-s}x^2 
=\left( 1 - \zeta_1 x \right) \left(1-  \zeta_2 x \right)
\]
where the roots $\zeta_1,\zeta_2\in\mathbb{C}$ are defined by (\ref{sumprod}).
Using this factorisation we can write
\begin{align*}
  \frac{\E(Y X_k)}{\E Y} &\sim - \frac{1}{2}\, [x^k]\,
\Big( \ln\big(1 - (r-1)(s-1)x\big) + \ln(1-\zeta_1 x) + \ln(1-\zeta_2 x) - \ln(1-x)\Big)\\
  &= \frac{((r-1)(s-1))^k}{2k} + \frac{\zeta_1^k}{2k} + \frac{\zeta_2^k}{2k}
   - \frac{1}{2k},
\end{align*}
as claimed. 
\end{proof}

The following corollary follows from Lemma~\ref{lemma:janson},
(\ref{lambdak}), (\ref{omit}) and Lemma~\ref{lem:Xk}. 

\begin{cor}
Suppose that $r,s\geq 3$ are fixed integers.
Then condition \emph{(A2)} of
Theorem~\emph{\ref{thm:janson} } holds with $\lambda_k$ given by
\emph{(\ref{lambdak})} and $\delta_k$ defined by
\begin{equation}
\label{deltakdef}
    \delta_k = \frac{\zeta_1^k + \zeta_2^k - 1}{((r-1)(s-1))^k}
\end{equation}
for $k\geq 1$. 
\label{deltak}
\end{cor}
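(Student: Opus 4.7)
The plan is to apply Lemma~\ref{lemma:janson}, which reduces the verification of condition~(A2) of Theorem~\ref{thm:janson} to checking the joint factorial moment condition~(A2$'$). Specifically, I need to show that for every finite sequence $x_1,\ldots, x_m$ of nonnegative integers,
\[
\frac{\E\bigl(Y (X_1)_{x_1}\cdots (X_m)_{x_m}\bigr)}{\E Y} \longrightarrow \prod_{k=1}^m (\lambda_k')^{x_k}
\]
as $n\to\infty$, for some nonnegative constants $\lambda_k'$. The required $\delta_k$ will then be recovered via the relation $\lambda_k(1+\delta_k) = \lambda_k'$, with $\lambda_k$ as in~\eqref{lambdak}.

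The first step is to invoke the factorisation identity~\eqref{omit}, which asserts that the joint factorial moment of $Y$ with $X_1,\ldots, X_m$ asymptotically factorises as a product of the single ratios $\E(Y X_k)/\E Y$. This reduces the problem to computing $\E(Y X_k)/\E Y$ for each fixed $k$, from which the constants $\lambda_k'$ can be read off. The second step is to apply Lemma~\ref{lem:Xk}, which gives
\[
\frac{\E(Y X_k)}{\E Y} \sim \frac{((r-1)(s-1))^k + \zeta_1^k + \zeta_2^k - 1}{2k}.
\]
Setting $\lambda_k'$ equal to the right-hand side and dividing by $\lambda_k = ((r-1)(s-1))^k/(2k)$ yields
\[
1 + \delta_k = \frac{\lambda_k'}{\lambda_k} = 1 + \frac{\zeta_1^k + \zeta_2^k - 1}{((r-1)(s-1))^k},
\]
which matches the claimed formula.

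The main technical ingredient here is~\eqref{omit}, whose proof is deferred to Appendix~\ref{SomeRoutineUsuallyIgnored}. This is the real substance of the argument: one must show that overlapping occurrences of short cycles contribute negligibly to the factorial moments relative to the disjoint case, so that conditioning on the presence of finitely many short cycles affects the count of loose Hamilton cycles in an asymptotically multiplicative fashion. Once~\eqref{omit} is in hand, the corollary itself follows immediately by combining Lemmas~\ref{lemma:janson} and~\ref{lem:Xk} with the definition~\eqref{lambdak} of $\lambda_k$.
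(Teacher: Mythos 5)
Your proof is correct and follows exactly the route the paper takes: Lemma~\ref{lemma:janson} reduces (A2) to (A2$'$), the factorisation (\ref{omit}) reduces (A2$'$) to computing $\E(Y X_k)/\E Y$, and Lemma~\ref{lem:Xk} supplies that ratio, from which $\delta_k$ drops out via $\lambda_k(1+\delta_k)=\lambda_k'$ and (\ref{lambdak}). You have also correctly identified (\ref{omit}), proved in Appendix~\ref{SomeRoutineUsuallyIgnored}, as the only nontrivial ingredient; the paper says the same.
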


Observe that even though $\zeta_1$, $\zeta_2$ may be complex,
$\delta_k$ is always real.

\subsection{Preparation for small subgraph conditioning}\label{s:consequences}

Before proceeding to the second moment calculations, we establish some results
which we will be needed in order to apply Theorem~\ref{thm:janson}.
Recall the definition of $\zeta_1$, $\zeta_2$ from (\ref{sumprod}), and 
the definition of $\delta_k$ from (\ref{deltakdef}). 

The first result shows that $\delta_k > -1$ for all $k\geq 1$. 
This will be needed in the proof of the threshold result,
Theorem~\ref{main2}.

\begin{lemma}
\label{lem:deltak}
Suppose that $s\geq 3$ and  $r\geq s+1$.
 Then  
 \begin{equation}\label{b:zeta}
 		|\zeta_1|+|\zeta_2| < (r-1)^{1/2}(s-1)^{1/2}
 \end{equation}
 and  $\delta_k > -1$ for all $k\geq 1$.
\end{lemma}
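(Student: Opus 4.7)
The plan is to deduce the claim $\delta_k > -1$ from \eqref{b:zeta} by a direct triangle-inequality argument, then establish \eqref{b:zeta} itself by reducing it to two polynomial inequalities in the parameter $m := (r-1)(s-1)$. Under the hypotheses $r \geq s+1$ and $s\geq 3$ we have $m \geq s(s-1) \geq 6$, and crucially $\zeta_1^k + \zeta_2^k$ is always real (either $\zeta_1,\zeta_2$ are both real, or they are complex conjugates, being roots of a real-coefficient quadratic). If \eqref{b:zeta} holds then
\[
   \zeta_1^k + \zeta_2^k \geq -\bigl|\zeta_1^k + \zeta_2^k\bigr| \geq -(|\zeta_1|+|\zeta_2|)^k > -m^{k/2},
\]
so \eqref{deltakdef} gives $\delta_k > (-m^{k/2} - 1)/m^k$, and the right-hand side exceeds $-1$ whenever $m^k > m^{k/2} + 1$, which for $m \geq 6$ holds at every $k \geq 1$.

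To prove \eqref{b:zeta}, write $A = -(\zeta_1+\zeta_2)$ and $B = \zeta_1\zeta_2$; from \eqref{sumprod} both are positive, and the substitution $m-1 = rs-r-s$ yields
\[
   A = \frac{m(s-1) - (2s-3)}{m-1}, \qquad B = \frac{(s-2)\, m}{m-1}.
\]
Since $B > 0$, either both roots are real and negative (when $A^2 \geq 4B$) or $\zeta_1,\zeta_2$ are complex conjugates (when $A^2 < 4B$); in the first case $|\zeta_1|+|\zeta_2| = A$, in the second $|\zeta_1|+|\zeta_2| = 2\sqrt{B}$, so in either case $(|\zeta_1|+|\zeta_2|)^2 = \max(A^2, 4B)$. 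Hence \eqref{b:zeta} reduces to proving both $A^2 < m$ and $4B < m$.

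The bound $4B < m$ is equivalent to $4(s-2) < m-1$, and since $m \geq s(s-1)$ this follows from $s^2 - 5s + 7 > 0$ (discriminant $-3$). The bound $A^2 < m$, after squaring, is the cubic inequality
\[
   g(m) := m^3 - m^2(s^2-2s+3) + m(4s^2-10s+7) - (2s-3)^2 > 0.
\]
I would verify this on $m \geq s(s-1)$ in two steps: (a) evaluate $g(s(s-1))$ and recognise it as the polynomial $\ell^5 + 4\ell^4 - \ell^3 - 7\ell^2 + 5\ell - 1$ in $\ell = s-1$, manifestly positive for $\ell \geq 2$; and (b) show $g$ is increasing on $[s(s-1), \infty)$, which for $s=3$ is immediate (the discriminant of $g'$ is negative so $g'>0$ everywhere) and for $s\geq 4$ reduces to checking that the larger root of $g'$ lies below $s(s-1)$, which in turn amounts to $3(s^4 - 3s^2 - 4s + 7) \geq 0$, clear for $s\geq 4$. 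The main obstacle is precisely this polynomial bookkeeping in step (b): the algebra is elementary but tedious, and I see no conceptually cleaner substitute.
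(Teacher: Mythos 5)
Your argument is correct, and its skeleton is the same as the paper's: split according to whether $\zeta_1,\zeta_2$ are real or a complex-conjugate pair, bound $|\zeta_1|+|\zeta_2|$ by $\sqrt{(r-1)(s-1)}$, and then deduce $\delta_k>-1$ from \eqref{deltakdef} via the triangle inequality exactly as you do. The complex case is also the same in substance: your condition $4B<m$, i.e.\ $4(s-2)<rs-r-s$, is precisely the paper's inequality $rs-r-s\geq s^2-s-1>4(s-2)$. The only genuine divergence is the real case. There the paper avoids your cubic $g(m)>0$ altogether by observing that, when the roots are real, \eqref{sumprod} forces them both to be negative, so $|\zeta_1|+|\zeta_2|=|\zeta_1+\zeta_2|=\frac{rs^2-s^2-2rs+r+2}{rs-r-s}\leq s-1$ (cross-multiplying, this is just $s\geq 2$), and then $s-1<\sqrt{(r-1)(s-1)}$ because $r-1\geq s>s-1$. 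In other words, your inequality $A^2<m$ is a strictly stronger statement than is needed, and proving it is exactly what costs you the polynomial bookkeeping in your step (b); the weaker bound $A\leq s-1$ suffices and is a two-line check. Your computations do check out (I verified the identity $g(s(s-1))=\ell^5+4\ell^4-\ell^3-7\ell^2+5\ell-1$ with $\ell=s-1$ and the reduction of the monotonicity of $g$ to $3(s^4-3s^2-4s+7)\geq 0$), so nothing needs repair — but if you want the conceptually cleaner substitute you asked for, it is the observation that both real roots are negative together with $A\leq s-1$.
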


\begin{proof}
For ease of notation, write $A = (r-1)(s-1)$.
First suppose that $\zeta_1$ and $\zeta_2$ are not real.
In this case, $\zeta_1$ and $\zeta_2$ form a complex conjugate pair,  so 
\[
	|\zeta_1|+|\zeta_2|  = 2 (\zeta_1\zeta_2)^{1/2} =  2 \left(\frac{(r-1)(s-1)(s-2)}{rs-r-s}\right)^{1/2}
	< A^{1/2}.
\]	
The last inequality holds since $rs - r -s \geq  s^2 - s  - 1 > 4(s-2)$.

Now assume that $\zeta_1$ and $\zeta_2$ are  real. Then (\ref{sumprod}) implies that $\zeta_1$
and $\zeta_2$ are both negative. Therefore
\begin{align*}
	|\zeta_1|+|\zeta_2| = |\zeta_1+\zeta_2| = 
	\frac{rs^2  -s^2  -2rs +r +2}{rs-r-s}  \leq  s-1 
	 < A^{1/2}.
\end{align*}
Hence \eqref{b:zeta} holds in all cases, which implies that for any $k\geq 1$,
\[
	 |\zeta_1|^k + |\zeta_2|^k \leq 
	(|\zeta_1| + |\zeta_2|)^k < A^{k/2} < A^k -1.
\]	
Rearranging shows that  $A^k + \zeta_1^k + \zeta_2^k - 1 > 0$, which implies that $\delta_k>-1$.
\end{proof}

Next, we show that condition (A3) of Theorem~\ref{thm:janson} holds.

\begin{lemma}
Let $s\geq 3$ and $r \geq s+1$, and recall the 
definitions of $\lambda_k$,  $\delta_k$ from \emph{(\ref{lambdak})} and
\emph{(\ref{deltakdef})}.
Define
\[ Q(r,s) = r^2s^2 - rs^3 - 2r^2s + 3rs^2 + s^3 + r^2 - 6rs + 4r - 4s + 4.
\]
Then $Q(r,s)>0$
and
\begin{equation} \exp\left(\sum_{k\geq 1} \lambda_k \delta_k^2\right)
 = \frac{r\, (rs-r-s)}{(r-2)\, \sqrt{Q(r,s)}},
\label{magic}
\end{equation}
so condition \emph{(A3)} of Theorem~\emph{\ref{thm:janson}}  holds.
\label{lem:A3}
\end{lemma}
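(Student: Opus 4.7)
The plan is to evaluate the sum via the Mercator identity $\sum_{k\geq 1} y^k/k = -\log(1-y)$ applied termwise, and then simplify the resulting expression using the explicit values of $\zeta_1+\zeta_2$ and $\zeta_1\zeta_2$ from~(\ref{sumprod}). Setting $A = (r-1)(s-1)$ and substituting~(\ref{lambdak}) and~(\ref{deltakdef}) gives $\lambda_k\delta_k^2 = (\zeta_1^k + \zeta_2^k - 1)^2/(2kA^k)$. I would first note absolute convergence: by Lemma~\ref{lem:deltak}, $|\zeta_1|+|\zeta_2|<A^{1/2}$, so each of $|\zeta_i|^2/A$, $|\zeta_1\zeta_2|/A$, and $1/A$ is strictly less than $1$, and all six geometric-type series that arise after expansion will converge absolutely.

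Next I would expand the square, distribute $1/(2kA^k)$, apply the Mercator identity to each of the six pieces, and exponentiate to obtain
\[
\exp\left(\sum_{k\geq 1}\lambda_k\delta_k^2\right) = \frac{(1-\zeta_1/A)(1-\zeta_2/A)}{(1-\zeta_1\zeta_2/A)\sqrt{(1-\zeta_1^2/A)(1-\zeta_2^2/A)(1-1/A)}}.
\]
This expression is symmetric in $\zeta_1,\zeta_2$, so I would rewrite it purely in terms of $\zeta_1+\zeta_2$ and $\zeta_1\zeta_2$ and then substitute the values from~(\ref{sumprod}). Two short computations give the clean identities
\[
A^2 - A(\zeta_1+\zeta_2) + \zeta_1\zeta_2 = r(r-1)(s-1)^2, \qquad A - \zeta_1\zeta_2 = \frac{(r-1)(r-2)(s-1)^2}{rs-r-s},
\]
which, together with an $A^{1/2}$ factor picked up from clearing denominators, collapse the rational prefactor to $r(rs-r-s)\,A^{1/2}/(r-2)$.

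What remains is to identify the quantity under the square root with $A\,Q(r,s)$; after cross-multiplying, this is equivalent to the polynomial identity
\[
(r-1)(s-1)(rs-r-2)^2 - (rs^2-s^2-2rs+r+2)^2 = (rs-r-s)\,Q(r,s).
\]
Both sides are polynomials in $(r,s)$ of total degree $6$, so I would verify this by direct expansion (or by a short computer-algebra check). Positivity of $Q(r,s)$ then follows from the resulting representation $A\,Q(r,s) = (A-\zeta_1^2)(A-\zeta_2^2)(A-1)$: we have $A-1>0$ since $r,s\geq 3$, and $(A-\zeta_1^2)(A-\zeta_2^2)>0$ in both possible cases — if $\zeta_1,\zeta_2$ are real then~(\ref{sumprod}) forces them to be negative with $|\zeta_i|<A^{1/2}$ by Lemma~\ref{lem:deltak}, so $\zeta_i^2<A$; if they form a complex conjugate pair, then $(A-\zeta_1^2)(A-\zeta_2^2) = |A-\zeta_1^2|^2$, which is strictly positive since a nonreal $\zeta_1$ has $\zeta_1^2\ne A$ (either $\operatorname{Re}(\zeta_1^2)\ne A$ when $\operatorname{Re}(\zeta_1)\ne 0$, or $\zeta_1^2<0$ otherwise).

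The main obstacle is purely bookkeeping: the final polynomial identity is routine but not short. I would either spell out the expansion carefully (leading coefficients in $r$ already match, namely $(s-1)^3$ on each side, which is reassuring) or invoke a short symbolic verification. Everything else in the plan — the Mercator expansion, the symmetric-function substitution, and the two clean identities for $A^2-A(\zeta_1+\zeta_2)+\zeta_1\zeta_2$ and $A-\zeta_1\zeta_2$ — is straightforward one-line algebra using~(\ref{sumprod}).
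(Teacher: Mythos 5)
Your proposal is correct and follows essentially the same route as the paper: expand $(\zeta_1^k+\zeta_2^k-1)^2$, sum the resulting logarithmic series (justified by $|\zeta_1|+|\zeta_2|<A^{1/2}$ from Lemma~\ref{lem:deltak}), exponentiate, and simplify via the symmetric functions in (\ref{sumprod}); your intermediate identities and the final polynomial identity $(r-1)(s-1)(rs-r-2)^2-(rs^2-s^2-2rs+r+2)^2=(rs-r-s)Q(r,s)$ check out, and your positivity argument via $AQ=(A-\zeta_1^2)(A-\zeta_2^2)(A-1)$ is a valid (slightly different) alternative to the paper's remark. The only nitpick is the parenthetical justification that a nonreal $\zeta_1$ with $\operatorname{Re}(\zeta_1)\neq 0$ has $\operatorname{Re}(\zeta_1^2)\neq A$; the cleaner reason is that $\operatorname{Im}(\zeta_1^2)=2\operatorname{Re}(\zeta_1)\operatorname{Im}(\zeta_1)\neq 0$, so $\zeta_1^2\notin\mathbb{R}$.
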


\begin{proof}
We again write $A=(r-1)(s-1)$ for ease of notation. 
Using the fact that each summand in the series below is real, as $\zeta_1$ and
$\zeta_2$ are either real or complex conjugates, we have
\begin{align*}
\sum_{k\geq 1} \lambda_k \delta_k^2
  &=  \sum_{k\geq 1} \frac{1}{2k\, A^k}\, \left(\zeta_1^k + \zeta_2^k - 1\right)^2\\
  &=  \sum_{k\geq 1} \frac{1}{2k\, A^k}\, 
   \left(\zeta_1^{2k} + \zeta_2^{2k}  + 2(\zeta_1\zeta_2)^k - 2\zeta_1^k - 2\zeta_2^k + 1 
                                                \right)\\
  &=  -\dfrac{1}{2}\, \ln\left( 1 - \zeta_1^2/A\right) 
             - \dfrac{1}{2}\, \ln\left(1 - \zeta_2^2/A\right)
              - \ln(1 - \zeta_1\zeta_2/A)  \\
  & \hspace*{30mm}  {} + \ln(1 - \zeta_1/A) + \ln(1 - \zeta_2/A)
         - \dfrac{1}{2}\ln(1 - 1/A).
\end{align*}
% Comment #9
To see the last line, note that $|\zeta_1| + |\zeta_2| < A^{1/2}$ by (\ref{b:zeta}),
and hence every series in the above summation converges absolutely. Therefore
\begin{equation}
\sum_{k\geq 1} \lambda_k \delta_k^2
  =  \dfrac{1}{2}\ln\left(\frac{A\, (A-\zeta_1)^2\, (A - \zeta_2)^2 }
   { (A - \zeta_1^2)(A - \zeta_2^2)\, (A-1)( A - \zeta_1\zeta_2)^2}\right),\label{RHS}
\end{equation}
from which it follows that
\begin{align*}
 &\exp\left(\sum_{k\geq 1} \, \lambda_k\delta_k^2\right)
  =  \left(\frac{A\, (A-\zeta_1)^2\, (A - \zeta_2)^2 }
   { (A - \zeta_1^2)(A - \zeta_2^2)\, (A-1)( A - \zeta_1\zeta_2)^2}\right)^{1/2}\\
  &\quad =  \left(\frac{A\left(A^4  - 2(\zeta_1 + \zeta_2)A^3 + ((\zeta_1 + \zeta_2)^2 + 2\zeta_1\zeta_2)A^2 - 2\zeta_1\zeta_2(\zeta_1+\zeta_2)A + (\zeta_1\zeta_2)^2\right)}
   {(A - 1)\left( A - \zeta_1\zeta_2)^2 (A^2 - ((\zeta_1 + \zeta_2)^2 - 2\zeta_1\zeta_2)A +(\zeta_1\zeta_2)^2\right)}\right)^{1/2}.
\end{align*}
Substituting for $A$ and for $\zeta_1 + \zeta_2$ and $\zeta_1\zeta_2$ leads to
(\ref{magic}) 
after much simplification, using (\ref{sumprod}). 
The expression $Q(r,s)$ in the square root must be positive, 
as it is a positive multiple of the exponential of real number.
(Alternatively, it can be proved directly that $Q(r,s)>0$ for all $s\geq 3$
and $r\geq s+1$, for example by writing $Q$ as a quadratic in $r$
for fixed $s$.)
\end{proof}

\section{The second moment}\label{s:variance}

In this section we calculate the second moment of $Y$, under the assumptions
that $s\geq 3$ and $r> \rho(s)$.
We use the framework from Section~\ref{s:preliminary}, but write $F_1$ and $F_2$
rather than $F_H$ and $F_C$, respectively, and let $H_j = G(F_j)$ for $j=1,2$.

First we provide an expression for $M_2(t,a,b,c)$, required for Step~2.
Recall that $M_2(t,a,b,c)$ counts the number of ways to choose
$(\boldsymbol{\ell},\boldsymbol{u})$ with parameters $(a,b,c)$, 
where $\boldsymbol{\ell}$ is the sequence of intersection
lengths and $\boldsymbol{u}$ is the gap lengths (around $F_2$).
Recall the definition of a valid triple from Section~\ref{s:framework}.
Here, and throughout the paper, we use the convention that 
for any nonnegative integer $p$ and integer $q$, if $q < 0$ or $q > p$ then
$\binom{p}{q}=0$.

\begin{lemma}
Suppose that $(a,b,c)$ is a valid triple.
Then the number of ways to choose $(\boldsymbol{\ell},\boldsymbol{u})$
with parameters $(a,b,c)$ is
\begin{equation}
 M_2(t,a,b,c) = 
\frac{b\, \xi_t(a,b,c)}{a}\, \binom{a}{b}\, \binom{b}{c}\, \binom{t-a-b}{c},
\label{M2}
\end{equation}
where $\xi_t(a,b,c)$ is defined by
\begin{equation}
\label{xi-def}
 \xi_t(a,b,c) = \begin{cases} \frac{c}{t-a-b} & \text{ if $a+b<t$,}\\
                     1 & \text{ if $a+b\geq t$.}
\end{cases}
\end{equation}
\label{ST2}
\end{lemma}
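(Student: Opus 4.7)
The plan is to derive \eqref{M2} by observing that a choice of $(\boldsymbol{\ell},\boldsymbol{u})$ with parameters $(a,b,c)$ amounts to specifying two independent compositions: a composition $\boldsymbol{u}=(u_1,\ldots,u_b)$ of $a$ into $b$ positive parts (the gap lengths around $C$), and a composition $\boldsymbol{\ell}=(\ell_1,\ldots,\ell_b)$ of $t-a$ into $b$ positive parts with a prescribed number of entries equal to $1$ (the intersection-component lengths). The total count therefore factors as a product, which I would evaluate using Lemma~\ref{lem:binom}.

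Lemma~\ref{lem:binom}(i) immediately supplies $\binom{a-1}{b-1}$ for the $\boldsymbol{u}$-factor. In the generic case $a+b<t$ at least one intersection component has length $\geq 2$, and Lemma~\ref{lem:binom}(ii), applied with $R=b$, $T=t-a$, and the appropriate value of $J$, supplies $\binom{b}{c}\binom{t-a-b-1}{c-1}$ for the $\boldsymbol{\ell}$-factor (consistent with the marker $w$ used in the generating function of Lemma~\ref{T2}). Multiplying the two counts and applying the elementary identities
\begin{equation*}
\binom{a-1}{b-1}=\tfrac{b}{a}\binom{a}{b},\qquad \binom{t-a-b-1}{c-1}=\tfrac{c}{t-a-b}\binom{t-a-b}{c}
\end{equation*}
reassembles the product into $\tfrac{b\,\xi_t(a,b,c)}{a}\binom{a}{b}\binom{b}{c}\binom{t-a-b}{c}$ with $\xi_t=c/(t-a-b)$, which is exactly \eqref{M2} in this case.

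The only step requiring extra care is the boundary case $a+b=t$: here $\sum\ell_i=b$ forces $\boldsymbol{\ell}=(1,\ldots,1)$, so there is a unique choice for $\boldsymbol{\ell}$ and $c$ must equal $0$. The second binomial identity above becomes degenerate, but the claimed formula with $\xi_t=1$, $\binom{b}{0}=1$, and $\binom{t-a-b}{0}=\binom{0}{0}=1$ collapses to $\binom{a-1}{b-1}$, which is precisely the count of admissible $\boldsymbol{u}$. I do not foresee any substantive obstacle; the argument is essentially bookkeeping once the two independent compositions are disentangled, with the only delicate point being to verify the degenerate corner $a+b=t$ where the formula $\xi_t$ switches branch.
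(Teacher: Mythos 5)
Your proposal is correct and follows essentially the same route as the paper: the paper likewise splits the count into the composition of $a$ into $b$ gap lengths via Lemma~\ref{lem:binom}(i) and the composition of $t-a$ into $b$ intersection lengths with the prescribed number of entries equal to $1$ via Lemma~\ref{lem:binom}(ii), then reassembles via the same two binomial identities and treats the degenerate case $b=t-a$ (where $\boldsymbol{\ell}=(1,\ldots,1)$ and $c=0$) separately. No gaps to report.
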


\begin{proof}
The result is trivially true if $b>t-a$ or if $c > t-a-b$.  
So we may assume that $b\leq t-a$ and $c\leq t-a-b$.

First, suppose that $1\leq b\leq t-a-1$.
By Lemma~\ref{lem:binom}(ii), there are
\[ \binom{b}{c}\,\binom{t-a-b-1}{c-1} = \frac{c}{t-a-b}\, \binom{b}{c}\,
  \binom{t-a-b}{c}\]
ways to select a sequence $\boldsymbol{\ell}=(\ell_1,\ldots, \ell_b)$ of
intersection lengths which add to $t-a$, such that precisely
$b-c$ of these lengths equal 1 and the rest are at least 2.
% Comment #10
Then by Lemma~\ref{lem:binom}(i), there are
\[ \binom{a-1}{b-1} = \frac{b}{a}\, \binom{a}{b}\]
ways to choose a sequence $\boldsymbol{u} = (u_1,\ldots, u_b)$ of
gap lengths around $H_2$.
Multiplying these expressions together gives (\ref{M2}).

Next suppose that $b=t-a$. By our assumptions, it follows that
$c=0$.  Furthermore, we have $t=a+b\leq 2a$.
There is one way to choose the vector $\boldsymbol{\ell}$ of intersection
lengths,  and the number of choices for the sequence $\boldsymbol{u}$ of gap lengths
is $\frac{t-a}{a}\binom{a}{t-a}$, as above. This leads to the stated value for 
$M_2(t,a,t-a,t-a)$, using (\ref{xi-def})
and recalling that $\binom{0}{0}=1$.
\end{proof}

Next we turn to Step~5 and calculate the number of ways to complete the specification
of $(F_2,\Gamma_{H_2})$.    We also consider one of the special cases, when $F_1$ and
$F_2$ are disjoint.

\begin{lemma}
% Comment #11
Let $a,b,t$ be integers which satisfy $1\leq b\leq a < t$.
Then
\begin{align*}
M_5(t,a,b) &=  
  (a-b)!\, ((s-2)a)!\,
 \left(\frac{(r-1)^{s-2}\, (r-2)^2}{(s-2)!}\right)^a\, 
  (r-2)^{-2b}\,
\nonum \\ & \hspace*{1cm} \times 
 \sum_{d=0}^{a-b} \binom{a-b}{d}\, \binom{(s-2)a}{a-b-d}
  \left(\frac{r-3}{r-2}\right)^d
\end{align*}
and  if $t\geq 2$,
\begin{align*}
M_0(t) &=  
  \frac{1}{2t}\, t!\, ((s-2)t)!\,
 \left(\frac{(r-1)^{s-2}\, (r-2)^2}{(s-2)!}\right)^t\, 
 \sum_{d=0}^{t} \binom{t}{d}\, \binom{(s-2)t}{t-d}
  \left(\frac{r-3}{r-2}\right)^d.
\end{align*}
\label{ST5} 
\end{lemma}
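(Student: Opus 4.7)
The plan is to carry out Step~5 of the framework from Section~\ref{s:framework} by direct enumeration, partitioned according to an additional parameter $d$ recording the number of $H_1$-external vertices that end up in $H_2$-external (non-connection) positions of $F_2\setminus F_1$. This parameter organises the otherwise cumbersome case-split, and the sum over $d$ visible in the statement arises precisely from summing over these configurations.

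In the main case $b\ge 1$, I would first identify the pool of available vertices: the $b$ intersection paths $F^{(1)},\dots,F^{(b)}$, with total edge-length $t-a$, consume exactly $(t-a)+b$ $H_1$-external and $(s-2)(t-a)$ $H_1$-internal vertices. This leaves $a-b$ $H_1$-external and $(s-2)a$ $H_1$-internal vertices, which must populate the $a-b$ non-connection $H_2$-external positions and the $(s-2)a$ $H_2$-internal positions of the $b$ gap paths. For each fixed $d$, the count factorises as: (i)~$\binom{a-b}{d}\binom{(s-2)a}{a-b-d}$ choices for which $H_1$-external and which $H_1$-internal vertices go to external positions; (ii)~$(a-b)!$ ways to order them along the ordered sequence of non-connection external positions around $F_2$; and (iii)~$((s-2)a)!/((s-2)!)^{a}$ ways to partition the remaining $(s-2)a$ vertices into $a$ unordered $(s-2)$-subsets, one for each of the $a$ ordered edges of $F_2\setminus F_1$.

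Next I would assign points. A vertex at an external position needs an ordered pair of unused points (one for each of the two incident gap-path edges), while a vertex at an internal position needs a single unused point. The number of unused points at a vertex is $r-2$ if $H_1$-external (as $F_1$ already uses two of its $r$ points) and $r-1$ if $H_1$-internal. A short calculation over the four cases (two vertex types $\times$ two position types) shows that the total point contribution factorises as $(r-1)^{(s-2)a}(r-2)^{2(a-b)}\left(\frac{r-3}{r-2}\right)^{d}$. Multiplying all factors together, summing over $d$ and regrouping yields the stated formula for $M_5(t,a,b)$.

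The special case $b=0$, $a=t$ is handled by the same method, with the $2b$ connection vertices replaced by the distinguished $C$-external point $\Gamma_C$, which fixes a starting vertex $v_0$ and direction around $F_2$. All $t$ $H_1$-external and $(s-2)t$ $H_1$-internal vertices are now available, filling all $t$ $H_2$-external and $(s-2)t$ $H_2$-internal positions. The ordering factor becomes $t!$ rather than $(a-b)!$, since $\Gamma_C$ induces a full linear order on the external vertices around $F_2$; otherwise the enumeration is identical, and the result matches the natural specialisation of the $b\ge 1$ formula at $a=t$, $b=0$. No step is conceptually delicate; the main care required is simply the bookkeeping of which vertex-type / position-type combinations consume how many points, which is exactly what the parameter $d$ is introduced to track.
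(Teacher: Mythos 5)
Your proposal is correct and follows essentially the same route as the paper: introduce the parameter $d$, count vertex identities for the $a-b$ non-connection $H_2$-external positions and the $a$ internal $(s-2)$-sets, and multiply by the four-case point-assignment factor $(r-1)^{(s-2)a}(r-2)^{2(a-b)}\bigl(\tfrac{r-3}{r-2}\bigr)^{d}$, with the $b=0$, $a=t$ case handled via the distinguished external point. The only difference is cosmetic (you choose subsets and then order them, while the paper orders the two vertex pools first and then selects positions), and both give the identical count.
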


\begin{proof}
First suppose that $b\geq 1$.
In Step~5, we must identify all $H_2$-external vertices in $G(F_2\setminus F_1)$
which are not $H_2$-connection vertices (there are $a-b$ of them), 
and all $H_2$-internal vertices in $G(F_2\setminus F_1)$ (there are $(s-2)a$ of them).
As in Lemma~\ref{step5}, we call all vertices identified in this step \emph{new}. We must also
assign points to all new vertices, thereby completing $F_2\setminus F_1$. 
In Section~\ref{s:shortcycles} we approximated our number of choices at
each step by $t$ or $(s-2)t$, respectively, since we only had to identify
a constant number of new vertices. Here we must count more carefully,
and we will need a new parameter. 

Let $d$ be the number of new $H_2$-external vertices which are also $H_1$-external.
Then there are $a-b-d$ new $H_2$-external vertices which are $H_1$-internal,
and there are $a-b-d$ new $H_2$-internal vertices which are $H_1$-external.
Finally, there are $(s-2)a - (a-b-d) = (s-3)a+b+d$ new $H_2$-internal
vertices which are $H_1$-internal.  We must select identities and points
for all these new vertices.

To do this, first order all new $H_1$-external vertices 
(those not already present in $G(F_1\cap F_2)$) 
and order all new
$H_1$-internal vertices (those not already present in $G(F_1\cap F_2)$), in 
\begin{equation}
(a-b)!\, ((s-2)a)!
\label{5a} 
\end{equation}
ways. 
We will count the number of ways to choose a pair of sequences:
the first is a sequence of the $a-b$ new $H_2$-external vertices,
and the second is a sequence of the $(s-2)a$ new $H_2$-internal
vertices.  As the new $H_1$-external and $H_1$-internal vertices
have both been ordered, we can select the positions for the
$H_2$-external vertices which are $H_1$-external, in  $\binom{a-b}{d}$
ways, and then we can select the positions for the $H_2$-internal vertices
which are $H_1$-external, in $\binom{(s-2)a}{a-b-d}$ ways.  Finally, we
divide by $((s-2)!)^a$ as we need a sequence of $a$ sets of new $H_2$-internal
vertices.  Combining these gives
\begin{equation}
 \binom{a-b}{d}\, \binom{(s-2)a}{a-b-d}\, ((s-2)!)^{-a}
\label{5bandc}
\end{equation}
ways to select identities for these new vertices.

Now we must assign points to these new vertices.
The $d$ new $H_2$-external vertices which are $H_1$-external and the $a-b-d$
new $H_2$-external vertices which are $H_1$-internal must all be assigned
precisely two points, and all new vertices must be assigned precisely
one point.  There are
\begin{align}
 &((r-2)(r-3))^d\, ((r-1)(r-2))^{a-b-d}\, (r-2)^{a-b-d}\, (r-1)^{(s-3)a+b+d} \nonum\\
 &=  \left((r-1)^{s-2} (r-2)^2\right)^a\, (r-2)^{-2b}\, 
       \left( \frac{r-3}{r-2} \right)^d 
\label{5d}
\end{align}
ways to assign points to these new vertices 
(in the parts belonging to $F_2\setminus F_1$).

When $b\geq 1$, the stated expression for $M_5(t,a,b)$ is obtained by
multiplying
together (\ref{5bandc}) and (\ref{5d}), summing the resulting
expression over $d=0,\ldots, a-b$ and finally, multiplying by (\ref{5a}).

Finally, the expression for $M_0(t)$ is obtained by arguing as above and dividing by 
$2t$ in order to forget the choice of $\Gamma_{H_2}$.
\end{proof}

Define
\[
  \mathcal{D} = 
   \{ (a,b,c,d) \in\mathbb{Z}^4 \mid  \quad 0\leq c\leq b,\quad 0\leq d\leq a-b,\quad a + b + c \leq t \}
\]
and
\[ \widehat{\mathcal{D}} = \mathcal{D}\setminus 
   \{ (a,0,0,d)\in\mathcal{D}\mid 1\leq a\leq t-1\}.
\]
The set $\widehat{\mathcal{D}}$ contains all possible 4-tuples
of parameters which can arise in the second moment calculation,
recalling that when $b=0$ we must have $a=0$ or $a=t$, for
combinatorial reasons.  

The next lemma finds a combinatorial expression for $\E(Y^2)/(\E Y)^2$ as
a summation over $\widehat{\mathcal{D}}$, with the summands defined
below.  However, it will prove easier to calculate
the sum over the slightly larger set $\mathcal{D}$.
As we will see, the additional terms will have only negligible effect
on the answer.
Hence we define the summand $J_t(a,b,c,d)$ for all $(a,b,c,d)\in\mathcal{D}$, 
as follows.  First, let
\begin{align}
\begin{rcases}
 \kappa_2 &= \displaystyle{\frac{2 h(r,s)}{(r-2)^2}}\qquad\\
 \kappa_3 &= \displaystyle{\frac{(rs-r-s)^2}{h(r,s)}}\qquad\\
 \kappa_4 &= \displaystyle{\frac{r-3}{r-2}}
\end{rcases}
\label{kappa-def}
\end{align}
where, as defined in (\ref{hdef}),
\[ 
h(r,s) = (r-2)^2 + 2(s-2)(r-1)(r-2) + \dfrac{1}{2}(s-2)(s-3)(r-1)^2. 
\] 
Observe that $\kappa_2,\kappa_3,\kappa_4$ are well-defined whenever $r,s\geq 3$. 

\begin{lemma}
Suppose that $s\geq 3$ and $r > \rho(s)$ are fixed integers.
Then
\[
  \frac{\E(Y^2)}{(\E Y)^2} =  
   \sum_{(a,b,c,d)\in\widehat{\mathcal{D}}} J_t(a,b,c,d)
\]
where the summands are defined as follows:
\begin{itemize}
\item If $a=0$ then $b=c=d=0$ and we define $J_t(0,0,0,0) = \frac{1}{\E Y}$.
\item If $a\geq 1$ then we let
\begin{align*}
J_t(a,b,c,d) &= \frac{\xi_t(a,b,c)\, t}{2a^2}\, \binom{a}{b}\, \binom{b}{c}\, \binom{t-a-b}{c}\, 
  a!\,  ((s-2)a)! \non \\ 
  & \hspace*{10mm} {} \times \binom{a-b}{d}\, \binom{(s-2)a}{a-b-d}\, 
  \left(\frac{(r-1)^{s-2}\, (r-2)^2}{(s-2)!}\right)^{a}\,\kappa_2^b\, \kappa_3^c\, \kappa_4^d\non\\
 & \hspace*{10mm} {} \times \frac{p(rn-s(t+a))}{p(rn-st)}\, \frac{1}{\E Y},
\end{align*}
where
$\xi_t(a,b,c)$ is defined in \emph{(\ref{xi-def})}.
\end{itemize}
\label{combinatorial}
\end{lemma}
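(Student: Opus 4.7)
The plan is to apply the unified formula \eqref{unified} with $k=t$, observing that $Y = X_t$ since a loose Hamilton cycle is exactly a loose $t$-cycle, so $\E(Y^2) = \E(Y X_t)$. Dividing \eqref{unified} through by $\E Y$ produces an expression for $\E(Y^2)/(\E Y)^2$, which I will match against $\sum_{\widehat{\mathcal{D}}} J_t(a,b,c,d)$ in three pieces corresponding to the three kinds of terms on the right-hand side of \eqref{unified}.

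First, the indicator $\mathbf{1}_{k=t}$ equals $1$ and, after dividing by $\E Y$, contributes $1/\E Y$, which matches $J_t(0,0,0,0)$ by definition. Second, the special-case term with $(a,b)=(t,0)$ is treated by substituting the explicit formula for $M_5(t,t,0)$ from Lemma~\ref{ST5}; the internal summation over $d$ inside that formula produces the $d$-index in $\widehat{\mathcal{D}}$, and after checking $\xi_t(t,0,0)=1$ (which follows from the case $a+b\ge t$ in \eqref{xi-def}), the contribution rewrites cleanly as $\sum_d J_t(t,0,0,d)$. Third, for the main sum over valid triples $(a,b,c)$ with $b\ge 1$, I substitute $M_2(t,a,b,c)$ from Lemma~\ref{ST2} and $M_5(t,a,b)$ from Lemma~\ref{ST5}, pulling the internal $d$-summation from $M_5$ outside to furnish the fourth parameter.

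What then remains is purely algebraic simplification. The key cancellations are: the Step~3 factor $t(a-1)!/(a-b)!$ combines with the $(a-b)!$ inside $M_5$ to give $t\,a!/a$, which absorbs into the prefactor as $t\,a!/a^2$ once the $1/(2b)$ from \eqref{unified} is merged with the $b/a$ inside $M_2$; the Step~4 factor $(2h(r,s))^b$ rewrites as $((r-2)^2\kappa_2)^b$ by the definition of $\kappa_2$, and this $(r-2)^{2b}$ piece cancels against the $(r-2)^{-2b}$ in $M_5$ to leave $\kappa_2^b$; the factors $\kappa_3^c$ and $\kappa_4^d$ appear directly from Step~4 and from $M_5$ respectively. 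Collecting the binomial coefficients from $M_2$ and from the $d$-sum in $M_5$ gives exactly the product $\binom{a}{b}\binom{b}{c}\binom{t-a-b}{c}\binom{a-b}{d}\binom{(s-2)a}{a-b-d}$ in $J_t$, and the probability-ratio factor $p(rn-s(t+a))/p(rn-st)$ is carried through unchanged.

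The last point is to check that the summation index set in \eqref{unified} together with the two special cases agrees with $\widehat{\mathcal{D}}$. Tuples $(a,0,0,d)$ with $1\le a\le t-1$ are excluded from $\widehat{\mathcal{D}}$; combinatorially these would correspond to $F_1\cap F_2=\emptyset$ with $|F_2\setminus F_1|=a<t$, which is impossible since both $F_1$ and $F_2$ have $t$ parts, so nothing is lost. I do not anticipate a genuine obstacle: the argument is entirely bookkeeping, with no nontrivial combinatorial identities required, and the only mild care needed is in handling the boundary case $a+b=t$ inside $\xi_t(a,b,c)$, which enters when $F_1\cap F_2$ contains $t-a$ components and no gap of length exceeding $1$.
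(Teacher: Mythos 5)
Your proposal is correct and takes essentially the same route as the paper's own proof: set $k=t$ in (\ref{unified}), use $\E(Y^2)=\E(YX_t)$, divide by $\E Y$, substitute Lemmas~\ref{ST2} and~\ref{ST5} (with the internal $d$-sum of $M_5$ supplying the fourth parameter), and handle the $a=0$ and $(a,b)=(t,0)$ special cases together with the exclusion of $(a,0,0,d)$, $1\leq a\leq t-1$, from $\widehat{\mathcal{D}}$. One cosmetic slip only: the boundary case $a+b=t$ of $\xi_t$ corresponds to every component of $F_1\cap F_2$ having length one (not to the gaps having length at most one), which does not affect the argument.
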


\begin{proof}
The set $\widehat{\mathcal{D}}$ contains all values of the
parameters $(a,b,c,d)$
which can arise from the interaction of two loose Hamilton cycles.
After dividing (\ref{unified}) by $\E Y$,  we can write the resulting
expression
as a sum over $\widehat{\mathcal{D}}$, and denote
the summand corresponding to $(a,b,c,d)\in\widehat{\mathcal{D}}$ by $J_t(a,b,c,d)$.  
(Recall that the sum over $d$ arises in the factor $M_5(t,a,b)$, see Lemma~\ref{ST5}.)

When $b\geq 1$, substituting
Lemma~\ref{ST2} and Lemma~\ref{ST5} into (\ref{unified}) and
dividing by $\E Y$ shows that the summand $J_t(a,b,c,d)$ equals
the expression given in above.

When $a=0$ we have $b=c=d=0$, corresponding to the term
\[ \frac{M_5(t,0,0)}{\E Y} = \frac{1}{\E Y}.\]
This equals the definition of $J_t(0,0,0,0)$ given above.
Finally, suppose that $a=t$ and $b=c=0$, which corresponds to
\begin{align*}
 \frac{M_5(t,t,0)}{\E Y}\cdot \frac{p(rn-2st)}{p(rn-st)}
&= \frac{t!\, ((s-2)t)!}{2t}\, \left(\frac{(r-1)^{s-2}(r-2)^2}{(s-2)!}\right)^t\,
       \sum_{d=0}^t \binom{t}{d}\, \binom{(s-2)t}{t-d}\\
  & \qquad \times \left(\frac{r-3}{r-2}\right)^d
  \, \frac{p(rn-2st)}{p(rn-st)}\, \frac{1}{\E Y}.
\end{align*}
This expression equals $\sum_{d=0}^t J_t(t,0,0,d)$,
with $J_t(t,0,0,d)$ as defined above,  noting that
$\xi_t(t,0,0)=1$.
\end{proof}

The summation in Lemma~\ref{combinatorial} will be evaluated using
Laplace summation.  The following lemma is tailored for this purpose: it is a 
restatement of~\cite[Lemma 6.3]{gjr} (using the notation of the current
paper).

\begin{lemma}\label{TA1}
Suppose the following: 
\begin{enumerate}
\item[(i)]
$\mathcal{L}\subset\mathbb{R}^m$ is a lattice with full rank $m$.
\item[(ii)] 
$K\subset\mathbb{R}^m$ is a compact convex set with non-empty interior $K^\circ$.
\item[(iii)] 
$\varphi:K\to\mathbb{R}$ is a continuous function with a unique maximum at 
some interior point $\boldsymbol{x}^*\in K^\circ$.
\item[(iv)] 
$\varphi$ is twice continuously differentiable in a neighbourhood of 
$\boldsymbol{x}^*$ and the Hessian
$H^*:=D^2\varphi(\boldsymbol{x}^*)$ is strictly negative definite.
\item[(v)] 
$\psi:K^*\to\mathbb{R}$ is a continuous function on some neighbourhood 
$K^*\subseteq K$ of $\boldsymbol{x}^*$ with $\psi(\boldsymbol{x}^*)>0$.
\item[(vi)] 
For each positive integer $t$ there is a vector $\boldsymbol{w}_t \in \mathbb{R}^m$.
\item[(vii)]
For each positive integer $t$ there is 
a function \mbox{$J_t: (\mathcal{L}+\boldsymbol{w}_t)\cap tK \to\mathbb{R}$}
and 
a real number $b_t>0$ 
such that, as $t\to\infty$,
\begin{align}
  J_t(\boldsymbol{v})&= O\left(b_t \,e^{t\varphi(\boldsymbol{v}/t)+o(t)}\right),
 && \boldsymbol{v}\in (\mathcal{L} + \boldsymbol{w}_t)\cap tK, \label{t1a} 
\intertext{and}
  J_t(\boldsymbol{v})&= b_t\left(\psi(\boldsymbol{v}/t)+o(1)\right)\, 
            e^{t\varphi(\boldsymbol{v}/t)}, &&
\boldsymbol{v}\in (\mathcal{L}+\boldsymbol{w}_t)\cap  tK^*, \label{t1b}
\end{align}
uniformly for $\boldsymbol{v}$ in the indicated sets.
\end{enumerate}
Then, as $t\to\infty$,
\begin{equation}
 \sum_{\boldsymbol{v}\in (\mathcal{L}+\boldsymbol{w}_t)\cap tK} J_t(\boldsymbol{v} )
\sim \frac{ (2\pi)^{m/2}\, \psi(\boldsymbol{x}^*)} {\det(\mathcal{L}) \, 
   \sqrt{\det\left(-H^*\right)}} \, b_t \, t^{m/2} \, e^{t\varphi(\boldsymbol{x}^*)}.
\label{final-answer}
\end{equation}
\end{lemma}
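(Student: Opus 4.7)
The plan is a standard Laplace-type summation argument in three pieces: discard lattice points far from $t\boldsymbol{x}^*$ using the upper bound \eqref{t1a}; expand $\varphi$ quadratically on an inner ball around $t\boldsymbol{x}^*$ of radius slightly larger than $\sqrt{t}$; and approximate the resulting truncated lattice sum by a Gaussian integral via a Riemann-sum argument. The fact that the answer \eqref{final-answer} matches a Gaussian integral of $e^{\frac{1}{2}\boldsymbol{y}^T H^*\boldsymbol{y}}\,d\boldsymbol{y}$, with the lattice covolume $\det(\mathcal{L})$ acting as the Riemann-sum mesh, is what dictates both the $t^{m/2}$ factor and the $\sqrt{\det(-H^*)}$ in the denominator.

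For the far region, fix a compact neighbourhood $K_1\subset K^*$ of $\boldsymbol{x}^*$. By the uniqueness of the maximum in (iii), continuity of $\varphi$, and compactness of $K\setminus K_1^\circ$, there exists $\eta>0$ with $\varphi(\boldsymbol{x})\leq \varphi(\boldsymbol{x}^*)-\eta$ on $K\setminus K_1^\circ$. Since $|(\mathcal{L}+\boldsymbol{w}_t)\cap tK|=O(t^m)$, applying \eqref{t1a} bounds the total contribution of the far region by $O\bigl(b_t\, t^m\, e^{t\varphi(\boldsymbol{x}^*)-t\eta+o(t)}\bigr)$, which is exponentially smaller than the claimed right-hand side and hence negligible.

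For the inner region, write $\boldsymbol{v}=t\boldsymbol{x}^*+\boldsymbol{u}$ and Taylor expand:
\[
t\,\varphi\bigl(\boldsymbol{x}^*+\tfrac{\boldsymbol{u}}{t}\bigr)
= t\,\varphi(\boldsymbol{x}^*) + \tfrac{1}{2t}\,\boldsymbol{u}^T H^*\boldsymbol{u} + O\bigl(|\boldsymbol{u}|^3/t^2\bigr).
\]
On the ball $|\boldsymbol{u}|\leq \sqrt{t}\log t$ the cubic error is $o(1)$ uniformly, and $\psi(\boldsymbol{x}^*+\boldsymbol{u}/t)\to \psi(\boldsymbol{x}^*)$ uniformly by continuity, so by \eqref{t1b} this part of the sum factors asymptotically as
\[
(1+o(1))\, b_t\, \psi(\boldsymbol{x}^*)\, e^{t\varphi(\boldsymbol{x}^*)} \sum_{\boldsymbol{u}} e^{\frac{1}{2t}\boldsymbol{u}^T H^*\boldsymbol{u}},
\]
where $\boldsymbol{u}$ ranges over an appropriately shifted copy of $\mathcal{L}$ inside the ball. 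Substituting $\boldsymbol{u}=\sqrt{t}\,\boldsymbol{y}$ exhibits this as a Riemann sum of mesh $O(1/\sqrt{t})$ (relative to a fundamental domain of $\mathcal{L}$ of volume $\det(\mathcal{L})$) approximating the Gaussian integral $\int_{\mathbb{R}^m} e^{\frac{1}{2}\boldsymbol{y}^T H^*\boldsymbol{y}}\,d\boldsymbol{y} = (2\pi)^{m/2}/\sqrt{\det(-H^*)}$, giving the prefactor in \eqref{final-answer}.

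The remaining piece is the intermediate shell inside $tK_1$ but outside the ball of radius $\sqrt{t}\log t$, and I expect this to be the main technical nuisance. Since $H^*$ is negative definite there exists $\alpha>0$ with $\boldsymbol{u}^T H^*\boldsymbol{u}\leq -\alpha|\boldsymbol{u}|^2$; by continuity of the Hessian near $\boldsymbol{x}^*$ (which follows from hypothesis (iv) after possibly shrinking $K_1$) the inequality $t\,\varphi(\boldsymbol{x}^*+\boldsymbol{u}/t)\leq t\,\varphi(\boldsymbol{x}^*)-\alpha|\boldsymbol{u}|^2/(4t)$ holds throughout this shell. Combining with \eqref{t1a} (or the obvious upper form of \eqref{t1b}) and standard Gaussian tail bounds of the form $\sum_{|\boldsymbol{u}|\geq \sqrt{t}\log t} e^{-\alpha|\boldsymbol{u}|^2/(4t)} = o(t^{m/2})$ shows that the shell contributes $o\bigl(b_t\, t^{m/2}\, e^{t\varphi(\boldsymbol{x}^*)}\bigr)$. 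Assembling the three regions then yields \eqref{final-answer}.
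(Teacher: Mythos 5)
The paper does not prove this lemma at all: it is introduced as a restatement of \cite[Lemma 6.3]{gjr} (Greenhill, Janson, Ruci\'nski), and the citation serves as the proof. So your proposal cannot be ``compared with the paper's own proof''; you are supplying an argument the paper outsources. Your three-region decomposition (far / inner / intermediate shell) is the standard route and the outer and shell estimates, together with the Riemann-sum step identifying the lattice covolume $\det(\mathcal{L})$ as the mesh and producing $(2\pi)^{m/2}/\sqrt{\det(-H^*)}$, are essentially correct in outline.

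There is, however, a genuine gap in the inner-ball step. You write
\[
t\,\varphi\bigl(\boldsymbol{x}^*+\tfrac{\boldsymbol{u}}{t}\bigr)
= t\,\varphi(\boldsymbol{x}^*) + \tfrac{1}{2t}\,\boldsymbol{u}^T H^*\boldsymbol{u} + O\bigl(|\boldsymbol{u}|^3/t^2\bigr),
\]
but hypothesis (iv) gives only that $\varphi$ is $C^2$ near $\boldsymbol{x}^*$; a cubic remainder requires bounded third derivatives. Under $C^2$ the best you get (Lagrange form) is a remainder
$\tfrac{1}{2t}\boldsymbol{u}^T\bigl(D^2\varphi(\boldsymbol{\xi})-H^*\bigr)\boldsymbol{u}$
with $\boldsymbol{\xi}$ within distance $|\boldsymbol{u}|/t$ of $\boldsymbol{x}^*$, which is bounded by $\omega(|\boldsymbol{u}|/t)\,|\boldsymbol{u}|^2/t$ for some modulus of continuity $\omega\to 0$. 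On your ball $|\boldsymbol{u}|\le\sqrt{t}\log t$ this is only $\omega\bigl((\log t)/\sqrt t\bigr)\,(\log t)^2$, which need not tend to $0$ (e.g.\ if $\omega(\rho)\sim 1/\log(1/\rho)$ the error grows like $\log t$). So the claimed uniform $o(1)$ Taylor control on the inner ball fails under the stated hypotheses. The standard repair is to take the inner ball of radius $A\sqrt t$ for fixed $A$: then the remainder is $\omega(A/\sqrt t)\,A^2 = o(1)$, the Riemann-sum argument gives the Gaussian integral over $|\boldsymbol{y}|\le A$, and the tail over $|\boldsymbol{y}|>A$ (bounded via the shell estimate) is made small by letting $A\to\infty$ after $t\to\infty$. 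With that modification, the rest of your argument goes through.
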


As remarked in~\cite{gjr}, the result also holds if $t$ tends to
infinity along some infinite subset of the positive integers.

\bigskip

In order to apply Lemma~\ref{TA1}, we need some more notation.
Define the scaled domain
\begin{equation}
\label{Kdef}
K = \{ (\alpha,\beta,\gamma,\delta)\in\mathbb{R}^4 \mid
   0 \leq \gamma \leq \beta,\quad
   0\leq \delta\leq \alpha-\beta,\quad 
    \alpha + \beta + \gamma \leq 1
   \}.
\end{equation}
Observe that $\mathcal{D}$ can be written as the intersection of
$\mathbb{Z}^4$ with $tK$, but it is not possible to write $\widehat{\mathcal{D}}$
in this form.  This is the reason why it is more convenient to work with
$\mathcal{D}$ when performing Laplace summation.

Let
\[
 \kappa_1 = (r-1)^{s-2}\, (r-2)^2\, (s-1)(s-2)^{2(s-2)}
\]
and recall (\ref{kappa-def}). Define the function $\varphi:K\longrightarrow\mathbb{R}$ by
\begin{align}
\varphi(\alpha,\beta,\gamma,\delta)
 &= 
g(1-\alpha-\beta) + 2(s-1) g(\alpha) + \frac{s-1}{s} g(rs-r-s-s\alpha)
   -g(\beta-\gamma)  \nonum\\
 & \qquad {} - g(\delta) - 2g(\gamma) - 2g(\alpha-\beta-\delta)
    -g((s-3)\alpha+\beta+\delta)  \nonum\\
 & \qquad {} - g(1-\alpha-\beta-\gamma)
  + \alpha\ln(\kappa_1) + \beta\ln(\kappa_2) + \gamma\ln(\kappa_3)
   + \delta\ln(\kappa_4)
\label{phidef}
\end{align}
where $g(x) = x\ln x$ for $x>0$, and   $g(0) = 0$. 
We will need the following information about the function $\varphi$.
The proof of the following crucial result is lengthy and technical, so it is deferred
to the appendix.

\begin{lemma}\label{Lemma_max}
Suppose that $s\geq 3$ and $r > \rho(s)$, where
$\rho(s)$ is defined in Theorem~\ref{main2}.
Then $\varphi$ has a unique global maximum over the domain $K$ which occurs at the point
   $\boldsymbol{x}^\ast = (\alpha^\ast,\beta^\ast,\gamma^\ast,\delta^\ast)$ defined by
  \begin{align*}
    \alpha^\ast = \frac{rs-r-s}{r(s-1)}, \qquad
    \beta^\ast = \frac{rs-s-2}{r(r-1)(s-1)},\\
    \gamma^\ast = \frac{2\, (rs-r-s)}{r(r-1)^2(s-1)^2}, \qquad 
    \delta^\ast = \frac{(r-2)(r-3)}{r(r-1)(s-1)}.
  \end{align*}
The maximum value of $\varphi$ equals 
\begin{equation}
   \varphi(\boldsymbol{x}^\ast ) = \ln(r-1) + \ln(s-1)+ \frac{(s-1)(rs-r-s)}{s} \ln\left(\frac{(rs-r-s)^2}{rs-r}\right).
\label{Lmax1}
\end{equation}
Let $Q(r,s)$ be as defined in Lemma~\emph{\ref{lem:A3}},
and denote by $H^\ast$ the Hessian of 
$\varphi(\alpha,\beta,\gamma,\delta)$ evaluated 
at the point $\boldsymbol{x}^\ast$.
Then
$H^\ast$ is strictly negative definite and
\begin{equation}
\det(-H^\ast) =\frac{r^5 (r-1)^5 (r-2) (s-1)^8 \,\, Q(r,s)}{8 (r-3) (rs-r-s)^2\,
    (rs-2r-2s+4)^2 \, h(r,s)}. \label{Lmax2}
\end{equation}
\end{lemma}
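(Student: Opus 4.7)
The plan is to prove the four assertions of Lemma~\ref{Lemma_max} in order: show that $\boldsymbol{x}^\ast$ is a critical point of $\varphi$, establish that it is the unique global maximum on $K$, evaluate $\varphi(\boldsymbol{x}^\ast)$, and finally compute $H^\ast$ together with its determinant and verify negative definiteness.

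Since $g'(x) = \ln x + 1$, the stationarity condition $\nabla \varphi = 0$ reads as four linear relations in the logarithms of the nine positive arguments $1-\alpha-\beta$, $\alpha$, $rs-r-s-s\alpha$, $\beta-\gamma$, $\delta$, $\gamma$, $\alpha-\beta-\delta$, $(s-3)\alpha+\beta+\delta$, $1-\alpha-\beta-\gamma$. The ``$+1$'' constants coming from each $g'$-factor cancel out in every partial derivative, because the signed coefficients of each variable appearing in the $g$-arguments sum to zero. Exponentiating yields four multiplicative identities in $(\alpha,\beta,\gamma,\delta)$, and direct substitution of $\boldsymbol{x}^\ast$ into these identities (together with the defining relation of $\rho(s)$ from Theorem~\ref{main2}) verifies that $\boldsymbol{x}^\ast$ is an interior critical point.

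For uniqueness and global maximality, I would first observe that the equations $\partial_\gamma\varphi = 0$ and $\partial_\delta\varphi = 0$ each involve only three of the $g$-arguments and can be solved explicitly for $\gamma$ and $\delta$ as rational functions of $(\alpha,\beta)$. Substituting back into $\partial_\alpha\varphi = \partial_\beta\varphi = 0$ leaves two polynomial equations in $(\alpha,\beta)$ which, after clearing denominators, can be shown to have a unique solution in the two-dimensional projection of $K^\circ$. To rule out a maximum on $\partial K$, I would argue that on every face where some ``$-g$'' argument vanishes, the inward normal derivative of $\varphi$ is $+\infty$ (since $-g'(x)\to+\infty$ as $x\to 0^+$), so $\varphi$ strictly increases as one moves off such a face. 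The remaining faces either correspond to degenerate configurations infeasible for pairs of loose Hamilton cycles, or reduce to a lower-dimensional critical-point problem handled by the same method; in each case the restricted supremum falls strictly below $\varphi(\boldsymbol{x}^\ast)$.

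The value $\varphi(\boldsymbol{x}^\ast)$ is then a substitution exercise: each of the nine $g$-arguments evaluates to a rational function of $r$ and $s$ whose numerator and denominator factor cleanly (the denominators all being divisors of $r(r-1)(s-1)$), and collecting the logarithms using the gradient identities from the previous paragraph yields \eqref{Lmax1}. For the Hessian, $g''(x) = 1/x$ gives
\[ H^\ast = \sum_i \varepsilon_i\, \frac{1}{x_i^\ast}\, c_i c_i^\top, \]
where $c_i$ is the gradient of the $i$-th $g$-argument, $\varepsilon_i\in\{+1,-1\}$ is its sign in $\varphi$, and $x_i^\ast$ is its value at $\boldsymbol{x}^\ast$. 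Strict negative definiteness follows from verifying that the four leading principal minors of $-H^\ast$ are positive, and \eqref{Lmax2} is a direct algebraic reduction whose denominators produce exactly the factors $(r-3)$, $(rs-r-s)^2$, $(rs-2r-2s+4)^2$ and $h(r,s)$, while the numerator collapses to the polynomial $Q(r,s)$ introduced in Lemma~\ref{lem:A3}. I expect the main obstacle to be the uniqueness step: after eliminating $\gamma$ and $\delta$, one must show that the resulting polynomial system in $(\alpha,\beta)$ has no spurious roots in $K^\circ$, and this analysis is likely to rely in an essential way on the hypothesis $r>\rho(s)$ in order to rule out competing critical points.
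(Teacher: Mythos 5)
Your outline gets the routine parts right (stationarity of $\boldsymbol{x}^\ast$ by substitution, evaluation of $\varphi(\boldsymbol{x}^\ast)$, computation of $\det(-H^\ast)$, and negative definiteness — though note the paper avoids checking principal minors by a nicer trick: since $\det(-H^\ast)>0$ forces an even number of positive eigenvalues, two or more positive eigenvalues would give a direction orthogonal to $(1,0,0,0)$ along which $\varphi$ increases, contradicting the fact that $\boldsymbol{x}^\ast$ maximises $\varphi$ on the slice $\alpha=\alpha^\ast$). But the heart of the lemma — uniqueness of the critical point and global maximality on all of $K$ — is exactly the step you defer with ``can be shown to have a unique solution,'' and that is a genuine gap, not a technicality. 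Your elimination of $\gamma$ and $\delta$ is essentially the paper's starting point (and, incidentally, $\partial_\gamma\varphi=0$ and $\partial_\delta\varphi=0$ are quadratic in $\gamma$ and $\delta$, so the solutions are algebraic, not rational, in $(\alpha,\beta)$); the paper then parameterises the resulting one-dimensional ``ridge'' $\boldsymbol{x}_\tau$, proves monotonicity lemmas for $\alpha_\tau$ and for $\varphi_\alpha(\boldsymbol{x}_\tau)$, and still has to split into the regime $(r-1)(s-1)\geq(s+1)^3$ (handled by delicate explicit inequalities and an auxiliary function verified by a plot) and $172$ remaining small pairs $(r,s)$ verified by computer plots. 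There is no known short argument here, so asserting uniqueness of the reduced system in $K^\circ$ without a mechanism is a missing proof of the lemma's main content.

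A second concrete problem is your boundary analysis. The infinite-inward-derivative argument (based on $-g'(x)\to+\infty$) handles faces where a single negatively signed argument vanishes, but it cannot dispose of the corner $\boldsymbol{0}=(0,0,0,0)$, where all variables and several arguments of both signs vanish simultaneously. That corner is a genuine competitor: $\varphi(\boldsymbol{0})=\tfrac{s-1}{s}(rs-r-s)\ln(rs-r-s)$, and $\varphi(\boldsymbol{x}^\ast)-\varphi(\boldsymbol{0})=L(r,s)$, which is positive precisely because $r>\rho(s)$ and tends to $0$ as $r\downarrow\rho(s)$ (indeed for $r\leq\rho(s)$ the corner wins or ties, which is why the lemma's hypothesis is sharp). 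Similarly, the face $\alpha+\beta+\gamma=1$ with $\beta=\gamma=0$ (the limit $\alpha\to 1$ of the ridge) needs a separate continuity argument in the paper. So the threshold hypothesis enters through the comparison with these boundary points, not, as your sketch suggests, through the verification that $\boldsymbol{x}^\ast$ is stationary (which is pure algebra independent of $\rho(s)$); your dismissal of the ``remaining faces'' therefore skips the one boundary comparison where the hypothesis $r>\rho(s)$ is actually doing the work.
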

   
With this result in hand, we may establish the following asymptotic expression for the
second moment of $Y$.

\begin{lemma}
Suppose that $s\geq 3$ and $r > \rho(s)$ are fixed integers, where $\rho(s)$ is defined in Theorem~\ref{main2}.
Then as $n\to\infty$ along $\mathcal{I}_{(r,s)}$,
\[
\frac{\E(Y^2)}{(\E Y)^2}
\sim \frac{r(rs-r-s)}{(r-2)\sqrt{Q(r,s)}}
\]
where $Q(r,s)$ is defined in Lemma~\emph{\ref{lem:A3}}.
\label{A4holds}
\end{lemma}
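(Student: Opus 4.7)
The plan is to recognise the sum in Lemma~\ref{combinatorial} as a Laplace-type sum and apply Lemma~\ref{TA1}. The index set $\widehat{\mathcal{D}}$ is not quite of the form $\mathbb{Z}^4\cap tK$, so first I would extend the sum to $\mathcal{D}=\mathbb{Z}^4\cap tK$, defining $J_t$ on the additional points $(a,0,0,d)$ with $1\leq a\leq t-1$ by the same algebraic expression. These extra points all have $b=c=0$, which lies strictly on the boundary of $K$ and away from the interior maximum $\boldsymbol{x}^\ast$ of $\varphi$ supplied by Lemma~\ref{Lemma_max}; hence the crude bound (\ref{t1a}) shows that their total contribution is exponentially smaller than the main term and can be discarded.

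The heart of the proof is to verify the hypotheses of Lemma~\ref{TA1} with $m=4$, $\mathcal{L}=\mathbb{Z}^4$, $\boldsymbol{w}_t=0$, $K$ as in (\ref{Kdef}), and $\varphi$ as in (\ref{phidef}). Conditions (iii) and (iv) are precisely the content of Lemma~\ref{Lemma_max}. The main calculation is a Stirling-formula estimate of $J_t(a,b,c,d)$. Every factorial and binomial appearing in $J_t$ contributes, after Stirling, an entropy term $g(x)=x\ln x$ applied to a linear combination of $\alpha=a/t$, $\beta=b/t$, $\gamma=c/t$, $\delta=d/t$; the falling factorials hidden in the ratio $p(rn-s(t+a))/p(rn-st)$ contribute the $g$-terms involving $rs-r-s-s\alpha$; the powers $\kappa_1^a\kappa_2^b\kappa_3^c\kappa_4^d$ contribute the linear terms; and the factor $1/\E Y$ absorbs an $O(\sqrt{n})$ polynomial prefactor together with the exponential part of (\ref{EY-raw}). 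Collecting everything yields exactly $J_t(\boldsymbol{v})=b_t\bigl(\psi(\boldsymbol{v}/t)+o(1)\bigr)e^{t\varphi(\boldsymbol{v}/t)}$ uniformly on a neighbourhood of $\boldsymbol{x}^\ast$, for an explicit continuous function $\psi$ (a product of Stirling square-root prefactors) and some power $b_t$ of $t$; the continuity of $g$ on $[0,\infty)$ with the convention $g(0)=0$ delivers the uniform bound (\ref{t1a}) on all of $tK$.

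Applying (\ref{final-answer}) with $\det\mathcal{L}=1$ and $m=4$ then gives
\[
\frac{\E(Y^2)}{(\E Y)^2}\sim\frac{(2\pi)^{2}\,\psi(\boldsymbol{x}^\ast)}{\sqrt{\det(-H^\ast)}}\,b_t\,t^{2}\,e^{t\varphi(\boldsymbol{x}^\ast)}.
\]
Substituting (\ref{Lmax1}), (\ref{Lmax2}), and the explicit values of $\alpha^\ast,\beta^\ast,\gamma^\ast,\delta^\ast$ should yield the right-hand side of the statement. The main obstacle is the algebraic collapse at the end: the quantity $\psi(\boldsymbol{x}^\ast)$ is a product of Stirling prefactors (square roots of quantities like $\alpha^\ast$, $1-\alpha^\ast-\beta^\ast$, $\beta^\ast-\gamma^\ast$, $(s-3)\alpha^\ast+\beta^\ast+\delta^\ast$, and so on) evaluated at the explicit point of Lemma~\ref{Lemma_max}, and must combine with $\det(-H^\ast)$ from (\ref{Lmax2}), the $t^2$, the $(2\pi)^2$, and the $\pi$-factor inherited from $(\E Y)^{-2}$ via Corollary~\ref{lem:looseHam} so that everything collapses to the compact expression $r(rs-r-s)/\bigl((r-2)\sqrt{Q(r,s)}\bigr)$. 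Verifying this collapse, including the cancellation of the transcendental exponential factor $e^{t\varphi(\boldsymbol{x}^\ast)}$ against $(\E Y)^{-2}$, is tedious but routine; in particular, because both $\E Y$ in Corollary~\ref{lem:looseHam} and the maximum value (\ref{Lmax1}) are computed from the same Stirling data, exact cancellation is guaranteed up to the polynomial prefactor that we are tracking.
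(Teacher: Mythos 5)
Your proposal is correct and follows essentially the same route as the paper: extend the sum from $\widehat{\mathcal{D}}$ to $\mathcal{D}=\mathbb{Z}^4\cap tK$, verify conditions (i)--(vii) of Lemma~\ref{TA1} using Lemma~\ref{Lemma_max} and a Stirling estimate of $J_t$, discard the negligible boundary terms with $b=c=0$, and substitute (\ref{Lmax1}), (\ref{Lmax2}) and $\psi(\boldsymbol{x}^\ast)$ to obtain the stated constant. The only cosmetic slip is describing $b_t$ as a power of $t$; as in (\ref{stuffoutfront}) it must also carry the exponential factor $e^{-t\varphi(\boldsymbol{x}^\ast)}$ inherited from the $\E Y$ normalisation, which is exactly the cancellation you note at the end.
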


\begin{proof}
Firstly, extend the definition of $J_t(a,b,c,d)$ to cover all $(a,b,c,d)\in
\mathcal{D}$, by defining $\xi_t(a,b,c)=1$ if $b=c=0$ and $1\leq a\leq t-1$.
We will apply Lemma~\ref{TA1} to calculate the sum of $J_t(a,b,c,d)$ over the 
domain $\mathcal{D}$.  While doing so, we will observe that the contribution
to the sum from $\mathcal{D}\setminus \widehat{\mathcal{D}}$ is negligible,
which will imply that the sum over the larger domain $\mathcal{D}$ is also 
asymptotically equal to  $\E(Y^2)/(\E Y)$.

The first six conditions of Lemma~\ref{TA1} hold
with the definitions given below.
\begin{enumerate}
\item[(i)] Let $\mathcal{L}=\mathbb{Z}^4$, a lattice with full rank $m=4$ and
with determinant 1.
\item[(ii)] 
The domain $K$ defined in (\ref{Kdef}) is compact, convex, is contained in 
$[0,1]^4$ and has non-empty interior.
Observe that $\mathcal{D} = \mathbb{Z}^4\cap t K$.  
\item[(iii)]
The function $\varphi:K\to \mathbb{R}$ defined before Lemma~\ref{Lemma_max} is 
continuous. Furthermore, $\varphi$ has a unique maximum
at the point $\boldsymbol{x}^*$, by Lemma~\ref{Lemma_max}, and
$\boldsymbol{x}^*$ belongs to the interior of $K$.
\item[(iv)] The function $\varphi$ is infinitely differentiable in the
interior of $K$. 
Let $H^*$ denote the Hessian matrix of $\varphi$ evaluated at $\boldsymbol{x}^*$.
Then $H^\ast$ is strictly negative definite, by Lemma~\ref{Lemma_max}.
\item[(v)]  
Write $\varepsilon  = \big(r(r-1)^2(s-1)^2\big)^{-1}$ and define
\[ K^* = \{ (\alpha,\beta,\gamma,\delta)\in\mathbb{R}^4 \mid
   \varepsilon <  \gamma \leq \beta - \varepsilon,\quad
   \varepsilon <  \delta <  \alpha-\beta - \varepsilon,\quad 
    \alpha + \beta + \gamma \leq 1 - \varepsilon
   \}.
\]
Then $K^*$ is contained in the interior of $K$ and $\boldsymbol{x}^* \in K^\ast$.
Furthermore, the function $\psi:K^*\longrightarrow\mathbb{R}$ defined by
\begin{align*}
\psi(\alpha,&\beta,\gamma,\delta) \\
 &= 
 \frac{1}{(\alpha-\beta-\delta)\,
  \sqrt{(\beta-\gamma)\, \delta\, (1-\alpha-\beta)\, (1-\alpha-\beta-\gamma)\, 
      ((s-3)\alpha + \beta + \delta)}}
\end{align*}
is continuous on $K^*$.  Direct substitution shows that
\begin{equation}
\psi(\boldsymbol{x}^*) = \frac{1}{2(s-2)(rs-2r-2s+4)}\, 
     \sqrt{\frac{r^{7}\, (r-1)^{5}\, (s-1)^{9}}{2(r-2)(r-3)\, h(r,s) }}\label{Lmax3}
\end{equation}
which is certainly positive when $r > \rho(s)$ and $s\geq 3$.
\item[(vi)]
Let $\boldsymbol{w}_t$ be the zero vector for each $t$.
\end{enumerate}
It remains to prove that condition (vii) of Lemma~\ref{TA1} holds.
Asymptotics are as $t\to\infty$ along the set $\{ t\in\mathbb{Z}^+ \, : \, s \text{ divides } r(s-1) t\}$.
Define 
\begin{align}
 b_t &=  \frac{(s-2)}{4\pi^2\,t^2\,\sqrt{s-1}}\, \left(\frac{1}{(r-1)(s-1)}\,
   \left(\frac{rs-r}{(rs-r-s)^2}\right)^{(s-1)(rs-r-s)/s}\right)^{t}
 \label{stuffoutfront}
\end{align}
and introduce the scaled variables
\begin{equation}
\label{scale} \alpha = a/t,\qquad \beta = b/t,\qquad \gamma= c/t,\qquad
  \delta = d/t.
\end{equation}
Observe that (\ref{t1a}) holds when $a=0$, since then
$\varphi(0,0,0,0) = \frac{s-1}{s} g(rs-r-s)$ and 
\[ J_t(0,0,0,0) = \frac{1}{\E Y} =  O(b_t)\, 
           \exp\Big(t\varphi(0,0,0,0) + \dfrac{5}{2}\ln t\Big),
\]
using Corollary~\ref{lem:looseHam}.
When $a\geq 1$,
first rewrite all binomial coefficients in $J_t(a,b,c,d)$ in terms of factorials
(except those in the factor $1/\E Y$), giving
\begin{align*}
 J_t(a,b,c,d)
&= \frac{\xi_t(a,b,c)\, t}{2a^2 }\,
  \left(s(s-1)(r-1)^{s-2}(r-2)^2\right)^{a}\, 
\kappa_2^b
  \kappa_3^c\, \kappa_4^d\, 
\non\\ &\hspace*{1cm} {} \times
  \frac{(a!)^2\, (t-a-b)!\, (((s-2)a)!)^2}{ 
  (b-c)!\, d!\, (c!)^2\, ((a-b-d)!)^2\, (t-a-b-c)!\, ((s-3)a+b+d)!}\non\\ &\hspace*{1cm} {} \times
  \frac{(rn-s(t+a)))!\, (rn/s-t)!}{ 
   (rn/s-(t+a))!\, (rn-st))!} \, \frac{1}{\E Y}.
\end{align*}
Let $x\vee y$ denote $\max(x,y)$  and apply Stirling's formula in the form
\[
\ln(N!)=N\ln N-N+\tfrac12\ln (N\vee1)+\tfrac12\ln{2\pi}+O(1/(N+1)),
\]
valid for all integers $N\geq 0$, to the above expression for $J_t(a,b,c,d)$.
Here we follow the convention that $0\ln 0=0$.
After substituting for $\E Y$ using Corollary~\ref{lem:looseHam}, this gives
\begin{align*}
& J_t(a,b,c,d)\non \\ 
& = \left(1 + O\left(\frac{1}{c+1} + \frac{1}{d+1} + \frac{1}{b-c+1} + \frac{1}{a-b-d+1}
           + \frac{1}{t-a-b-c+1}\right)\right) \nonum\\
 & \quad {} \times
 \frac{(s-2)\,t^{3/2} \, \big(\kappa_1 t^{-(s-1)}\big)^a\, \kappa_2^b\, \kappa_3^c\, \kappa_4^d}{4 \pi^{2}\,(a-b-d)\,
  \sqrt{(s-1)\, (b-c)\, d\, (t-a-b)\, (t-a-b-c)\, ((s-3)a + b + d)}} \label{sqrt}\\
 &\quad {} \times \frac{(t-a-b)^{t-a-b}\, a^{2(s-1)a}\, (rs-r-s-sa/t)^{(s-1)(rs-r-s-sa/t)t/s}}
  {(b-c)^{b-c}\, c^{2c}\, d^d\, (a-b-d)^{2(a-b-d)}\, (t-a-b-c)^{t-a-b-c}
  \, ((s-3)a+b+d)^{(s-3)a+b+d}}  \non\\
  &\quad  {} \times
   \left(\frac{1}{(r-1)(s-1)}\, \left(\frac{rs-r}{(rs-r-s)^2}\right)^{(s-1)(rs-r-s)/s}\right)^t,\nonum
\end{align*}
except that if some factor in the denominator is zero then it 
should be replaced by 1.  (Also interpret $0^0$ as 1.) 
Finally, rewriting this expression in terms of the scaled variables (\ref{scale}) proves
that (\ref{t1a}) and (\ref{t1b}) hold.
Hence condition (vii) of Lemma~\ref{TA1} is satisfied.
Therefore we may apply Lemma~\ref{TA1} to conclude that (\ref{final-answer})
holds:  that is,
\begin{equation}
\label{final-answer2}
 \sum_{(a,b,c,d)\in \mathcal{D}} J_t(a,b,c,d)\sim \frac{4\pi^2\, 
  \psi(\boldsymbol{x}^\ast)}{\sqrt{\operatorname{det}(-H^\ast)}}\, b_t\, t^2\, 
e^{t\varphi(\boldsymbol{x}^\ast)},
\end{equation}
using the fact that the lattice $\mathbb{Z}^4$ has determinant 1.
It follows from Lemma~\ref{TA1} that, up to the $1+o(1)$ relative error term, 
the right hand side of (\ref{final-answer}) depends only on the values of $J_t(\cdot)$ in
$(\mathcal{L} + \boldsymbol{w}_t)\cap tK^\ast = \mathcal{D}\cap tK^\ast$. Therefore, 
since $\mathcal{D}\cap tK^\ast = \widehat{\mathcal{D}}\cap tK^\ast$
we may replace $\mathcal{D}$ by $\widehat{\mathcal{D}}$ in
(\ref{final-answer2}).  By Lemma~\ref{combinatorial}, the 
proof is completed by
substituting (\ref{Lmax1}) -- (\ref{stuffoutfront}) into
(\ref{final-answer2}).
\end{proof}

\section{Proof of the threshold result}\label{s:proof-main2}

Recall that $Y$ is the number of subsets $F_H$ of $\mathcal{F}(n,r,s)$ consisting of $t$ parts such that
$G(F_H)$ is a loose Hamilton cycle.
To prepare for the proof of Theorem~\ref{main2}, we now find the values of $r,s$ for which 
$\E Y$ tends to infinity.  Define
\[ L(r,s) = 
    \ln(r-1) + \ln(s-1) + \frac{(s-1)(rs-r-s)}{s}\, \ln\left(1-\frac{s}{rs-r}\right)
\]
and treat $r$ as a continuous variable.
Then $L(r,s)$ is 
the natural logarithm of the base of the exponential factor in 
$\E Y$, see Corollary~\ref{lem:looseHam}.
If $L(r,s)\leq 0$ then
$\E Y = o(1)$, so a.a.s.\ there are no loose Hamilton cycles.
For example, $L(3,3)=0$, so a.a.s.\
$\cF(n,3,3)$ has no loose Hamilton cycles. 
Similarly, if $L(r,s) > 0$ then $\E Y\to\infty$.

\begin{lemma}
\label{rho-val}
For any fixed integer $s\geq 2$,  there exists a unique real number
$\rho(s) > 2$ satisfying the lower and upper bounds given in \emph{(\ref{trap})}
such that $L(\rho(s),s)=0$,
\[ L(r,s) < 0\,\,\, \text{ for\,\, $r\in [2,\rho(s))$} \quad 
\text{and} 
 \quad L(r,s)> 0\,\,\, \text{for\,\, $r\in (\rho(s),\infty)$}.
\]
If $s\geq 3$ then $\rho(s) \geq s$.
\label{Llemma}
\end{lemma}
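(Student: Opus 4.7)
My plan is to treat $L(r,s)$ as a continuous function of the real variable $r$ on $(s/(s-1),\infty)$ for each fixed $s\geq 2$, with the value at $r=s/(s-1)$ taken as the appropriate limit (which equals $0$).

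I would first establish the behaviour at the two endpoints. Expanding $\ln(1-s/(rs-r))$ as a power series in $s/u$ with $u=r(s-1)$ and rearranging gives
\[
\frac{(s-1)(rs-r-s)}{s}\ln\!\left(1-\frac{s}{rs-r}\right) \;=\; -(s-1) + (s-1)\sum_{k\geq 1}\frac{s^k}{k(k+1)\,u^k},
\]
so $L(r,s)\to+\infty$ as $r\to\infty$; at the other endpoint $L(s/(s-1),s)=0$ and a local expansion shows $L$ is strictly negative immediately to the right. In particular $L(2,s)\leq 0$, strictly for $s\geq 3$, which will yield $\rho(s)>2$.

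The heart of the proof is the existence and uniqueness of $\rho(s)$ together with the sign behaviour. I would change variables via $y=(r(s-1)-s)/s$, an increasing bijection onto $(0,\infty)$, to rewrite
\[
\widetilde L(y) \;=\; \ln(sy+1)-(s-1)\,y\,\ln(1+1/y),
\]
with $\widetilde L(0^+)=0$, $\widetilde L(\infty)=+\infty$, $\widetilde L'(0^+)=-\infty$, and $\widetilde L'(y)\sim 1/y\to 0^+$ as $y\to\infty$. A direct calculation gives
\[
\widetilde L''(y) \;=\; \frac{s-1}{y(y+1)^2}-\frac{s^2}{(sy+1)^2},
\]
whose sign coincides with that of the cubic $P(y)=-s^2y^3+s^2(s-3)y^2+s(s-2)y+(s-1)$. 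The main obstacle is to show this cubic has a unique positive zero; this follows by analyzing $P'(y)=-3s^2y^2+2s^2(s-3)y+s(s-2)$ (a downward parabola with $P'(0)=s(s-2)\geq 0$) to conclude $P$ is unimodal on $(0,\infty)$ with $P(0)=s-1>0$ and $P(\infty)=-\infty$. Hence $\widetilde L'$ strictly increases then strictly decreases; the boundary values force its maximum to be positive (otherwise a decreasing function would have to rise back up to $0^+$), so $\widetilde L'$ has exactly one zero. Consequently $\widetilde L$ is strictly decreasing then strictly increasing with $\widetilde L(0^+)=0$, and thus has a unique positive zero $y^\ast$, giving the desired $\rho(s)$ with the claimed sign behaviour.

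The lower bound $\rho(s)\geq s$ for $s\geq 3$ follows from checking $L(s,s)\leq 0$: direct substitution gives $L(s,s)=2\ln(s-1)-(s-1)(s-2)\ln((s-1)/(s-2))$, equal to $0$ at $s=3$ and decreasing in $s$ (by a standard derivative check), tending to $-\infty$. Finally, for the trap \eqref{trap} with $s\geq 4$, I would substitute $r=\rho^\pm(s)$ into $L(r,s)$ and expand in the small parameter $1/e^{s-1}$, using $u=r(s-1)=e^{s-1}-(s-1)(s-2)/2+O(1)$. The leading $O(1/e^{s-1})$ contributions from $\ln(r-1)$ and from the series above cancel; careful bookkeeping of the surviving $O(1/e^{2(s-1)})$ terms gives
\[
L(\rho^+(s),s) \;=\; \frac{s(s-1)(3s^2-11s+12)}{24\,e^{2(s-1)}}+O\!\bigl(s^6/e^{3(s-1)}\bigr)>0,
\]
while the shift $\rho^+(s)-\rho^-(s)=(s^2-s+1)^2/((s-1)e^{s-1})$ subtracts an additional $(s^2-s+1)^2/e^{2(s-1)}$ which dominates, giving $L(\rho^-(s),s)<0$. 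Together with the uniqueness of $\rho(s)$, this sandwiches $\rho(s)\in(\rho^-(s),\rho^+(s))$ for all $s\geq 4$.
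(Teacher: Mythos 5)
Your treatment of existence, uniqueness and the sign pattern is correct, but it takes a genuinely different route from the paper. The paper works with the same substitution ($f_s(x)=\ln(sx+1)-(s-1)x\ln(1+1/x)$ is exactly your $\widetilde L$), but it never proves global unimodality of $f_s$: it shows $f_s<0$ on the interval corresponding to $2\le r\le s+1$ (checking $s=4,\dots,13$ directly and $s\ge 14$ by an inequality), and then shows $f_s'>0$ only for $x\ge s-1-1/s$, i.e.\ monotonicity for $r\ge s+1$, which together give the unique root and the sign behaviour. Your second-derivative analysis — reducing the sign of $\widetilde L''$ to the cubic $P(y)=-s^2y^3+s^2(s-3)y^2+s(s-2)y+(s-1)$, showing $P$ is unimodal with $P(0)>0$, hence $\widetilde L'$ increases then decreases, and then using $\widetilde L'(0^+)=-\infty$, $\widetilde L'(y)\sim 1/y>0$ to get exactly one zero of $\widetilde L'$ and a decreasing-then-increasing $\widetilde L$ with $\widetilde L(0^+)=0$ — is cleaner and uniform in $s\ge 2$, avoiding the paper's case analysis. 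Your derivation of $\rho(s)\ge s$ from $L(s,s)=2\ln(s-1)-(s-1)(s-2)\ln\frac{s-1}{s-2}\le 0$ also works (the paper gets this instead as a byproduct of its negativity claim on $2\le r\le s+1$); the only caveat is the boundary point $r=2$, $s=2$, which both you and the paper treat only via the limit convention.

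The genuine gap is in the bounds (\ref{trap}). You propose to verify $L(\rho^{\pm}(s),s)\gtrless 0$ for all $s\ge 4$ by a leading-order expansion in $1/e^{s-1}$ with an error of size $O(s^6/e^{3(s-1)})$, but this error is not small compared with your main term at the small values of $s$ that the lemma must cover: at $s=4$ the claimed main term is about $8/e^{6}\approx 0.02$ while $s^6/e^{3(s-1)}\approx 0.5$, and even at $s=6,7$ the two are comparable, so positivity of $L(\rho^+(s),s)$ cannot be read off without explicit constants. The linearisation used for $L(\rho^-(s),s)$ is worse: the shift $\rho^+(4)-\rho^-(4)=(s^2-s+1)^2/((s-1)e^{s-1})\approx 2.8$ is roughly half of $\rho^+(4)\approx 5.7$, so treating it as a first-order perturbation is not justified there. (Your leading terms do look right — numerically at $s=10$ they match — but the argument only becomes a proof for $s$ beyond roughly $8$ or $9$ unless the implied constants are pinned down.) This is exactly why the paper verifies $s=4,5$ directly and, for $s\ge 6$, replaces expansions by explicit two-sided inequalities such as $a\ln(1+1/a)\ge 1-\tfrac{1}{2a}$ and $(1-a)^{1/2}\ge 1-a/2-a^2/2$, reducing $f_s(x_1)<0$ to locating $x_1$ between the roots of an explicit quadratic. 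To close your gap you would need either to make all error terms explicit and check they are beaten by the main terms for every $s\ge 4$, or to handle the small values of $s$ separately as the paper does.
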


\begin{proof}
The statements hold when $s\in \{2,3\}$, as can be verified directly.
For the remainder of the proof, assume that $s\geq 4$. 
Setting $x=(rs-r-s)/s$, we may rewrite $L(r,s)$ as $L(r,s)= f_s(x)$ where
\[ f_s(x) = \ln(sx+1) - (s-1)x\, \ln\left(1 + \dfrac{1}{x}\right).\]
First we claim that $f_s(x)$ is negative when $x\in \left[1-\frac{2}{s},\,
s-1-\frac{1}{s}\right]$. (This range of $x$ corresponds to $2 \leq r\leq s+1$.)
This can be verified directly for $s=4,\ldots, 13$, while for fixed
$s\geq 14$ we have, for $x$ in this range,
\begin{align*} 
f_s(x) &\leq \ln(sx+1) - (s-1)x\left(\frac{1}{x} - \frac{1}{2x^2}\right)\\
  &\leq \ln(s(s-1)) - (s-1) + \frac{s(s-1)}{2(s-2)} < 0.
\end{align*}
This implies that $L(r,s) < 0$ for all $s\geq 4$ and $2 \leq r\leq s+1$.
In particular, this establishes that $\rho(s) \geq s$ whenever $s\geq 3$.

Next, suppose that $x \geq s-1-\dfrac{1}{s}$. 
Then the derivative of $f_s$ with respect to $x$ satisfies
\begin{align*}
f'_s(x) = \frac{s}{sx+1}  - (s-1)\ln\left(1 + \dfrac{1}{x}\right) + \frac{s-1}{x+1}
 &\geq \frac{s}{sx+1} - \frac{s-1}{x} + \frac{s-1}{x+1}\\
 = \frac{sx(x-s) + 2sx - (s-1)}{(sx+1)x(x+1)}
  &\geq \frac{sx(-1-1/s) + 2sx - (s-1)}{(sx+1)x(x+1)}
 >0.
\end{align*}
This implies that $L(r,s)$ is monotonically increasing as a function of 
$r\geq s+1$, for any fixed $s\geq 4$, as
\[ \frac{\partial}{\partial r} L(r,s) = \frac{(s-1)}{s}\, f'_s(x). 
\]
Furthermore, $f_s(x)$ tends to infinity as $x\to\infty$.
Therefore the function $L(\cdot,s)$ has precisely one root in $(2,\infty)$,
for all $s\geq 4$. Let this root be $r=\rho(s)$.
Next we will prove that 
\begin{equation}
\label{fs-inequalities}
f_s(x_1) < 0 \quad \text{ and } \quad f_s(x_2) > 0
\end{equation}
where
\[
 x_1 = \frac{e^{s-1}}{s} - \frac{s-1}{2}  - \frac{1}{s} -
  \frac{(s^2-s+1)^2}{s e^{s-1}},\qquad
x_2 = \frac{e^{s-1}}{s} - \frac{s-1}{2} - \frac{1}{s}.
\]
Since $\rho^-(s) =s(x_1+1)/(s-1)$ and $\rho^+(s) = s(x_2+1)/(s-1)$, this will prove
that $\rho^-(s) < \rho(s) < \rho^+(s)$, as required.

When $s=4, 5$, we can verify the inequalities (\ref{fs-inequalities}) directly.
Now suppose that $s\geq 6$.
Using the inequality $a\ln(1+1/a) \geq 1 - \frac{1}{2a}$, which holds for all $a > 1$, we have
\begin{align*}
\exp\left( (s-1)x\ln\left(1+\frac{1}{x}\right)\right) &\geq \exp\left(s-1-\frac{s-1}{2x}\right)\\
   &\geq e^{s-1}\, \left(1 - \frac{s-1}{2x}\right).
\end{align*}
Note that $f_s(x) < 0$ if this expression is bounded below by $sx + 1$.
This holds if and only if
\[ 2s x^2 - 2(e^{s-1}-1)x + (s-1)e^{s-1} < 0.\]
Let $x^-$ and $x^+$ denote the smaller and larger root of this quadratic (in $x$), respectively. Then
\begin{align*} 
  x^+ &= \frac{e^{s-1}-1}{2s} + \frac{e^{s-1}}{2s}\sqrt{1 - \frac{2(s^2-s+1)}{e^{s-1}} + \frac{1}{e^{2(s-1)}}}\\
 &> \frac{e^{s-1}-1}{2s} + \frac{e^{s-1}}{2s}\sqrt{1 - \frac{2(s^2-s+1)}{e^{s-1}} }\geq x_1
\end{align*}
using the inequality $(1-a)^{1/2} \geq 1 - a/2 - a^2/2$, which holds for all $a\in (0,1)$.
Also
\[ x^- < \frac{e^{s-1}-1}{2s} < x_1,\]
proving the first statement in (\ref{fs-inequalities}).

For the upper bound, by definition of $x_2$, we have
\[ \ln(s x_2 + 1) = s-1 + \ln\left(1-\frac{s(s-1)}{2e^{s-1}}\right)
        \geq (s-1)\left(1 - \frac{s}{2e^{s-1}} - \frac{s^2(s-1)}{6e^{2(s-1)}}
         \right)
\]
since $\frac{s(s-1)}{2e^{s-1}} < 1/3$ when $s\geq 6$, and
$\ln(1-a) \geq -a -2a^2/3$ when $0 < a < 1/3$.
Next, since $x_2  > 1$ we have
\[ (s-1)x_2\ln\left(1+\dfrac{1}{x_2}\right) \leq (s-1)\left(1-\frac{1}{2x_2}
    + \frac{1}{3x_2^2}\right).
\]
Hence $f(x_2) > 0$ holds if
\begin{equation}
\label{star}
 \frac{1}{3x_2^2} 
< \frac{1}{2x_2} - \frac{s}{2e^{s-1}} - \frac{s^2(s-1)}{6e^{2(s-1)}}.
\end{equation}
Substituting the expression for $x_2$,
the left hand side of (\ref{star}) becomes
\[ \frac{4s^2}{3(2e^{s-1} - (s^2-s+2))^2}
\]
while the right hand side of (\ref{star}) becomes
\begin{align*}
\frac{s(s^2-s+2)}{2e^{s-1}(2e^{s-1}- (s^2-s+2))} - \frac{s^2(s-1)}{6e^{2(s-1)}}
  &> \frac{s(s^2-s+2)}{4e^{2(s-1)}} - \frac{s^2(s-1)}{6 e^{2(s-1)}}\\
   &= \frac{s(s^2-s+6)}{12e^{2(s-1)}}.
\end{align*}
Therefore, it suffices to prove that
\[ 16s^2\, e^{2(s-1)} \leq s(s^2-s+6)\, (2e^{s-1} - (s^2-s + 2))^2.\]
But the right hand side of this expression is bounded below by 
$4\cdot \dfrac{49}{64}\, s(s^2-s+6)\, e^{2(s-1)}$
when $s\geq 6$.
For $s\geq 6$ the inequality 
$256 s \leq 49(s^2-s+6)$ holds, and hence
(\ref{star}) holds. 
Therefore $f_s(x_2) > 0$ for $s\geq 6$,  completing the proof.
\end{proof}

 Now we are ready to complete the proof of our main result, Theorem~\ref{main2},
establishing a threshold result for existence of a loose Hamilton cycle
in $\mathcal{G}(n,r,s)$.

\begin{proof}[Proof of Theorem~\ref{main2}]
When $s=2$, the result follows immediately from Robinson and Wormald~\cite{RW92,RW94},
since $2 < \rho(2) < 3$.
Now suppose that $s\geq 3$.
Lemma~\ref{rho-val} proves that 
there is a unique value of $\rho(s)\geq 3$ such that
\[ (r-1)(s-1)\left(\frac{rs-r-s}{rs-r}\right)^{(s-1)(rs-r-s)/s} = 1.\]
Furthermore, Lemma~\ref{rho-val} proved that the upper and lower
bounds on $\rho(s)$ given in (\ref{trap}) hold.

If $r\geq 2$ is an integer with $r \leq \rho(s)$ then 
a.a.s.\ $\cG(n,r,s)$ contains no loose Hamilton cycle, using 
Corollary~\ref{lem:looseHam} and Lemma~\ref{rho-val},
since $\Pr(Y > 0 )\leq \E Y$.

Now suppose that $r > \rho(s)$ for some fixed $s\geq 3$.
As noted in Section~\ref{s:intro}, Cooper et al.~\cite{cfmr} proved that condition 
(A1) of Theorem~\ref{thm:janson}
holds, with $\lambda_k$ as defined in (\ref{lambdak}).
The remaining conditions of Theorem~\ref{thm:janson} have also been
established:
Corollary~\ref{deltak} proves that condition (A2) holds, Lemma~\ref{lem:A3}
shows that condition (A3) holds, while condition (A4) is shown to hold by
combining Lemma~\ref{lem:A3} and Lemma~\ref{A4holds}. 
Note also that $\delta_k > -1$ for all $k\geq 1$, by Lemma~\ref{lem:deltak}.
By Theorem~\ref{thm:janson}, we have $Y>0$ a.a.s.

To complete the proof, apply Lemma~\ref{translate}(a) with the function $h$ 
given by the number of loose Hamilton cycles.
\end{proof}

Finally, we provide the asymptotic distribution of the number of loose Hamilton 
cycles in $\cG(n,r,s)$, denoted by $Y_{\cG}$.

\begin{thm}
\label{distribution}
Suppose that $r$, $s$ are integers with $s\geq 2$ and $r > \rho(s)$.
Let $\zeta_1,\zeta_2\in\mathbb{C}$ be the constants defined in \emph{(\ref{sumprod})}.
Then   $Y_{\cG}/ \E Y_{\cG}$ converges  in distribution to
\begin{align*}
      \prod_{k=2}^\infty \left(1+ \frac{\zeta_1^k + \zeta_2^k - 1}{((r-1)(s-1))^k}
               \right)^{Z_k}\, 
  \exp\left( \frac{1-\zeta_1^k - \zeta_2^k}{2k}\right)
\end{align*}
where 
the variables $Z_k$ are independent Poisson variables with
\[\E Z_k = \frac{((r-1)(s-1))^k}{2k}\]
 for $k\geq 2$. 
\end{thm}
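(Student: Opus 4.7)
\medskip

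\noindent\textbf{Proof plan for Theorem~\ref{distribution}.}
The plan is to assemble the pieces already developed for the small subgraph conditioning argument and then translate back from the configuration model to $\cG(n,r,s)$. For the case $s=2$, the conclusion reduces to Janson's result from~\cite{janson}, so I would assume $s\geq 3$ and $r>\rho(s)$. First I would verify that the hypotheses of Theorem~\ref{thm:janson} all hold: condition (A1) with $\lambda_k = ((r-1)(s-1))^k/(2k)$ was established by Cooper et al.~\cite{cfmr}; condition (A2) with $\delta_k = (\zeta_1^k+\zeta_2^k-1)/((r-1)(s-1))^k$ is Corollary~\ref{deltak}; condition (A3) is Lemma~\ref{lem:A3}; and condition (A4) follows by combining Lemma~\ref{lem:A3} with Lemma~\ref{A4holds}. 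Observe also that for $r>\rho(s)\geq s$ one has $r\geq s+1$ (using $\rho(3)=3$ and the table for larger $s$), so Lemma~\ref{lem:deltak} gives $\delta_k>-1$ for every $k\geq 1$.

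With these in hand, Corollary~\ref{W-conditional} applied to $\widehat{Y}$, the random variable obtained from $Y$ by conditioning on $X_1 = 0$, yields
\[
\frac{\widehat{Y}}{\E Y} \stackrel{d}{\longrightarrow} e^{-\lambda_1\delta_1}\,
 \prod_{k=2}^\infty \bigl(1+\delta_k\bigr)^{Z_k}\, e^{-\lambda_k\delta_k},
\]
with $\widehat{Y}>0$ a.a.s. It remains to identify the leading constant. Substituting into $\lambda_1\delta_1 = (\zeta_1+\zeta_2-1)/2$ the value $\zeta_1+\zeta_2 = -(rs^2-s^2-2rs+r+2)/(rs-r-s)$ from~\eqref{sumprod}, a direct computation using the factorisation $(s-1)(rs-s-2) = rs^2-s^2-rs-s+2$ gives
\[
-\lambda_1\delta_1 \;=\; \frac{1-\zeta_1-\zeta_2}{2} \;=\; \frac{(s-1)(rs-s-2)}{2(rs-r-s)},
\]
which exactly matches the exponential prefactor in the theorem. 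Similarly, $-\lambda_k\delta_k = (1-\zeta_1^k-\zeta_2^k)/(2k)$ for $k\geq 2$ gives the exponential factors inside the infinite product.

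To conclude, I would translate the result from the configuration model to $\cG(n,r,s)$. By Lemma~\ref{translate}(b), applied with $h$ equal to the number of loose Hamilton cycles, the distribution of $\widehat{Y}$ (i.e.\ $Y$ in a uniformly random loopless configuration) is asymptotically equivalent to the distribution of the loose Hamilton cycle count in a uniformly random simple partition, which in turn has the same distribution as $Y_{\cG}$ by~\eqref{simpleratio} and the paragraph following it. Hence $Y_{\cG}/\E Y$ converges to the stated limit.

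The argument is essentially an assembly of previously verified ingredients, so no real obstacle remains; the only mildly delicate step is the algebraic simplification of $\lambda_1\delta_1$ to recognise the exponent $(s-1)(rs-s-2)/(2(rs-r-s))$, and a small sanity check that $r>\rho(s)$ implies $r\geq s+1$ so that Lemma~\ref{lem:deltak} applies.
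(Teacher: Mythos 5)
Your proposal is correct and follows essentially the same route as the paper: verify (A1)--(A4) and $\delta_k>-1$ from the earlier lemmas, apply Corollary~\ref{W-conditional} to $\widehat{Y}/\E Y$, substitute $\lambda_k$ and $\delta_k$ (your simplification $-\lambda_1\delta_1=(1-\zeta_1-\zeta_2)/2=(s-1)(rs-s-2)/(2(rs-r-s))$ checks out), and transfer to $Y_{\cG}$ via Lemma~\ref{translate}(b), with the $s=2$ case deferred to Janson. No gaps.
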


\begin{proof}
When $s=2$, the above expression matches the distribution given
by Janson~\cite[Theorem 2]{janson}.
To see this, observe that when $s=2$ we have
$\{ \zeta_1,\zeta_2\} = \{ 0,-1\}$.

Now assume that $s\geq 3$. We showed in the proof of Theorem~\ref{main2}
that conditions (A1)--(A4) of Theorem~\ref{thm:janson} hold for
$Y$.  Then Corollary~\ref{W-conditional} implies that
\[ \frac{\widehat{Y}}{\E Y} \stackrel{d}{\longrightarrow } 
e^{-\lambda_1\delta_1}\,
  \prod_{k=2}^\infty \left(1 + \delta_k\right)^{Z_k} e^{-\lambda_k \delta_k}\,\, 
                     \text{ as } \,\,n \to \infty,
\]
where $\widehat{Y}$ is the random variable obtained from $Y$ by conditioning
on the event that there are no 1-cycles.  Applying Lemma~\ref{translate}(b) with the function 
$h$ given by the number of loose Hamilton cycles shows that $\hat{Y}$ and $Y_{\cG}$ have
the same limiting distribution.  
Finally, Corollary~\ref{lem:looseHam} shows that
$\E Y_{\cG}/\E Y\sim e^{-\lambda_1\delta_1}$, and the result follows after substituting
for $\lambda_k$ and $\delta_k$ using (\ref{lambdak}) and (\ref{deltakdef}).
\end{proof}

\begin{remark}
For possible future reference we remark that our arguments prove,
as an intermediate step, that with the same threshold function $\rho(s)$,
when $s\geq 3$ and $r > \rho(s)$
we have the following analogue of Theorem~\ref{distribution}:
\begin{align*}
\frac{Y}{\E Y}\stackrel{d}{\rightarrow}  \prod_{k=1}^\infty \left(1+ \frac{\zeta_1^k + \zeta_2^k - 1}{((r-1)(s-1))^k}
               \right)^{Z_k}\, 
  \exp\left( \frac{1-\zeta_1^k - \zeta_2^k}{2k}\right).
\end{align*}
Furthermore, the analogue of Theorem~\ref{main2} holds.
\end{remark}

%%%%%%%%%%%%%%%%%%%%%%%%%%%%%%%%%%%%%%%%%%%%%%%%%%%%%%%%%%%%%%%%%%%%%%%%%%
\subsection*{Acknowledgements}

The authors would like to express our sincere gratitude to the anonymous referees. Their 
careful reading and attention to detail has led to great improvements in this paper.

%%%%%%%%%%%%%%%%%%%%%%%%%%%%%%%%%%%%%%%%%%%%%%%%%%%%%%%%%%%%%%%%%%%%%%%%%%

%%%%%%%%%%%%%%%%%%%%%%%%%%%%%%%%%%%%%%%%%%%%%%%%%%%%%%%%%%%%%%%%%%%%%%%%%%

\appendix
\section{Search for the global maximum}\label{s:maximum}

We now present the proof of Lemma~\ref{Lemma_max}.  In particular, we assume that $s\geq 3$ and
$r > \rho(s)$.  Recall that $\rho(s)\geq s$ for all $s\geq 3$, as proved directly from the
definition of $\rho(s)$ in Lemma~\ref{rho-val}.

Observe that $K$, defined in (\ref{Kdef}), is a compact, convex set in $[0,1]^4$.
Furthermore, $\varphi$, defined in (\ref{phidef}),  is a continous function on $K$. 
Therefore $\varphi$ attains its maximum value at least once. Moreover, $\varphi$ is  
 infinitely differentiable in the interior of $K$.  
The first-order partial derivatives of $\varphi$ are given by
\begin{align}
\frac{\partial \varphi}{\partial\alpha} &= -\ln(1-\alpha-\beta) + 2(s-1)\ln(\alpha)
 - 2\ln(\alpha-\beta-\delta) + \ln(1-\alpha-\beta-\gamma) \non \\
  & \qquad {} - (s-3)\ln((s-3)\alpha+\beta+\delta) - (s-1)\ln(rs-r-s-s\alpha) 
  + \ln(\kappa_1), \label{a-diff}\\
\frac{\partial \varphi}{\partial\beta} &= -\ln(1-\alpha-\beta) + 
   2\ln(\alpha-\beta-\delta) - \ln(\beta-\gamma) + \ln(1-\alpha-\beta-\gamma)\non\\
  & \qquad {} - \ln((s-3)\alpha + \beta+\delta) + \ln(\kappa_2),\label{b-diff}\\
\frac{\partial\varphi}{\partial\gamma} &=  \ln(\beta-\gamma) - 2\ln(\gamma)
  + \ln(1-\alpha-\beta - \gamma) + \ln(\kappa_3),\label{c-diff}\\
\frac{\partial\varphi}{\partial \delta} &= -\ln(\delta) + 2\ln(\alpha-\beta-\delta)
 - \ln((s-3)\alpha + \beta + \delta) + \ln(\kappa_4).
 \label{d-diff}
\end{align}
For $\tau \geq 1$, let 
$$
 p_\tau =  \frac{(\tau -1)^2 + {\kappa}_3 (\tau -1)}{ (\tau -1)^2 + 2{\kappa}_3 \tau  - {\kappa}_3}, \ \ \ \ 
 q_\tau =  \frac{\kappa_4 \tau(\tau-1)^2}{ \kappa_2( (\tau-1)^2 + 2{\kappa}_3 \tau - {\kappa}_3)}
$$
and define $\boldsymbol{x}_\tau = (\alpha_\tau,\beta_\tau,\gamma_\tau,\delta_\tau)$ by
\begin{align}
\label{alpha-def}
 \alpha_\tau  &= \frac{p_\tau+  q_\tau + \frac{(s-3)}{2\kappa_4} q_\tau
   + \sqrt{\frac{(s-2)}{ \kappa_4} q_\tau(p_\tau + q_\tau)+ \frac{(s-3)^2}{4\kappa_4^2} q_\tau^2 }}
   {1+ p_\tau+  q_\tau + \frac{(s-3)}{2\kappa_4}q_\tau
   + \sqrt{\frac{(s-2)}{ \kappa_4} q_\tau(p_\tau + q_\tau)+ \frac{(s-3)^2}{4\kappa_4^2} q_\tau^2 }}
\\
  \beta_\tau &= p_\tau (1- \alpha_\tau), \qquad 
  \gamma_\tau = \tfrac{{\kappa}_3 (\tau-1)}{ (\tau-1)^2 + 2{\kappa}_3 \tau - 
    {\kappa}_3} (1- \alpha_\tau), \qquad
  \delta_\tau = q_\tau (1-\alpha_\tau).
\label{beta-delta-def}
 \end{align}
When $r\geq s\geq 3$ we have, from (\ref{hdef}),
\begin{align*}
h(r,s) &= (r-2)^2 + 2(s-2)(r-1)(r-2) + \nfrac{1}{2}(s-2)(s-3)(r-1)^2\\ 
       &\leq \left( r-2 + (s-2)(r-1)\right)^2\\
       &= (rs-r-s)^2
\end{align*}
and
\begin{align*}
h(r,s) &= \nfrac{1}{2}(rs-r-s)^2 + \nfrac{1}{2}
  \left( (r-2)^2 + (s-2)(r-1)(r-3)\right) \geq \nfrac{1}{2}(rs-r-s)^2.
\end{align*}
Therefore 
\begin{equation}
\label{kappa3-less-than-2}
1\leq \kappa_3 =\frac{(rs-r-s)^2}{h(r,s)} \leq 2.
\end{equation}
 Since $\kappa_2,\kappa_3,\kappa_4>0$ it follows immediately that  $\alpha_1 = 0$. 
Furthermore,
\begin{align}
   \lim_{\tau\rightarrow \infty} \alpha_\tau &=1, \qquad 
     \lim_{\tau\rightarrow\infty} \beta_\tau = 
     \lim_{\tau\rightarrow\infty} \gamma_\tau = 0, \nonumber \\
     \lim_{\tau\rightarrow\infty} \delta_\tau &= \delta_{\infty}:= \frac{1}{1 + \frac{s-3}{2\kappa_4} + \sqrt{\frac{s-2}{\kappa_4} 
    + \frac{(s-3)^2}{4\kappa_4^2}}} \, \in (0,1).
\label{x-infinity}
\end{align}
Next, since $\kappa_2, \kappa_3, \kappa_4>0$ it follows that
both $p_\tau$ and $q_\tau/p_\tau$ are strictly increasing, infinitely differentiable functions
of $\tau\in [1,\infty)$. 
Hence
\begin{equation}
\label{alpha-increasing}
\text{$\alpha_\tau$ is a
strictly increasing differentiable function of $\tau\in [1,\infty)$.  
}
\end{equation}
Observe also that  
 \begin{equation}\label{Lmax_inside}
  \text{$\boldsymbol{x}_\tau$ lies in the interior of the domain $K$,  for any $\tau\in (1,\infty)$}.
 \end{equation}
Since $\kappa_4 = \frac{r-3}{r-2} <1$, we have
\begin{equation}
    \sqrt{\tfrac{(s-2)}{ \kappa_4}q_\tau(p_\tau + q_\tau)+ \tfrac{(s-3)^2}{4\kappa_4^2}q_\tau^2 } \geq q_\tau + \tfrac{(s-3)}{2\kappa_4}q_\tau, \label{bound_root}
\end{equation}
and hence
\begin{equation}
 	(1-\alpha_\tau)^{-1} \geq 1+p_\tau+ \bigg(2 + \frac{s-3}{\kappa_4}\bigg)\, q_\tau \geq 1+ p_\tau + (s-1)q_\tau. \label{bound_alpha-1}
\end{equation}
We also note that
\begin{equation}
\label{we-also-note}
\tau= \frac{1-\alpha_\tau-\beta_\tau}{1-\alpha_\tau-\beta_\tau-\gamma_\tau}
\end{equation}
for all $\tau\geq 1$.

Suppose that  $\boldsymbol{x}_{\tilde{\tau}}$ is a stationary point of $\varphi$,
for some value of $\tilde{\tau}$.
Then we can solve the equations
$\frac{\partial \varphi}{\partial\alpha}(\boldsymbol{x}_{\tilde{\tau}}) = 
\frac{\partial \varphi}{\partial\beta}(\boldsymbol{x}_{\tilde{\tau}}) = 
\frac{\partial \varphi}{\partial\gamma}(\boldsymbol{x}_{\tilde{\tau}}) = 
\frac{\partial \varphi}{\partial\delta}(\boldsymbol{x}_{\tilde{\tau}}) = 0$
for 
$\ln \kappa_1$, $\ln \kappa_2$, $\ln \kappa_3$, $\ln \kappa_4$, and substitute
these expressions into  \eqref{phidef}, to obtain (after much cancellation),
\begin{equation}
\label{value-at-stat-point}
 \varphi(\boldsymbol{x}_{\tilde{\tau}}) = \ln \tilde{\tau} + \frac{s-1}{s}(rs-r-s) \ln (rs-r-s-s\alpha_{\tilde{\tau}}).
\end{equation}
Next, note that $\alpha_1=\beta_1=\gamma_1=\delta_1=0$, and therefore            
\begin{equation}
\label{value-at-1}
     	\varphi(\boldsymbol{x}_1) = \frac{(s-1)}{s} (rs-r-s) \ln(rs-r-s).
\end{equation}
It follows that
\begin{align}
\varphi(\boldsymbol{x}_{\tilde{\tau}}) - \varphi(\boldsymbol{x}_1) 
  &= \ln \tilde{\tau} + \frac{s-1}{s} (rs-r-s)  \ln\left(1 - \frac{s\tilde{\alpha}}{rs-r-s}\right) \nonum \\
  &\leq \ln \tilde{\tau} - (s-1)\tilde{\alpha}
\label{stat-minus-1}
\end{align}
for any stationary point $\boldsymbol{x}_{\tilde{\tau}}$ of $\varphi$.
This will be useful later.

\bigskip
 	
Recall the point $\boldsymbol{x}^\ast$ from the statement of Lemma~\ref{Lemma_max}.
We claim that that $\boldsymbol{x}^\ast$ belongs to the ridge: specifically, we claim that
$\boldsymbol{x}^\ast = \boldsymbol{x}_{\tau^\ast}$ for $\tau^\ast = (r-1)(s-1)$. 
To establish this, observe (by direct substitution) that
\[ p_{\tau^*} = \frac{rs-s-2}{(r-1)s},\qquad
   q_{\tau^*} = \frac{(r-2)(r-3)}{(r-1)s}
\]
and hence
\begin{align*} 
 p_{\tau*} + q_{\tau*} + \frac{s-3}{2\kappa_4}\, q_{\tau^*} &= \frac{r^2s-r^2-2rs+2r+2s-4}{2(r-1)s},\\
  \sqrt{\frac{(s-2)}{\kappa_4} q_{\tau^*}(p_{\tau^*} + q_{\tau^*}) + 
   \frac{(s-3)^2}{4\kappa_4^2} q_{\tau^*}^2} &= \frac{(r-2)(rs-r-2)}{2(r-1)s}.
\end{align*}
The sum of these two expressions is $(rs-r-s)/s$, from which it follows easily
using (\ref{alpha-def}) that
\[ 1-\alpha_{\tau^*} = \frac{s}{r(s-1)}. \]
This implies that $\alpha_{\tau^*} = \alpha^*$, and using
(\ref{beta-delta-def}), we deduce that
$(\beta_{\tau^*},\gamma_{\tau^*},\delta_{\tau^*})=(\beta^*,\gamma^*,\delta^*)$, completing the
proof of the claim.

Therefore, by~\eqref{Lmax_inside}, the point $\boldsymbol{x}^\ast$ belongs 
to the interior of $K$.

Our interest in this particular curve $\boldsymbol{x}_\tau$ is clarified by the following lemma,
which shows that $\boldsymbol{x}_\tau$ is a parameterisation of a ridge which must contain any
global maximum of $\varphi$ such that $\alpha > 1$.
We prove Lemma~\ref{claima} in Section~\ref{ss:claima}.
 
\begin{lemma}
Suppose that the assumptions of Lemma \ref{Lemma_max} hold. 
For $\tau\geq 1$,  the function $\eta_\tau(\beta,\gamma,\delta)= \varphi(\alpha_\tau,\beta,\gamma,\delta)$
has a  unique global maximum over the domain 
\[ K_\tau= \{ (\beta,\gamma,\delta)\in\mathbb{R}^3 \mid
   0 \leq \gamma \leq \beta,\
   0\leq \delta \leq \alpha_\tau - \beta,\ 
     \beta + \gamma \leq 1 - \alpha_\tau
   \}\]  
which occurs at the point $(\beta_\tau,\gamma_\tau,\delta_\tau)$.  
In particular,  $\frac{\partial}{\partial \beta} \varphi (\boldsymbol{x}_\tau) = 
                       \frac{\partial}{\partial \gamma} \varphi (\boldsymbol{x}_\tau) 
                       = \frac{\partial}{\partial \delta} \varphi (\boldsymbol{x}_\tau) = 0$ for any $\tau>1$.
\label{claima}
\end{lemma}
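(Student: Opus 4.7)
The plan is to establish that $\eta_\tau$ is strictly concave on the interior of $K_\tau$ and that its maximum lies in the interior, so that the unique interior critical point is the global maximum, and then to verify by direct substitution that this critical point is $(\beta_\tau,\gamma_\tau,\delta_\tau)$.

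First, since $K_\tau$ is compact and $\eta_\tau$ is continuous, a maximum is attained. To rule out boundary maxima, I would examine the partial derivatives \eqref{b-diff}--\eqref{d-diff} with $\alpha=\alpha_\tau$ near each face of $K_\tau$. The logarithmic terms force the directional derivative pointing into the interior of $K_\tau$ to tend to $+\infty$ on each face: for instance $\partial_\gamma\eta_\tau\to+\infty$ as $\gamma\to0^{+}$ (from the $-2\ln\gamma$ term), $\partial_\delta\eta_\tau\to+\infty$ as $\delta\to0^{+}$, $\partial_\beta\eta_\tau\to+\infty$ as $\beta\to\gamma^{+}$ (from $-\ln(\beta-\gamma)$), while $\partial_\beta\eta_\tau\to-\infty$ on the faces $\beta+\delta=\alpha_\tau$ and $\beta+\gamma=1-\alpha_\tau$. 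Hence the maximum lies in $K_\tau^\circ$.

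Next, I would compute the Hessian of $\eta_\tau$ in $(\beta,\gamma,\delta)$ and exhibit the quadratic form in a direction $(u,v,w)$ as the sum $Q(u,v,w)=Q_1(u,v)+Q_2(u,w)$ where
\[
Q_1(u,v)=\frac{u^2}{1-\alpha_\tau-\beta}-\frac{(u-v)^2}{\beta-\gamma}-\frac{(u+v)^2}{1-\alpha_\tau-\beta-\gamma}-\frac{2v^2}{\gamma},
\]
\[
Q_2(u,w)=-\frac{2(u+w)^2}{\alpha_\tau-\beta-\delta}-\frac{(u+w)^2}{(s-3)\alpha_\tau+\beta+\delta}-\frac{w^2}{\delta}.
\]
Clearly $Q_2\le0$, with equality iff $u=w=0$. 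For $Q_1$, writing $a=\beta-\gamma$, $b=1-\alpha_\tau-\beta-\gamma$ so that $1-\alpha_\tau-\beta=b+\gamma$, a direct computation shows that the $2\times 2$ matrix of $Q_1$ has determinant proportional to $4\gamma^2+5\gamma b+\gamma a+2b^2>0$ and negative trace, hence is strictly negative definite; thus $Q_1\le0$ with equality iff $u=v=0$. Combining, $Q(u,v,w)<0$ for $(u,v,w)\neq 0$, so $\eta_\tau$ is strictly concave on $K_\tau^\circ$. Consequently the maximum, being interior, is the unique critical point.

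Finally, I would verify that $(\beta_\tau,\gamma_\tau,\delta_\tau)$ is this critical point by a direct substitution into the first-order conditions derived from setting \eqref{b-diff}--\eqref{d-diff} to zero (with $\alpha=\alpha_\tau$). Writing $N=(\tau-1)^2+2\kappa_3\tau-\kappa_3$, the identities
\[
\beta_\tau-\gamma_\tau=\frac{(\tau-1)^2}{N}(1-\alpha_\tau),\qquad 1-\alpha_\tau-\beta_\tau-\gamma_\tau=\frac{\kappa_3}{N}(1-\alpha_\tau),\qquad \gamma_\tau=\frac{\kappa_3(\tau-1)}{N}(1-\alpha_\tau)
\]
immediately give $\kappa_3(\beta_\tau-\gamma_\tau)(1-\alpha_\tau-\beta_\tau-\gamma_\tau)=\gamma_\tau^2$, which is $\partial_\gamma\eta_\tau=0$. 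Similarly, the defining formula \eqref{alpha-def} for $\alpha_\tau$ together with \eqref{beta-delta-def} and the identity $\alpha_\tau-\beta_\tau-\delta_\tau=(1-\alpha_\tau)\big(\tfrac{s-3}{2\kappa_4}q_\tau+R\big)$ (with $R$ the square root in \eqref{alpha-def}) yield $\partial_\delta\eta_\tau=0$ after a computation that exploits the quadratic relation satisfied by $R$. The $\beta$-condition is obtained in the same way.

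The main obstacle will be the strict concavity step: $\eta_\tau$ contains the \emph{convex} summand $g(1-\alpha-\beta)$, and this must be dominated by the remaining concave summands. The key point enabling this is the inequality $\tfrac{1}{1-\alpha-\beta}<\tfrac{1}{1-\alpha-\beta-\gamma}$ for $\gamma>0$, which is what produces the negative $u^2$-coefficient inside $Q_1$ and makes the decomposition $Q=Q_1+Q_2$ effective.
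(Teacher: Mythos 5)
Your proposal takes a genuinely different route from the paper, and the central observation appears to be correct. The paper's proof first rules out boundary maxima through a careful case analysis (cases (i)--(iv), including sub-cases at $\beta\in\{0,\alpha_\tau,1-\alpha_\tau,\nfrac{1}{2}\}$), and then establishes that the interior critical-point system has a \emph{unique} solution by exploiting the monotonicity of the auxiliary functions $G_{\tau,\beta}(\gamma)$ and $D_{\tau,\beta}(\delta)$ and reducing to the identity $\lm{\tau}=\tau$ via the formula for $(1-\alpha_\tau)^{-1}$. Your proof replaces the entire uniqueness argument with the observation that $\eta_\tau$ is strictly concave on $K_\tau^\circ$, so that once a critical point is found in the interior, it is automatically the unique global maximum. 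I have checked the Hessian decomposition: writing $a=\beta-\gamma$, $b=1-\alpha_\tau-\beta-\gamma$, the $Q_1$-matrix has determinant
\[
\frac{4\gamma^2+5b\gamma+a\gamma+2b^2}{ab\gamma(b+\gamma)}>0
\]
and negative trace (since $\frac{1}{b+\gamma}<\frac{1}{b}$), so $Q_1$ is strictly negative definite; and $Q_2\le 0$ with equality only when $u=w=0$. So $Q_1+Q_2<0$ for $(u,v,w)\neq 0$, and your concavity claim holds. This is a cleaner structural insight than the paper's monotonicity argument, and it also makes transparent \emph{why} the problematic convex summand $g(1-\alpha-\beta)$ does not spoil concavity: its $u^2$-coefficient $\tfrac{1}{1-\alpha-\beta}$ is strictly smaller than $\tfrac{1}{1-\alpha-\beta-\gamma}$ coming from $-g(1-\alpha-\beta-\gamma)$.

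Two small remarks. First, once strict concavity on $K_\tau^\circ$ is established, your boundary-derivative argument is actually not needed: it suffices to check (as the paper does in~\eqref{Lmax_inside}) that $(\beta_\tau,\gamma_\tau,\delta_\tau)$ lies in $K_\tau^\circ$ and satisfies the first-order conditions. Strict concavity along the segment from this interior critical point to any boundary point then forces the boundary value to be strictly smaller, by continuity of $\eta_\tau$ on the compact $K_\tau$. This is worth doing because your boundary argument, as sketched, is not airtight at lower-dimensional strata where several faces meet (e.g.\ where $\beta=\gamma=0$, the sign analysis of $\partial_\gamma\eta_\tau$ requires a finer comparison of the competing $\ln(\beta-\gamma)$ and $-2\ln\gamma$ singularities — this is precisely what the paper's case analysis handles). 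Second, you should state the trivial case $\tau=1$ separately, where $K_\tau$ degenerates to a single point. With these adjustments the argument is complete and somewhat shorter than the paper's.
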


%\bigskip

Let
$\boldsymbol{x}_{\infty} = (1,0,0,\delta_{\infty}) = \lim_{\tau\to\infty} (\alpha_{\tau},\beta_{\tau},
\gamma_{\tau},\delta_{\tau})$, where the value of $\delta_{\infty}$ is given in (\ref{x-infinity}).
It will be convenient to let the parameter $\tau$ range over $[1,\infty]$
to include the limiting point $\boldsymbol{x}_{\infty}$.

% For Comment #13.
\begin{lemma}
For all $\delta\in [0,1]$, if  $\delta\neq \delta_\infty$ then $\varphi((1,0,0,\delta)) \leq \varphi(\boldsymbol{x}_\infty)$.
\label{alpha=1}
\end{lemma}

\begin{proof}
For a contradiction, suppose that there exists $\delta \in [0,1]$ with $\delta\neq \delta_\infty$ and
$\varphi((1,0,0,\delta)) > \varphi(\boldsymbol{x}_\infty)$.  
By continuity of $\varphi$ on $K$, there exists some
$\varepsilon > 0$ such that $\varphi((1-\varepsilon,0,0,\delta))$ is strictly greater than 
$\varphi(\boldsymbol{x}_\tau)$,
where $\tau$ is chosen so that $\alpha_{\tau} = 1-\varepsilon$.  But this contradicts Lemma~\ref{claima}. 
\end{proof}

The last two lemmas lead to the following.

\begin{lemma}
\label{lem:x-infinity}
Suppose that
$\varphi(\boldsymbol{x}^\ast)$ is strictly larger than $\varphi(\boldsymbol{x}_\tau)$
for all $\tau\in [1,\infty]$ with $\tau\neq \tau^*$.
Then the conclusion of Lemma~\ref{Lemma_max} holds.
\end{lemma}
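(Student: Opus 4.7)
The plan is to combine the compactness of $K$, continuity of $\varphi$, and Lemma~\ref{claima} to reduce the global maximization of $\varphi$ on $K$ to the study of its restriction to the one-parameter ridge $\{\boldsymbol{x}_\tau : \tau \geq 1\}$, so that the hypothesis directly identifies $\boldsymbol{x}^\ast = \boldsymbol{x}_{\tau^\ast}$ with $\tau^\ast = (r-1)(s-1)$ as the unique maximizer.

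First, since $\varphi$ is continuous on the compact set $K$, a global maximizer $\boldsymbol{y}$ exists. If $\alpha_y \in (0,1)$, then by \eqref{alpha-increasing} and \eqref{x-infinity} there is a unique $\tau_y > 1$ with $\alpha_{\tau_y} = \alpha_y$, and applying Lemma~\ref{claima} on the slice $\alpha = \alpha_y$ forces $\boldsymbol{y} = \boldsymbol{x}_{\tau_y}$; the hypothesis then gives $\tau_y = \tau^\ast$. The two degenerate slices are handled separately: $\alpha_y = 0$ collapses to $\boldsymbol{y} = (0,0,0,0) = \boldsymbol{x}_1$, excluded directly by the hypothesis, while on the edge $\alpha_y = 1$ (where $\beta = \gamma = 0$ is forced) one combines continuity of $\varphi$ with the convergence $\boldsymbol{x}_\tau \to (1,0,0,\delta_\infty)$ from \eqref{x-infinity} and a short one-variable analysis of $\delta \mapsto \varphi(1,0,0,\delta)$ on $[0,1]$ to obtain $\max_\delta \varphi(1,0,0,\delta) < \varphi(\boldsymbol{x}^\ast)$. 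Uniqueness on the ridge (by hypothesis) together with the uniqueness in Lemma~\ref{claima} therefore delivers uniqueness on all of $K$.

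With $\boldsymbol{x}^\ast$ identified as an interior global maximum, it is a full stationary point of $\varphi$: the three partial derivatives in $\beta,\gamma,\delta$ vanish by Lemma~\ref{claima}, and the $\alpha$-derivative vanishes because $\tau^\ast$ is a critical point of $\tau \mapsto \varphi(\boldsymbol{x}_\tau)$ and $\alpha_\tau$ is strictly increasing in $\tau$. Hence \eqref{value-at-stat-point} applies with $\tau = \tau^\ast$, and substituting $\alpha^\ast = (rs-r-s)/(r(s-1))$ gives $rs-r-s-s\alpha^\ast = (rs-r-s)^2/(r(s-1))$, from which \eqref{Lmax1} follows by an elementary rewrite using $rs-r = r(s-1)$. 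The Hessian computation is then a direct differentiation of \eqref{a-diff}--\eqref{d-diff}: I evaluate each second partial at $\boldsymbol{x}^\ast$ and expand $\det(-H^\ast)$, cross-checking that the result factors into \eqref{Lmax2}. Since $\boldsymbol{x}^\ast$ is an interior local maximum the Hessian is automatically negative semi-definite, so once the nonzero determinant \eqref{Lmax2} is verified (which it is, as $Q(r,s) > 0$ by Lemma~\ref{lem:A3} and the remaining factors are positive for $r > \rho(s) \geq s$) strict negative definiteness follows. I expect the main obstacle to be the tedious but routine bookkeeping in the $4 \times 4$ determinant expansion, together with the careful one-variable analysis required to rule out the $\alpha = 1$ boundary edge.
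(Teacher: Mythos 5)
Your route is essentially the paper's: Lemma~\ref{claima} confines any global maximiser with $\alpha\in(0,1)$ to the ridge, the hypothesis pins it at $\boldsymbol{x}^\ast=\boldsymbol{x}_{\tau^\ast}$, stationarity plus (\ref{value-at-stat-point}) gives (\ref{Lmax1}), and (\ref{Lmax2}) is a direct computation. Two of your shortcuts are sound and slightly cleaner than the paper: obtaining $\varphi_\alpha(\boldsymbol{x}^\ast)=0$ from the chain rule along the ridge (or simply from interior global maximality), where the paper uses direct substitution; and deducing strict negative definiteness of $H^\ast$ from ``negative semi-definite at an interior maximum'' together with $\det(-H^\ast)>0$, where the paper argues via the parity of positive eigenvalues and rules out two positive eigenvalues using Lemma~\ref{claima}.

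The genuine gap is the boundary face $\alpha=1$ (where $\beta=\gamma=0$). You assert that continuity, the convergence $\boldsymbol{x}_\tau\to(1,0,0,\delta_\infty)$ from (\ref{x-infinity}), and ``a short one-variable analysis'' of $\delta\mapsto\varphi(1,0,0,\delta)$ yield $\max_\delta\varphi(1,0,0,\delta)<\varphi(\boldsymbol{x}^\ast)$, but none of these ingredients produces the strict inequality. The one-variable analysis shows at best that the maximum over $\delta$ is attained at $\delta_\infty$, i.e.\ equals $\varphi(\boldsymbol{x}_\infty)=\lim_{\tau\to\infty}\varphi(\boldsymbol{x}_\tau)$; since the hypothesis only controls finite $\tau$, passing to the limit gives $\varphi(\boldsymbol{x}_\infty)\le\varphi(\boldsymbol{x}^\ast)$, not $<$, and if equality held uniqueness would fail, so this case cannot be waved away. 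The alternative of comparing $\max_\delta\varphi(1,0,0,\delta)$ directly with the explicit value (\ref{Lmax1}) is an honest $(r,s)$-dependent inequality (the competing terms are of order $rs\ln r$ and $s\ln s$), not a short calculation, and you have not carried it out. The paper handles this face differently: supposing some $(1,0,0,\delta)$ were a global maximum, so that $\varphi(1,0,0,\delta)\ge\varphi(\boldsymbol{x}^\ast)>\varphi(\boldsymbol{x}_\infty)$, continuity gives an $\varepsilon>0$ with $\varphi(1-\varepsilon,0,0,\delta)>\varphi(\boldsymbol{x}_\tau)$ for the $\tau$ with $\alpha_\tau=1-\varepsilon$, contradicting Lemma~\ref{claima}. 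Even that argument rests on the strict separation $\varphi(\boldsymbol{x}^\ast)>\varphi(\boldsymbol{x}_\infty)$, which in context can be extracted from the monotonicity supplied by Lemma~\ref{final} via (\ref{connect-derivatives}); the point is that some additional input of this kind is indispensable, and your sketch supplies none, so the exclusion of the $\alpha=1$ face, and with it the uniqueness claim in Lemma~\ref{Lemma_max}, is left unproved.
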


\begin{proof}
Direct substitution shows that
$\frac{\partial}{\partial \alpha} \varphi (\boldsymbol{x}^\ast) = 0$. 
Hence, since $\boldsymbol{x}^\ast = \boldsymbol{x}_{\tau^\ast}$,
it follows from Lemma~\ref{claima} that
$\boldsymbol{x}^\ast$ is a stationary point of $\varphi$. 
Using (\ref{value-at-stat-point}) we may confirm that (\ref{Lmax1}) holds,
and direct substitution (with the help of a computer algebra package such as Maple)
shows that (\ref{Lmax2}) holds. 

Since the Hessian $H^\ast$ is real and symmetric, there exists an orthogonal basis of $\mathbb{R}^4$ 
consisting
of eigenvectors. Combining (\ref{Lmax2}) with Lemma~\ref{lem:A3} shows that $\det(-H^\ast) > 0$.
It follows that $\det(H^\ast) = (-1)^4\, \det(-H^\ast)>0$, so
$H^\ast$ has an even number of positive eigenvalues
(counting multiplicities).  If there are at least two positive eigenvalues (counting
multiplicities) then we can find a vector $\boldsymbol{w}$ which is orthogonal to
$(1,0,0,0)^t$ which is a linear combination of eigenvectors with positive eigenvalues.
By Taylor's Theorem, moving away from $\boldsymbol{x}^*$ in the direction of $\boldsymbol{w}$ would
increase the value of $\varphi$, contradicting Lemma~\ref{claima}. 
Therefore, we conclude that $H^\ast$ is negative definite, which implies that
$\boldsymbol{x}^\ast$ is a local maximum of $\varphi$. 

Recall that $\alpha_1=0$ and $\alpha_\tau$ is a strictly increasing function of $\tau$ 
with $\lim_{\tau\to\infty} \alpha_{\tau} = 1$.
By Lemma~\ref{claima}, any global maximum of $\varphi$ with $\alpha\in [0,1)$ must lie on the ridge.
But Lemma~\ref{claima} does
not rule out the possibility that $\varphi$ has a global maximum at a point in $K$ with $\alpha=1$.
By definition of $K$, any such point must satisfy $\beta = \gamma=0$.
By assumption and using Lemma~\ref{alpha=1}, we see that
$\varphi((1,0,0,\delta))\leq \varphi(\boldsymbol{x}_\infty) < \varphi(\boldsymbol{x}^\ast)$.
Therefore, we conclude that 
$\boldsymbol{x}^\ast$ is the unique global maximum of $\varphi$ on $K$,
completing the proof.
\end{proof}

Hence, it remains to prove that
$\varphi(\boldsymbol{x}^\ast)$ is strictly larger than $\varphi(\boldsymbol{x}_\tau)$
for all $\tau\in [1,\infty]$ with $\tau\neq \tau^*$.
First we consider $\tau=1$. 
Using \eqref{Lmax1} and (\ref{value-at-1}), we find that
\begin{equation}\label{compwith1}
  \varphi(\boldsymbol{x}^\ast) - \varphi(\boldsymbol{x}_1) =   
         \ln(r-1) + \ln(s-1)+ \frac{(s-1)(rs-r-s)}{s} \ln\left(\frac{rs-r-s}{rs-r}\right),
\end{equation}          
and this expression is positive since $r > \rho(s)$ (see Lemma~\ref{Llemma}).
% Comment #12
Note that combining Lemma~\ref{claima} with  (\ref{Lmax_inside}) and (\ref{compwith1}) shows that no point on the boundary of $K$ can be a 
global maximum of $\varphi$ on $K$, except possibly points with $\alpha=1$.

Now we may suppose that $\tau > 1$.
Let $\varphi_\alpha$ denote the partial derivative of $\varphi$ with
respect to $\alpha$. 
By Lemma~\ref{claima} we have  $\frac{\partial}{\partial \delta} \varphi (\boldsymbol{x}_\tau) = 0$. 
Combining \eqref{a-diff} and \eqref{d-diff}, we find that
       \begin{align*}
       	 \varphi_\alpha(\boldsymbol{x}_\tau)
&= -\ln(1-\alpha_\tau-\beta_\tau) 
           + 2(s-1)\ln \alpha_\tau - 2\ln(\alpha_\tau-\beta_\tau-\delta_\tau) + \ln(1-\alpha_\tau-\beta_\tau-\gamma_\tau)  \\
  & \qquad {} - (s-3)\ln((s-3)\alpha_\tau+\beta_\tau+\delta_\tau) - (s-1)\ln(rs-r-s-s\alpha_\tau) 
  + \ln{\kappa}_1 \\
     &= -\ln \tau - (s-3) \ln ((s-3)\alpha_\tau+\beta_\tau+\delta_\tau) 
  -\ln \left(\frac{\delta_\tau((s-3)\alpha_\tau+\beta_\tau+\delta_\tau)}{\alpha_\tau^2}\right) 
     \\
        & \qquad {} + 2(s-2)\ln \alpha_\tau - (s-1)\ln(rs-r-s-s\alpha_\tau) + \ln \kappa_4 + \ln \kappa_1.
       \end{align*}           
We will use primes to denote differentiation with respect to $\tau$.

\begin{lemma}
Suppose that the assumptions of Lemma~\emph{\ref{Lemma_max}} hold. 
The functions 
\[ \alpha_\tau+\beta_\tau+\delta_\tau,\qquad
\frac{\delta_\tau((s-3)\alpha_\tau+\beta_\tau +\delta_\tau)}{\alpha_\tau^2} \]
are both strictly increasing with respect to $\tau \in (1,\infty)$.  
\label{claimb}
\end{lemma}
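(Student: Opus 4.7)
The plan is to reduce each monotonicity claim, via Lemma~\ref{claima} and the explicit parameterization, to the monotonicity of a very simple scalar function of $\tau$. Throughout, set $P_\tau = p_\tau + q_\tau$ and $X_\tau = \frac{s-3}{2\kappa_4}q_\tau + M_\tau$, where $M_\tau$ denotes the square root appearing in (\ref{alpha-def}), so that $N_\tau = P_\tau + X_\tau$, $1-\alpha_\tau = 1/(1+N_\tau)$, and $\beta_\tau + \delta_\tau = P_\tau(1-\alpha_\tau)$.

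For the second function I would use the first-order condition $\partial_\delta\varphi(\boldsymbol{x}_\tau) = 0$ provided by Lemma~\ref{claima}, which rearranges to $\kappa_4(\alpha_\tau - \beta_\tau - \delta_\tau)^2 = \delta_\tau((s-3)\alpha_\tau + \beta_\tau + \delta_\tau)$ and hence yields the identity
\[
\frac{\delta_\tau((s-3)\alpha_\tau + \beta_\tau + \delta_\tau)}{\alpha_\tau^2} = \kappa_4\left(\frac{\alpha_\tau - \beta_\tau - \delta_\tau}{\alpha_\tau}\right)^2 = \kappa_4\left(\frac{X_\tau}{N_\tau}\right)^2.
\]
So it suffices to show that $X_\tau/P_\tau$ is strictly increasing in $\tau$. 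Reading off the definition,
\[
\frac{X_\tau}{P_\tau} = \frac{s-3}{2\kappa_4}\cdot\frac{q_\tau}{P_\tau} + \sqrt{\frac{s-2}{\kappa_4}\cdot\frac{q_\tau}{P_\tau} + \frac{(s-3)^2}{4\kappa_4^2}\left(\frac{q_\tau}{P_\tau}\right)^2}
\]
is a strictly increasing function of $q_\tau/P_\tau = (1 + p_\tau/q_\tau)^{-1}$, and a direct computation gives
\[
\frac{q_\tau}{p_\tau} = \frac{\kappa_4}{\kappa_2}\cdot\frac{\tau(\tau-1)}{\tau-1+\kappa_3},
\]
whose derivative has numerator $(\tau-1)^2 + \kappa_3(2\tau-1) > 0$ for $\tau>1$.

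For the first function, write $\alpha_\tau + \beta_\tau + \delta_\tau = 1 - (1-P_\tau)(1-\alpha_\tau) = 1 - (1-P_\tau)/(1+N_\tau)$. Differentiation reduces the claim to $P_\tau'(1+N_\tau) > (P_\tau-1)N_\tau'$, and substituting $N_\tau = P_\tau + X_\tau$ gives the cleaner inequality $P_\tau'(2+X_\tau) > (P_\tau - 1)X_\tau'$. When $P_\tau \leq 1$ this is immediate from $P_\tau', X_\tau'>0$. In the harder case $P_\tau > 1$, the plan is to differentiate the identity $\kappa_4 X_\tau^2 = (s-3)q_\tau X_\tau + (s-2)q_\tau P_\tau$ (a rewriting of the formula for $M_\tau$) to solve for $X_\tau'$, substitute, and use $2\kappa_4 X_\tau - (s-3)q_\tau = 2\kappa_4 M_\tau$ to reduce the inequality to
\[
A_\tau\bigl(P_\tau' q_\tau - (P_\tau - 1)q_\tau'\bigr) + P_\tau'\bigl((s-2)q_\tau + 4\kappa_4 M_\tau\bigr) > 0,
\]
where $A_\tau = (s-3)X_\tau + (s-2)P_\tau > 0$. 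The second summand is manifestly positive, and the first is positive by the elementary cancellation $P_\tau' q_\tau - (P_\tau - 1)q_\tau' = p_\tau' q_\tau + (1-p_\tau)q_\tau' > 0$, using $p_\tau \in [0,1)$ (the denominator in the formula for $p_\tau$ exceeds the numerator by $\kappa_3\tau$).

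The main obstacle will be the case $P_\tau > 1$ for the first function: naive sign considerations fail, and the two key algebraic simplifications above are what make the inequality transparent. In particular, the identity $4\kappa_4 X_\tau - (s-4)q_\tau = (s-2)q_\tau + 4\kappa_4 M_\tau$ is essential to absorb the troublesome negative term when $s>4$, while the cancellation in $P_\tau' q_\tau - (P_\tau - 1)q_\tau'$ sidesteps the need to control the sign of $P_\tau - 1$ directly.
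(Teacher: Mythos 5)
Your proof is correct, and it is worth recording how it relates to the paper's argument. For the second function the two treatments essentially coincide: the paper simply writes the same quantity as $\kappa_4\bigl(1-\frac{1}{1+\cdots}\bigr)^2$, an explicit increasing function of $q_\tau/p_\tau$ read off from the parameterisation, and quotes the monotonicity of $q_\tau/p_\tau$; you rederive that identity from the stationarity condition $\frac{\partial\varphi}{\partial\delta}(\boldsymbol{x}_\tau)=0$ supplied by Lemma~\ref{claima}, which is legitimate and non-circular since the proof of Lemma~\ref{claima} in Appendix~\ref{ss:claima} does not invoke Lemma~\ref{claimb}, and your explicit derivative of $q_\tau/p_\tau=\frac{\kappa_4}{\kappa_2}\frac{\tau(\tau-1)}{\tau-1+\kappa_3}$ is right. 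For the first function the routes genuinely differ. The paper first deduces $\delta_\tau'>0$ (because $\delta_\tau/\alpha_\tau$ is likewise an increasing function of $q_\tau/p_\tau$ and $\alpha_\tau$ is increasing by (\ref{alpha-increasing})) and then simply notes $\frac{d}{d\tau}(\alpha_\tau+\beta_\tau+\delta_\tau)=\alpha_\tau'(1-p_\tau)+(1-\alpha_\tau)p_\tau'+\delta_\tau'>0$ using $p_\tau<1$; you instead differentiate $1-(1-P_\tau)/(1+N_\tau)$ and reduce via the quadratic identity $\kappa_4X_\tau^2=(s-3)q_\tau X_\tau+(s-2)q_\tau P_\tau$ (equivalently $q_\tau A_\tau=\kappa_4X_\tau^2$). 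I verified that your displayed inequality is an exact rearrangement of $P_\tau'(2+X_\tau)>(P_\tau-1)X_\tau'$ after multiplying by $2\kappa_4M_\tau$ and using $2\kappa_4X_\tau-(s-3)q_\tau=2\kappa_4M_\tau$, so the argument closes; in fact your split into $P_\tau\le 1$ and $P_\tau>1$ is unnecessary, as the reduced inequality holds for all $\tau>1$. What your version buys is that it bypasses the monotonicity of $\delta_\tau$ entirely, at the cost of heavier algebra; the paper's version is shorter but leans on that intermediate fact. Two points you should state explicitly rather than leave implicit: $p_\tau'>0$ and $q_\tau'>0$ (hence $P_\tau'>0$, $X_\tau'>0$, and $M_\tau>0$ for $\tau>1$); the first is the paper's observation that $p_\tau$ is strictly increasing, and the second follows, e.g., from $q_\tau=p_\tau\cdot(q_\tau/p_\tau)$ being a product of positive strictly increasing functions.
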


\begin{proof}
Recalling that $q_\tau/p_\tau$ increases with $\tau$, it follows that
$$
  \frac{\delta_\tau}{\alpha_\tau} =  \left(1+ p_\tau/q_\tau + \frac{(s-3)}{2\kappa_4} + \sqrt{ \frac{(s-2)(1+p_\tau/q_\tau)}{ \kappa_4}+ \frac{(s-3)^2}{4\kappa_4^2} }\right)^{-1}	
$$
and 
$$
  \frac{	\delta_\tau((s-3)\alpha_\tau+\beta_\tau +\delta_\tau)}{\alpha_\tau^2} = 
         \kappa_4 \left(1 - \frac{1}{1+
         \frac{(s-3)}{2\kappa_4 (1+p_\tau/q_\tau)} + 
         \sqrt{ \frac{(s-2)}{ \kappa_4 (1+p_\tau/q_\tau)}+ \frac{(s-3)^2 }{4\kappa_4^2(1+p_\tau/q_\tau)^2}  }
          } \right)^2
$$ 
are both increasing functions of $\tau$. Hence, by (\ref{alpha-increasing}),
it follows that $\delta_\tau$ increases. 
Finally, observe that $p_\tau < 1$ for all $\tau> 1$,  so
$$
  \frac{d}{d\tau}(\alpha_\tau + \beta_\tau + \delta_\tau) = \alpha_\tau' - \alpha_\tau' p_\tau + (1-\alpha_\tau) p_\tau'    + \delta_\tau'>0, 
$$
completing the proof. 
\end{proof}
Using Lemma~\ref{claimb} and the fact that $\beta_\tau + \delta_\tau \leq \alpha_\tau\leq 1$, we calculate that 
when $\tau > 1$,
   \begin{align}
   \varphi'_{\alpha} (\boldsymbol{x}_\tau)
   	 &\leq -\frac{1}{\tau} - (s-3) \frac{(s-3)\alpha_\tau'+\beta_\tau'+\delta_\tau'}{(s-3)\alpha_\tau+\beta_\tau+\delta_\tau} + 
   	 2(s-2) \frac{\alpha_\tau'}{\alpha_\tau} + \frac{(s-1)s \alpha_\tau'}{rs-r-s-s\alpha_\tau} 
   	 \nonum \\
   	 &\leq -\frac{1}{\tau} +\left(2(s-2)-\frac{(s-3)(s-4)}{s-2}  + \frac{(s-1)s}{rs-r-2s}\right) \frac{\alpha_\tau'}{\alpha_\tau}.\label{concave}
   \end{align}     
Observe that, by Lemma~\ref{claima}, for
$\tau\in (1,\infty)$
\begin{equation}
\label{connect-derivatives}
 \varphi'(\boldsymbol{x}_\tau) = \varphi_\alpha(\boldsymbol{x}_\tau)\, \alpha'_\tau.
\end{equation}
Using (\ref{alpha-increasing}), it follows that
$\varphi'(\boldsymbol{x}_\tau)$ and $\varphi_\alpha(\boldsymbol{x}_\tau)$ always
have the same sign.
The next lemma will be used to identify some regions in which $\varphi_\alpha(\boldsymbol{x}_{\tau})$ is monotone with respect to $\tau$ and therefore $\varphi'(\boldsymbol{x}_\tau)$
has at most one zero.
We prove Lemma~\ref{final} in Section~\ref{ss:final} below. 

\begin{lemma}
Suppose that $s\geq 3$ and $r > \rho(s)$.
Then $\varphi'_{\alpha}(\boldsymbol{x}_\tau) < 0$ if any of the following
  conditions hold:
\begin{itemize}
\item[\emph{(i)}]  $s=3$ and $\tau \geq 52$,

\item[\emph{(ii)}] $s\geq 4$ and  $\tau \geq 2(s+2)^2$,
\item[\emph{(iii)}] $(r-1)(s-1)\geq (s+1)^3$ and $\tau \geq (s+1)^{2.5}$.
\end{itemize}
\label{final}
\end{lemma}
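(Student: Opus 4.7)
The starting point is the bound (\ref{concave}), which says
\[ \varphi'_\alpha(\boldsymbol{x}_\tau) \leq -\frac{1}{\tau} + C(r,s)\, \frac{\alpha'_\tau}{\alpha_\tau},\qquad
C(r,s) := 2(s-2) - \frac{(s-3)(s-4)}{s-2} + \frac{(s-1)s}{rs-r-2s}.
\]
It therefore suffices, in each of the three regimes, to prove the inequality
$C(r,s)\,\alpha'_\tau/\alpha_\tau < 1/\tau$. The plan is to derive an explicit upper bound of the form $\alpha'_\tau/\alpha_\tau \leq B(r,s)/\tau^2$, with $B(r,s)$ sharpened in case~(iii), to bound $C(r,s)$ from above in each regime, and then combine these and check the resulting numeric inequality against the stated thresholds.

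For the first step, writing $N_\tau = (1-\alpha_\tau)^{-1} - 1$, so that $\alpha_\tau = N_\tau/(1+N_\tau)$, we have $\alpha'_\tau/\alpha_\tau = N'_\tau/(N_\tau(1+N_\tau))$. Using (\ref{bound_alpha-1}) together with the explicit growth $q_\tau \sim \kappa_4\tau/\kappa_2$, I would show that $N_\tau(1+N_\tau) \geq c_1(r,s)\,\tau^2$ for $\tau \geq 2$ and for an explicit constant $c_1(r,s)$, while $N'_\tau$ is uniformly bounded by an explicit $c_2(r,s)$ (since $p'_\tau$ is $O(1/\tau^2)$ and the contributions from $q'_\tau$ and the derivative of the square root in (\ref{alpha-def}) stabilise to constants as $\tau \to \infty$). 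For the second step, in case~(i) the middle term of $C(r,s)$ vanishes and $r \geq 4$ makes the $r$-dependent term at most $6/(2r-6) \leq 3$; in case~(ii) the bound $\rho(s)\geq s$ from Lemma~\ref{Llemma} gives $rs-r-2s \geq s(s-3)$, so $C(r,s) = O(s)$; in case~(iii), the extra hypothesis $(r-1)(s-1) \geq (s+1)^3$ forces $rs-r-2s \gg s^2$ so that the $r$-dependent term is $o(1)$ and $C(r,s)$ is essentially determined by the $s$-only part, producing a sharper estimate.

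The principal obstacle is the bookkeeping: the three thresholds $52$, $2(s+2)^2$, and $(s+1)^{2.5}$ are fairly tight, so the implicit constants $c_1(r,s)$, $c_2(r,s)$, and the accompanying estimate for $C(r,s)$ must be tracked explicitly rather than asymptotically. A further subtlety is that both $\kappa_2$ and $\kappa_4$ depend on $r$ through $h(r,s)$, so in case~(iii) the argument gains its sharpness precisely because the large-$r$ hypothesis both reduces $C(r,s)$ and adjusts the constants in $N_\tau$ favourably; this is also why case~(i) must be split off, as at $s=3$ the term $(s-3)(s-4)/(s-2)$ vanishes and the natural bound on $C(r,s)$ loses a factor of $s$. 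Once the explicit estimates are in hand, each of the three case-specific inequalities reduces to an elementary polynomial comparison in $r$ and $s$, which the stated thresholds are designed to pass.
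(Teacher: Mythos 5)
Your outline follows exactly the paper's route: start from the bound (\ref{concave}), bound the prefactor (the paper uses (\ref{factoroutfront}), giving roughly $s+2$), write $\alpha_\tau=N_\tau/(1+N_\tau)$ so that $\alpha'_\tau/\alpha_\tau=N'_\tau/\bigl(N_\tau(1+N_\tau)\bigr)$, bound $N'_\tau$ above and $N_\tau(1+N_\tau)$ below using the elementary estimates on $p_\tau,q_\tau,p'_\tau,q'_\tau$, and finish each case by a numerical comparison against the stated threshold. So this is not a different argument but the paper's argument with the quantitative work deferred; the only question is whether the bounds in the shape you propose actually squeeze under the (admittedly tight) thresholds.

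There is a concrete gap at the tightest instance, $s=3$ with $r=4$ (admissible in case (i), since $\rho(3)=3$), which your sketch does not isolate and which your stated form of the estimate does not survive. For $(r,s)=(4,3)$ one has $\kappa_2=8$, $\mu=2\kappa_4=1$, and the prefactor equals $2+3/(r-3)=5$. The upper bound on $N'_\tau$ that comes from the estimates you cite is about $\tfrac{\mu}{\kappa_2}\bigl(\tfrac32+\tfrac{8}{\tau-4}+\tfrac{32}{\tau^2}\bigr)\approx 0.21$ at $\tau=52$, so the required inequality $5\,N'_\tau\,\tau<N_\tau(1+N_\tau)$ needs $N_\tau(1+N_\tau)\gtrsim 54.6$ at $\tau=52$. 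But a purely quadratic lower bound $N_\tau(1+N_\tau)\ge c_1\tau^2$, with $c_1$ extracted from (\ref{bound_alpha-1}) and $q_\tau\le\kappa_4\tau/\kappa_2$ as you propose, forces $c_1\le(\mu/\kappa_2)^2=1/64$ (the ratio decreases to this limit), giving only $52^2/64\approx 42.3$. The case goes through only if one retains the lower-order terms $p_\tau\approx1$ and the ``$+1$'' in $1+N_\tau$ --- equivalently the full denominator of the paper's bound (\ref{alpha_ineq}) --- which is precisely why the paper splits $s=3$ into $r\ge5$ (where the crude denominator bound of $1$ suffices, via (\ref{s3sufficient})) and $r=4$, remarking that bounding that denominator by $1$ ``gives away too much''. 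A related small correction: the hypothesis $(r-1)(s-1)\ge(s+1)^3$ in case (iii) is not mainly used to sharpen the prefactor; for $s\ge4$ the paper dispatches (ii) and (iii) with one and the same inequality ($1.3(s+2)(s+3)<\min\{2(s+2)^2,(s+1)^{2.5}\}$), and the large-$r$ assumption earns its keep in the $s=3$ computation, where it yields $\kappa_4\ge 30/31$ and hence the lower threshold $\tau\ge 32$. With these points repaired --- in particular, keeping the subleading terms in the lower bound for $N_\tau(1+N_\tau)$ and treating $(s,r)=(3,4)$ separately --- your plan coincides with the paper's proof.
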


The rest of the argument is split into two cases.  

\subsubsection*{Case 1: $r$ is sufficiently large}

Assume that $(r-1)(s-1)\geq (s+1)^3$.
Since $\varphi_{\alpha} (\boldsymbol{x}^\ast)=0$, it follows 
from Lemma~\ref{final}(iii) that
\begin{equation}\label{der_diff-alpha}
     \text{
$\varphi_{\alpha} (\boldsymbol{x}_\tau)>0$ for $(s+1)^{2.5} \leq \tau < \tau^\ast$} \ \  \text{ and }  \ \ 
\text{
$\varphi_{\alpha} (\boldsymbol{x}_\tau)<0$ for $\tau>\tau^\ast$.  }
\end{equation}
By
(\ref{connect-derivatives}), the function $\tau \mapsto \varphi'(\boldsymbol{x}_\tau)$
has at most one zero in the range $\tau \geq (s+1)^{2.5}$.  So there can
be at most one local maximum of $\varphi(\boldsymbol{x}_\tau)$ in $[(s+1)^3,\infty]$, and
we know that this local maximum occurs at $\tau =\tau^*$.
% For Comment #13
Combining this with Lemma~\ref{claima}, Lemma~\ref{alpha=1}, 
(\ref{Lmax_inside}) and \eqref{compwith1}, we conclude in particular
that there are no global maxima of $\varphi$ on the boundary of $K$. 
That is, we may restrict our attention to values of $\tau$ which 
correspond to stationary points of $\varphi$.
     
To complete the proof in this case, it suffices to show that
$\varphi(\boldsymbol{x}_\tau) 
< \varphi(\boldsymbol{x^\ast})$ 
for any $\boldsymbol{x}_\tau\neq \boldsymbol{x}^\ast$ such that 
     $\varphi_{\alpha} (\boldsymbol{x}_\tau)=0$ and 
$\tau\neq \tau^\ast$. 
For a contradiction, suppose that a global maximum is achieved at
a point $\boldsymbol{x}_{\tilde{\tau}}$, 
for some $\tilde{\tau}>1$ with $\tilde{\tau}\neq \tau^*$. 
Then
$\varphi(\boldsymbol{x}_{\tilde{\tau}})\geq 
   \varphi(\boldsymbol{x}^\ast)$
and 
$\varphi_{\alpha}(\boldsymbol{x}_{\tilde{\tau}}) = 0$. 
Furthermore, $1 < \tilde{\tau} < (s+1)^{2.5}$, by \eqref{der_diff-alpha}. 

For ease of notation, let $\tilde{\alpha} = \alpha_{\tilde{\tau}}$.
By the lower bound on $r$ we have
\begin{equation}
  r \geq \frac{(s+1)^3}{s-1} +1 \geq 33 \,\, \text{ and } \,\, 
   \frac{rs-r-s}{rs-r-2s} \leq 1+ \frac{s}{(s+1)^3 - s - 1} \leq 1.05. 
\label{claimc}
\end{equation}
By our assumption on $\boldsymbol{x}_{\tilde{\tau}}$, it follows from (\ref{stat-minus-1}) that
\begin{equation}
0 \leq \ln \tilde{\tau} - (s-1)\tilde{\alpha}, 
\label{first}
\end{equation}
while combining (\ref{Lmax1}) and (\ref{value-at-stat-point}) gives
\begin{align}
0\leq \varphi(\boldsymbol{x}_{\tilde{\tau}}) - \varphi(\boldsymbol{x}^\ast) 
    &= \ln \tilde{\tau}  - \ln \tau^\ast + \frac{s-1}{s} (rs-r-s)  
     	\ln\left(\frac{rs-r-s-s\tilde{\alpha}}{rs-r-s-s\alpha^\ast }\right)   \nonum \\ 
 	&\leq \ln \tilde{\tau} - 3\ln(s+1) +
   \frac{s-1}{s}(rs-r-s )\ln\left(1 + \frac{s(1-\tilde{\alpha})}
   {rs-r-2s }\right)\nonum \\  
 	&\leq \ln \tilde{\tau} - 3\ln(s+1) + (s-1)\frac{rs-r-s }{rs-r-2s }  
          (1-\tilde{\alpha})\nonum \\
  &\leq  \ln \tilde{\tau} - 3\ln(s+1) + 1.05\, (s-1)  (1-\tilde{\alpha}),\label{second}
\end{align}
using (\ref{claimc}) for the final inequality.
Taking a carefully chosen linear combination of the
inequalities (\ref{first}), (\ref{second}), we conclude that
\begin{align} 0 &\leq 
  1.05(1-\tilde{\alpha}) (\varphi(\boldsymbol{x}_{\tilde{\tau}}) - 
             \varphi(\boldsymbol{x}_1)) 
        + \tilde{\alpha} (\varphi(\boldsymbol{x}_{\tilde{\tau}}) - 
                  \varphi(\boldsymbol{x}^*) )\nonum\\
  &\leq 1.05 \ln \tilde{\tau} - 3 \tilde{\alpha} \ln(s+1). \label{later}
\end{align}
Since $\tilde{\tau} < (s+1)^{2.5}$, this implies that
\begin{equation}
  \tilde{\alpha} \leq \frac{1.05\, \ln(\tilde{\tau})}{3\ln(s+1)} <   
            \frac{1.05\times 2.5}{3}   = 0.875.    	      
\label{alpha-bound}
\end{equation}
Now observe that by (\ref{bound_alpha-1}),
\[ 1 - (1-\tilde{\alpha})^{-1} + p_{\tilde{\tau}} + q_{\tilde{\tau}}\leq 0.\]
In Lemma~\ref{lem:halfway}, stated later, we will prove that 
\begin{equation}
\label{halfway} 1 - (1-\tilde{\alpha})^{-1} + p_{\tilde{\tau}} + q_{\tilde{\tau}}
   \geq T(\tilde{\alpha})
\end{equation}
where
$$
 T(\alpha)= - (1-\alpha)^{-1} + 1+ \frac{(50^{ \alpha}-1)^2 + 2 (50^{ \alpha}-1)}{ (50^{ \alpha} -1)^2 + 4( 50^{ \alpha} -1) +2} + R(\alpha)
$$
and
$$
 R(\alpha) = \begin{cases} 0 & \text{ if $\alpha < 0.35$,}\\
        \frac{15}{62}\ 50^{\alpha} \frac{ (50^{ \alpha} -1)^2  }{(50^{ \alpha} -1)^2 + 4( 50^{ \alpha} -1) +2}
        & \text{ otherwise.}
      \end{cases}
      $$
From the plot of the function $\alpha\mapsto T(\alpha)$ given in Figure~\ref{Tplot},
we observe that
$T(\alpha)$ is strictly positive for $0 < \alpha \leq 0.875$ and has a jump at $\alpha = 0.35$.
\begin{figure}[ht!]
\begin{center}
\begin{tikzpicture}
\clip (-1,-2) rectangle (12.5,3.5);
\node [right] at (0,0) {\includegraphics[width=0.7\textwidth]{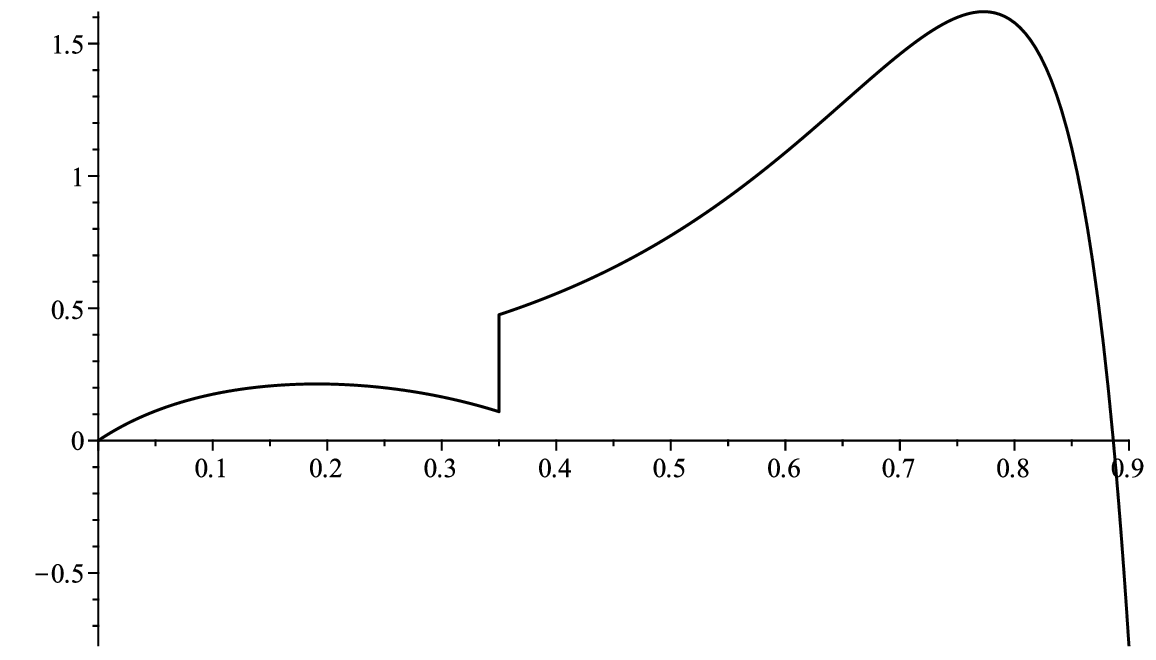}};
\node [below] at (12,-1.5) {$\alpha$};
\node [left] at (0.5,3.2) {$T(\alpha)$};
\end{tikzpicture}
\caption{The function $T(\alpha)$, which is positive for $0< \alpha\leq 0.875$.}
\label{Tplot}
\end{center}
\end{figure}

\noindent Therefore, using (\ref{alpha-bound}), if (\ref{halfway}) holds then
\[ 1 - (1-\tilde{\alpha})^{-1} + p_{\tilde{\tau}} + q_{\tilde{\tau}} > 0.\]
But this contradicts (\ref{bound_alpha-1}). 
Therefore no other global maximum $\tilde{\tau}$ 
of $\varphi(\boldsymbol{x}_\tau)$ can exist
in the interval $[1,(s+1)^{2.5}]$. 

Observe that by (\ref{kappa-def}), we may write
\begin{equation}
  \kappa_2 = s^2-s  + \frac{2(s-2)(s-1)}{r-2} +  \frac{(s-2)(s-3)}{(r-2)^2}.
\label{kappa2-expression}
\end{equation}
We use this to establish (\ref{halfway})
when $(r-1)(s-1) \geq (s+1)^3$,
which will complete the proof of Lemma~\ref{Lemma_max} in this case.

\begin{lemma}
If $s\geq 3$ and $(r-1)(s-1)\geq (s+1)^3$ and \emph{(\ref{later})} holds
then \emph{(\ref{halfway})} holds.
\label{lem:halfway}
\end{lemma}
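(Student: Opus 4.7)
My plan is to rearrange the inequality and then bound $p_{\tilde\tau}$ and $q_{\tilde\tau}$ separately using monotonicity.  Cancelling the common term $1-(1-\tilde\alpha)^{-1}$ from both sides of \eqref{halfway} reduces the claim to
\[
  p_{\tilde\tau} + q_{\tilde\tau} \;\geq\; \frac{(50^{\tilde\alpha}-1)^2 + 2(50^{\tilde\alpha}-1)}{(50^{\tilde\alpha}-1)^2 + 4(50^{\tilde\alpha}-1) + 2} \;+\; R(\tilde\alpha).
\]

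The key preliminary is the lower bound $\tilde\tau\geq 50^{\tilde\alpha}$, which I would obtain by rearranging \eqref{later} to $\tilde\tau\geq (s+1)^{3\tilde\alpha/1.05}$ and observing that $(s+1)^{3/1.05}\geq 4^{3/1.05}>50$ for every $s\geq 3$.  Next I would verify two monotonicity facts: $p_\tau$ is strictly increasing in $\tau\in[1,\infty)$ and strictly decreasing in $\kappa_3$, by direct differentiation of its rational form (rewriting $p_\tau=1-\kappa_3\tau/((\tau-1)^2+\kappa_3(\tau-1)+\kappa_3\tau)$ makes both derivatives transparent). Evaluating at the worst-case $\kappa_3=2$, the identity $\kappa_3(\tau-1)+\kappa_3\tau=4(\tau-1)+2$ rewrites the denominator as $(\tau-1)^2+4(\tau-1)+2$, and combining this with the lower bound $\tilde\tau\geq 50^{\tilde\alpha}$ produces the required lower bound on $p_{\tilde\tau}$, matching the first summand above.

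For the $R(\tilde\alpha)$ contribution, I would split into two sub-cases.  When $\tilde\alpha<0.35$, $R(\tilde\alpha)=0$ and the trivial bound $q_{\tilde\tau}\geq 0$ finishes the argument.  When $\tilde\alpha\geq 0.35$, the analogous monotonicity of $q_\tau$ (strictly increasing in $\tau$, strictly decreasing in $\kappa_3$) combined with $\kappa_3\leq 2$ and $\tilde\tau\geq 50^{\tilde\alpha}$ yields
\[
  q_{\tilde\tau} \;\geq\; \frac{\kappa_4}{\kappa_2}\cdot\frac{50^{\tilde\alpha}(50^{\tilde\alpha}-1)^2}{(50^{\tilde\alpha}-1)^2 + 4(50^{\tilde\alpha}-1) + 2},
\]
so that the entire inequality reduces to the single numerical bound $\kappa_4/\kappa_2\geq 15/62$.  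This final bound is precisely where the hypothesis $(r-1)(s-1)\geq(s+1)^3$ enters: substituting $\kappa_4=(r-3)/(r-2)$ together with the explicit expression \eqref{kappa2-expression} for $\kappa_2$, and treating the cases $s=3$ and $s\geq 4$ separately, one can verify $\kappa_4/\kappa_2\geq 15/62$ under the hypothesis.

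I expect the principal obstacle to be the quantitative tightness of $\kappa_4/\kappa_2\geq 15/62$ in the $s=3$ case, where the boundary value $r=33$ is delicate.  If a direct check is insufficient, the remedy is to retain the sharper bound $\tilde\tau\geq (s+1)^{3\tilde\alpha/1.05}$ (rather than weakening it to $50^{\tilde\alpha}$) throughout the estimate on $q_{\tilde\tau}$; this supplies an extra factor of $\bigl((s+1)^{3/1.05}/50\bigr)^{\tilde\alpha}$ which absorbs any shortfall, and a short $s$-by-$s$ case analysis closes the gap.
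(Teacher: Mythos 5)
Your overall architecture is the same as the paper's: cancel the common term $1-(1-\tilde{\alpha})^{-1}$, use \eqref{later} to get $\tilde{\tau}\geq 50^{\tilde{\alpha}}$, bound $p_{\tilde{\tau}}$ from below via monotonicity in $\tau$ and in $\kappa_3$ (with $\kappa_3\leq 2$), and split at $\tilde{\alpha}=0.35$ with $q_{\tilde{\tau}}\geq 0$ handling the easy half. The gap is in your final numerical step. You bound $q_{\tilde{\tau}}$ using only $\tilde{\tau}\geq 50^{\tilde{\alpha}}$ and thereby reduce the $\tilde{\alpha}\geq 0.35$ case to $\kappa_4/\kappa_2\geq 15/62$. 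That inequality is not "delicate at $s=3$, $r=33$" — it is false for every admissible pair $(r,s)$: by \eqref{kappa2-expression} we have $\kappa_2\geq s^2-s\geq 6$ for all $s\geq 3$ and all $r$, while $\kappa_4=(r-3)/(r-2)<1$, so $\kappa_4/\kappa_2<1/6<15/62$. Increasing $r$ does not help (it only drives $\kappa_4/\kappa_2$ toward $1/(s^2-s)$), so no case analysis in $r$ or $s$ can rescue this reduction.

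What your argument is missing is the sharper consequence of \eqref{later} that is available exactly when $\tilde{\alpha}\geq 0.35$: writing $\tilde{\tau}\geq (s+1)^{3\tilde{\alpha}/1.05}=(s+1)\,(s+1)^{3\tilde{\alpha}/1.05-1}\geq (s+1)4^{3\tilde{\alpha}/1.05-1}\geq \tfrac{s+1}{4}\,50^{\tilde{\alpha}}$, which inserts an extra factor $\tfrac{s+1}{4}$ into the lower bound for $q_{\tilde{\tau}}$ and weakens the target to $\tfrac{\kappa_4(s+1)}{4\kappa_2}\geq \tfrac{15}{62}$; the hypothesis $(r-1)(s-1)\geq(s+1)^3$ then enters through $r-2\geq(s+2)^2$ (which with \eqref{kappa2-expression} gives $\kappa_2\leq s^2-1$) and through $r\geq 33$ (which by \eqref{claimc} gives $\kappa_4\geq 30/31$). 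Your proposed fallback — keeping $\tilde{\tau}\geq(s+1)^{3\tilde{\alpha}/1.05}$ — is indeed the right instinct, since that is precisely where the paper's extra factor comes from, but as sketched it is not a proof: you must convert the gain $\bigl((s+1)^{3/1.05}/50\bigr)^{\tilde{\alpha}}$ into an estimate that competes uniformly with $\kappa_2\approx s^2-s$, and your suggested "short $s$-by-$s$ case analysis" is not available here, because the hypothesis places no upper bound on $s$ (in contrast to Case~2 of the appendix, where only finitely many pairs $(r,s)$ remain and plots suffice).
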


\begin{proof}
Suppose that $s\geq 3$ and $(r-1)(s-1)\geq (s+1)^3$.  Then
\[
r-2 \geq \frac{(s+1)^3}{s-1} - 1 \geq (s+2)^2,
\]
and it follows from (\ref{kappa2-expression}) that 
\begin{align}
  \kappa_2 & \leq 
  s^2 - s + \frac{2(s-1)(s-2)}{(s+2)^2} + \frac{(s-2)(s-3)}{(s+2)^4} \nonum \\
      &   \leq s^2-1.\label{kappa2-bound}
\end{align}
By (\ref{later}), since $s\geq 3$ we have 
$$
  \tilde{\tau} \geq (s+1)^{\frac{3}{1.05} \tilde{\alpha}} 
   \geq 50^{ \tilde{\alpha}},
$$
while if $\tilde{\alpha} \geq 0.35$ then we can instead write
$$ \tilde{\tau} \geq (s+1)4^{\frac{3}{1.05} \tilde{\alpha}-1} \geq 
            \tfrac{s+1}{4}\, 50^{\tilde{\alpha}}.
$$
Using the fact that $\kappa_3\leq 2$, see (\ref{kappa3-less-than-2}), we can estimate
\[    p_{\tilde{\tau}} \geq 
   \frac{(50^{ \tilde{\alpha}}-1)^2 + 2 (50^{ \tilde{\alpha}}-1)}
  { (50^{ \tilde{\alpha}} -1)^2 + 4( 50^{ \tilde{\alpha}} -1) +2}
\]
and if $\tilde{\alpha}\geq 0.35$ we have
\[ q_{\tilde{\tau}} \geq \frac{\kappa_4 (s+1)  }{4 \kappa_2}   50^{\tilde{\alpha}} 
  \frac{ (50^{ \tilde{\alpha}} -1)^2  }{(50^{ \tilde{\alpha}} -1)^2 + 
    4( 50^{ \tilde{\alpha}} -1) +2 }.\] 
Additionally, it follows from (\ref{claimc}) that
$\kappa_4 = \frac{r-3}{r-2}\geq \frac{30}{31}$.
Using these inequalities, together with the upper bound on $\kappa_2$
given in (\ref{kappa2-bound}), we conclude that
(\ref{halfway}) holds,  as required.
\end{proof}

\subsubsection*{Case 2: $r$ is small}
      
It remains to consider the case that $(r-1)(s-1)< (s+1)^3$.
By definition of $\rho(s)$ (see Table~\ref{rhobounds}),
the only remaining pairs $(r,s)$ belong to the set
\begin{align*}
\mathcal{A} &= \{ (r,3)\mid r=4,\ldots, 32\} \cup
\{ (r,4)\mid r = 6,\ldots, 41 \}\cup  
\{ (r,5)\mid r= 12,\ldots, 54\}\\
 & \hspace*{4.5cm} {} \cup
\{ (r,6)\mid r = 28,\ldots, 69\}\cup 
\{ (r,7)\mid r= 65,\ldots,86\}. 
\end{align*}
There are 172 pairs $(r,s)$ in $\mathcal{A}$.
(It may be possible to reduce this number by refining the above analysis, 
but we have not pursued this.)

We already checked in \eqref{compwith1} 
that  $\varphi(\boldsymbol{x}_1)<\varphi(\boldsymbol{x}^*)$.  
% Comment #14
As remarked below \eqref{compwith1}, no point on the boundary of $K$
can be a global maximum except possibly those with $\alpha=1$.
All such points have the form $(1,0,0,\delta)$ for some $\delta$.
By Lemma~\ref{alpha=1} and Lemma~\ref{lem:x-infinity}, it suffices to consider
the stationary points $\boldsymbol{x}_{\tau}$ along the ridge, and the point
$\boldsymbol{x}_\infty$.
Recalling  the definition of $\alpha_\tau$ from~\eqref{alpha-def}, 
since $p_\tau \geq (\tau-1)/(2\tau-1)$ we have
\[
	\alpha_{\tau} > \frac{p_\tau}{1+p_\tau}  \geq 
	\frac{\tau-1}{3 \tau -2}.
\]
It follows from (\ref{stat-minus-1}) that if
$\tau \in (1, (s+1)/3]$ is a stationary point of $\varphi(\boldsymbol{x}_\tau)$, then 
\[
	\varphi(\boldsymbol{x}_\tau) -
	\varphi(\boldsymbol{x}_1) \leq 	\ln \tau -  (s-1)\alpha_{\tau} <
	\tau -1 -  (s-1)  \frac{\tau-1}{3 \tau -2} \leq 0.
\]
Hence we can restrict our attention to the case when $\tau>(s+1)/3$.

For $(r,s)\in\mathcal{A}$ we consider two functions.
The first is 
% Comment 15, ref asks if we could use expression from A.23 instead.  NO. (?)
\[ \tau\mapsto \frac{\tau^{1/2}}{r^{2}}\, \big( \varphi(\boldsymbol{x}^*) - \varphi(\boldsymbol{x}_\tau) \big)\]
on the interval $\tau\in [(s+1)/3,s(s-1)/2]$,
and the second is
\[ \tau\mapsto -\frac{\tau}{r^{1/2}\ln(\tau)}\, \varphi'_{\alpha} 
    (\boldsymbol{x}_\tau)
\]
on the interval $\tau\in [s(s-1)/2,\max\{52, 2(s+2)^2\}]$.
Figures~\ref{s3}--\ref{s7} show the plots of these functions for all
$(r,s)\in\mathcal{A}$, with all pairs with a given value of $s$ 
displayed together.

\begin{figure}[ht!]
\begin{tabular}{cc}
\includegraphics[width=7cm]{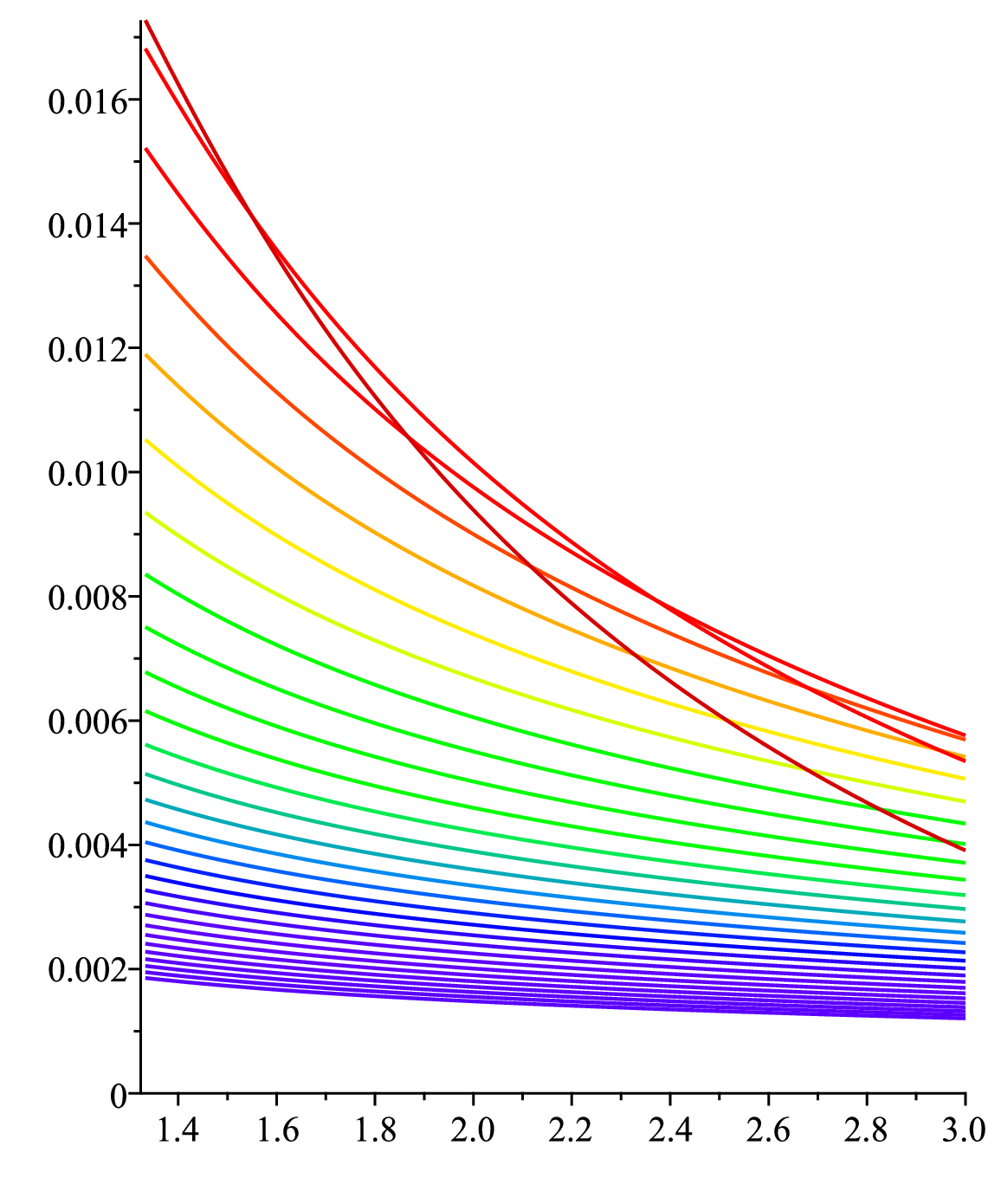} & \includegraphics[width=7cm]{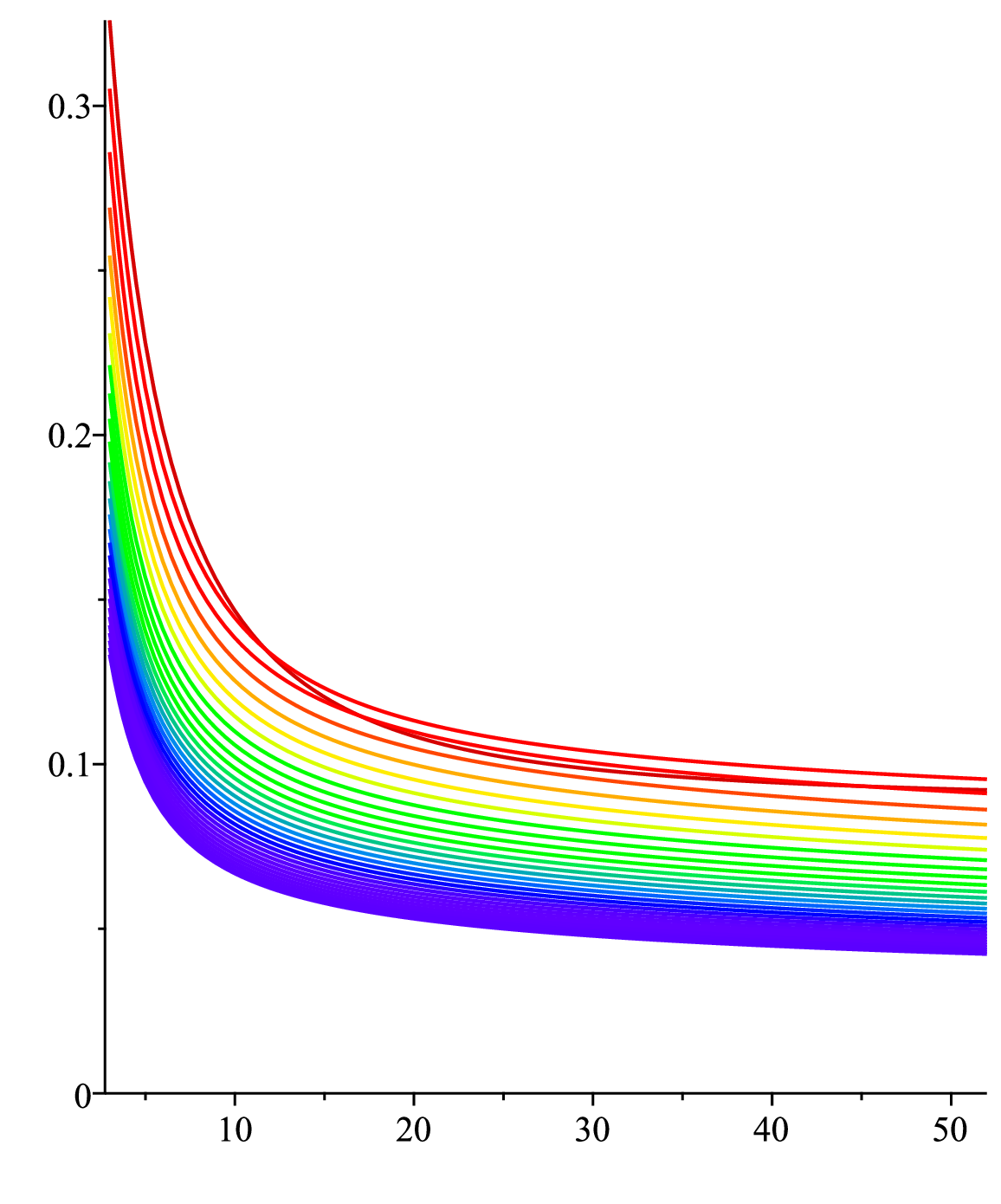}
\end{tabular}
\caption{The two plots for $s=3$, $r=4,\ldots, 32$ (from top to bottom).}
\label{s3}
\end{figure}

In each of these figures, the plot of the first function is shown on the
left, and the plot of the second function is shown on the right.
%The colours are used to help distinguish between the plots for 
%different values of $r$: the top lines (red) correspond to the smallest values of $r$ and the bottom lines (blue) correspond to the biggest.  
In each plot, the top lines correspond to the smallest values of $r$ and the bottom lines 
correspond to the biggest.  
Note that the scaling factors $r^{-2}\tau^{1/2}$ and $r^{-1/2}\tau/\ln(\tau)$ in the
first and second plot, respectively, do not affect the sign
of the functions, and are included to attempt to spread out the different
plots shown in each figure.

\begin{figure}[ht!]
\begin{tabular}{cc}
\includegraphics[width=7cm]{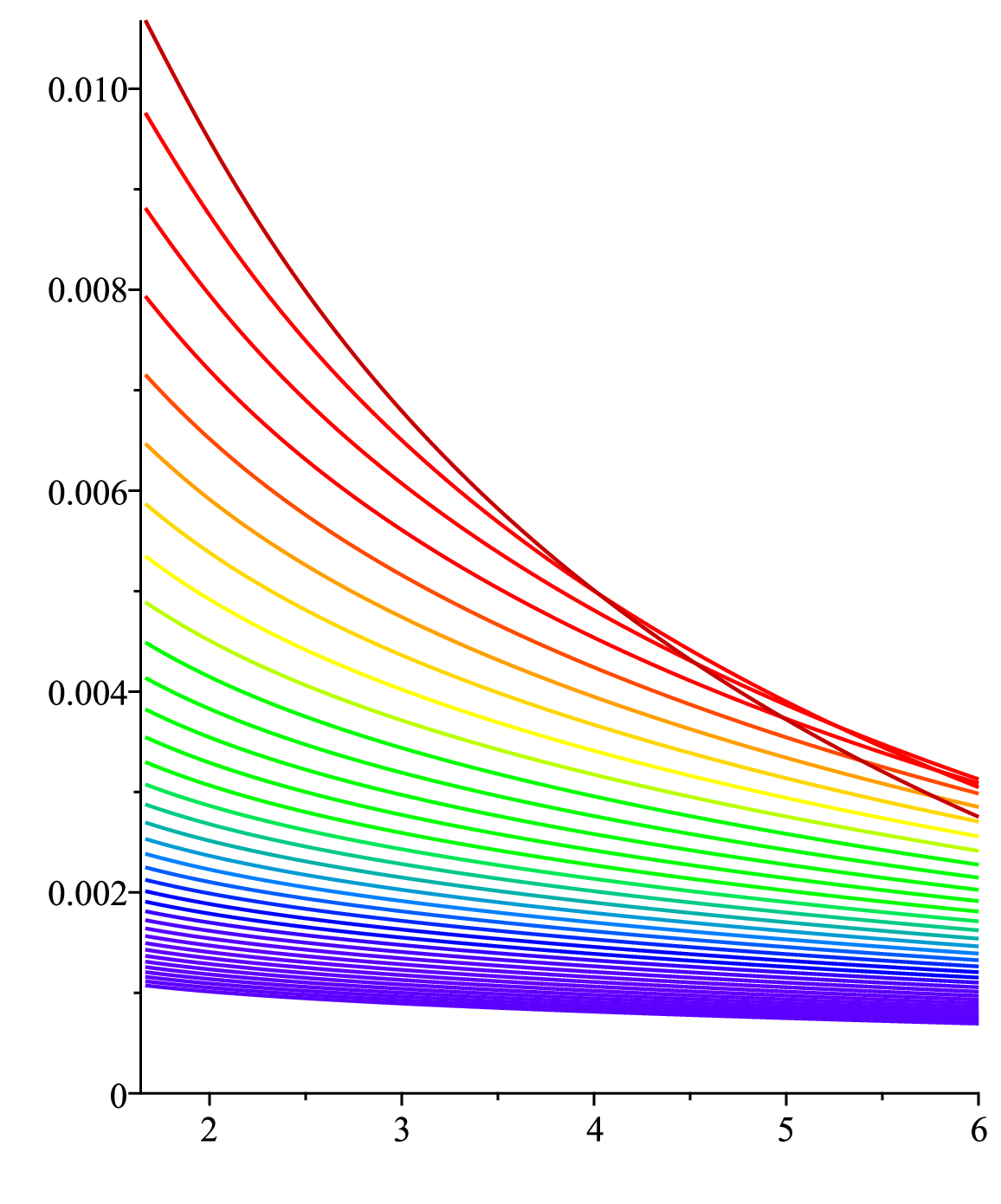} & \includegraphics[width=7cm]{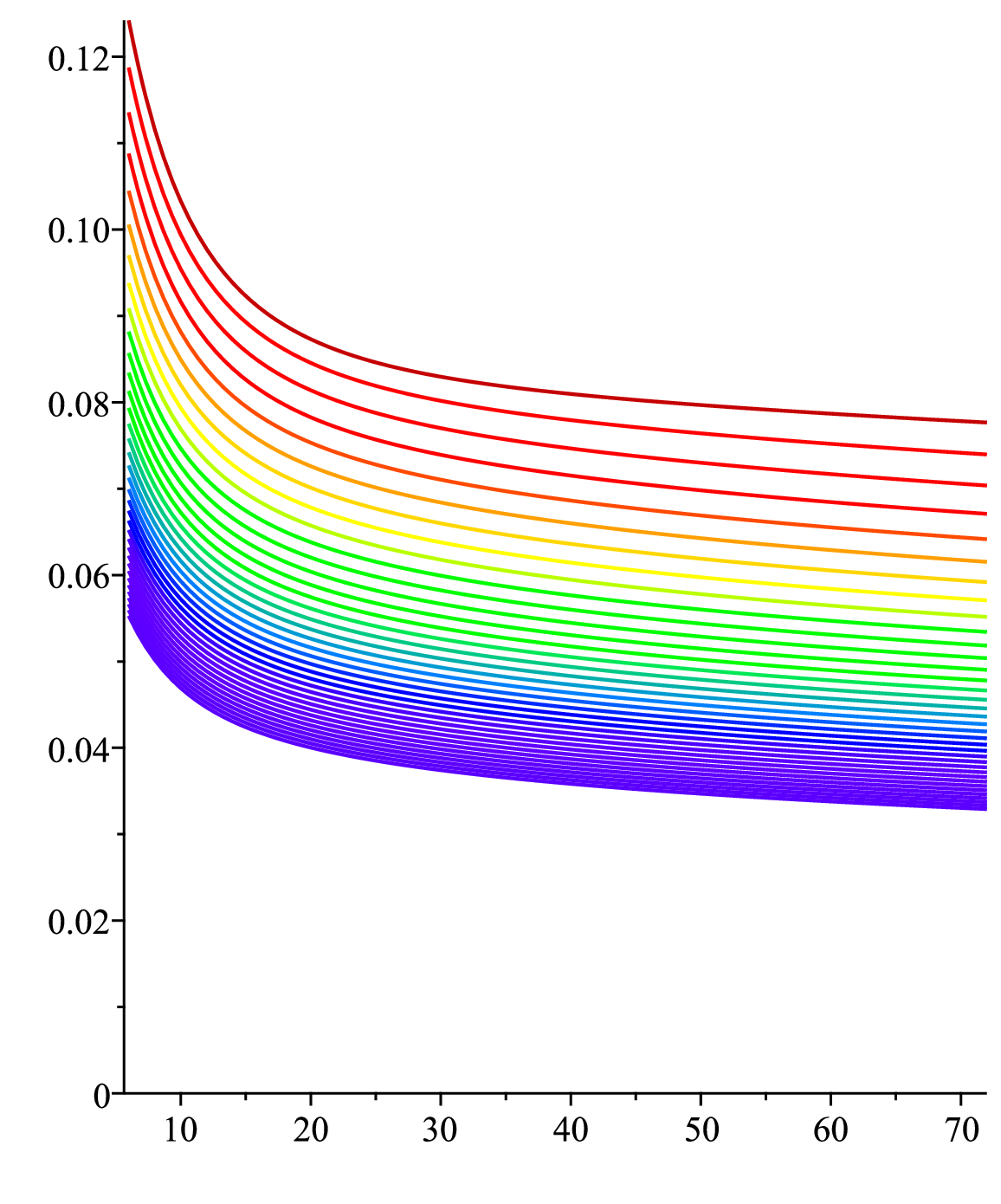}
\end{tabular}
\caption{The two plots for $s=4$, $r=6,\ldots, 41$ (from top to bottom).}
\label{s4}
\end{figure}

\begin{figure}[ht!]
\begin{tabular}{cc}
\includegraphics[width=7cm]{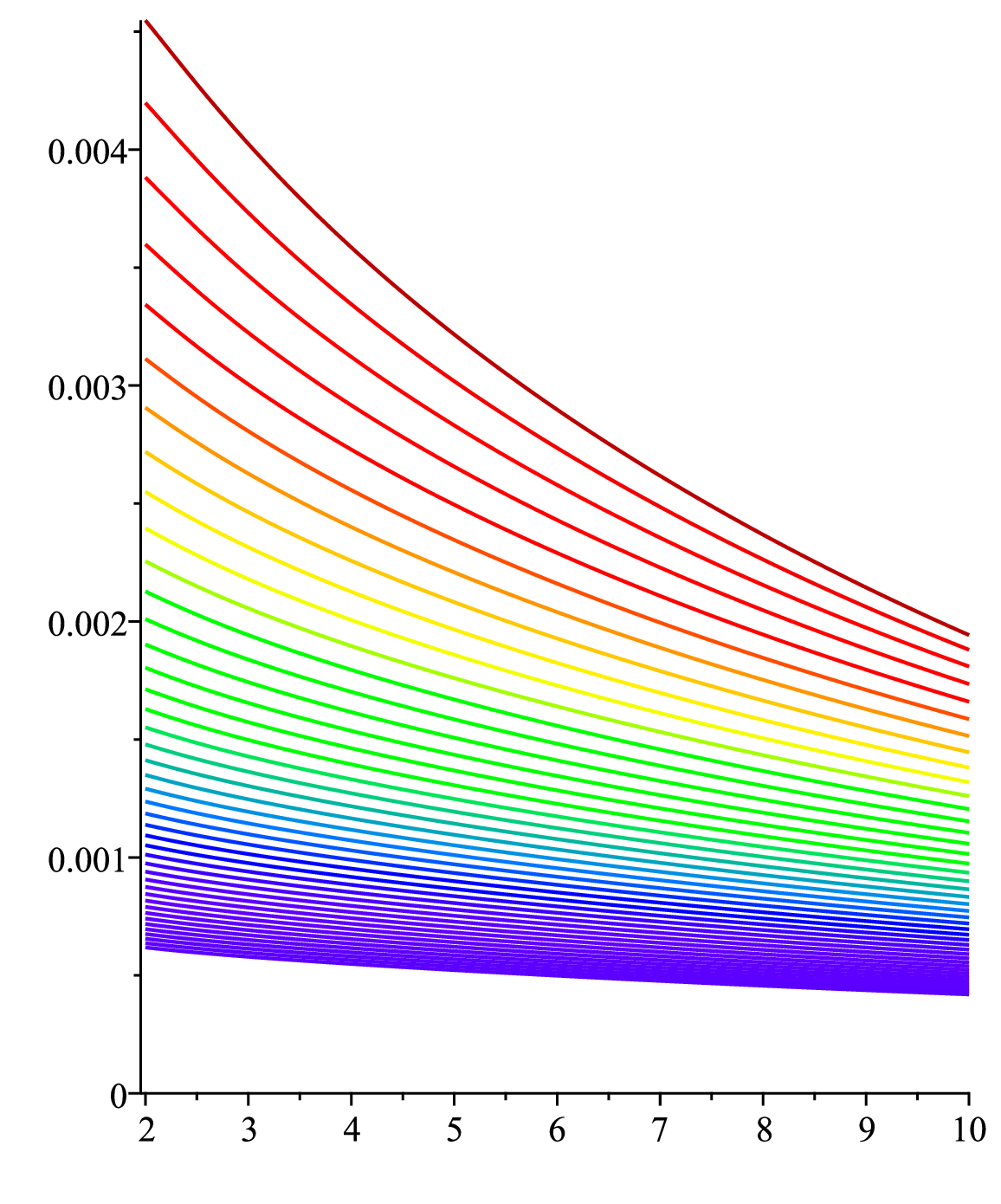} & \includegraphics[width=7cm]{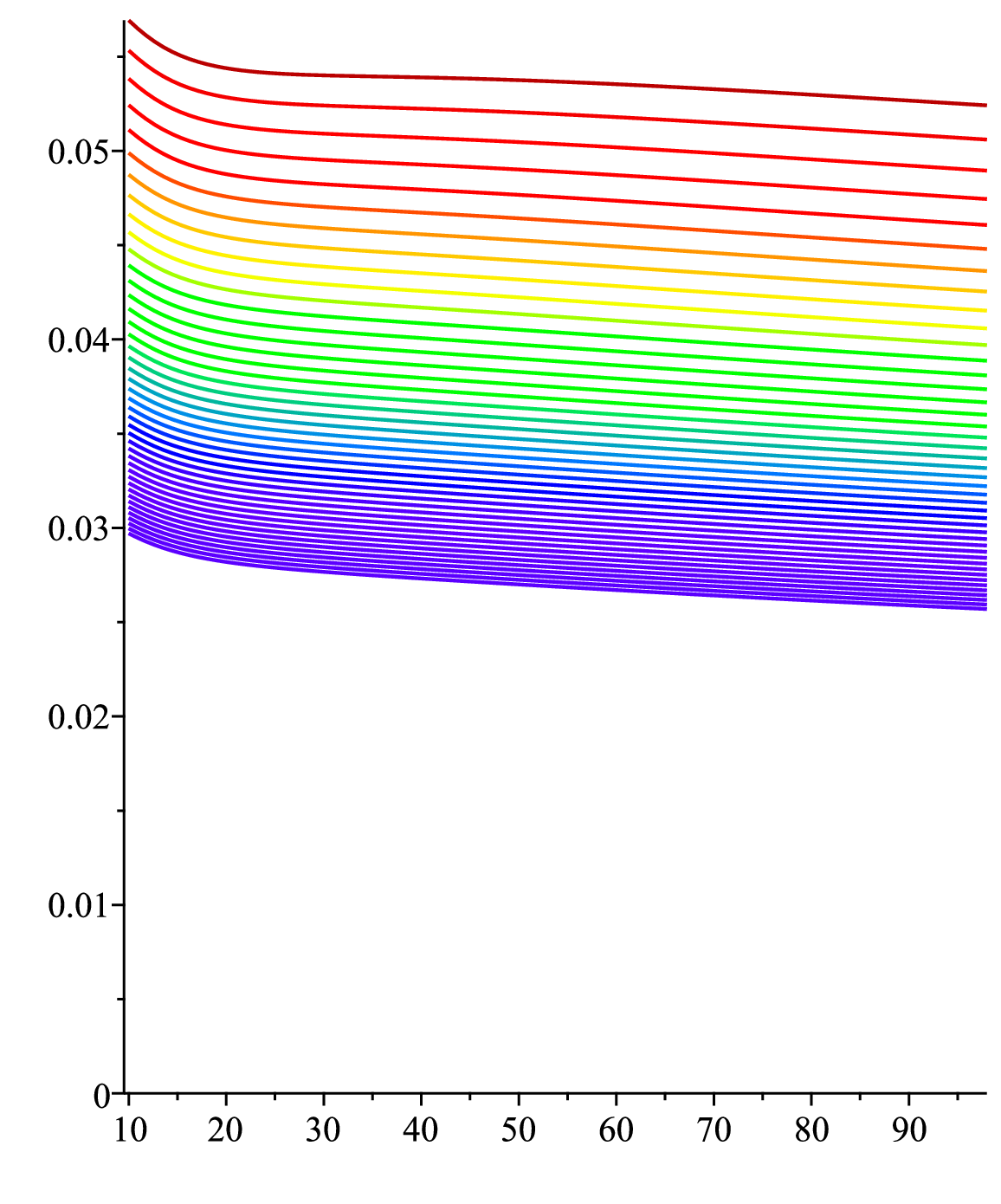}
\end{tabular}
\caption{The two plots for $s=5$, $r=12,\ldots, 54$ (from top to bottom).}
\label{s5}
\end{figure}

Consider each $(r,s)\in\mathcal{A}$ in turn: we can see that both plots are
strictly positive over the given intervals. The first plot
(on the left) shows that
$\varphi(\boldsymbol{x}_\tau)$ is strictly less than $\varphi(\boldsymbol{x}^\ast)$
for all $\tau\in [(s+1)/3,s(s-1)/2]$.  

\begin{figure}[ht!]
\begin{tabular}{cc}
\includegraphics[width=7cm]{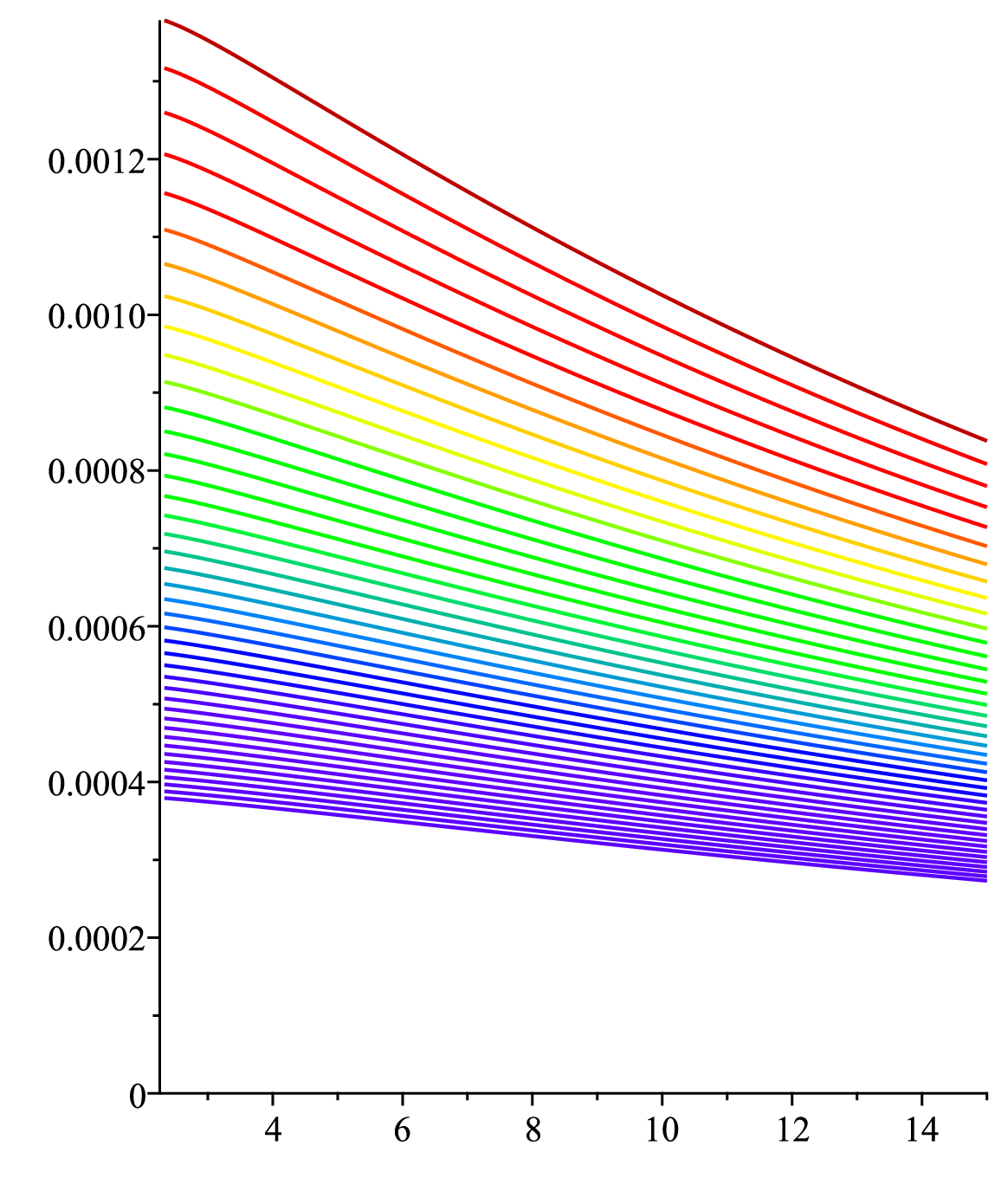} & \includegraphics[width=7cm]{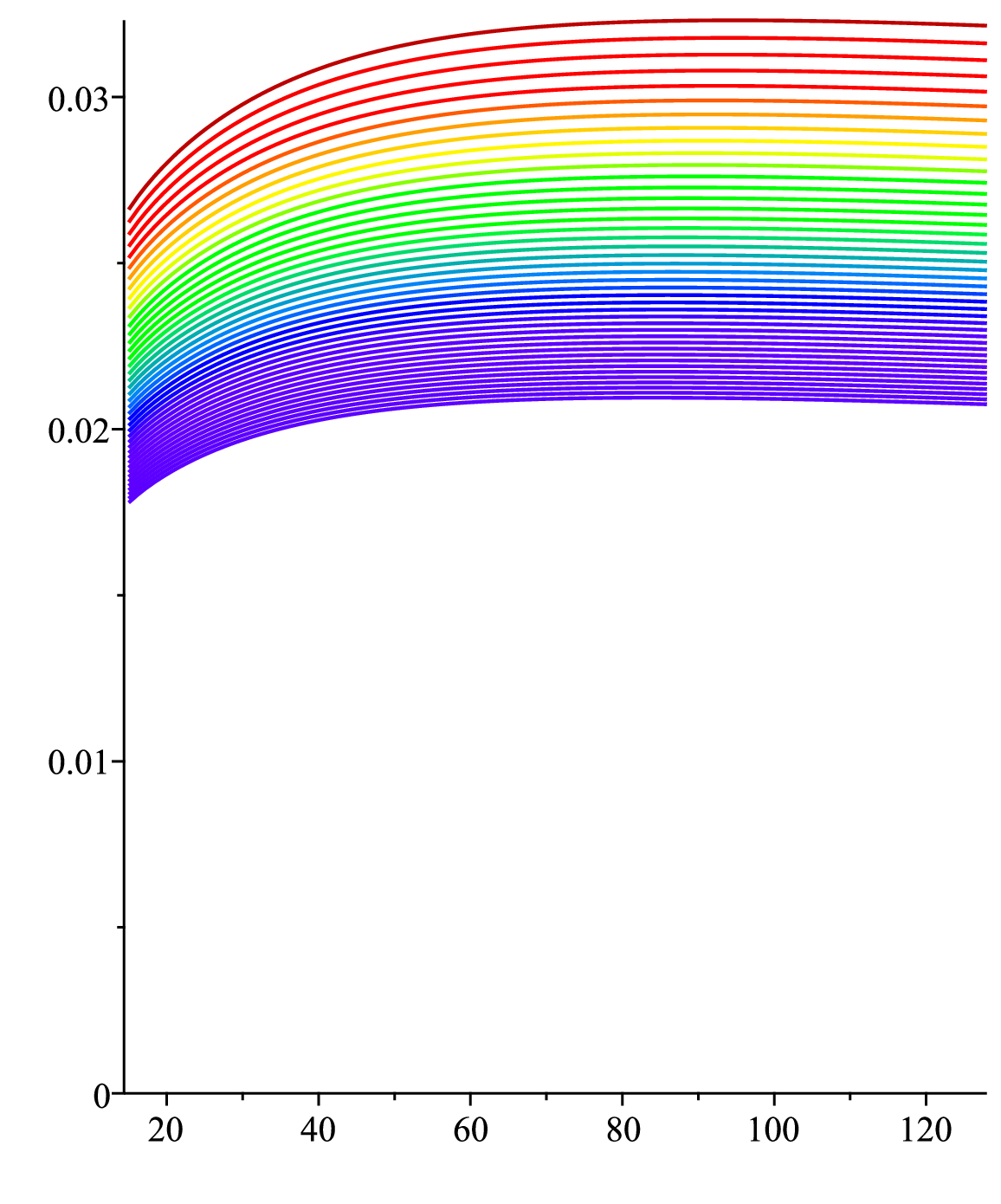}
\end{tabular}
\caption{The two plots for $s=6$, $r=28,\ldots, 69$ (from top to bottom).}
\label{s6}
\end{figure}

Combining the second plot (on the right) with Lemma~\ref{final}(i) 
or Lemma~\ref{final}(ii), 
% For Comment #16
we conclude that the function 
$\tau\mapsto \varphi'_\alpha(\boldsymbol{x}_\tau)$  
is negative for all finite $\tau\geq s(s-1)/2$.  
By (\ref{connect-derivatives}), this implies that
the function $\tau\mapsto \varphi(\boldsymbol{x}_\tau)$ has at most
one stationary point in $[s(s-1)/s,\infty)$.
But 
we know that $\boldsymbol{x}^* = \boldsymbol{x}_{\tau^*}$ is a local maximum of $\varphi(\boldsymbol{x}_\tau)$, 
and $\tau^* = (r-1)(s-1) > s(s-1)/2$ when $s\geq 3$ and $r\geq s+1$.
In particular, $\varphi'(\boldsymbol{x}_\tau) <  0$ for all
$\tau > \tau^\ast$, which proves that
the limiting point $\boldsymbol{x}_\infty$ is not a local maximum.
%and that
%he function $\tau \mapsto \varphi(\boldsymbol{x}_\tau)$ has
%t most one local maximum in the interval $[s(s-1)/2,\infty)$.
%
Therefore, we conclude that
%$\boldsymbol{x}^*$ is the unique global maximum of $\varphi(\boldsymbol{x}_\tau)$
% Comment #17
$\tau^\ast$ is the unique global maximum of $\tau \mapsto\varphi(\boldsymbol{x}_{\tau})$ on $[1,\infty]$.

This argument covers the 172 remaining cases of
$(r,s)\in\mathcal{A}$ and completes the proof of Lemma~\ref{Lemma_max}.

%\newpage
\bigskip

\begin{figure}[ht!]
\begin{tabular}{cc}
\includegraphics[width=7cm]{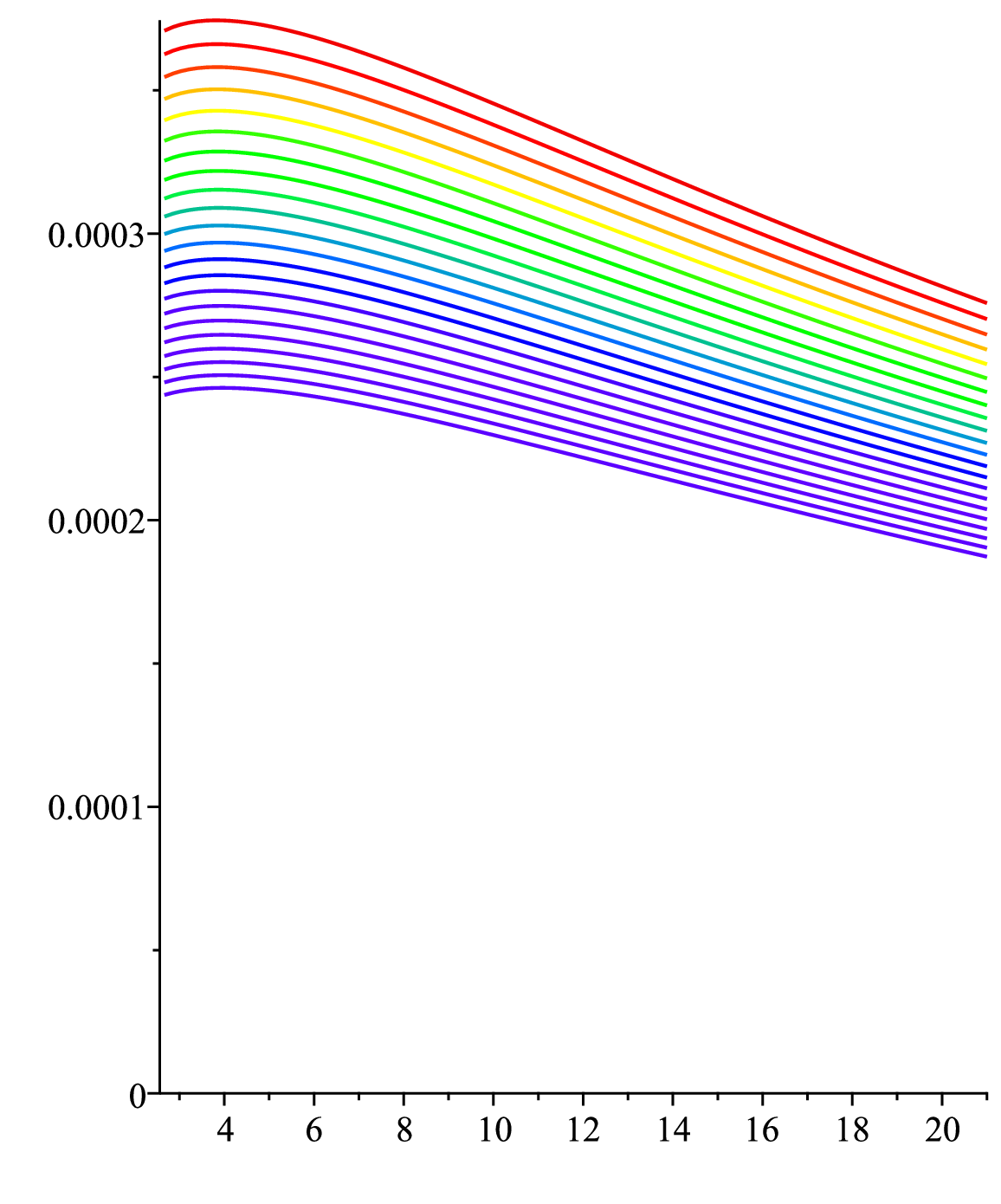} & \includegraphics[width=7cm]{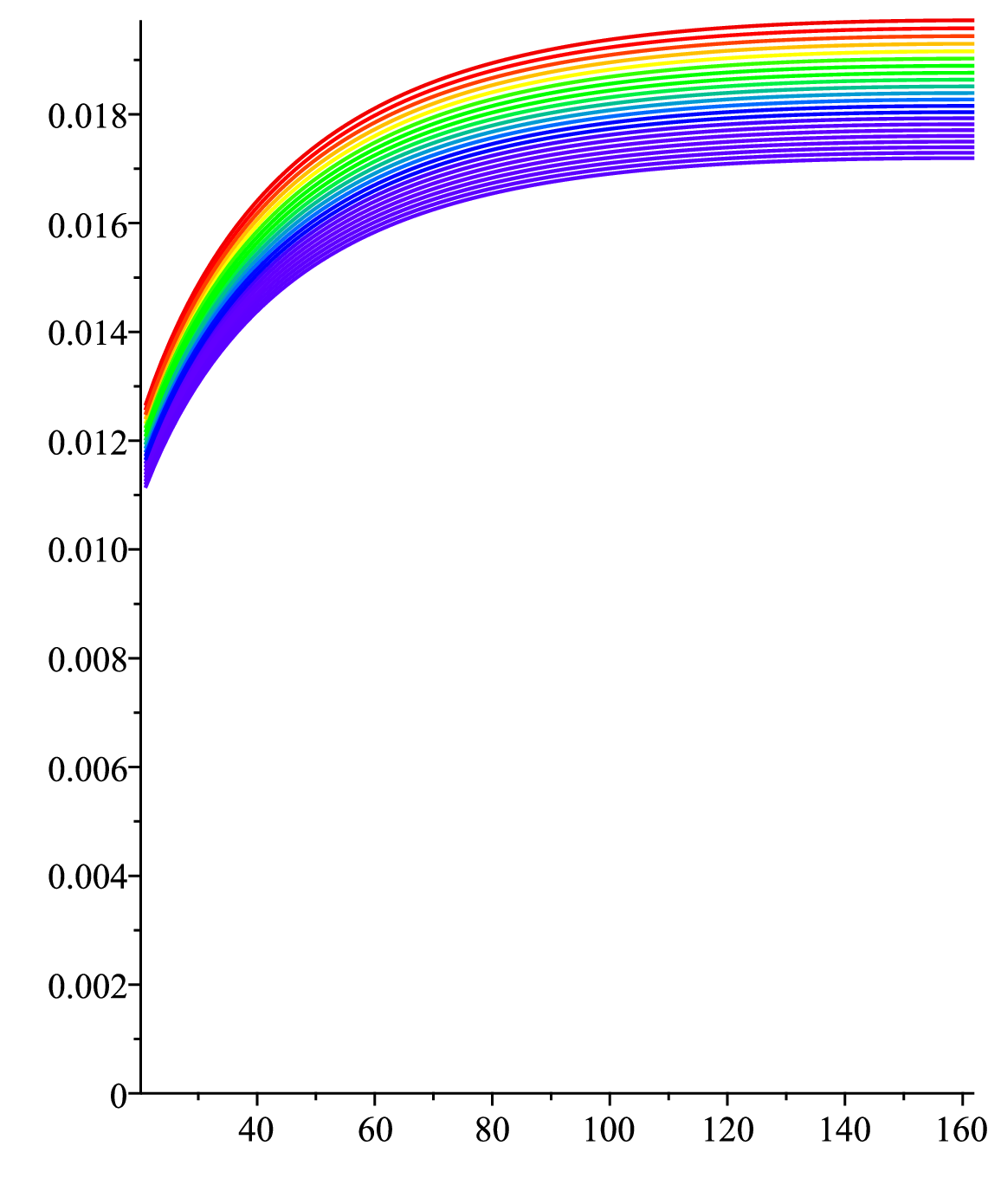}
\end{tabular}
\caption{The two plots for $s=7$, $r=65,\ldots, 86$ (from top to bottom).}
\label{s7}
\end{figure}

\begin{remark}
The above argument relies on the fact that certain explicitly-defined functions on bounded
real intervals take only positive values.  A more rigorous proof of this fact would include 
a bound on the absolute value of the derivative of each function, within the specified interval.
Using this bound, it is sufficient to approximate the value of this function on a finite set of points.
Finally, the value of the function on these points can be approximated to sufficient accuracy
using interval arithmetic.  We omit these technical details because our plots show that there is a clear
gap between each of these functions and the $x$-axis, at all points in the relevant intervals.
This gap is large enough to allow mathematical software such as Maple to compute the required values with sufficient precision.
\end{remark}

\subsection{Proof of Lemma~\ref{claima}}\label{ss:claima}

Since $\eta_\tau$ is continuous and $K_\tau$ is compact, the function $\eta_\tau$ attains
its maximum at least once on $K_\tau$.
For $\tau=1$ the region $K_\tau$ consist of one point $(\beta_\tau,\gamma_\tau,\delta_\tau)$, so the
lemma is true for this case. 
In the following we assume that $\tau>1$, which implies that $0<\alpha_\tau<1$.  

First we show that no local maximum of $\eta_\tau$ can lie on a boundary of $K_\tau$.
Recalling (\ref{c-diff}) and (\ref{d-diff}), let 
\begin{equation*}
\begin{aligned}
G_{\tau,\beta}(\gamma) &= \kappa_3\, \exp\left( - \frac{\partial\varphi}{\partial \gamma}(\alpha_{\tau},\beta,\gamma,\delta)\right)
  = \frac{\gamma^2}{(\beta-\gamma)(1-\alpha_\tau-\beta-\gamma)}, \\
D_{\tau,\beta}(\delta) &= \kappa_4\, \exp\left( - \frac{\partial\varphi}{\partial \delta}(\alpha_{\tau},\beta,\gamma,\delta)\right)
  = \frac{((s-3)\alpha_\tau + \beta + \delta)\delta}{(\alpha_\tau-\beta - \delta)^2}. 
\end{aligned}
\end{equation*}
Note that $G_{\tau,\beta}(\gamma)=\kappa_3$ if and only if 
$\frac{\partial \varphi}{\partial \gamma}(\alpha_\tau,\beta,\gamma,\delta) =0$.
For any $\beta\in (0, 1-\alpha_\tau)$, the function $G_{\tau,\beta}(\gamma)$ is 
strictly increasing with respect to $\gamma$ 
for $0 < \gamma < \min\{\beta,1-\alpha_{\tau}-\beta\}$,
and  
$$G_{\tau,\beta}(0) = 0, \qquad
\lim_{\gamma \rightarrow  \min\{\beta, 1-\alpha_\tau-\beta\} }\, G_{\tau,\beta}(\gamma) = \infty.$$
Therefore, there is a unique value of $\gamma$, say $\gamma=\tilde{\gamma}_\tau(\beta)$, which satisfies
$G_{\tau,\beta}(\gamma) = \kappa_3$
and 
$0\leq \tilde{\gamma}_\tau(\beta) < \min\{ \beta,1-\alpha_{\tau}-\beta\}$,
as $\kappa_3$ is positive and finite. 
To cover the cases $\beta=0$ and $\beta = 1-\alpha_\tau$, 
we continuously extend $\tilde{\gamma}_\tau$ and put $\tilde{\gamma}_\tau(0) = \tilde{\gamma}_\tau(1-\alpha_\tau)=0$.
Now (\ref{c-diff}) also implies that $\frac{\partial \varphi}{\partial \gamma}(\alpha_\tau,\beta,\gamma,\delta) >0$
 for $0<\gamma<\tilde{\gamma}_\tau(\beta)$  and  
 $\frac{\partial \varphi}{\partial \gamma}(\alpha_\tau,\beta,\gamma,\delta) < 0$ for 
  $\tilde{\gamma}_\tau(\beta) <\gamma< \min\{ \beta,1-\alpha_{\tau}-\beta\}.$ 
Therefore, by continuity,  for any $\beta\in [0,1-\alpha_\tau)$,
\begin{align}
\eta_\tau \, \text{ has no local maximum with } \, &
\gamma \in \{0, \min\{\beta, 1- \alpha_\tau-\beta\}\}\nonumber
\\  \text{ and } &
\min\{ \beta,1-\alpha_{\tau}-\beta\}>0.  
\label{no-1}
\end{align}

Similarly $D_{\tau,\beta}(\delta) = \kappa_4$ if and only if $\frac{\partial\varphi}{\partial\delta}(\alpha_\tau,\beta,\gamma,\delta)=0$. 
For any $0\leq \beta< \alpha_\tau$, 
the function $D_{\tau,\beta}(\delta)$ is strictly increasing with respect to $\delta$ 
for $0 < \delta < \alpha_\tau-\beta$, and  
$$D_{\tau,\beta}(0) = 0, \qquad  \lim_{\delta \rightarrow  \alpha_\tau-\beta }D_{\tau,\beta}(\delta) = \infty.$$
Therefore, there is a unique value of $\delta$, say $\delta = \tilde{\delta}_\tau(\beta)$, 
which satisfies $D_{\tau,\beta}(\delta) = \kappa_4$  and 
$0\leq \tilde{\delta}_\tau(\beta) < \alpha_\tau - \beta$ for all 
$\beta\in [0,\alpha_\tau)$, as $\kappa_4$ is positive and finite.
We continuously extend $\tilde{\delta}_\tau$ by defining $\tilde{\delta}_\tau(\alpha_\tau)=0$. 
From (\ref{d-diff}) we see  that $\frac{\partial \varphi}{\partial \delta}(\alpha_\tau,\beta,\gamma,\delta) >0$
 for $0<\delta<\tilde{\delta}_\tau(\beta)$  and  
 $\frac{\partial \varphi}{\partial \delta}(\alpha_\tau,\beta,\gamma,\delta) < 0$ for 
  $\tilde{\delta}_\tau(\beta)<\delta< \alpha_{\tau}-\beta$. 
As above, by continuity, we conclude that for any $\beta\in [0,\alpha_\tau)$,
\begin{equation}
\label{no-2}
\eta_\tau \, \text{ has no local maximum with } \,
\delta \in \{0, \alpha_\tau-\beta\} \text{ and } \alpha_\tau-\beta >0. 
\end{equation}
      
Now suppose that $(\lm{\beta},\lm{\gamma},\lm{\delta})$ is a local maximum of 
$\eta_\tau$ in $K_\tau$.  For a contradiction, suppose that this point lies on the
boundary of $K_{\tau}$.  From (\ref{no-1}), (\ref{no-2}) and
the definition of $K_{\tau}$, we know that $\lm{\beta}\in 
\{ 0,\, \min\{\alpha_\tau,\, 1-\alpha_\tau\}\}$.
\begin{enumerate}
\item[(i)] First suppose that $\lm{\beta}=0$. 
Then $\lm{\gamma}=0$, by definition of $K_{\tau}$.  
Now (\ref{no-2}) implies that $\lm{\delta} \not\in \{0,\alpha_\tau\}$,
and therefore $\lm{\delta}  \in (0,\alpha_\tau)$.  
From  \eqref{b-diff}, we find that, for $\beta$ in the neighbourhood of $0$, 
$$\frac{\partial \eta_\tau}{\partial \beta} (\beta, \lm{\gamma}, \lm{\delta}) = -\ln \beta+ O(1) > 0.$$ 
Therefore, $\lm{\beta}$ cannot be a local maximum of the function
$\beta \mapsto \eta_\tau(\beta, \lm{\gamma},\lm{\delta})$ in this case.
\item[(ii)] Next, suppose that $\lm{\beta}=\alpha_\tau<1-\alpha_\tau$. 
Then $\lm{\delta}=0$, by definition of $K_{\tau}$.
Since $\lm{\beta} < 1-\alpha_\tau$ we can apply (\ref{no-1}), which
implies that $\lm{\gamma}\not\in\{0,\min\{\alpha_\tau,1-2\alpha_\tau\}\}$ and therefore
$\lm{\gamma} \in (0,\min\{\alpha_\tau,1-2\alpha_\tau\})$.   
From  \eqref{b-diff}, we find that, for $\beta$ in the neighbourhood of $\alpha_\tau$,  
$$\frac{\partial \eta_\tau}{\partial \beta} (\beta, \lm{\gamma}, \lm{\delta}) = 
2\ln (\alpha_\tau - \beta) + O(1) < 0.$$ 
Therefore, $\lm{\beta}$ cannot be a local maximum of the function
$\beta \mapsto \eta_\tau(\beta, \lm{\gamma},\lm{\delta})$ in this case.

\item[(iii)]   
Now suppose that $\lm{\beta}=1-\alpha_\tau < \alpha_\tau$. 
Then $\lm{\gamma}=0$, by definition of $K_{\tau}$.
Since $\lm{\beta} < \alpha_\tau$, we can apply (\ref{no-2}), which
implies that $\lm{\delta}\not\in \{0, 2\alpha_{\tau}-1\}$, and hence
$\lm{\delta}  \in (0, 2\alpha_\tau-1)$. 
The map $\beta\mapsto \tilde{\gamma}_\tau(\beta)$ is continuously differentiable,
since $G'_{\tau,\beta}(\gamma) > 0$ for $0 < \gamma < \min\{\beta,1-\alpha_{\tau}-\beta\}$.
Therefore, from  \eqref{b-diff} and \eqref{c-diff},
and by definition of $\tilde{\gamma}_\tau(\beta)$,
we find that for $\beta$ in the neighbourhood of $1-\alpha_\tau$, 
\begin{align*}
     	\frac{\partial \eta_\tau}{\partial \beta} (\beta, \tilde{\gamma}_\tau(\beta), \lm{\delta}) 
 &= \frac{\partial \varphi}{\partial \beta}(\alpha_\tau, \beta, \tilde{\gamma}_\tau(\beta),\lm{\delta})
   + 0\cdot \frac{\partial \tilde{\gamma}_\tau(\beta)}{\partial \beta}\\
     	&= - \ln \left( \frac{1-\alpha_\tau-\beta}{1-\alpha_\tau-\beta - \tilde{\gamma}_\tau(\beta)} \right) 
     	 + O(1)  
     	 \\
  &= -\ln \left(1+ \frac{\kappa_3\big(\beta - \tilde{\gamma}_\tau(\beta)\big)}{\tilde{\gamma}_\tau(\beta)} \right) + O(1) < 0.
\end{align*}
Therefore, $\lm{\beta}$ cannot be a local maximum of the function
$\beta \mapsto \eta_\tau(\beta, \tilde{\gamma}_\tau(\beta),\lm{\delta})$ in this case.
\item[(iv)] Suppose that $\lm{\beta} = \alpha_\tau = 1-\alpha_\tau = \nfrac{1}{2}$.
Then $\lm{\gamma} = \lm{\delta} = 0$, by definition of $K_{\tau}$. By (\ref{b-diff}),
for $\beta$ in the neighbourhood of $\nfrac{1}{2}$, when $\alpha_\tau=\nfrac{1}{2}$,
$$\frac{\partial \eta_\tau}{\partial \beta} (\beta, 0, 0) = 
 2\ln(\nfrac{1}{2}-\beta) + O(1) < 0.
$$
Therefore the point $(\nfrac{1}{2},0,0)$ cannot be a local maximum of $\eta_{\tau}$
when $\alpha_\tau = \nfrac{1}{2}$.
\end{enumerate}

Hence there is no local maximum of $\eta_{\tau}$ on the boundary of the domain $K_t$.
Therefore the point $(\lm{\beta},\lm{\gamma},\lm{\delta})$ lies in the interior of the 
domain $K_\tau$.
and so it satisfies  the system  of equations 
$\frac{\partial \eta_\tau}{\partial \beta}=\frac{\partial \eta_\tau}{\partial \gamma} = \frac{\partial \eta_\tau}{\partial \delta}=0$. 
Define $\lm{\tau} =  \frac{1 - \alpha_\tau - \lm{\beta}}{1 - \alpha_\tau - \lm{\beta}-\lm{\gamma}}$. 
Since $\frac{\partial \eta_\tau}{\partial \gamma}=0$, using (\ref{c-diff}) we find that 
$$
     	  \lm{\tau}-1 = \frac{\lm{\gamma}}{1 - \alpha_\tau - \lm{\beta}-\lm{\gamma}} =\frac{\kappa_3 (\lm{\beta} - \lm{\gamma})}{\lm{\gamma}}.
$$
This implies that $\lm{\beta}  = ( \frac{\lm{\tau}-1}{\kappa_3} +1) \lm{\gamma}$. 
Substituting this back into the expression for $\lm{\tau}$ and 
solving with respect to $\lm{\gamma}$, we obtain
$$
 \lm{\gamma} = (1-\alpha_\tau) \frac{\lm{\tau}-1}{ \frac{(\lm{\tau}-1)^2}{\kappa_3} +2\lm{\tau}-1}.
$$
Next, since $\frac{\partial \eta_\tau}{\partial \beta}=0$ and 
$\frac{\partial \eta_\tau}{\partial \delta}=0$, using (\ref{b-diff}) and (\ref{d-diff})
implies that
$$
  \lm{\delta} = \frac{\kappa_4 (\lm{\beta}-\lm{\gamma})(1-\alpha_\tau-\lm{\beta})}
  { \kappa_2 (1-\alpha_\tau-\lm{\beta} -\lm{\gamma})}
  =  \frac{\kappa_4}{\kappa_2 \kappa_3} \lm{\tau}(\lm{\tau}-1)  \lm{\gamma} = 
  (1-\alpha_\tau) \frac{\kappa_4\, \lm{\tau}(\lm{\tau}-1)^2}{\kappa_2 \kappa_3 
       \left(\frac{(\lm{\tau}-1)^2}{\kappa_3} +2  \lm{\tau} -1\right)}.
$$  
Recalling the definitions of $p_\tau$ and $q_\tau$, we can write
$$
  \lm{\beta} = (1-\alpha_\tau) p_{\lm{\tau}}, \ \ \ \  \lm{\delta} = (1-\alpha_\tau) q_{\lm{\tau}}.
$$
Substituting 
these expressions into the equation $\frac{\partial \eta_\tau}{\partial \delta}=0$,
using (\ref{d-diff}), leads
to the identity
\[ \kappa_4\Big( \frac{1}{1-\alpha_\tau}- (1 + p_{\lm{\tau}} + q_{\lm{\tau}})\Big)^2
    = q_{\lm{\tau}}\, 
        \Big(\frac{s-3}{1-\alpha_\tau} - (s-3-p_{\lm{\tau}} - q_{\lm{\tau}})\Big), 
\]
and solving this for $(1-\alpha_\tau)^{-1}$ gives
$$
  (1-\alpha_\tau)^{-1} =  1+ p_{\lm{\tau}}+  q_{\lm{\tau}} + \frac{(s-3)q_{\lm{\tau}}}{2\kappa_4}
   \pm \sqrt{\frac{(s-2)q_{\lm{\tau}}(p_{\lm{\tau}} + q_{\lm{\tau}})}{ \kappa_4}+ \frac{(s-3)^2q_{\lm{\tau}}^2}{4\kappa_4^2} }.
$$
Since $\lm{\beta}+\lm{\delta} \leq \alpha_\tau$, which is equivalent to
$(1-\alpha_\tau)^{-1} \geq 1 + p_{\lm{\tau}} + q_{\lm{\tau}}$, 
we must take the positive sign outside of the radical.
Therefore, it follows that
$$(1-\alpha_\tau)^{-1} = (1-\alpha_{\lm{\tau}})^{-1}.$$
By monotonicity of $\alpha_\tau$ we conclude that $\lm{\tau} = \tau$, and hence 
$(\lm{\beta},\lm{\gamma}, \lm{\delta}) = (\beta_\tau, \gamma_\tau, \delta_\tau)$.  
Therefore, the point $(\beta_\tau,\gamma_\tau,\delta_\tau)$ is the only point where 
the global maximum of $\eta_\tau$ is attained on $K_\tau$.
This completes the proof of Lemma~\ref{claima}.

\subsection{Proof of Lemma~\ref{final}}\label{ss:final}

We will use (\ref{concave}). Observe that the
factor multiplying $\frac{\alpha'_\tau}{\alpha_\tau}$ in (\ref{concave})
can be bounded above by
\begin{equation}
\label{factoroutfront}
  2s-4 - \frac{(s-3)(s-4)}{s-2} + \frac{s(s-1)}{rs-r-2s}
 \leq s+2
\end{equation} 
when $r\geq s+1\geq 4$.  

We now work towards an upper bound on $\frac{\alpha'_\tau}{\alpha_\tau}$.
Write $\alpha_\tau = \frac{N_\tau}{1+N_\tau}$ where
\[ N_\tau = p_\tau + q_\tau + \frac{s-3}{2\kappa_4} q_\tau +
            \sqrt{\frac{s-2}{\kappa_4} q_\tau(p_\tau + q_\tau) + \frac{(s-3)^2}{4\kappa_4^2} q_\tau^2}. 
\]
Then
\begin{equation}
\label{Ntau}
 \frac{\alpha_\tau'}{\alpha_\tau} = \frac{N_\tau'}{N_\tau(1+N_\tau)}.
\end{equation}
Using (\ref{kappa3-less-than-2}), we see that
\[
1 - \dfrac{2}{\tau} \leq p_\tau\leq 1, \qquad \
                q_\tau \geq   \frac{\kappa_4}{\kappa_2} (\tau-4),
\]
\begin{align*}
     p_\tau' = \frac{\kappa_3(\tau^2 + \kappa_3 - 1)}{\left(\tau^2+(\kappa_3-1)(2\tau-1)\right)^2}
          \leq \frac{\kappa_3}{\tau^2+(\kappa_3-1)(2\tau-1)}
          &\leq \frac{2}{\tau^2},\\
 q_\tau' =\frac{\kappa_4\, (\tau -1)\Big((\tau -1)^3 + \kappa_3(4\tau ^2-3\tau +1)\Big)}{\kappa_2
        \left( (\tau -1)^2 + \kappa_3(2\tau -1)\right)^2} &\leq \frac{\kappa_4}{\kappa_2}.
\end{align*}
Recall that $\mu = 2\kappa_4 +s-3$.
Applying (\ref{bound_root}) and the above inequalities, we have
\[
 N_\tau \geq p_\tau + 2q_\tau + \frac{s-3}{\kappa_4} q_\tau
         \geq 1-\frac{2}{\tau} + \frac{\mu (\tau-4)}{\kappa_2}
        = \frac{\mu \tau}{\kappa_2}\left( 1 + \frac{\kappa_2}{\mu \tau}
             - \frac{4}{\tau}
           - \frac{2\kappa_2}{\mu \tau^2} \right)
\]
and
\begin{align*}
N'_\tau &\leq p'_\tau + q'_\tau + \frac{s-3}{2\kappa_4} q'_\tau +
                 \frac{(s-2)\left( q_\tau p'_\tau + q'_\tau p_\tau + 2q_\tau q'_\tau\right)
             + \frac{(s-3)^2}{2\kappa_4} q_\tau q'_\tau}{\mu  q_\tau}\\
 &=
p'_\tau + 2q'_\tau + \frac{s-3}{\kappa_4} q'_\tau +
       \frac{(s-2)p'_\tau +  2(1-\kappa_4) q'_\tau}{\mu } 
                    + \frac{(s-2) q'_\tau p_\tau}{\mu q_\tau}\\
   & \leq \frac{2}{\tau^2} + \frac{\mu}{\kappa_2} + 
      \frac{2(s-2)}{\mu \tau^2} + \frac{2\kappa_4(1-\kappa_4)}{\kappa_2 \mu}
           + \frac{s-2}{\mu(\tau-4)}\\
  &= \frac{\mu}{\kappa_2}\left(1 
  + \frac{2\kappa_4(1-\kappa_4)}{\mu^2} + \frac{(s-2)\kappa_2}{\mu^2(\tau-4)}
  + \frac{2\kappa_2}{\mu \tau^2} + \frac{2(s-2)\kappa_2}{\mu^2\tau^2} 
   \right).
\end{align*}
Substituting these bounds into (\ref{Ntau}), we conclude that 
\begin{align}
\frac{\alpha_\tau'}{\alpha_\tau}
       &\leq 
  \frac{\frac{\kappa_2 }{\mu\tau^2}
  \left(1 
	 + \frac{2\kappa_4(1-\kappa_4)}{\mu^2} 
        + \frac{(s-2)\kappa_2}{\mu^2 (\tau-4)}
   + \frac{2\kappa_2}{ \mu \tau^2} + \frac{2(s-2)\kappa_2}{\mu^2 \tau^2} 
	\right)}
    { \left(1 + \frac{\kappa_2}{\mu \tau} - \frac{4}{\tau} 
          -\frac{2\kappa_2}{\mu \tau^2} 
	             \right)
  \left(1 + \frac{2\kappa_2}{\mu \tau} - \frac{4}{\tau} 
           -\frac{2\kappa_2}{\mu \tau^2}
	             \right)
	              }.
\label{alpha_ineq}
\end{align}
Using (\ref{kappa-def}),
we claim that whenever $s\geq 3$ and $r > \rho(s)$, we have
\begin{align*}
\dfrac{1}{2} &\leq \kappa_4\leq 1 \, \text{ and } \, \kappa_4(1-\kappa_4)\leq \nfrac{1}{4},\\
s-2&\leq \mu\leq s-1,\\
s^2-s &\leq \kappa_2\leq s^2+s-3,\\
s & \leq \frac{\kappa_2}{\mu} \leq \begin{cases} 8 & \text{ if $s=3$,}\\
                                                s+3 & \text{ if $s\geq 4$}.
             \end{cases}
\end{align*}
When $s\geq 3$ we have $r > \rho(s) \geq s$, by Lemma~\ref{Llemma},
which implies that $r\geq s+1$.
This is sufficient to prove almost all of the above inequalities.
The bounds on $\kappa_4$ are clear, and lead immediately to the bounds on
$\mu$. 
The bounds on $\kappa_2$ follow from (\ref{kappa2-expression}),
using the inequality $r-2\geq s-1$.  
The lower bound on $\kappa_2/\mu$ then follows,
while for the upper bound we must be a little more precise.  
If $s\geq 5$ then by definition of $\mu$ and using the bound $r-2\geq s-1$,
we find that
\[
\mu = s-3 + 2\kappa_4
    \geq s-3 + \frac{2(s-2)}{s-1} 
    = s-1 - \frac{2}{s-1}.
\]
Therefore
\[ \frac{\kappa_2}{\mu} \leq \frac{s^2+s-3}{s-1-\frac{2}{s-1}} \leq s+3.\]
Next, suppose that $s=4$.  Then $r\geq 6$ since $r > \rho(4)$,  from Table~\ref{rhobounds}.
Hence when $s=4$ we have $\mu \geq \dfrac{5}{2}$ and
\[ \frac{\kappa_2}{\mu} \leq \frac{34}{5} < 7 = s+3.\]
Finally, if $s=3$ then $r\geq 4$ and $\mu\geq 1$, while
(\ref{kappa2-expression}) implies that $\kappa_2 \leq 8$. 

%%%%%%%%%%%%%%%%%%%%%%%%

Using the inequalities proved above, the denominator of (\ref{alpha_ineq}) 
is bounded below by
\begin{align}
 \left(1 + \frac{\kappa_2}{\mu\tau}  - \frac{4}{\tau} \right.&\left. - \frac{2\kappa_2}{\mu\tau^2}
\right)
\left(1 + \frac{2\kappa_2}{\mu\tau} - \frac{4}{\tau} - \frac{2\kappa_2}{\mu\tau^2}
\right) \nonumber\\
&\geq
\left(1 + \frac{s-4}{\tau} - \frac{2s}{\tau^2}\right)
\left(1 + \frac{2(s-2)}{\tau} - \frac{2s}{\tau^2}\right) \nonumber\\
&\geq 1 + \frac{3s-8}{\tau} + \frac{2(s^2-8s+8)}{\tau^2} 
   - \frac{2s(3s-8)}{\tau^3}. \label{denomA24}
\end{align}
For the first inequality we use the lower bound for $\kappa_2/\mu$
everywhere, since the coefficients $1/\tau - 2/\tau^2$ and $2/\tau-2/\tau^2$ 
are positive.
We claim that the expression given on the right hand side of~(\ref{denomA24}) 
is bounded below by~1
 if any of conditions (i), (ii) or (iii) holds, and 
hence the same is true for the
denominator of (\ref{alpha_ineq}). 
First suppose that $s\geq 7$.  
Then $s^2-8s+8\geq 0$, and $\tau^2 - 2s\geq 0$ whenever 
$\tau \geq \min\{ 2(s+2)^2,\, (s+1)^{2.5}\}$.
It follows that (\ref{denomA24}) is bounded below by~1.
For $s=3,4,5,6$, direct substitution into (\ref{denomA24}) confirms
that the given expression is bounded below by~1
when $\tau\geq \min\{2(s+2)^2,\, (s+1)^{2.5}\}$.
This establishes the claim.

Now we consider the numerator of (\ref{alpha_ineq}).
First suppose that $s\geq 4$. Applying the bounds on
$\kappa_4$ and $\kappa_2/\mu$ gives
\[
\frac{\alpha_{\tau}'}{\alpha_{\tau}} \leq
  \frac{s+3}{\tau^2}\left( 1 + \frac{1}{2(s-2)^2} + \frac{s+3}{\tau-4}
   + \frac{4(s+3)}{\tau^2}\right).
\]
Substituting this and (\ref{factoroutfront}) into (\ref{concave}), we find
that
$\varphi_\alpha'(\boldsymbol{x}_\tau) < 0$ if
\begin{equation} (s+2)(s+3)\left(1 +  \frac{1}{2(s-2)^2} + \frac{s+3}{\tau-4} + 
       \frac{4(s+3)}{\tau^2}\right) < \tau.
\label{tau-sufficient}
\end{equation}
Assuming that $\tau\geq \min\{ 2(s+2)^2,\, (s+1)^{2.5}\}$,
since $s\geq 4$, the left hand side of (\ref{tau-sufficient}) is bounded
above by
\[ (s+2)(s+3)\left(1 + \dfrac{1}{8} + \dfrac{7}{51}
  + \dfrac{28}{55^2}\right) < 1.3 (s+2)(s+3),\]
which is bounded above by $\min\{ 2(s+2)^2,\, (s+1)^{2.5}\}$.
This uses the value $55$ as a convenient lower bound 
for $5^{2.5} \approx 55.9017$.)
Hence, when $s\geq 4$ and $r > \rho(s)$, if either (ii) or (iii) 
holds then $\varphi_\alpha'(\boldsymbol{x}_\tau)<0$.

Finally, suppose that $s=3$.  
Then $\frac{\kappa_2}{\mu^2} \leq \frac{\kappa_2}{\mu} \leq 8$, so the numerator
of (\ref{alpha_ineq}) is bounded above by
\begin{equation}
 \frac{8}{\tau^2}\, \left(\dfrac{3}{2} + \dfrac{8}{\tau-4} + \dfrac{32}{\tau^2}\right).
\label{numers3}
\end{equation}
(This also uses the inequalities $\mu\geq 1$ and $\kappa_4(1-\kappa_4)\leq \nfrac{1}{4}$
stated earlier.)
We proved earlier that the denominator
of (\ref{alpha_ineq}) is bounded below by 1. 
Since the left hand side of (\ref{factoroutfront}) is $2+\frac{3}{r-3}$ when $s=3$,
using (\ref{concave}) we see that a sufficient condition 
for $\varphi'_\alpha(\boldsymbol{x}_\tau)$ when $s=3$ is
\begin{equation}
\label{s3sufficient}
 8\left(2 + \dfrac{3}{r-3}\right)\left(\dfrac{3}{2} + \dfrac{8}{\tau-4} 
   + \dfrac{32}{\tau^2}\right) < \tau. 
\end{equation}
For (iii) we also assume that $2(r-1) \geq 4^3$, which means that $r\geq 33$,
and that $\tau\geq 32$. 
Then the left hand side of (\ref{s3sufficient}) can be bounded above by
\[
8\times \dfrac{21}{10}\times \left( \dfrac{3}{2} + \dfrac{8}{28} + \dfrac{1}{32}\right) < 
32,
\]
which proves that $\varphi'_\alpha(\boldsymbol{x}_\tau)< 0$  
when (iii) holds and $s=3$.

It remains to consider (i).
Assume that $s=3$, $r\geq 5$ and $\tau \geq 52$.
Then the left hand side of (\ref{s3sufficient}) is bounded above by
\[ 8\times \dfrac{7}{2} \times\left(\dfrac{3}{2} + \dfrac{8}{48} + \dfrac{32}{52^2}\right)
< 52,
\]
which proves that $\varphi_\alpha'(\boldsymbol{x}_\tau)<0$ whenever (i) holds with
$s=3$ and $r\geq 5$.

Next, suppose that $s=3$ and $r = 4$.  Here we must use the denominator of
(\ref{alpha_ineq}) directly, as the usual lower bound of~1 gives away too much.
Now
\[ \kappa_4 = \nfrac{1}{2},\quad \mu = 1,\quad \kappa_2 = 8.\]
Substituting these values directly into (\ref{alpha_ineq}), 
we find using (\ref{concave}) that 
$\varphi_{\alpha}'(\boldsymbol{x}_\tau) < 0$ if 
\[
 40\left(\dfrac{3}{2} + \dfrac{8}{\tau-4} + \dfrac{32}{\tau^2}\right)
   < \tau\left(1 + \dfrac{16}{\tau} + \dfrac{16}{\tau^2} - \dfrac{256}{\tau^3}
   \right),
\]
which holds for $\tau \geq 52$.
Therefore $\varphi_\alpha'(\boldsymbol{x}_\tau)<0$ whenever (i) holds with
$s=3$ and $r=4$. 
This completes the proof of Lemma~\ref{final}. \hfill $\square$

%%%%%%%%%%%%%%%%%%%%%%%%%%%%%%%%%%%%%%%%%%%%%%%%%%%%%%%%%%%%%%%%%%%%%%%%%%
%% Some routine which is usually ignored.
%%%%%%%%%%%%%%%%%%%%%%%%%%%%%%%%%%%%%%%%%%%%%%%%%%%%%%%%%%%%%%%%%%%%%%%%%

\section{More than one cycle}\label{SomeRoutineUsuallyIgnored}

In this section we prove that (\ref{omit}) holds, restated as
Lemma~\ref{more-than-one-cycle} below.   

Let $r,s\geq 2$ be fixed integers such that $\Omega(n,r,s)$ is nonempty.
For any subpartition $F$ of a partition from $\Omega(n,r,s)$, let
$\|F\|$  denote the number of parts of $F$ and let $|F|$ 
be the number of cells with points in $F$. Alternatively, 
$\|F\|$ is the number of edges in $G(F)$ and $|F|$ is the number of
vertices with positive degree in $G(F)$.

\begin{lemma}\label{Lemma_cycles}
For any partition $F\in\Omega(n,r,s)$, let
 $F_1, \ldots, F_{\ell}$ be distinct subpartitions of  $F$ 
  (possibly with some common parts) such that, for each $j=1,\ldots, 
  \ell$, the hypergraph
  $G(F_j)$ is a 1-cycle, when $\|F_j\|=1$, or a loose $\|F_j\|$-cycle,
  when $\|F_j\|\geq 2$.  
 Then
   \begin{equation}\label{ind_cycle}
    |F_1 \cup\cdots \cup F_\ell | \leq 
        (s-1)\|F_1 \cup\cdots \cup F_\ell \|,
   \end{equation}
    and equality holds if and only if all cycles
    $G(F_j)$ are loose and pairwise disjoint. 
\end{lemma}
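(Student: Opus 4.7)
The plan is to prove the inequality by induction on $\ell$. The base case $\ell=1$ is essentially the definitions: a loose $k$-cycle with $k\geq 2$ has exactly $k(s-1)$ distinct vertices and $k$ parts; a loose 1-cycle has $s-1$ vertices and 1 part; a non-loose 1-cycle (an edge containing a repeated vertex with at most $s-2$ cells actually hit) has strictly fewer than $s-1$ vertices. So in all cases $|F_1| \leq (s-1)\|F_1\|$ with equality exactly when $F_1$ is loose. This also anchors the ``if'' direction, because a disjoint union of loose cycles is obviously tight.

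For the inductive step, set $\tilde F' = F_1 \cup \cdots \cup F_{\ell-1}$ and $B = F_\ell \setminus \tilde F'$, so $\|\tilde F'\cup F_\ell\| = \|\tilde F'\| + \|B\|$. Writing $w$ for the number of vertices that appear in some part of $B$ but in no part of $\tilde F'$, we have $|\tilde F' \cup F_\ell| = |\tilde F'| + w$. By the inductive hypothesis it suffices to show $w \leq (s-1)\|B\|$. The main case is when $F_\ell$ is a loose $k$-cycle with $k \geq 2$; then the parts of $B$ correspond to a subfamily of the $k$ edges of $F_\ell$, and I'll observe that $G(B)$ is a disjoint union of sub-paths $P_1,\ldots,P_c$ of $F_\ell$, of total length $\|B\|$ (or the entire cycle, if $B=F_\ell$ and $A:=F_\ell\cap \tilde F' = \emptyset$). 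A loose path of length $p$ contributes exactly $p(s-1)+1$ vertices; summing gives $|G(B)| = (s-1)\|B\| + c$. When $A \neq \emptyset$, each sub-path $P_i$ has two terminal ``junction'' vertices, which also lie in the adjacent edges of $A \subseteq \tilde F'$, so they contribute nothing to $w$. This removes $2c$ vertices and yields $w \leq (s-1)\|B\| + c - 2c \leq (s-1)\|B\|$. When $B=F_\ell$ (so $c=1$, and the ``path'' is the full cycle), $|G(B)| = k(s-1)$ and so $w \leq (s-1)\|B\|$ trivially. The 1-cycle case is a direct application of the base-case bound to $F_\ell$ itself.

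For the equality clause, I will backtrack through the above. Equality in the inductive step forces both $|\tilde F'| = (s-1)\|\tilde F'\|$ (so by induction $F_1,\ldots,F_{\ell-1}$ are loose and pairwise disjoint) and $w = (s-1)\|B\|$. In the loose-$k$-cycle case with $k\geq 2$, equality rules out $0 < \|B\| < k$ (the slack $-c$ would be nonzero), and $B = \emptyset$ would force $F_\ell \subseteq F_j$ for some $j$, hence $F_\ell = F_j$ since no proper sub-edge-set of a loose cycle is itself a cycle, contradicting distinctness. Thus $B = F_\ell$ and $w = k(s-1)$, which means $F_\ell$ is vertex-disjoint from $\tilde F'$. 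The 1-cycle case forces $F_\ell$ loose with all $s-1$ vertices new (the $B=\emptyset$ alternative is impossible, since a loose cycle of length $\geq 2$ has no 1-cycle edge).

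I expect no real obstacle beyond care in the path-counting step: making sure the terminal-vertex argument is valid both in the full-cycle and proper-sub-path subcases, and handling the degenerate situations ($B = \emptyset$, 1-cycles, coincident parts across different $F_j$) without circularity in the distinctness of $F_1,\ldots,F_\ell$.
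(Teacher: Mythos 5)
Your proposal is correct and takes essentially the same route as the paper's proof: induction on $\ell$, splitting the new cycle into $F_\ell\setminus\tilde F'$, which in the partial-overlap case is a disjoint union of loose paths whose two terminal junction vertices already lie in the earlier union, so each path contributes at most $(s-1)\ell(P)-1$ new vertices and the resulting slack makes the inequality strict; your subcases $B=F_\ell$, $B=\emptyset$ and $0<\|B\|<\|F_\ell\|$ match the paper's three cases (no common parts, full containment, partial intersection) exactly. The one step worth tightening is your claim that $B=\emptyset$ forces $F_\ell\subseteq F_j$ for a single $j$: this is not automatic, but it does follow at that point of the equality analysis because the induction has already made $G(F_1),\ldots,G(F_{\ell-1})$ loose and pairwise vertex-disjoint, so the connected cycle $G(F_\ell)$ must lie inside one component and hence coincide with some $G(F_j)$, contradicting distinctness.
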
 

\begin{proof}
    We prove the result by induction on $\ell$. When $\ell=1$, the statement follows
immediately by the definition of loop and loose cycle. 
For the inductive step, assume that the statement holds for $\ell=k\geq 1$. 
We will prove that the statement holds for $\ell = k+1$.

Let $F_1,\ldots, F_{k+1}$ be subpartitions of some partition in $\Omega(n,r,s)$.
First suppose that subpartitions $F_{k+1}$ and $F_1 \cup \cdots \cup F_{k}$ do not have any 
parts in common.  
Then, using the induction hypothesis, 
     \begin{align*}
      |F_1 \cup\cdots \cup F_{k+1} | &\leq  |F_1 \cup\cdots \cup F_{k} | + |F_{k+1}|\\
      &\leq 
      (s-1) \|F_1 \cup\cdots \cup F_{k} \| + (s-1) \|F_{k+1}\|\\
     &= (s-1)\|F_1 \cup\cdots \cup F_{k+1} \|.
     \end{align*}
The first inequality becomes equality precisely when $G(F_{k+1})$ is disjoint 
from $G(F_1 \cup\cdots \cup F_{k})$. The second inequality becomes equality if and only if
all cycles $G(F_1),\ldots, G(F_{k+1})$ are loose and pairwise disjoint. 
This proves the statement 
in the case that $\left(F_1 \cup \cdots \cup F_{k}\right) \cap F_{k+1}=\emptyset$. 
Note that, in particular, this condition holds whenever $F_{k+1}$ is a 1-cycle,
since the $F_j$ are distinct and no cycle of length at least 2 can contain a loop. 
     
Next, suppose that $F_{k+1} \subseteq  F_1 \cup \cdots \cup F_{k}$. 
Then the induction hypothesis implies that \eqref{ind_cycle} holds. 
Suppose that (\ref{ind_cycle}) holds with equality with $\ell=k+1$.
Then (\ref{ind_cycle}) holds with equality for $\ell=k$, which implies by the inductive
hypothesis that $F_1,\ldots, F_k$ are loose and disjoint.  Now since $F_{k+1}\subseteq F_1\cup \cdots \cup F_k$,
it follows that $F_{k+1}=F_j$ for some $j\in [k]$.  This contradicts the assumptions of the lemma.
Hence we do not have equality in \eqref{ind_cycle} in this case.

Finally, suppose that
\[ \left(F_1 \cup \cdots \cup F_{k}\right) \cap F_{k+1} \not\in\{ \emptyset, F_{k+1}\},\]
and note that $\| F_{k+1}\| \geq 2$ in this case.
Then, by assumptions, $G(F_{k+1})$ is a loose cycle, 
and so $G(F_{k+1} \setminus \left(F_1 \cup \cdots \cup F_{k}\right))$ is 
a union of disjoint loose paths. Let $P$ be any of these paths and let 
$\ell(P)$ denote the number of edges in $P$. Observe that $P$ contains at least two 
vertices of $G(F_1 \cup \cdots \cup F_{k})$, so the number of vertices in $P$ 
which are not in $G(F_1 \cup \cdots \cup F_{k})$ is bounded above by $(s-1)\ell(P)-1$. 
Thus, we conclude that
\begin{align*}
	|F_{1}\cup\cdots\cup F_{k+1}| - 
	|F_{1}\cup\cdots\cup F_{k}| &\leq \sum_{P} \left( (s-1)\ell(P)-1\right)\\
        & \leq  -1 + (s-1)\sum_{P} \ell(P)\\
	&\leq -1+ (s-1)(\|F_{1}\cup\cdots\cup F_{k+1}\| - \|F_{1}\cup\cdots\cup F_{k}\|),
\end{align*}
where the sum is over all maximal paths $P$ in $G(F_{k+1} \setminus \left(F_1 \cup \cdots \cup F_{k}\right))$.
By the inductive hypothesis, \eqref{ind_cycle} holds for $\ell = k$, and adding
this inequality to the above gives
\[
   |F_1 \cup\cdots \cup F_{k+1} | <    (s-1)\|F_1 \cup\cdots \cup F_{k+1} \|.
\]
In particular, we see that the inequality is strict in this case, completing the proof.
\end{proof}

For convenience, we say that $F$ is a \emph{subpartition of} $\Omega(n,r,s)$ whenever
$F$ is a subpartition of some partition from $\Omega(n,r,s)$.
For two subpartitions $F_1,F_2$ of $\Omega(r,n,s)$ 
let $\Pr(F_1\! \mid\! F_2)$ denote the probability that $\mathcal{F}(n,r,s)$ contains $F_1$ 
as a subpartition, conditioned on the event that $\mathcal{F}(n,r,s)$ contains $F_2$
as a subpartition. Equivalently, $\Pr(F_1\! \mid\! F_2)$ equals the probability that a uniform 
random element of the set
\[ \{F \subseteq \Omega(n,r,s) \mid F_2 \subseteq F\}\]
 contains $F_1$. 
If $F_2 =\emptyset$ then we simply write $\Pr(F_1)$.

We say that two subpartitions $F_1$ and $F_2$ of $\Omega(n,r,s)$ are \textit{isomorphic} if 
there exists a permutation of the $rn$ points 
which induces a hypergraph isomorphism between $G(F_1)$ and $G(F_2)$. 
We will call this permutation an \textit{isomorphism} between $F_1$ and $F_2$.

The next result is proved using a generalisation of the
framework described in Section~\ref{s:framework}, but with less precision:
here, the exact constants in the asymptotic expressions do not matter.

\begin{lemma}\label{Lemma2}
Given a subpartition $F_0$ of $\Omega(n,r,s)$,
let $D_0$ denote the set of all subpartitions of $\Omega(n,r,s)$  which are
isomorphic to $F_0$. Let $F_H$ be a subpartition of $\Omega(n,r,s)$  such that 
$G(F_H)$ is a loose Hamilton cycle.
   Then the following holds as $n\rightarrow \infty$. 
   \begin{itemize}
    \item[\emph{(i)}]     
Uniformly over all subpartitions $F_0$ with $\|F_0\| = \Theta(1)$, 
    \[ \sum_{F\in D_0} \Pr(F \mid F_H) = \Theta\left(n^{|F_0|-(s-1)\|F_0\|}\right).\] 
     \item[\emph{(ii)}]   
Uniformly over all subpartitions $F_0,\, F_{\mathrm{bad}}$ with $\|F_0\|,\,\|F_{\mathrm{bad}}\| = \Theta(1)$,
        \[ \sum_{\substack{F\in D_0 \\ F \cap F_{\mathrm{bad}} = \emptyset}} 
 		 \Pr(F \mid  F_{\mathrm{bad}}\cup F_H)  = (1+ O(1/n))\, \sum_{F \in D_0} \Pr(F \mid F_H).
        \]
   \end{itemize}
\end{lemma}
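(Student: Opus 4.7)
The plan for both parts is to start from the closed-form expression
\[ \Pr(F \mid F_H) = \frac{p(rn - s\|F \cup F_H\|)}{p(rn - s\|F_H\|)}, \]
which, via the elementary asymptotic $p(N-s)/p(N) \sim (s-1)!/N^{s-1}$ as $N \to \infty$, reduces to $\Pr(F \mid F_H) = \Theta(n^{-(s-1)\|F \setminus F_H\|})$ uniformly for compatible $F \in D_0$, since $\|F\setminus F_H\|$ is bounded and $rn - s\|F_H\| = (rs-r-s)t = \Theta(n)$.

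For part~(i) I would partition $D_0$ according to $a = |F \cap F_H|$. Because $F_H$ is a loose Hamilton cycle, any $a$-subset of its parts decomposes into $c$ disjoint loose-path components (arcs), together spanning exactly $(s-1)a + c$ vertices. For each way that $a$ parts of $F_0$ can be matched to such an arc-configuration, there are $O(n^c)$ choices for where each arc sits in $F_H$ and $O(n^{|F_0|-(s-1)a-c})$ ways to label the remaining vertex positions of $F_0$; these two exponents sum to $|F_0|-(s-1)a$, independent of $c$. Multiplying by $\Theta(n^{-(s-1)(\|F_0\|-a)})$ gives $O(n^{|F_0|-(s-1)\|F_0\|})$ per value of $a$, and since only $O(1)$ values occur, the stated upper bound follows. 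The matching lower bound is furnished by the dominant term $a=0$, whose $\Theta(n^{|F_0|})$ generic embeddings each contribute $\Theta(n^{-(s-1)\|F_0\|})$.

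For part~(ii) I would carry out two comparisons. First, with $b' = \|F_{\mathrm{bad}} \cup F_H\| - \|F_H\| = O(1)$, applying the Stirling-type ratio twice shows that $\Pr(F \mid F_{\mathrm{bad}} \cup F_H)/\Pr(F \mid F_H) = 1 + O(1/n)$ uniformly over compatible $F \in D_0$, so the two conditional probabilities can be interchanged at negligible cost. Second, I would show that the terms discarded in passing from the RHS to the LHS are $o(\mathrm{RHS})$. These come in two flavours: the $F \in D_0$ with $F \cap F_{\mathrm{bad}} \neq \emptyset$, and the $F$ with $F \cap F_{\mathrm{bad}} = \emptyset$ that nevertheless share a point (not a whole part) with $F_{\mathrm{bad}}$. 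The latter "point-conflict" case is immediate: each of the $O(1)$ points of $F_{\mathrm{bad}}$ forces a specific vertex of $F_0$ into a specific cell, losing a factor of $n$ and yielding $O(n^{-1}\cdot \mathrm{RHS})$. For the shared-part case I would recast the subsum, for each part $P \in F_{\mathrm{bad}}$ and each $P' \in F_0$ mappable to $P$, as an instance of part~(i) applied to $F_0 \setminus \{P'\}$ with the $|V(P')|$ vertices of $P'$ pre-assigned to those of $P$; the fixed vertex count yields a saving of $n^{-(|V(P')|-(s-1))}$ in the case $P \in F_H$ and $n^{-|V(P')|}$ in the case $P \notin F_H$.

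The main obstacle is ensuring that every branch of this shared-part analysis yields a strictly $o(\mathrm{RHS})$ contribution. The borderline case is $P \in F_H$ with $|V(P')| = s-1$, which would give only $\Theta(\mathrm{RHS})$; however, $F_H$ is a loose Hamilton cycle and therefore contains no 1-cycle, so $P \in F_H$ forces $|V(P')| = s$ and this case cannot arise. In the complementary case $P \notin F_H$, using $|V(P')| \geq s-1$ and $s \geq 3$ already secures the $o(\mathrm{RHS})$ bound, completing the argument.
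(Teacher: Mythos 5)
Your proof follows essentially the same route as the paper's. For part~(i) you decompose $D_0$ according to the parts shared with $F_H$ (your parameter $a$, the paper's $\|U\|$), count arc placements ($\Theta(n^c)$, the paper's $\Theta(n^b)$) separately from the remaining vertex/point assignments, and multiply by the conditional probability $\Theta(n^{-(s-1)(\|F_0\|-a)})$; the exponents telescope to $|F_0|-(s-1)\|F_0\|$ for every $a$, exactly as in the paper. For part~(ii) both you and the paper first replace $\Pr(F\mid F_{\mathrm{bad}}\cup F_H)$ by $\Pr(F\mid F_H)$ at $1+O(1/n)$ cost, then bound the discarded mass.

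Where you do something genuinely extra: you explicitly treat the $F$ that share a point but not a part with $F_{\mathrm{bad}}$. The paper's written argument applies the closed-form $p$-ratio to every $F$ with $F\cap F_{\mathrm{bad}}=\emptyset$, but for a point-conflicting $F$ the true value of $\Pr(F\mid F_{\mathrm{bad}}\cup F_H)$ is zero (there is no partition extending $F\cup F_{\mathrm{bad}}\cup F_H$), so the asserted $\sim$ fails termwise for those $F$ and one needs to know their total contribution is negligible; your ``point-conflict'' step supplies this. Your shared-part analysis, phrased as part~(i) on $F_0\setminus\{P'\}$ with pre-assigned vertices, is stated rather tersely and the claimed savings $n^{-(|V(P')|-(s-1))}$ and $n^{-|V(P')|}$ are asserted without derivation; they do check out when worked through (and the exclusion of the borderline case, that $P\in F_H$ forces $|V(P')|=s$ since the isomorphism preserves cell multiplicities, is a correct and necessary observation). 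The paper reaches the same conclusion with a more compact argument: fixing a part of $F\in D_U$ to coincide with a part of $F_{\mathrm{bad}}$ costs a factor $\Omega(n)$ either in the $\Theta(n^b)$ arc-placement count (when $P\in F_H$) or in the $\Theta(n^{|F_0|-|U|})$ vertex-choice count (when $P\notin F_H$), giving $O(n^{-1})$ uniformly without the case-by-case exponent bookkeeping. Both arguments are sound; the paper's is cleaner, yours is slightly more careful on the point-conflict subcase.
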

\begin{proof}
For any subpartition $U$  of $F_0$, let $D_U$  denote the set of all 
subpartitions $F \in D_0$  such that there exists an isomorphism from $F$ to $F_0$ which  maps
$F \cap F_H$ to $U$. There are 
$\Theta(1)$ classes $D_U$, including the class $D_\emptyset$ corresponding to $U =\emptyset$,
and the union of these classes equals $D_0$. Note that the class $D_\emptyset$ is disjoint
from all other classes $D_U$.
Observe also that the class $D_U$ is empty unless $G(U)$ is a 
union of disjoint loose paths. 
Let $b=b(U)$ be the number of these paths, noting that $b=0$ when $U=\emptyset$.

Recalling that the length of a loose Hamiltonian cycle equals $t = n/(s-1)$, we find that
 \begin{equation}
\label{PrFmidFH}
    \Pr(F \mid F_H) =  
    \frac{p(rn - st - s\|F_0 \setminus U\|)}{p(rn - st)} 
 = \Theta\left(n^{-(s-1)(\|F_0\|-\|U\|)}\right).
 \end{equation}
 Now we bound the number of elements in $D_U$.  There are $\Theta(n^{b})$ ways to
select $b$ disjoint paths of specified lengths in the cycle $F_H$.
These paths form $F\cap F_H$.
 Next, there are $\Theta(n^{|F_0| - |U|})$ ways to choose the 
 parts in $F$ that are not determined by $F \cap F_H$. 
 (First, choose the vertices of $G(F)$ which are not contained in $G(U)$,
  in $\Theta(n^{|F_0|- |U|})$ ways, and then assign points to these
  vertices, in $\Theta(1)$ ways.)
Since $G(U)$ is a union of $b$ disjoint loose paths, we find that
	 \[
	    |U|  =  (s-1)\|U\| + b.
	 \] 
 Therefore,
\begin{equation}
\label{sizeDU} |D_U| =
    \Theta\left(n^{b} \cdot n^{|F_0| - (s-1)\|U\|- b} \right) 
\end{equation}
and hence
 \[
    \sum_{F \in D_U } \Pr(F \mid F_H) = 
    \Theta\left(n^{b}\cdot n^{|F_0| - (s-1)\|U\|- b}\cdot n^{-(s-1)(\|F_0\|-\|U\|)}\right)
    = \Theta\left(n^{|F_0|-(s-1)\|F_0\|}\right).
 \]
 Summing over all classes $D_U$ proves that
    \[ \sum_{F\in D_0} \Pr(F \mid F_H)  \leq
    \sum_{U} \sum_{F\in D_U} \Pr(F \mid F_H) 
= O\left(n^{|F_0|-(s-1)\|F_0\|}\right),\] 
as there are $\Theta(1)$ classes.  For the matching lower bound,
    \[ \sum_{F\in D_0} \Pr(F \mid F_H) \geq \sum_{F\in D_\emptyset} \Pr(F\mid F_H) = 
\Omega\left(n^{|F_0|-(s-1)\|F_0\|}\right).\] 
This completes the proof of (i).
	  
To prove (ii), let $F_{\mathrm{bad}}$ be any subpartition of $\Omega(n,r,s)$ with
$\| F_{\mathrm{bad}}\| = \Theta(1)$.
Next, for any subpartition $U$ of $F_0$, the number of $F\in D_U$ such 
that $F\cap F_{\mathrm{bad}}\neq \emptyset$ is bounded above by
$O(n^{|F_0| - (s-1)\| U\| - 1})$.
We wish to bound the number of 
subpartitions $F\in D_U$ which contain at least one part of $F_{\mathrm{bad}}$.
Arguing as in the proof of (\ref{sizeDU}),
either the number of choices of the $b$ paths in $F_H$ is reduced by a factor
of $\Omega(n)$,  or the number of choices of the
  remaining vertices of $G(F)$ (which are not covered by $G(U)$) 
 is reduced by a factor of $\Omega(n)$. 
Therefore, by (\ref{PrFmidFH}) and (\ref{sizeDU}),  we have
\[ \sum_{\substack{F\in D_0\\ F\cap F_{\mathrm{bad}}\neq \emptyset}} \Pr(F\mid F_H)
  = O(1/n)\, \sum_{F\in D_0} \Pr(F\mid F_H).
\]
Now for any $F \in D_0$ such that $F \cap F_{\mathrm{bad}} = \emptyset$, we have
\begin{align*}
       \Pr(F \mid F_{\mathrm{bad}} \cup F_H) &=
       \frac{p(rn - s \| F \cup F_{\mathrm{bad}} \cup F_H\|)}
       {p(rn - s \|F_{\mathrm{bad}} \cup F_H\|)} \\
       &= \big(1 + O(1/n)\big) \,\frac{p(rn - s \| F  \cup F_H\|)}{p(rn - s \| F_H\|)} \nonumber \\
         	&= \big(1+ O(1/n)\big)\,  \Pr(F \mid  F_H), 
\end{align*}
and hence
\begin{align*}
\sum_{F \in D_0} \Pr(F \mid F_H) &=  \big(1+O(1/n)\big)\, 
\sum_{\substack{F\in D_0 \\ F \cap F_{\mathrm{bad}} = \emptyset}} \Pr(F\mid F_H)\\
  &= \big(1+O(1/n)\big)
           \sum_{\substack{F\in D_0 \\ F \cap F_{\mathrm{bad}} = \emptyset}} 
 		 \Pr(F \mid  F_{\mathrm{bad}}\cup F_H),
\end{align*}
completing the proof.
\end{proof}

 Recall that $X_k$ denotes the number of subpartitions $F$ in $\mathcal{F}(n,r,s)$ such that $G(F)$ is a loose $k$-cycle, for $k\geq 2$, and 
 $X_1$ is the number of parts $F$ containing more than one point from some cell.
 The random variable $Y=X_t$ corresponds to the number of loose Hamilton cycles.
 We are ready to prove (\ref{omit}), which enables us to check 
condition (A2) of Theorem~\ref{thm:janson} by simply estimating the
 quantities $\E(YX_k)/\E Y$ for (fixed) positive integers $k$.
 
\begin{lemma} 
\label{more-than-one-cycle}
 Let $k, j_1,\ldots,j_k$ be fixed nonnegative integers. Then,  as $n\rightarrow \infty$,
   \[
    \frac{\E (Y (X_1)_{j_1} \cdots (X_k)_{j_k} )}{\E Y}  \sim \prod_{i=1}^{k}      \left( \frac{\E (Y X_{i} )}{\E Y} \right)^{j_i}.
   \]
 \end{lemma}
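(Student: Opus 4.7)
The plan is to reduce the identity to a conditional factorisation. I would fix any loose Hamilton cycle subpartition $F_H$ of $\Omega(n,r,s)$ and exploit the symmetry of the uniform distribution on $\Omega(n,r,s)$ under cell-preserving permutations of the $rn$ points, which act transitively on loose Hamilton cycle subpartitions and preserve each of the random variables $Y,X_1,\ldots,X_k$. This symmetry implies that the conditional expectations $\E[X_i\mid F_H\subseteq\cF(n,r,s)]$ and $\E[(X_1)_{j_1}\cdots(X_k)_{j_k}\mid F_H\subseteq\cF(n,r,s)]$ do not depend on the choice of $F_H$, and hence equal $\E(YX_i)/\E Y$ and $\E(Y(X_1)_{j_1}\cdots(X_k)_{j_k})/\E Y$, respectively. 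Writing $\Lambda_i := \sum_{F\in D_i}\Pr(F\mid F_H)$, where $D_i$ denotes the set of all subpartitions of $\Omega(n,r,s)$ realising a loose $i$-cycle (for $i\geq 2$) or any $1$-cycle (for $i=1$), we have $\Lambda_i = \E(YX_i)/\E Y = \Theta(1)$ by Lemma~\ref{Lemma2}(i). The identity then reduces to establishing, with $J := j_1+\cdots+j_k$, that
\[ \sum_{(F_m)_{m=1}^J} \Pr\!\left(\bigcup_{m=1}^J F_m \,\bigg|\, F_H\right) \sim \prod_{m=1}^J \Lambda_{i(m)}, \]
where the sum runs over all ordered tuples of distinct subpartitions with $j_i$ of them in $D_i$, and $i(m)$ records the type of $F_m$.

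The next step is to split this sum into a main contribution, where no two $F_m$ share a part, and a bad contribution, where at least one pair does. For the main contribution, I would sum over $F_J$ first and apply Lemma~\ref{Lemma2}(ii) with $F_{\mathrm{bad}} = F_1\cup\cdots\cup F_{J-1}$ (which has $O(1)$ parts) to obtain
\[ \sum_{F_J\in D_{i(J)},\, F_J\cap F_{\mathrm{bad}}=\emptyset} \Pr(F_J\mid F_H\cup F_{\mathrm{bad}}) \sim \Lambda_{i(J)}, \]
uniformly in $F_1,\ldots,F_{J-1}$. Iterating this for $F_{J-1},\ldots,F_1$ yields a main contribution of $(1+o(1))^J\prod_{m=1}^J \Lambda_{i(m)} = (1+o(1))\prod_i \Lambda_i^{j_i}$. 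Since $D_i$ is a union of only finitely many isomorphism classes of subpartitions (exactly one class when $i\geq 2$), one applies Lemma~\ref{Lemma2}(ii) to each class separately and sums.

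The main obstacle is controlling the bad contribution. For any ordered pair $(m,m')$ with $m<m'$ and any isomorphism class in which $F_m\cap F_{m'}\neq\emptyset$, the cycles $F_m$ and $F_{m'}$ necessarily share at least one vertex, so Lemma~\ref{Lemma_cycles} yields the strict inequality
\[ |F_m\cup F_{m'}|\leq (s-1)\|F_m\cup F_{m'}\|-1. \]
Arguing exactly as in the proof of Lemma~\ref{Lemma2}(i), the contribution of this class to $\sum\Pr(F_m\cup F_{m'}\mid F_H)$ is $O(n^{-1})$, saving a factor of $n^{-1}$ compared with the fully disjoint analogue $\Lambda_{i(m)}\Lambda_{i(m')}=\Theta(1)$. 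A further application of Lemma~\ref{Lemma2}(i) to each of the remaining $F_\ell$'s bounds the rest of the sum by $O\!\left(\prod_{\ell\neq m,m'}\Lambda_{i(\ell)}\right)=O(1)$. Summing over the finitely many pairs $(m,m')$ and the finitely many relevant overlap classes, the total bad contribution is $O(n^{-1})=o\!\left(\prod_i\Lambda_i^{j_i}\right)$. Combining the main and bad contributions completes the proof.
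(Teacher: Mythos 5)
Your proposal follows essentially the same route as the paper: fix a single Hamilton cycle subpartition $F_H$ by symmetry, split the tuple-sum into a main part (pairwise part-disjoint) and a bad part, factor the main part by iterating Lemma~\ref{Lemma2}(ii) with $F_{\mathrm{bad}}$ equal to the already-chosen cycles, and bound the bad part using Lemma~\ref{Lemma_cycles}. The main-part factorisation is the same inductive argument the paper uses to prove (\ref{for_ind}), and your care with the finitely many isomorphism classes inside $D_1$ and the uniformity of the $o(1)$ in Lemma~\ref{Lemma2}(ii) is well placed.

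There is, however, a gap in the bad-part bound. You argue per overlapping pair: the sum of $\Pr(F_m\cup F_{m'}\mid F_H)$ over such pairs is $O(n^{-1})$ by Lemma~\ref{Lemma2}(i), and then ``a further application of Lemma~\ref{Lemma2}(i) to each of the remaining $F_\ell$'s bounds the rest of the sum by $O(1)$''. That last step silently treats the joint probability as if it factored over the pair and the remaining cycles, but $\Pr(F_1\cup\cdots\cup F_J\mid F_H)\le\Pr(F_m\cup F_{m'}\mid F_H)\prod_{\ell\neq m,m'}\Pr(F_\ell\mid F_H)$ is generally \emph{false}: overlaps shrink the union and hence \emph{increase} the left-hand probability. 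What the chain rule actually gives is $\Pr(F_1\cup\cdots\cup F_J\mid F_H)=\Pr(F_m\cup F_{m'}\mid F_H)\cdot\Pr\bigl(\bigcup_{\ell\neq m,m'}F_\ell\mid F_H\cup F_m\cup F_{m'}\bigr)$, and to complete your argument you would need a version of Lemma~\ref{Lemma2}(i) that is uniform over conditioning on $F_H$ plus $O(1)$ extra parts, together with some bookkeeping for the $F_\ell$'s possibly overlapping $F_m\cup F_{m'}$ or one another; neither is available as stated. The paper avoids all of this by applying Lemma~\ref{Lemma2}(i) once to the entire union $F_0=F_1\cup\cdots\cup F_\ell$, grouped by isomorphism type: by Lemma~\ref{Lemma_cycles}, $|F_0|-(s-1)\|F_0\|\le -1$ whenever the cycles fail to be pairwise vertex-disjoint and loose, there are $O(1)$ isomorphism types and $O(1)$ ways to decompose each representative into a tuple of cycles, so the entire bad sum is $O(n^{-1})$. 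Recasting your bad-part estimate in this whole-union form closes the gap and recovers the paper's argument.
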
 
 
 \begin{proof} 
Note that $\E(Y (X_1)_{j_1} \cdots (X_k)_{j_k})$ is the expected number of
ways to choose, in $\mathcal{F}(n,r,s)$, subpartitions corresponding to the following 
structures:
a loose Hamilton cycle, and $j_1$ distinct 1-cycles, and $j_2$
distinct loose 2-cycles, and so on, up to $j_k$ distinct loose $k$-cycles.
 Let $\ell = j_1+\cdots+j_k$ be the total number of short cycles, and let 
$k_1,\ldots,k_\ell$ be the sequence of cycle lengths in non-decreasing order.
That is, for $q \in\{1, \ldots, \ell\}$, the value of $k_q$ equals $\rho$ defined by
  $\sum_{i=1}^{\rho-1}j_i <  q \leq \sum_{i=1}^{\rho} j_i$.
 By linearity of expectation, observe that 
\[
		\frac{\E (Y (X_1)_{j_1} \ldots (X_k)_{j_k} )}{\E Y} =
		\frac{1}{\E Y}\sum_{F_H} \sum_{F_1,\ldots, F_\ell}
		 \Pr\left( F_{1} \cup \cdots \cup F_{\ell} \cup F_H\right),
\]
where the first sum is over subpartitions $F_H$ of $\Omega(n,r,s)$ such that 
$G(F_H)$ is a loose Hamilton cycle and the second 
sum is over distinct subpartitions $F_1, \ldots, F_\ell$ of $\Omega(n,r,s)$ such that  
for $q=1,\ldots, \ell$, the graph $G(F_q)$ is a 1-cycle when $k_q=1$,
 or a loose $k_q$-cycle when $k_q\geq 2$. By symmetry, any  
$F_H$ gives the same contribution to the first sum. Fixing one 
subpartition $F_H$ and multiplying by $\E Y /\Pr(F_H)$ (which equals the number of choices of $F_H$), we find that
\[
		\frac{\E (Y (X_1)_{j_1} \ldots (X_k)_{j_k} )}{\E Y} =
		 \sum_{F_1,\ldots, F_\ell}
		 \Pr\left(F_{1} \cup \cdots \cup F_{\ell} \mid F_H \right).
\]
Let  $D_0$ be the set of subpartitions isomorphic to a fixed subpartition $F_0$ of $\Omega(n,r,s)$. 
If $G(F_0)$ is a union of $\ell$ cycles of lengths 
$k_1,\ldots, k_\ell$ then  we have, by Lemma \ref{Lemma2}(i),
   	\[ 
   			\sum_{\substack{F_1,\ldots,F_\ell\\F_1\cup \cdots \cup F_\ell \in D_0}} 
   			\Pr (F_1 \cup \cdots \cup F_{\ell} \mid F_H) = \Theta(n^{|F_0|-(s-1)\|F_0\|}).
   	\]
   	 Using Lemma \ref{Lemma_cycles}, we obtain that 
   	 \[
   	     \sum_{F_1,\ldots, F_\ell} \Pr (F_1 \cup \cdots \cup F_{\ell} \mid F_H) 
   	      \sim  \sum_{{\rm disjoint}\, F_1,\ldots, F_\ell} \Pr (F_1 \cup \cdots \cup F_{\ell} \mid F_H), 
   	 \]
where the second sum  is over summands of the first sum with the additional restriction that
subpartitions $F_1,\ldots,F_\ell$ are pairwise disjoint. 
In fact, by Lemma \ref{Lemma_cycles} and Lemma \ref{Lemma2}(i), the first sum is asymptotically equal to the sum over the summands
such that all cycles $G(F_1), \ldots, G(F_k)$ are loose and pairwise disjoint. 
Hence we can include or omit any other summands without changing the asymptotic
value of the sum.

For $q=1,\ldots, \ell$, let $D_q$ denote the set of all subpartitions $F$
of $\Omega(n,r,s)$  that $G(F)$ is a 1-cycle, when $k_q=1$ or a loose $k_q$-cycle,
when $k_q\geq 2$. 
It remains to prove that
\begin{equation}\label{for_ind}
    \sum_{{\rm disjoint}\, F_1,\ldots, F_\ell} \Pr (F_1 \cup \cdots \cup F_{\ell} \mid F_H)
     \sim \left(\sum_{F\in D_1} \Pr (F \mid F_H) \right) \cdots  
     \left(\sum_{F\in D_\ell} \Pr (F \mid F_H) \right).
   \end{equation}
 We  prove \eqref{for_ind} by induction on $\ell$. For $\ell=1$ the 
 left hand side and right hand side coincide. 
Now suppose that (\ref{for_ind}) holds for some $\ell=p\geq 1$.
 To prove that (\ref{for_ind}) holds with $\ell=p+1$, we use Lemma \ref{Lemma2}(ii) with 
$F_{\mathrm{bad}} = F_1\cup\cdots \cup F_{p}$ and a subpartition $F_0 \in D_{p+1}$. 
 Then, we have  $D_0 = D_{p+1}$ and 
 \begin{align*} 
     \sum_{{\rm disjoint}\, F_1,\ldots, F_{p+1}} 
     &\Pr (F_1 \cup \cdots \cup F_{p+1} \mid F_H) 
     \\&= 
     \sum_{{\rm disjoint}\, F_1,\ldots, F_p} \Pr (F_1 \cup \cdots \cup F_{p} \mid F_H) 
     \sum_{\substack{F \in D_{p+1}\\ F \cap F_{\mathrm{bad}} = \emptyset}} 
     \Pr (F  \mid F_{\mathrm{bad}}\cup F_H)\\
     &\sim \sum_{{\rm disjoint}\, F_1,\ldots, F_p} \Pr (F_1 \cup \cdots \cup F_p \mid F_H)
         \sum_{F \in D_{p+1}} \Pr (F \mid F_H). 
 \end{align*}
Applying the induction hypothesis completes the proof.
    \end{proof}

\end{document}